\theoremstyle{definition}
\newtheorem{definition}{Definition}[section]
\newtheorem{example}[definition]{Example}
\newtheorem{remark}[definition]{Remark}
\theoremstyle{plain}
\newtheorem{theorem}[definition]{Theorem}
\newtheorem{lemma}[definition]{Lemma}
\newtheorem{proposition}[definition]{Proposition}
\newtheorem{corollary}[definition]{Corollary}
\numberwithin{equation}{section}
\def\N{{\mathbb N}}
\begin{document}
\title{Going Up and Lying Over in Congruence--modular Algebras}
\author{George GEORGESCU and Claudia MURE\c SAN\thanks{Corresponding author.}\\ \footnotesize University of Bucharest\\ \footnotesize Faculty of Mathematics and Computer Science\\ \footnotesize Academiei 14, RO 010014, Bucharest, Romania\\ \footnotesize Emails: georgescu.capreni@yahoo.com; c.muresan@yahoo.com, cmuresan@fmi.unibuc.ro}
\date{\today }
\maketitle

\begin{abstract} In this paper, we extend properties Going Up and Lying Over from ring theory to the general setting of congruence--modular equational classes, using the notion of prime congruence defined through the commutator. We show how these two properties relate to each other, prove that they are preserved by finite direct products and quotients and provide algebraic and topological characterizations for them. We also point out many kinds of varieties in which these properties always hold.\\ {\em 2010 Mathematics Subject Classification:} Primary: 08A30; secondary: 08B10, 03G10, 06F35.\\ {\em Keywords:} congruence--modular varieties, commutator, prime congruence, Going Up, Lying Over.\end{abstract}

\section{Introduction}

\label{introduction}

Properties Going Up (GU) and Lying Over (LO) reflect the behaviour of commutative ring extensions with respect to finite chains of prime ideals. An extension of commutative rings $A\subseteq B$ fulfills {\em GU} iff, for any prime ideals $P,Q$ of $A$ and $P^{\prime }$ of $B$, if $P\subseteq Q$ and $P^{\prime }\cap A=P$, then there exists a prime ideal $Q^{\prime }$ of $B$ such that $P^{\prime }\subseteq Q^{\prime }$ and $Q^{\prime }\cap A=Q$; the extension $A\subseteq B$ fulfills {\em LO} iff, for any prime ideal ${\cal P}$ of $A$, there exists a prime ideal $P^{\prime }$ of $B$ such that $P^{\prime }\cap A=P$. These two conditions have be generalized from ring embeddings to arbitrary morphisms of commutative rings: a morphism $f:A\rightarrow B$ between two commutative rings shall fulfill GU, respectively LO, iff the extension $f(A)\subseteq B$ fulfills GU, respectively LO. By the Cohen--Seidenberg Theorem \cite[Theorem 5.11]{amd}, integral ring extensions fulfill GU and LO.

GU and LO--type conditions have been studied for the prime ideals of some algebraic structures related to logic: bounded distributive lattices \cite[p. 773]{loqu}, MV--algebras \cite{bel} and BL--algebras \cite{rada}. By \cite{bel}, respectively \cite{rada}, any MV-algebra, respectively BL--algebra morphism fulfills GU and LO.

In the present paper, we study properties GU and LO for morphisms in certain kinds of varieties of universal algebras, relating them to congruences instead of ideals. In order to define GU and LO in this general setting, we need a notion of prime congruence; we have chosen the prime congruences introduced through the notion of commutator, which can be defined in congruence--modular varieties \cite[p. 82]{fremck}. \cite{agl} shows that the prime spectra of algebras in semi--degenerate congruence--modular varieties have rich enough properties for developping an interesting mathematical theory concerning GU and LO.

While the inverse images of prime ideals through morphisms of commutative rings, bounded distributive lattices, MV--algebras and BL--algebras are again prime ideals, the same does not go for prime congruences in algebras from congruence--modular varieties, in general, and, since this property makes the theory of conditions GU and LO work for these particular kinds of algebras, we have had to restrict our research to morphisms that fulfill this property for prime congruences, which we have called {\em admissible morphisms}. However, in many kinds of varieties, all morphisms are admissible; we list such varieties in the final section of this paper.

In Section \ref{preliminaries} of our paper, we recall some notions from universal algebra and commutator theory, including properties of prime congruences. The results in the following sections are new and original, excepting those that we cite from other papers.

Section \ref{admmorph} has an introductory purpose; here we present the notion of {\em admissible morphism}, along with some properties of this kind of morphisms, most of which we cite from \cite{euadm}. We also give some examples.

In Section \ref{guandlo}, we define properties GU and LO for admissible morphisms in congruence--modular varieties, we provide examples, along with some simple characterizations that we need in what follows, and obtain the main results on these properties, such as the fact that their study can be reduced to embeddings, that surjectivity implies GU, GU implies LO in semi--degenerate congruence--modular equational classes, but the converses of these implications do not hold, GU is preserved by composition, while LO needs enforcing surjectivity on one of the morphisms or injectivity on the other for it to be preserved in the composition of those morphisms.

In some particular cases, concerning the structures of the posets of the prime congruences of the algebras in question, GU always holds or LO implies GU. Such cases are pointed out in Section \ref{gulopartic}. Here we notice that any morphism in the class of bounded distributive lattices is admissible and fulfills GU, a result which we also generalize both in this section and in Section \ref{varieties}.

In Section \ref{prodsum}, we prove that GU and LO are preserved by finite direct products in semi--degenerate congruence--modular varieties and in congruence--distributive varieties, as well as by finite ordinal sums in the class of bounded lattices and in any congruence--modular variety of bounded orderred structures that fulfills certain conditions.

In Section \ref{charguandlo}, we prove that GU and LO are preserved by quotients, and obtain a series of algebraic and topological characterizations for GU and LO, which lead to further results on the relationships between these two properties.

In Section \ref{varieties}, we study admissibility, GU and LO in different kinds of congruence--modular equational classes. We prove that all morphisms are admissible in varieties having a system of congruence intersection terms without parameters, among which there are congruence--distributive varieties with the compact intersection property, which in turn include filtral varieties, discriminator varieties, bounded distributive lattices and residuated lattices. As for varieties in which all admissible morphisms fulfill GU and LO, it turns out that they include semi--degenerate varieties with equationally definable principal congruences, which in turn include semi--degenerate filtral varieties, semi--degenerate discriminator varieties, bounded distributive lattices and residuated lattices and many other varieties which are important in the algebra of logic. If we put together these results, we obtain a set of varieties in which all morphisms are admissible and fulfill GU and LO, a fact which generalizes the results on MV--algebras and BL--algebras from \cite{bel} and \cite{rada}, respectively, but also includes many other interesting cases, such as bounded distributive lattices, residuated lattices, semi--degenerate filtral varieties and semi--degenerate discriminator varieties.

\section{Preliminaries}
\label{preliminaries}

In this section, we recall some properties on congruences in universal algebras and a series of results from commutator theory; we shall provide short proofs for those which are least commonly used. For the notions on universal algebras that we use in the sequel, we refer the reader to \cite{agl}, \cite{bur}, \cite{gralgu}, \cite{koll}. For those on lattices, see \cite{bal}, \cite{blyth}, \cite{cwdw}, \cite{gratzer}, \cite{schmidt}. For a further study of commutator theory, we recommend \cite{agl}, \cite{fremck}, \cite{koll}, \cite{owe}.

We shall denote by $\N $ the set of the natural numbers and by $\N ^*=\N \setminus \{0\}$. For any set $M$, we shall denote by $|M|$ the cardinality of $M$, by ${\cal P}(M)$ the set of the subsets of $M$, by ${\rm Eq}(M)$ the set of the equivalence relations on $M$, by $\Delta _M=\{(x,x)\ |\ x\in M\}\in {\rm Eq}(M)$ and by $\nabla _M=M^2\in {\rm Eq}(M)$. If $M$ is non--empty and $\pi $ is a partition of $M$, then we shall denote by $eq(\pi )$ the equivalence on $M$ that corresponds to $\pi $; if $n\in \N ^*$ and $\{M_1,\ldots ,M_n\}$ is a finite partition of $M$, $eq(\{M_1,\ldots ,M_n\})$ shall also be denoted by $eq(M_1,\ldots ,M_n)$. For any $\theta \in {\rm Eq}(M)$, any $a\in M$, $V\subseteq M$ and $W\subseteq M^2$, $a/\theta $ will denote the equivalence class of $a$ with respect to $\theta $, $V/\theta =\{x/\theta \ |\ x\in V\}$, $W/\theta =\{(x/\theta ,y/\theta)\ |\ x,y\in M,(x,y)\in W\}$ and $p_{\theta }:M\rightarrow M/\theta $ shall be the canonical surjection.

Let $I$ be a non--empty set and $(X_i)_{i\in I}$ and $(Y_i)_{i\in I}$ be families of sets. If $X\subseteq \displaystyle \prod _{i\in I}X_i$, then by $a=(a_i)_{i\in I}\in X$ we mean $a_i\in X_i$ for all $i\in I$, such that $a\in X$. If $f_i:X_i\rightarrow Y_i$ for all $i\in I$, then $\displaystyle \prod _{i\in I}f_i:\prod _{i\in I}X_i\rightarrow \prod _{i\in I}Y_i$ shall have the usual componentwise definition and, in the particular case when $I=\overline{1,n}$ for some $n\in \N ^*$ and $f_1=\ldots =f_n=f$, then we denote $\displaystyle \prod _{i=1}^nf_i=f^n$. If $\theta _i\in {\rm Eq}(X_i)$ for all $i\in I$, then we denote by $\displaystyle \prod _{i\in I}\theta _i=\{((x_i)_{i\in I},(y_i)_{i\in I})\ |\ (\forall \, i\in I)\, ((x_i,y_i)\in \theta _i)\}$. If $M$ and $N$ are sets and $f:M\rightarrow N$, then the direct image of $f^2$ shall simply be denoted by $f$, and $(f^2)^{-1}$, the inverse image of $f^2$, shall be denoted by $f^*$. Clearly, if $f$ is injective, then so is $f^2$, thus $f^*:{\cal P}(N^2)\rightarrow {\cal P}(M^2)$ is surjective, and, if $f$ is surjective, then so is $f^2$, thus $f^*$ is injective. Trivially, $f^*(\nabla _N)=\nabla _M$. We shall denote by ${\rm Ker}(f)=f^*(\Delta _N)$: the {\em kernel} of $f$. Notice that, if $Q$ is a set and $g:N\rightarrow Q$, then $(g\circ f)^*=((g\circ f)^2)^{-1}=(g^2\circ f^2)^{-1}=(f^2)^{-1}\circ (g^2)^{-1}=f^*\circ g^*$.

Whenever there is no danger of confusion, any algebra shall be designated by its support set. All algebras shall be considerred non--empty; by {\em trivial algebra} we mean one--element algebra, and by {\em non--trivial algebra} we mean algebra with at least two distinct elements. Any quotient algebra and any direct product of algebras shall be considerred with the operations defined canonically. Sometimes, for brevity, we shall denote by $A\cong B$ the fact that two algebras $A$ and $B$ of the same type are isomorphic.

For any $n\in \N ^*$, we shall denote the $n$--element chain by ${\cal L}_n$. We shall denote by ${\cal D}$ the diamond and by ${\cal P}$ the pentagon. We shall abbreviate the {\em ascending chain condition} for lattices by {\em ACC}. For any lattice $L$ and any $x\in L$, we shall denote by $[x)$ the principal filter of $L$ generated by $x$: $[x)=\{y\in L\ |\ x\leq y\}$.

Throughout this paper, $\tau $ shall be a type of universal algebras, ${\cal C}$ shall be an equational class of algebras of type $\tau $ and ${\bf L}_{\tau }$ shall be the first order language associated to $\tau $. For any term $t$ in ${\bf L}_{\tau }$ and any member $M$ of ${\cal C}$, we shall denote by $t^M$ the derivative operation of $M$ associated to $t$.

Throughout the rest of this section, $A$ shall be an algebra from ${\cal C}$. We shall denote by ${\rm Con}(A)$ the set of the congruences of $A$; for each $X\subseteq A^2$, we shall denote by $Cg_A(X)$ the congruence of $A$ generated by $X$; for every $a,b\in A$, $Cg_A(\{(a,b)\})$ is also denoted by $Cg_A(a,b)$ and called the {\em principal congruence} of $A$ generated by $(a,b)$. Let $\phi \in {\rm Con}(A)$; $\phi $ is said to be {\em finitely generated} iff $\phi =Cg_A(X)$ for some finite subset $X$ of $A^2$; $\phi $ is called a {\em proper congruence} of $A$ iff $\phi \neq \nabla _A$. The {\em maximal congruences} of $A$ are the maximal elements of $({\rm Con}(A)\setminus \{\nabla _A\},\subseteq )$; the set of the maximal congruences of $A$ is denoted by ${\rm Max}(A)$ and called the {\em maximal spectrum} of $A$. $({\rm Con}(A),\vee ,\cup ,\Delta _A,\nabla _A)$ is a bounded lattice, orderred by set inclusion, where, for all $\phi ,\psi \in {\rm Con}(A)$, $\phi \vee \psi =Cg_A(\phi \cup \psi )$; moreover, ${\rm Con}(A)$ is a complete lattice, in which, for any family $(\phi 
_i)\subseteq {\rm Con}(A)$, $\displaystyle \bigvee _{i\in I}\phi _i=Cg_A(\bigcup _{i\in I}\phi _i)$. Obviously, $A$ is non--trivial iff $\Delta _A\neq \nabla _A$. The algebra $A$ is said to be {\em congruence--modular}, respectively {\em congruence--distributive}, iff the lattice ${\rm Con}(A)$ is modular, respectively distributive. The equational class ${\cal C}$ is said to be {\em congruence--modular}, respectively {\em congruence--distributive}, iff all algebras from ${\cal C}$ are congruence--modular, respectively congruence--distributive. The class of lattices and that of residuated lattices are congruence--distributive; that of commutative rings is congruence--modular and it is not congruence--distributive.

If $I$ is a non--empty set, $(A_i)_{i\in I}$ and $(B_i)_{i\in I}$ are families of algebras in ${\cal C}$ and, for all $i\in I$, $\theta _i\in {\rm Con}(A_i)$ and $f_i:A_i\rightarrow B_i$ is a morphism in ${\cal C}$, then, clearly: $\displaystyle \prod _{i\in I}\theta _i\in {\rm Con}(\prod _{i\in I}A_i)$ and $\displaystyle \prod _{i\in I}f_i$ is a morphism in ${\cal C}$.

Now let $B$ be an algebra from ${\cal C}$ and $f:A\rightarrow B$ be a morphism in ${\cal C}$ and let $\phi \in {\rm Con}(A)$ and $\psi \in {\rm Con}(B)$. Then $f^*(\psi )\in {\rm Con}(A)$, in particular ${\rm Ker}(f)\in {\rm Con}(A)$, and, clearly, $f(f^*(\psi ))=\psi \cap f(A^2)$, so, if $f$ is surjective, then $f(f^*(\psi ))=\psi $. Also, $f(\phi )\in {\rm Con}(f(A))$, thus, if $f$ is surjective, then $f(\phi )\in {\rm Con}(B)$ and $f^*(f(\phi ))=\phi $ since, in this case, $f^*$ is injective.

For any $\theta \in {\rm Con}(A)$, $p_{\theta }$ is a surjective morphism in ${\cal C}$ and the mapping $\gamma \mapsto p_{\theta }(\gamma )=\gamma /\theta $ sets a bounded lattice isomorphism from $[\theta )$ to ${\rm Con}(A/\theta )$, so ${\rm Con}(A/\theta )=\{\gamma /\theta \ | \gamma \in [\theta )\}$ and, for all $\gamma \in [\theta )$, $p_{\theta }^*(p_{\theta }(\gamma ))=p_{\theta }^*(\gamma /\theta )=\gamma $, thus ${\rm Ker}(p_{\theta })=p_{\theta }^*(\Delta _{A/\theta })=p_{\theta }^*(\theta /\theta )=\theta $. Notice that, for any $\gamma \in [\theta )$ and any $a,b\in A$, the following hold: $(a/\theta ,b/\theta )\in \gamma /\theta $ iff $(p_{\theta }(a),p_{\theta }(b))\in p_{\theta }(\gamma )$ iff $(a,b)\in p_{\theta }^*(p_{\theta }(\gamma ))$ iff $(a,b)\in \gamma $; from this or directly from the fact that the map above is a lattice isomorphism, it follows that, for any $\alpha ,\beta \in [\theta )$: $\alpha /\theta =\beta /\theta $ iff $\alpha =\beta $, and: $\alpha /\theta \subseteq \beta /\theta $ iff $\alpha \subseteq \beta $. From the above it follows that ${\rm Max}(A/\theta )=\{\mu /\theta \ |\ \mu \in {\rm Max}(A),\theta \subseteq \mu \}=p_{\theta }([\theta )\cap {\rm Max}(A))$.

\begin{theorem}{\rm \cite{fremck}} If ${\cal C}$ is congruence--modular, then, for each member $M$ of ${\cal C}$, there exists a unique binary operation $[\cdot ,\cdot ]_M$ on ${\rm Con}(M)$ such that, for all $\alpha ,\beta \in {\rm Con}(M)$, $[\alpha ,\beta ]_M=\min \{\mu \in {\rm Con}(M)\ |\ \mu \subseteq \alpha \cap \beta $ and, for any member $N$ of ${\cal C}$ and any surjective morphism $f:M\rightarrow N$ in ${\cal C}$, $\mu \vee {\rm Ker}(f)=f^*([f(\alpha \vee {\rm Ker}(f)),f(\beta \vee {\rm Ker}(f))]_N)\}$.\label{wow}\end{theorem}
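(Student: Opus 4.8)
The plan is to recognise this as the existence--and--uniqueness theorem for the \emph{modular commutator} and to derive the displayed recursive identity from the classical term--condition construction. First I would define, for $\alpha ,\beta ,\delta \in {\rm Con}(M)$, the centralizing relation $C(\alpha ,\beta ;\delta )$: for every term $t$ of ${\bf L}_\tau $, every $(a,b)\in \alpha $ and every pair of tuples $(\overline{c},\overline{d})$ with $(c_i,d_i)\in \beta $ for all $i$, one has $t^M(a,\overline{c})\mathrel{\delta }t^M(a,\overline{d})$ if and only if $t^M(b,\overline{c})\mathrel{\delta }t^M(b,\overline{d})$. A routine term chase shows that $C(\alpha ,\beta ;\delta _i)$ for every $i$ in a family implies $C(\alpha ,\beta ;\bigcap _i\delta _i)$, so we may put $[\alpha ,\beta ]_M\notatie \bigcap \{\delta \in {\rm Con}(M)\mid C(\alpha ,\beta ;\delta )\}$, the least congruence centralized by the pair $(\alpha ,\beta )$; this defines a binary operation on ${\rm Con}(M)$ for every member $M$ of ${\cal C}$, and a short computation giving $C(\alpha ,\beta ;\alpha \cap \beta )$ yields $[\alpha ,\beta ]_M\subseteq \alpha \cap \beta $. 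None of this step uses congruence--modularity.

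Next I would bring in congruence--modularity of ${\cal C}$ --- concretely, the Day implication terms witnessing it, or equivalently a difference term --- to establish the two properties that the displayed identity really encodes: additivity of the commutator, $[\alpha \vee \gamma ,\beta ]_M=[\alpha ,\beta ]_M\vee [\gamma ,\beta ]_M$ and symmetrically in the other variable, and its compatibility with surjections, namely that for a surjective morphism $f:M\rightarrow N$ in ${\cal C}$ with $\theta ={\rm Ker}(f)$ and congruences $\alpha ,\beta $ of $M$ containing $\theta $ one has $f([\alpha ,\beta ]_M)=[f(\alpha ),f(\beta )]_N$. Granting these, the displayed identity follows for arbitrary $\alpha ,\beta \in {\rm Con}(M)$: two uses of additivity give $[\alpha \vee \theta ,\beta \vee \theta ]_M=[\alpha ,\beta ]_M\vee [\alpha ,\theta ]_M\vee [\theta ,\beta ]_M\vee [\theta ,\theta ]_M$, whose last three joinands lie in $\theta $ (each commutator is below the meet of its arguments), so $[\alpha ,\beta ]_M\vee \theta =[\alpha \vee \theta ,\beta \vee \theta ]_M\vee \theta $; applying $f$ to $[\alpha \vee \theta ,\beta \vee \theta ]_M$, then $f^*$, and using that $f^*\circ f$ acts as $-\vee \theta $ on ${\rm Con}(M)$ while $f(\alpha \vee \theta )=f(\alpha )$ and $f(\beta \vee \theta )=f(\beta )$ --- all instances of the lattice isomorphism $[\theta )\rightarrow {\rm Con}(N)$ recalled in Section~\ref{preliminaries} --- we obtain $[\alpha ,\beta ]_M\vee \theta =f^*\big(f([\alpha \vee \theta ,\beta \vee \theta ]_M)\big)=f^*\big([f(\alpha \vee \theta ),f(\beta \vee \theta )]_N\big)$. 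Thus $[\alpha ,\beta ]_M$ lies in the set on the right--hand side of the statement; taking there $f={\rm id}_M$ collapses the membership condition to $\mu =[\alpha ,\beta ]_M$, so that set equals $\{[\alpha ,\beta ]_M\}$, its minimum exists, and it is $[\alpha ,\beta ]_M$.

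Uniqueness of an operation satisfying the displayed identity is part of the standard characterization of the modular commutator, which I would cite from \cite{fremck} rather than reprove. The part I expect to be the genuine obstacle is the modular calculus used above: the inclusion $f([\alpha ,\beta ]_M)\subseteq [f(\alpha ),f(\beta )]_N$ and the inclusion $[\alpha ,\beta ]_M\vee [\gamma ,\beta ]_M\subseteq [\alpha \vee \gamma ,\beta ]_M$ hold in any variety --- the first by transporting the term condition along $f$, the second by monotonicity of the term--condition commutator --- but their reverse inclusions fail in arbitrary equational classes and need congruence--modularity in an essential way, via the Day terms; this is where the bulk of the work in \cite{fremck} sits. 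Everything else --- the containment $[\alpha ,\beta ]_M\subseteq \alpha \cap \beta $, the transfer between $M$ and $N$ through $f^*$, and the reduction of the minimum to a singleton through $f={\rm id}_M$ --- is routine once that machinery is available, so I would structure the argument as an appeal to \cite{fremck} for the commutator's basic calculus, with the short deductions above layered on top.
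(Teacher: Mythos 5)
The paper offers no proof of this statement: Theorem \ref{wow} is quoted verbatim from \cite{fremck} and used as a black box, so there is no in--paper argument to measure yours against. Taken on its own terms, your reconstruction is sound in outline and follows the standard development: define the term--condition commutator, check $[\alpha ,\beta ]_M\subseteq \alpha \cap \beta $, invoke modularity (via Day terms) for join--additivity and for the quotient/homomorphism formula, and then derive the displayed $\min $--characterization; the observation that instantiating $f={\rm id}_M$ collapses the defining set to the singleton $\{[\alpha ,\beta ]_M\}$ is exactly the right way to see that the minimum exists and equals the term--condition commutator. Two caveats. First, your claim that $f^*\circ f$ acts as $-\vee {\rm Ker}(f)$ on all of ${\rm Con}(M)$ is not literally true: for $\theta ={\rm Ker}(f)$ and a congruence $\gamma $ not containing $\theta $ one only gets $f^*(f(\gamma ))=\theta \circ \gamma \circ \theta \subseteq \gamma \vee \theta $, and $f([\gamma ,\delta ]_M)$ need not be a congruence of $N$ before joining with $\theta $. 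The repair is to use the homomorphism property in the form the paper itself records in Remark \ref{r2.8}, namely $[\gamma ,\delta ]_M\vee \theta =f^*([f(\gamma \vee \theta ),f(\delta \vee \theta )]_N)$, applied with $\gamma =\alpha \vee \theta $ and $\delta =\beta \vee \theta $; your additivity computation $[\alpha \vee \theta ,\beta \vee \theta ]_M\vee \theta =[\alpha ,\beta ]_M\vee \theta $ then finishes the membership claim. Second, the $f={\rm id}_M$ trick shows that the set defined by a \emph{given} family of operations is a singleton, but it does not by itself rule out a second, different family satisfying the same recursion, since the right--hand sides would then be computed with different $[\cdot ,\cdot ]_N$; so the uniqueness clause genuinely rests on the cited characterization, as you acknowledge. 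Given that the paper itself proves nothing here, deferring the two deep lemmas and the uniqueness to \cite{fremck} while supplying the routine derivation of the displayed identity is a reasonable, and essentially correct, division of labour.
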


\begin{remark} Notice that, since it refers to surjective functions $f$, the last equality from Theorem \ref{wow} implies: $f(\mu \vee {\rm Ker}(f))= [f(\alpha \vee {\rm Ker}(f)),f(\beta \vee {\rm Ker}(f))]_N$.\label{r2.5}\end{remark}

\begin{definition} If ${\cal C}$ is congruence--modular, then, for any member $M$ of ${\cal C}$, the operation $[\cdot ,\cdot ]_M:{\rm Con}(M)\times {\rm Con}(M)\rightarrow {\rm Con}(M)$ from Theorem \ref{wow} is called the {\em commutator of $M$}, and, for any $\alpha ,\beta \in {\rm Con}(M)$, $[\alpha ,\beta ]_M$ is called the {\em commutator of $\alpha $ and $\beta $}.\label{1.1}\end{definition}

\begin{theorem}{\rm \cite{bj}} If ${\cal C}$ is congruence--distributive, then, in each member of ${\cal C}$, the commutator coincides to the intersection of congruences.\label{distrib}\end{theorem}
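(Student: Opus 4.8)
The plan is to use the \emph{uniqueness} assertion in Theorem \ref{wow}. Since every distributive lattice is modular, ${\cal C}$ is in particular congruence--modular, so Theorem \ref{wow} applies and, for each $M\in{\cal C}$, the commutator $[\cdot,\cdot]_M$ exists. I will show that the family of operations given by $\langle\alpha,\beta\rangle_M:=\alpha\cap\beta$, one for each $M\in{\cal C}$, also satisfies the characterization in Theorem \ref{wow}; by uniqueness this will force $[\cdot,\cdot]_M=\langle\cdot,\cdot\rangle_M$, that is $[\alpha,\beta]_M=\alpha\cap\beta$ for all $M\in{\cal C}$ and all $\alpha,\beta\in{\rm Con}(M)$.

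So fix $M\in{\cal C}$ and $\alpha,\beta\in{\rm Con}(M)$; I have to check that $\alpha\cap\beta$ equals the minimum of the set $S$ of those $\mu\in{\rm Con}(M)$ with $\mu\subseteq\alpha\cap\beta$ such that $\mu\vee{\rm Ker}(f)=f^*(f(\alpha\vee{\rm Ker}(f))\cap f(\beta\vee{\rm Ker}(f)))$ for every member $N$ of ${\cal C}$ and every surjective morphism $f:M\rightarrow N$ in ${\cal C}$. First I would simplify the right--hand side: inverse images preserve intersections, so it equals $f^*(f(\alpha\vee{\rm Ker}(f)))\cap f^*(f(\beta\vee{\rm Ker}(f)))$, and, since $f$ is surjective and both $\alpha\vee{\rm Ker}(f)$ and $\beta\vee{\rm Ker}(f)$ lie in $[{\rm Ker}(f))$, the correspondence between $[{\rm Ker}(f))$ and ${\rm Con}(N)$ recalled in Section \ref{preliminaries} (applied through the isomorphism $M/{\rm Ker}(f)\cong N$) gives $f^*(f(\xi))=\xi$ for such $\xi$; hence the defining condition on $\mu$ becomes $\mu\vee{\rm Ker}(f)=(\alpha\vee{\rm Ker}(f))\cap(\beta\vee{\rm Ker}(f))$. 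This is exactly where congruence--distributivity enters: in the distributive lattice ${\rm Con}(M)$ one has $(\alpha\vee{\rm Ker}(f))\cap(\beta\vee{\rm Ker}(f))=(\alpha\cap\beta)\vee{\rm Ker}(f)$, so $\mu\in S$ iff $\mu\subseteq\alpha\cap\beta$ and $\mu\vee{\rm Ker}(f)=(\alpha\cap\beta)\vee{\rm Ker}(f)$ for every surjective $f$.

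Now, taking $f={\rm id}_M$, for which ${\rm Ker}(f)=\Delta_M$, the last condition forces $\mu=\alpha\cap\beta$; conversely $\mu=\alpha\cap\beta$ clearly satisfies all the requirements. Thus $S=\{\alpha\cap\beta\}$, so $\min S=\alpha\cap\beta$, which completes the verification; as this argument only used distributivity of ${\rm Con}(M)$, which is available for every $M\in{\cal C}$, the characterization holds uniformly and uniqueness yields the claim. The only points to watch are that $f^*(f(\xi))=\xi$ is invoked solely for $\xi\supseteq{\rm Ker}(f)$ — which is why one first replaces $\alpha,\beta$ by $\alpha\vee{\rm Ker}(f),\beta\vee{\rm Ker}(f)$ — and that the meet in question is pulled back along $f^*$, so that no identity about forward images of meets is needed; beyond unwinding the statement of Theorem \ref{wow} correctly, I expect no genuine obstacle.
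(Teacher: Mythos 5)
The paper does not prove this statement --- it is quoted from \cite{bj} --- so there is no internal proof to compare against; I can only assess your argument on its own terms, and it is essentially correct. Your route, deriving the result from the \emph{uniqueness} clause of Theorem \ref{wow}, is a clean alternative to the usual proof (which typically shows directly that congruence--distributive varieties have no nontrivial abelian congruences, forcing $[\alpha,\beta]\supseteq\alpha\cap\beta$, the reverse inclusion being automatic). Each step checks out: $f^*=(f^2)^{-1}$ does preserve intersections; $f^*(f(\xi))=\xi$ does hold for surjective $f$ and $\xi\supseteq{\rm Ker}(f)$, and you are right to restrict to that case, since the identity fails for general $\xi$; the distributive--lattice identity $(\alpha\vee\gamma)\cap(\beta\vee\gamma)=(\alpha\cap\beta)\vee\gamma$ is exactly where congruence--distributivity is used; and taking $f={\rm id}_M$ collapses the candidate set to the singleton $\{\alpha\cap\beta\}$, so the minimum exists and the intersection operation satisfies the characterization, whence equality with the commutator. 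The one point you should make explicit is how you read the uniqueness in Theorem \ref{wow}: the defining condition for $[\cdot,\cdot]_M$ refers to $[\cdot,\cdot]_N$ on quotients (including $M$ itself via the identity map), so the assertion only makes sense, and your argument only goes through, if the uniqueness is understood as uniqueness of the whole \emph{family} of operations $([\cdot,\cdot]_M)_{M\in{\cal C}}$ satisfying the system of equations simultaneously. That is the standard reading of the Freese--McKenzie characterization, and under it your verification that the family $\langle\alpha,\beta\rangle_M=\alpha\cap\beta$ satisfies the same system does indeed force $[\alpha,\beta]_M=\alpha\cap\beta$; a per--algebra reading would instead require a well--founded induction on quotients that is not available.
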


Throughout the rest of this section, ${\cal C}$ shall be congruence--modular.

\begin{remark} For any $\alpha ,\beta \in {\rm Con}(A)$, we have:\begin{itemize}
\item $[\alpha ,\beta ]_A\subseteq \alpha \cap \beta $;
\item for any algebra $B$ from ${\cal C}$ and any surjective morphism $f:A\rightarrow B$, $[\alpha ,\beta ]_A\vee {\rm Ker}(f)=f^*([f(\alpha \vee {\rm Ker}(f)),f(\beta \vee {\rm Ker}(f))]_B)$, which implies: $f([\alpha ,\beta ]_A\vee {\rm Ker}(f))=[f(\alpha \vee {\rm Ker}(f)),f(\beta \vee {\rm Ker}(f))]_B$;
\item for all $\theta \in {\rm Con}(A)$, $p_{\theta }:A\rightarrow A/\theta $ is a surjective morphism, so we may take $B=A/\theta $ and $f=p_{\theta }$ in the above, and we obtain: $([\alpha ,\beta ]_A\vee \theta )/\theta =[(\alpha \vee \theta )/\theta,(\beta \vee \theta )/\theta ]_{A/\theta }$; in particular, if $\alpha ,\beta \in [\theta )$, then $([\alpha ,\beta ]_A\vee \theta )/\theta =[\alpha /\theta,\beta /\theta ]_{A/\theta }$.\end{itemize}\label{r2.8}\end{remark}

\begin{proposition}{\rm \cite{fremck}} The commutator is:\begin{itemize}
\item commutative, that is $[\alpha ,\beta ]_A=[\beta ,\alpha ]_A$ for all $\alpha ,\beta \in {\rm Con}(A)$;
\item increasing in both arguments, that is, for all $\alpha ,\beta ,\phi ,\psi \in {\rm Con}(A)$, if $\alpha \subseteq \beta $ and $\phi \subseteq \psi $, then $[\alpha ,\phi ]_A\subseteq [\beta ,\psi ]_A$;
\item distributive in both arguments with respect to arbitrary joins, that is, for any families $(\alpha _i)_{i\in I}$ and $(\beta _j)_{j\in J}$ of congruences of $A$, $\displaystyle [\bigvee _{i\in I}\alpha _i,\bigvee _{j\in J}\beta _j]_A=\bigvee _{i\in I}\bigvee _{j\in J}[\alpha _i,\beta _j]_A$.\end{itemize}\label{1.3}\end{proposition}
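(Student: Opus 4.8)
The plan is to deduce everything from the characterization of the commutator in Theorem \ref{wow} together with Remark \ref{r2.8}, importing from \cite{fremck} (see also \cite{koll}) the two genuinely term--condition--theoretic facts that these formal tools cannot supply: the symmetry of the commutator on every member of ${\cal C}$, and the ``trivial case'' of additivity. For commutativity, note that the defining conditions in Theorem \ref{wow} for the pair $(\alpha ,\beta )$ and for the pair $(\beta ,\alpha )$ coincide, except that the bracket $[f(\alpha \vee {\rm Ker}(f)),f(\beta \vee {\rm Ker}(f))]_N$ is replaced by its transpose; hence $[\alpha ,\beta ]_A=[\beta ,\alpha ]_A$ for all $A\in {\cal C}$ once one knows that the commutator is commutative on every $N\in {\cal C}$, which is exactly the symmetry of the term condition in congruence--modular varieties and is the one non--formal ingredient I would quote. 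Having commutativity, it is enough to prove join--distributivity in the first argument; the statements for the second argument and for both at once follow by transposing and iterating.

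Next I would record monotonicity (also imported from \cite{fremck}; Theorem \ref{wow} does not yield it formally, since passing to $A/[\beta ,\gamma ]_A$ only reduces monotonicity on $A$ to monotonicity on that quotient). Monotonicity immediately gives the ``increasing'' item, $[\alpha ,\phi ]_A\subseteq [\beta ,\phi ]_A\subseteq [\beta ,\psi ]_A$ when $\alpha \subseteq \beta $ and $\phi \subseteq \psi $, and also the inclusion $\bigvee _{i}\bigvee _{j}[\alpha _i,\beta _j]_A\subseteq [\bigvee _{i}\alpha _i,\bigvee _{j}\beta _j]_A$ in join--distributivity; and the right--hand side here is $\subseteq (\bigvee _i\alpha _i)\cap (\bigvee _j\beta _j)$ by the first item of Remark \ref{r2.8}.

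The reverse inclusion is the crux, and I would obtain it by the standard ``bootstrap''. Put $\delta =\bigvee _{i}\bigvee _{j}[\alpha _i,\beta _j]_A$ and apply the equality of Theorem \ref{wow} to $p_{\delta }:A\rightarrow A/\delta $, whose kernel is $\delta $, to get
\[\left[\bigvee _{i}\alpha _i,\bigvee _{j}\beta _j\right]_A\vee \delta =p_{\delta }^*\left(\left[\left(\bigvee _{i}\alpha _i\vee \delta \right)/\delta ,\left(\bigvee _{j}\beta _j\vee \delta \right)/\delta \right]_{A/\delta }\right).\]
Since $p_{\delta }^*(\Delta _{A/\delta })={\rm Ker}(p_{\delta })=\delta $, it suffices to show that the commutator displayed on the right equals $\Delta _{A/\delta }$, for then $[\bigvee _i\alpha _i,\bigvee _j\beta _j]_A\subseteq \delta $, as wanted. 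Using that $\gamma \mapsto \gamma /\delta $ is a lattice isomorphism of $[\delta )$ onto ${\rm Con}(A/\delta )$ we may rewrite $(\bigvee _i\alpha _i\vee \delta )/\delta =\bigvee _i((\alpha _i\vee \delta )/\delta )$, and likewise for the $\beta _j$'s; and, by the last item of Remark \ref{r2.8} applied with $\theta =\delta $, each $[\alpha _i,\beta _j]_A\subseteq \delta $ gives $[(\alpha _i\vee \delta )/\delta ,(\beta _j\vee \delta )/\delta ]_{A/\delta }=\Delta _{A/\delta }$.

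So what is left is the purely commutator--theoretic assertion that, in the algebra $A/\delta $, a join of commutator--trivial pairs of congruences is commutator--trivial: $[x_i,y_j]=\Delta $ for all $i,j$ implies $[\bigvee _i x_i,\bigvee _j y_j]=\Delta $. This ``trivial case'' of additivity --- equivalently, that the relation ``$x$ centralizes $y$'' is preserved under joins in either variable --- is the genuinely congruence--modular input, and it is the step I expect to be the real obstacle to a fully self--contained proof; I would cite it from \cite{fremck} and thereby finish. (Symmetrically, once first--argument distributivity is available for all of ${\cal C}$, the two--sided statement for arbitrary families follows by applying it twice, using commutativity to swap the roles of the two arguments.)
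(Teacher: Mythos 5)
Your argument is internally correct, but note that the paper itself offers no proof of this proposition at all: it is stated as a quoted result from \cite{fremck}, so any comparison is between your reduction and a bare citation. What you have done is organize the statement so that everything formal is squeezed out of Theorem \ref{wow} and Remark \ref{r2.8} --- the ``increasing'' item and the inclusion $\bigvee_i\bigvee_j[\alpha_i,\beta_j]_A\subseteq[\bigvee_i\alpha_i,\bigvee_j\beta_j]_A$ from monotonicity, and the reverse inclusion via the bootstrap through $A/\delta$ with $\delta=\bigvee_i\bigvee_j[\alpha_i,\beta_j]_A$, which is sound: the third bullet of Remark \ref{r2.8} does give $[(\alpha_i\vee\delta)/\delta,(\beta_j\vee\delta)/\delta]_{A/\delta}=\Delta_{A/\delta}$, and the lattice isomorphism $[\delta)\cong{\rm Con}(A/\delta)$ lets you pull the joins inside. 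You are also right, and commendably explicit, that the two residual ingredients (symmetry of the commutator, and the fact that a join of commutator-trivial pairs is commutator-trivial) cannot come out of Theorem \ref{wow} and must be imported. The only caveat worth recording is that these two imported facts are not peripheral lemmas: in \cite{fremck} the ``trivial case'' of additivity is proved via the closure of the centralizing relation $C(\alpha,\beta;\delta)$ under joins, and that closure property is essentially the entire content of additivity --- the general case is recovered from it by exactly the quotient bootstrap you describe. So your proposal is a faithful reconstruction of how the Freese--McKenzie proof is structured rather than a lightening of it; it buys transparency about which steps are formal consequences of the characterization in Theorem \ref{wow} and which require the term-condition machinery, but it does not reduce the citation burden relative to the paper's approach.
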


\begin{lemma}{\rm \cite{fremck}} If $B$ is a subalgebra of $A$, then, for any $\alpha ,\beta \in {\rm Con}(A)$, $[\alpha \cap B^2,\beta \cap B^2]_B\subseteq [\alpha ,\beta ]_A\cap B^2$.\label{1.8}\end{lemma}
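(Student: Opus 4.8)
The plan is to work with the \emph{term--condition} (centralizer) description of the commutator rather than with the homomorphism--based formula of Theorem \ref{wow} directly. Recall from the commutator theory of \cite{fremck} that, in a congruence--modular variety, for a member $M$ and $\gamma ,\delta \in {\rm Con}(M)$, $[\gamma ,\delta ]_M$ equals the least congruence $\varepsilon \in {\rm Con}(M)$ for which the centralizing relation $C_M(\gamma ,\delta ;\varepsilon )$ holds, where $C_M(\gamma ,\delta ;\varepsilon )$ asserts that, for every term $t$ of ${\bf L}_{\tau }$ in two blocks of variables $\bar{x},\bar{y}$ and all tuples $\bar{a}\mathrel{\gamma }\bar{b}$ and $\bar{c}\mathrel{\delta }\bar{d}$ of elements of $M$ (the congruences understood componentwise), $t^M(\bar{a},\bar{c})\mathrel{\varepsilon }t^M(\bar{a},\bar{d})$ implies $t^M(\bar{b},\bar{c})\mathrel{\varepsilon }t^M(\bar{b},\bar{d})$; that this agrees with Definition \ref{1.1} is part of that theory. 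Before using it, I would record the routine facts that $\alpha \cap B^2,\beta \cap B^2\in {\rm Con}(B)$, being restrictions to the subalgebra $B$ of congruences of $A$, that $[\alpha ,\beta ]_A\cap B^2\in {\rm Con}(B)$ as well, and that $[\alpha ,\beta ]_A\cap B^2\subseteq (\alpha \cap B^2)\cap (\beta \cap B^2)$ by the first bullet of Remark \ref{r2.8}; so both sides of the claimed inclusion are legitimate congruences of $B$.

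The main step is to show that $C_B(\alpha \cap B^2,\beta \cap B^2;[\alpha ,\beta ]_A\cap B^2)$ holds. By minimality, $[\alpha ,\beta ]_A$ is a congruence of $A$ for which $C_A(\alpha ,\beta ;[\alpha ,\beta ]_A)$ holds, so it suffices to observe that this relation in $A$ ``restricts'' to $B$. Indeed, since $B$ is a subalgebra of $A$, for every term $t$ the operation $t^B$ is the restriction of $t^A$ to $B$; every tuple of elements of $B$ is in particular a tuple of elements of $A$; and, for $u,v\in B$ and any $\theta \in {\rm Con}(A)$, $u\mathrel{(\theta \cap B^2)}v$ iff $u\mathrel{\theta }v$. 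Hence the universally quantified implication defining $C_A(\alpha ,\beta ;[\alpha ,\beta ]_A)$, specialized to tuples with all entries in $B$ and to the congruences $\alpha \cap B^2$, $\beta \cap B^2$ and $[\alpha ,\beta ]_A\cap B^2$, becomes precisely the implication defining $C_B(\alpha \cap B^2,\beta \cap B^2;[\alpha ,\beta ]_A\cap B^2)$.

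Granting this, $[\alpha ,\beta ]_A\cap B^2$ is a congruence $\varepsilon $ of $B$ satisfying $C_B(\alpha \cap B^2,\beta \cap B^2;\varepsilon )$, so by minimality of the commutator in $B$ we get $[\alpha \cap B^2,\beta \cap B^2]_B\subseteq [\alpha ,\beta ]_A\cap B^2$, which is the assertion. The only genuine obstacle is the passage between the homomorphism--property definition of the commutator adopted in Theorem \ref{wow} and its term--condition form; everything else — notably the stability of the term condition under restriction to a subalgebra — is an immediate syntactic check, and this asymmetry (the term condition in $B$ ranges over strictly fewer tuples than in $A$) is also why one cannot expect equality here in general.
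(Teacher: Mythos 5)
Your argument is correct. The paper itself gives no proof of Lemma \ref{1.8} --- it is simply cited from \cite{fremck} --- so there is no in-paper argument to compare against; what you have written is essentially the standard proof from that reference. The restriction step is sound: the matrices $\left(\begin{smallmatrix}t(\bar{a},\bar{c}) & t(\bar{a},\bar{d})\\ t(\bar{b},\bar{c}) & t(\bar{b},\bar{d})\end{smallmatrix}\right)$ witnessing the term condition in $B$ for $\alpha \cap B^2$, $\beta \cap B^2$ form a subset of those witnessing it in $A$ for $\alpha $, $\beta $ (namely the ones with all entries in $B$, since $B^{2\times 2}$ is a subalgebra of $A^{2\times 2}$ and term operations of $B$ are restrictions of those of $A$), and for elements of $B$ membership in $\theta \cap B^2$ is the same as membership in $\theta $; hence $C_A(\alpha ,\beta ;[\alpha ,\beta ]_A)$ specializes to $C_B(\alpha \cap B^2,\beta \cap B^2;[\alpha ,\beta ]_A\cap B^2)$, and minimality in $B$ finishes the proof. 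The one load-bearing citation is the equivalence between the term-condition (centralizer) definition of the commutator and the characterization adopted in Theorem \ref{wow}/Definition \ref{1.1}; that equivalence is a genuine theorem of \cite{fremck}, not a formality, but since the lemma itself is only quoted from the same source, relying on it is entirely appropriate, and you flag it explicitly. Your closing observation about why only an inclusion (and not equality) can be expected is also apt: the term condition in $B$ quantifies over fewer tuples, so $[\alpha ,\beta ]_A\cap B^2$ may strictly exceed the commutator computed inside $B$.
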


\begin{proposition}{\rm \cite[Theorem 5.17, p. 48]{owe}} Let $n\in \N ^*$, $M_1,\ldots ,M_n$ be algebras from ${\cal C}$, $\displaystyle M=\prod _{i=1}^nM_i$ and, for all $i\in \overline{1,n}$, $\alpha _i,\beta _i\in {\rm Con}(M_i)$. Then: $\displaystyle [\prod _{i=1}^n\alpha _i,\prod _{i=1}^n\beta _i]_M=\prod _{i=1}^n[\alpha _i,\beta _i]_{M_i}$.\label{comutprod}\end{proposition}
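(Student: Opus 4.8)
The plan is to reduce the identity to a computation involving only ``axis'' product congruences. For $\gamma \in {\rm Con}(M_i)$, write $(\gamma)_{(i)}$ for the congruence of $M$ whose $i$-th component is $\gamma$ and whose other components are the corresponding $\Delta$, and $(\gamma)^{(i)}$ for the one whose $i$-th component is $\gamma$ and whose other components are the corresponding $\nabla$; with $\pi_i:M\to M_i$ the $i$-th projection, which is a surjective morphism in ${\cal C}$, one has ${\rm Ker}(\pi_i)=(\Delta_{M_i})^{(i)}$ and $\pi_i^*(\gamma)=(\gamma)^{(i)}$ for $\gamma\in{\rm Con}(M_i)$. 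First I would check, by a transitivity argument (given $(a,b)\in\prod_i\gamma_i$, pass from $a$ to $b$ by altering one coordinate at a time), that $\prod_{i=1}^n\gamma_i=\bigvee_{i=1}^n(\gamma_i)_{(i)}$ for all $\gamma_i\in{\rm Con}(M_i)$. Applying this to the $\alpha$'s and the $\beta$'s and expanding the commutator over these joins, using its distributivity over arbitrary joins (Proposition \ref{1.3}), gives $[\prod_i\alpha_i,\prod_i\beta_i]_M=\bigvee_{i,j}[(\alpha_i)_{(i)},(\beta_j)_{(j)}]_M$. For $i\neq j$, the intersection $(\alpha_i)_{(i)}\cap(\beta_j)_{(j)}$ has $\alpha_i\cap\Delta_{M_i}=\Delta_{M_i}$ on component $i$, $\Delta_{M_j}\cap\beta_j=\Delta_{M_j}$ on component $j$, and $\Delta$ on the other components, hence equals $\Delta_M$; since $[\cdot,\cdot]_M\subseteq\cap$ (Remark \ref{r2.8}), all off-diagonal terms are $\Delta_M$ and we are reduced to proving $[(\alpha_k)_{(k)},(\beta_k)_{(k)}]_M=([\alpha_k,\beta_k]_{M_k})_{(k)}$ for each $k\in\overline{1,n}$; joining these over $k$ and using the displayed equality once more, now for the congruences $[\alpha_k,\beta_k]_{M_k}$, then finishes the proof.

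To establish this last equality, fix $k$ and put $\delta=[(\alpha_k)_{(k)},(\beta_k)_{(k)}]_M$. Since $\pi_k$ is surjective, Remark \ref{r2.8} gives $\delta\vee{\rm Ker}(\pi_k)=\pi_k^*([\pi_k((\alpha_k)_{(k)}\vee{\rm Ker}(\pi_k)),\pi_k((\beta_k)_{(k)}\vee{\rm Ker}(\pi_k))]_{M_k})$; a componentwise computation gives $(\alpha_k)_{(k)}\vee{\rm Ker}(\pi_k)=(\alpha_k)^{(k)}$ and $\pi_k((\alpha_k)^{(k)})=\alpha_k$, and likewise for $\beta$, whence $\delta\vee{\rm Ker}(\pi_k)=\pi_k^*([\alpha_k,\beta_k]_{M_k})=([\alpha_k,\beta_k]_{M_k})^{(k)}$. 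For each $j\neq k$, one computes $(\alpha_k)_{(k)}\vee{\rm Ker}(\pi_j)={\rm Ker}(\pi_j)$, so $\pi_j$ of this join is $\Delta_{M_j}$ and the same formula yields $\delta\vee{\rm Ker}(\pi_j)=\pi_j^*(\Delta_{M_j})={\rm Ker}(\pi_j)$, i.e. $\delta\subseteq{\rm Ker}(\pi_j)$; intersecting over all $j\neq k$ gives $\delta\subseteq\bigcap_{j\neq k}{\rm Ker}(\pi_j)=(\nabla_{M_k})_{(k)}$. Finally I would invoke congruence--modularity of $M$: from $\delta\subseteq(\nabla_{M_k})_{(k)}$ together with ${\rm Ker}(\pi_k)\cap(\nabla_{M_k})_{(k)}=\Delta_M$, the modular law gives $\delta=\delta\vee({\rm Ker}(\pi_k)\cap(\nabla_{M_k})_{(k)})=(\delta\vee{\rm Ker}(\pi_k))\cap(\nabla_{M_k})_{(k)}=([\alpha_k,\beta_k]_{M_k})^{(k)}\cap(\nabla_{M_k})_{(k)}=([\alpha_k,\beta_k]_{M_k})_{(k)}$, which is what was needed.

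The bulk of the work is the routine componentwise bookkeeping for joins and intersections of ``rectangular'' congruences of $M$; the one point that needs care — and the reason congruence--modularity is used rather than simply ``reading off coordinate $k$'' — is that a congruence of $M$ contained in $(\nabla_{M_k})_{(k)}$ need not be a product congruence, so $\delta$ has to be recovered from the pair of facts $\delta\vee{\rm Ker}(\pi_k)=([\alpha_k,\beta_k]_{M_k})^{(k)}$ and $\delta\subseteq(\nabla_{M_k})_{(k)}$. Alternatively, one could prove the case $n=2$ directly, expanding $[(\alpha_1\times\Delta_{M_2})\vee(\Delta_{M_1}\times\alpha_2),(\beta_1\times\Delta_{M_2})\vee(\Delta_{M_1}\times\beta_2)]_M$ in the same spirit and then inducting on $n$, but the obstacle is the same.
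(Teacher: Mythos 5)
The paper does not prove this proposition at all: it is quoted verbatim from Ouwehand's notes (\cite[Theorem 5.17]{owe}), so there is no internal argument to compare yours against. Your proof is correct and self-contained, and every ingredient you use is available in the paper: the decomposition $\prod_i\gamma_i=\bigvee_i(\gamma_i)_{(i)}$ via one-coordinate-at-a-time transitivity, distributivity of the commutator over joins (Proposition \ref{1.3}), the vanishing of the off-diagonal terms because $[\cdot,\cdot]_M\subseteq\cap$, and then the reduction of $[(\alpha_k)_{(k)},(\beta_k)_{(k)}]_M$ via the defining identity of Theorem \ref{wow}/Remark \ref{r2.8} applied to each projection $\pi_j$. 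The componentwise computations of joins and meets of product congruences all check out (in particular $(\alpha_k)_{(k)}\vee{\rm Ker}(\pi_k)=(\alpha_k)^{(k)}$ needs the same one-coordinate chain argument, which you have), and the final appeal to the modular law with $\delta\subseteq(\nabla_{M_k})_{(k)}$ and ${\rm Ker}(\pi_k)\cap(\nabla_{M_k})_{(k)}=\Delta_M$ is exactly the right way to recover $\delta$ from $\delta\vee{\rm Ker}(\pi_k)$; you correctly identify this as the one step where congruence-modularity, rather than bookkeeping, is doing the work. This is essentially the standard proof of the product formula for the modular commutator, so nothing is lost relative to the cited source.
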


\begin{definition}{\rm \cite{fremck}} A proper congruence $\phi $ of $A$ is said to be {\em prime} iff, for all $\alpha ,\beta \in {\rm Con}(A)$, $[\alpha ,\beta ]_A\subseteq \phi $ implies $\alpha \subseteq \phi $ or $\beta \subseteq \phi $.\label{1.4}\end{definition}

The set of the prime congruences of $A$ shall be denoted by ${\rm Spec}(A)$. ${\rm Spec}(A)$ is called the {\em (prime) spectrum} of $A$. Note that not every algebra in a congruence--modular equational class has prime congruences. We shall denote by ${\rm Min}(A)$ the set of the {\em minimal prime congruences} of $A$, that is the minimal elements of the poset $({\rm Spec}(A),\subseteq )$.

\begin{lemma}{\rm \cite[Theorem $5.3$]{agl}} If $\nabla _A$ is finitely generated, then:\begin{itemize}
\item any proper congruence of $A$ is included in a maximal congruence of $A$;
\item any maximal congruence of $A$ is prime.\end{itemize}\label{folclor}\end{lemma}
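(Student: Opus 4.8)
The two bullets are established independently. The first is a Zorn's Lemma argument whose only delicate point is the use of the hypothesis that $\nabla_A$ is finitely generated; the second follows from the maximality of the congruence together with the join--distributivity of the commutator, the whole substance being concentrated in the identity $[\nabla_A,\nabla_A]_A=\nabla_A$.

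For the first bullet, let $\phi$ be a proper congruence of $A$ and put $P=\{\psi\in{\rm Con}(A)\ |\ \phi\subseteq\psi\neq\nabla_A\}$, ordered by inclusion; then $P\neq\emptyset$, as $\phi\in P$. If $(\psi_i)_{i\in I}$ is a chain in $P$, then $\chi=\bigcup_{i\in I}\psi_i$ is again a congruence of $A$ (a directed union of congruences is an equivalence relation and is closed under the basic operations, as these are finitary; equivalently, for a chain one has $\bigvee_{i\in I}\psi_i=Cg_A(\bigcup_{i\in I}\psi_i)=\chi$), and $\phi\subseteq\chi$. Moreover $\chi\neq\nabla_A$: if we write $\nabla_A=Cg_A(X)$ with $X\subseteq A^2$ finite, then $\chi=\nabla_A$ would force $X\subseteq\bigcup_{i\in I}\psi_i$, hence, $X$ being finite and $(\psi_i)_{i\in I}$ a chain, $X\subseteq\psi_{i_0}$ for some $i_0\in I$, whence $\nabla_A=Cg_A(X)\subseteq\psi_{i_0}\neq\nabla_A$, a contradiction. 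Thus $\chi\in P$ is an upper bound for the chain, so Zorn's Lemma yields a maximal element $\mu$ of $P$; and $\mu\in{\rm Max}(A)$, since any $\nu\in{\rm Con}(A)$ with $\mu\subsetneq\nu\neq\nabla_A$ would belong to $P$ and contradict the maximality of $\mu$. As $\phi\subseteq\mu$, the first bullet follows.

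For the second bullet, let $\mu\in{\rm Max}(A)$ and suppose, towards a contradiction, that $\mu$ is not prime: there are $\alpha,\beta\in{\rm Con}(A)$ with $[\alpha,\beta]_A\subseteq\mu$ but $\alpha\not\subseteq\mu$ and $\beta\not\subseteq\mu$. Then $\mu\subsetneq\alpha\vee\mu$ and $\mu\subsetneq\beta\vee\mu$, so by maximality $\alpha\vee\mu=\beta\vee\mu=\nabla_A$. By the distributivity of the commutator over joins (Proposition \ref{1.3}),
\[[\nabla_A,\nabla_A]_A=[\alpha\vee\mu,\beta\vee\mu]_A=[\alpha,\beta]_A\vee[\alpha,\mu]_A\vee[\mu,\beta]_A\vee[\mu,\mu]_A;\]
since $[\alpha,\beta]_A\subseteq\mu$ by assumption and $[\alpha,\mu]_A,[\mu,\beta]_A,[\mu,\mu]_A$ are all contained in $\mu$ because the commutator lies below the intersection (Remark \ref{r2.8}), we get $[\nabla_A,\nabla_A]_A\subseteq\mu\subsetneq\nabla_A$. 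Invoking $[\nabla_A,\nabla_A]_A=\nabla_A$ (equivalently, that the simple non--trivial algebra $A/\mu$ is not abelian) then yields $\nabla_A\subseteq\mu$, contradicting $\mu\neq\nabla_A$; hence $\mu$ is prime.

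The main obstacle is exactly the identity $[\nabla_A,\nabla_A]_A=\nabla_A$ used in the last step: it does not follow from congruence--modularity alone --- for example, in the variety of abelian groups every non--trivial algebra is abelian, so $[\nabla_A,\nabla_A]_A=\Delta_A$ and no proper congruence is prime even when $\nabla_A$ is finitely generated, in line with the remark preceding the lemma. This is where one brings in the structural assumption under which Theorem $5.3$ of \cite{agl} is stated, namely the semi--degeneracy of the ambient congruence--modular variety, which is precisely what forces every non--trivial member $M$ to be non--abelian (so that $\nabla_M$ is a compact, non--abelian congruence). Once that fact is available, both bullets reduce to the routine Zorn and commutator computations above.
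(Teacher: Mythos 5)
The paper does not prove this lemma at all --- it is imported verbatim from \cite{agl} --- so there is no internal proof to compare yours against; judged on its own, your argument is sound, and its most valuable part is precisely the caveat you raise at the end. The Zorn argument for the first bullet is complete and uses the compactness of $\nabla _A$ exactly where it must (to see that the union of a chain of proper congruences cannot be $\nabla _A$). For the second bullet, the reduction via $\alpha \vee \mu =\beta \vee \mu =\nabla _A$ and the join--distributivity and monotonicity of the commutator (Proposition \ref{1.3}) to the containment $[\nabla _A,\nabla _A]_A\subseteq \mu $ is correct, and you are right that closing the contradiction requires $[\nabla _A,\nabla _A]_A\nsubseteq \mu $ (equivalently, that $A/\mu $ is non--abelian), which does not follow from congruence--modularity plus compactness of $\nabla _A$ alone.

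Your abelian--group example is in fact a genuine counterexample to the second bullet as literally transcribed here: for $A=\Z /2\Z $ in the congruence--permutable variety of abelian groups, $\nabla _A=Cg_A(0,1)$ is finitely generated and $\Delta _A$ is the unique maximal congruence, yet $[\nabla _A,\nabla _A]_A=\Delta _A\subseteq \Delta _A$ while $\nabla _A\nsubseteq \Delta _A$, so $\Delta _A$ is not prime. Within the paper's own toolkit the missing ingredient is exactly Lemma \ref{distribsemid}, (\ref{distribsemid1}), together with Proposition \ref{prodcongr}: semi--degeneracy of ${\cal C}$ yields $[\nabla _M,\nabla _M]_M=\nabla _M$ for every member $M$, which is the identity your computation needs (and, by Proposition \ref{2.6}, semi--degeneracy is also what makes the hypothesis ``$\nabla _M$ finitely generated'' hold uniformly). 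So the lemma should be read with semi--degeneracy of ${\cal C}$ --- or at least $[\nabla _A,\nabla _A]_A=\nabla _A$ --- as an additional hypothesis, as in the source; with that supplied, your proof of both bullets is complete. One small precision: for a fixed maximal $\mu $ what is needed is only $[\nabla _A,\nabla _A]_A\nsubseteq \mu $, i.e.\ that $A/\mu $ is non--abelian; semi--degeneracy gives the stronger global identity $[\nabla _A,\nabla _A]_A=\nabla _A$, whereas mere non--abelianness of $A$ itself would not suffice.
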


Following \cite{koll}, we say that ${\cal C}$ is {\em semi--degenerate} iff no non--trivial algebra in ${\cal C}$ has one--element subalgebras. For instance, obviously, the class of bounded lattices, that of residuated lattices and that of unitary rings are semi--degenerate.

\begin{proposition}{\rm \cite{koll}} The following are equivalent:\begin{enumerate}
\item\label{2.6(1)} ${\cal C}$ is semi--degenerate;
\item\label{2.6(2)} for all members $M$ of ${\cal C}$, the congruence $\nabla _M$ is finitely generated.\end{enumerate}\label{2.6}\end{proposition}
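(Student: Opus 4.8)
The plan is to prove the equivalence of the two conditions by establishing both implications, with the bulk of the work lying in the direction $(\ref{2.6(2)})\Rightarrow(\ref{2.6(1)})$, and to exploit the explicit description of $\nabla_M$ as a finitely generated congruence in order to locate a finite chain of witnessing congruences. For the easy direction $(\ref{2.6(1)})\Rightarrow(\ref{2.6(2)})$: suppose ${\cal C}$ is semi--degenerate and let $M$ be a member of ${\cal C}$. If $M$ is trivial then $\nabla_M=\Delta_M=Cg_M(\emptyset)$ is finitely generated; so assume $M$ non--trivial. Here I would use the standard characterisation of semi--degeneracy (as in \cite{koll}): semi--degeneracy is equivalent to the existence of a natural number $n$ and terms $t_1,\dots,t_n$ together with two constant terms (nullary operations, or terms in no variables built from the $0$--ary symbols of $\tau$) $0,1$ such that the identities expressing ``$t_i(x,y,\overline{0})=t_i(x,y,\overline{1})$ for all $i$ forces $x=y$'' hold; the precise form is that $\nabla_M=Cg_M(0^M,1^M)$ in every $M\in{\cal C}$. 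I would simply cite this form of the characterisation from \cite{koll} and read off that $\nabla_M$ is generated by the single pair $(0^M,1^M)$, hence finitely generated.

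For the converse $(\ref{2.6(2)})\Rightarrow(\ref{2.6(1)})$, I would argue contrapositively: suppose ${\cal C}$ is not semi--degenerate, so there is a non--trivial $M\in{\cal C}$ possessing a one--element subalgebra $\{a\}$. The goal is to show $\nabla_M$ is not finitely generated, and the key structural fact to invoke is that a one--element subalgebra $\{a\}$ picks out, for every term $t$ and every tuple, a ``constant'' value, which forces any finitely generated congruence to be proper. Concretely, suppose toward a contradiction that $\nabla_M=Cg_M(X)$ for some finite $X=\{(b_1,c_1),\dots,(b_k,c_k)\}\subseteq M^2$. I would then pass to a suitable quotient or use a direct-power/ultrapower argument: form a large power $M^\kappa$ (or work inside ${\cal C}$ with a free-algebra-style argument) in which the diagonal copy of $\{a\}$ remains a one--element subalgebra, and observe that the subalgebra generated by the images of the $b_i,c_i$ together with $a$ is ``small'' in a sense that prevents a single finite relation set from collapsing everything. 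The cleanest route is: since $\{a\}$ is a subalgebra, the map $M\to M$, $x\mapsto a$ need not be a homomorphism, but the inclusion $\{a\}\hookrightarrow M$ is, and one can use the fact that in the absence of semi--degeneracy there exist arbitrarily ``long'' obstructions—I would cite the equivalence from \cite{koll} in the sharper form that non--semi--degeneracy yields, for each $n$, an algebra where $\nabla$ is not generated by $n$ elements, which is exactly the negation we want.

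Honestly, the most load-bearing step—and the one I expect to be the main obstacle—is reconstructing the term-condition characterisation of semi--degeneracy precisely enough to make the quantifier ``$\nabla_M$ finitely generated \emph{for all} $M$'' come out right: the subtlety is the uniformity over the whole class ${\cal C}$, since a single non--semi--degenerate variety could a priori have $\nabla_M$ finitely generated in each individual $M$ but with no common bound. The resolution is that, by \cite{koll}, semi--degeneracy is in fact equivalent to the stronger-looking uniform statement ``there exist $0,1$ (closed terms) and finitely many terms witnessing $\nabla=Cg(0,1)$ in \emph{every} member'', so that ``$\nabla_M$ finitely generated for every $M$'' already self-improves to this uniform version; I would therefore structure the proof so that the genuinely new content is just the two short implications, while the uniformity upgrade is quoted wholesale from \cite{koll}. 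If a self-contained argument is wanted instead, I would replace the appeal to \cite{koll} in the hard direction by: take the free algebra $F=F_{\cal C}(x,y)$ on two generators; $\nabla_F$ finitely generated means $(x,y)\in Cg_F(x,y)=\nabla_F$ is witnessed by a finite ``Malcev--type'' chain of terms, and unwinding this chain produces exactly the terms $t_i$ and constants $0,1$ needed to verify semi--degeneracy in an arbitrary $M$ via substitution—this is the place where a little care with the chain of generators is required, but no deep idea beyond the standard description of principal congruence generation.
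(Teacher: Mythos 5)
This proposition is imported verbatim from Koll\'ar's paper \cite{koll}; the paper you are working from gives no proof of it at all, so there is no in--paper argument to compare against. Judged on its own terms, your proposal has a genuine gap. Its two directions both ultimately rest on ``cite the characterisation from \cite{koll}'' --- but the characterisation you invoke (equivalence of semi--degeneracy with a uniform term/constant condition, and with the non--existence, for each $n$, of members whose $\nabla$ needs more than $n$ generators) \emph{is} the theorem being proved, so as written the argument is circular. The one place where you attempt concrete mathematics, the contrapositive of (ii)$\Rightarrow$(i), starts from a false target: you set out to show that the given non--trivial $M$ with one--element subalgebra $\{a\}$ has $\nabla_M$ not finitely generated. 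That is simply not true in general --- in the variety of lattices every singleton is a subalgebra, yet every finite non--trivial lattice has $\nabla_M$ finitely generated (every congruence of a finite algebra is compact). The witness for ``some member of ${\cal C}$ has non--compact $\nabla$'' must be \emph{constructed}, not taken to be $M$ itself. The standard construction, which your ``large power'' remark gestures at but never pins down, is: let $N=\{x\in M^{\N}\ |\ x_i=a\ \mbox{for all but finitely many}\ i\}$; this is a subalgebra of $M^{\N}$ precisely because $\{a\}$ is a subalgebra of $M$, and for any finite $X\subseteq N^2$ there is a coordinate $i_0$ at which every pair in $X$ has both entries equal to $a$, so $Cg_N(X)\subseteq \ker(\pi_{i_0}\!\mid_N)\subsetneq\nabla_N$ (properness because $M$ is non--trivial). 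Hence $\nabla_N$ is not compact. Note that $N$ must be chosen \emph{before} any putative finite generating set is considered; your framing, which picks the power after fixing $X$, does not yield a single algebra violating (ii).

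Two further points. Your worry about ``uniformity over ${\cal C}$'' is a red herring for (ii)$\Rightarrow$(i): condition (ii) quantifies over all members with no uniform bound, and the construction above refutes it member--by--member, so no self--improvement step is needed there (the uniform term condition is only relevant to (i)$\Rightarrow$(ii), which you do not actually prove). And the self--contained free--algebra sketch contains the false identity $Cg_F(x,y)=\nabla_F$ for $F=F_{{\cal C}}(x,y)$: e.g.\ for bounded lattices $F/Cg_F(x,y)$ is the free bounded lattice on one generator, a three--element chain, so $Cg_F(x,y)\neq\nabla_F$. What one should unwind is a finite generating set for $\nabla_F$ itself, and even then passing from the free algebra to an arbitrary (not $2$--generated) member requires an argument you have not supplied.
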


\begin{proposition}{\rm \cite[Theorem 8.5, p. 85]{fremck}} The following are equivalent:\begin{itemize}
\item for any algebra $M$ from ${\cal C}$, $[\nabla _M,\nabla _M]=\nabla _M$;
\item for any algebra $M$ from ${\cal C}$ and any $\theta \in {\rm Con}(M)$, $[\theta ,\nabla _M]=\theta $;
\item for any $n\in \N ^*$ and any algebras $M_1,\ldots ,M_n$ from ${\cal C}$, $\displaystyle {\rm Con}(\prod _{i=1}^nM_i)=\{\prod _{i=1}^n\theta _i\ |\ (\forall \, i\in \overline{1,n})\, (\theta _i\in {\rm Con}(M_i))$.\end{itemize}\label{prodcongr}\end{proposition}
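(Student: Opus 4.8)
The plan is to prove the three listed conditions equivalent --- I label them (i), (ii), (iii) in the order given --- by the cycle (i)$\,\Rightarrow\,$(ii)$\,\Rightarrow\,$(iii)$\,\Rightarrow\,$(i), noting at once that (ii)$\,\Rightarrow\,$(i) is immediate (put $\theta =\nabla _M$ in (ii)). For (ii)$\,\Rightarrow\,$(iii), one inclusion is trivial since every $\prod _{i=1}^n\theta _i$ is a congruence of $\prod _{i=1}^nM_i$, and a straightforward induction on $n$ (via $\prod _{i=1}^nM_i\cong M_1\times \prod _{i=2}^nM_i$) lets one assume $n=2$; so I would take $\Phi \in {\rm Con}(M_1\times M_2)$ and show it is a product congruence. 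Let $\epsilon _i={\rm Ker}(p_i)$ for the canonical projections $p_i:M_1\times M_2\rightarrow M_i$, so that $\epsilon _1\cap \epsilon _2=\Delta _{M_1\times M_2}$ and, by the standard correspondence $[\epsilon _i)\cong {\rm Con}(M_i)$ (as $(M_1\times M_2)/\epsilon _i\cong M_i$), $\Phi \vee \epsilon _1=\theta _1\times \nabla _{M_2}$ and $\Phi \vee \epsilon _2=\nabla _{M_1}\times \theta _2$ for suitable $\theta _i\in {\rm Con}(M_i)$, whence $(\Phi \vee \epsilon _1)\cap (\Phi \vee \epsilon _2)=\theta _1\times \theta _2$. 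Since ${\cal C}$ is congruence--modular and $\Phi \subseteq \Phi \vee \epsilon _2$, the modular law turns the left--hand side into $\Phi \vee (\epsilon _1\cap (\Phi \vee \epsilon _2))=\Phi \vee (\Delta _{M_1}\times \theta _2)$, so it suffices to check $\Delta _{M_1}\times \theta _2\subseteq \Phi $. This is where (ii) enters: by Proposition \ref{comutprod}, Remark \ref{r2.8} and (ii), $\Delta _{M_1}\times \theta _2=[\nabla _{M_1},\Delta _{M_1}]_{M_1}\times [\theta _2,\nabla _{M_2}]_{M_2}=[\nabla _{M_1}\times \theta _2,\Delta _{M_1}\times \nabla _{M_2}]_{M_1\times M_2}=[\Phi \vee \epsilon _2,\epsilon _1]_{M_1\times M_2}$, which, by additivity of the commutator (Proposition \ref{1.3}) and Remark \ref{r2.8}, equals $[\Phi ,\epsilon _1]_{M_1\times M_2}\vee [\epsilon _2,\epsilon _1]_{M_1\times M_2}\subseteq \Phi \vee (\epsilon _1\cap \epsilon _2)=\Phi $; therefore $\Phi =\Phi \vee (\Delta _{M_1}\times \theta _2)=\theta _1\times \theta _2$.

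For (iii)$\,\Rightarrow\,$(i) I would argue by contraposition. If (i) fails, pick a non--trivial $M\in {\cal C}$ with $\psi :=[\nabla _M,\nabla _M]_M\subsetneq \nabla _M$; passing to $N=M/\psi $ and using Remark \ref{r2.8}, $[\nabla _N,\nabla _N]_N=\Delta _N$ with $N$ non--trivial, i.e., $N$ is a non--trivial abelian algebra of ${\cal C}$. For such an $N$, the structure theory of abelian algebras in congruence--modular varieties (\cite{fremck}) furnishes the diagonal congruence $\Delta ^N$ on $N^2$: the diagonal $\{(x,x)\ |\ x\in N\}$ is one of its blocks, so $\Delta ^N$ contains every pair $((a,a),(b,b))$ with $a,b\in N$, and $\Delta ^N$ is a complement of ${\rm Ker}(p_1)$ in ${\rm Con}(N^2)$, hence a proper congruence of $N^2$ (because ${\rm Ker}(p_1)\neq \Delta _{N^2}$). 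But the only congruence of the form $\theta _1\times \theta _2$ containing all the pairs $((a,a),(b,b))$ is $\nabla _{N^2}$; thus $\Delta ^N$ is a congruence of $N^2$ that is not a product congruence, contradicting (iii).

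For (i)$\,\Rightarrow\,$(ii): one inclusion, $[\theta ,\nabla _M]_M\subseteq \theta $, is immediate from Remark \ref{r2.8}. For the reverse, distributivity of the commutator over joins (Proposition \ref{1.3}) together with $\theta =\bigvee _{(a,b)\in \theta }Cg_M(a,b)$ reduces matters to $\theta =Cg_M(a,b)$, where it is enough to show $(a,b)\in [\theta ,\nabla _M]_M$; putting $\psi =[\theta ,\nabla _M]_M\subseteq \theta $ and $N=M/\psi $, Remark \ref{r2.8} gives $[\theta /\psi ,\nabla _N]_N=\Delta _N$, so the goal becomes $\theta /\psi =\Delta _N$. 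Now $\theta /\psi $ is an abelian congruence of $N$ (the commutator being increasing, $[\theta /\psi ,\theta /\psi ]_N\subseteq [\theta /\psi ,\nabla _N]_N=\Delta _N$), while (i) forces ${\cal C}$ to have no non--trivial abelian algebra (a non--trivial abelian $K$ would satisfy $[\nabla _K,\nabla _K]_K=\Delta _K\neq \nabla _K$). The proof is then completed by the commutator--theoretic fact that, in a congruence--modular variety with no non--trivial abelian member, no algebra possesses a non--trivial abelian congruence (\cite{fremck}), which yields $\theta /\psi =\Delta _N$. I expect this last step --- manufacturing a non--trivial abelian algebra out of a non--trivial abelian congruence --- to be the main obstacle: it is the one genuinely deep ingredient (resting on Herrmann's representation of abelian algebras via module/affine structure), and it is exactly what makes the equivalence non--trivial, which is why the statement is cited from \cite{fremck} rather than reproved here.
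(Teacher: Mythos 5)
The paper offers no proof of this proposition --- it is quoted verbatim from Freese--McKenzie --- so your attempt can only be judged on its own terms. Your (ii)$\Rightarrow$(i) is trivially correct, and your (ii)$\Rightarrow$(iii) is a complete and correct argument from the tools the paper actually states (modularity, Proposition \ref{comutprod}, additivity of the commutator): the reduction to $n=2$, the identification of the congruences above $\epsilon _1$ and $\epsilon _2$, and the computation $[\Phi \vee \epsilon _2,\epsilon _1]=[\Phi ,\epsilon _1]\vee [\epsilon _2,\epsilon _1]\subseteq \Phi \vee (\epsilon _1\cap \epsilon _2)=\Phi $ all check out. Your (iii)$\Rightarrow$(i) is also sound modulo the standard fact that for an abelian algebra $N$ the diagonal is a single block of the congruence of $N^2$ generated by the pairs $((a,a),(b,b))$; that is genuine Freese--McKenzie Chapter 4--5 material and is true.

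The gap is in (i)$\Rightarrow$(ii), and it is not merely an un-expanded citation: the principle you invoke --- ``in a congruence--modular variety with no non--trivial abelian member, no algebra possesses a non--trivial abelian congruence'' --- is false. Take ${\cal C}$ to be the (congruence--permutable, hence congruence--modular) variety of commutative unital rings, where congruences are ideals and $[I,J]=IJ$. A non--trivial abelian algebra would satisfy $R\cdot R=0$, forcing $1=1\cdot 1=0$, so ${\cal C}$ has no non--trivial abelian members; yet in $R=k[x]/(x^2)$ the ideal $I=(x)$ satisfies $[I,I]=I^2=0$ with $I\neq 0$, i.e.\ $I$ is a non--trivial abelian congruence. (Consistently, $[I,R]=IR=I\neq 0$, so (ii) still holds there.) The congruence $\theta /\psi $ you actually produce satisfies the stronger condition $[\theta /\psi ,\nabla _N]_N=\Delta _N$ --- it is \emph{central}, not merely abelian --- and the statement you need is ``no non--trivial abelian algebras implies no non--trivial central congruences.'' That statement is true, but proving it (by manufacturing a non--trivial abelian algebra out of a non--trivial center, via the $\Delta _{\nabla ,\beta }$ construction on $N(\beta )$) is essentially the entire content of the implication (i)$\Rightarrow$(ii); as written, you have reduced the hard direction to itself and then closed it with a false lemma. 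Until that step is replaced by the correct central-congruence argument (or by citing the correct statement from Freese--McKenzie), the cycle does not close and the equivalence is not established.
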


\begin{lemma}\begin{enumerate}
\item\label{distribsemid1} If ${\cal C}$ is semi--degenerate, then ${\cal C}$ fulfills the equivalent conditions from Proposition \ref{prodcongr}.
\item\label{distribsemid2} If ${\cal C}$ is congruence--distributive, then ${\cal C}$ fulfills the equivalent conditions from Proposition \ref{prodcongr}.\end{enumerate}\label{distribsemid}\end{lemma}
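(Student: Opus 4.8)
The plan is to use the equivalence provided by Proposition~\ref{prodcongr}: it suffices, in each of the two cases, to verify its first clause, namely $[\nabla_M,\nabla_M]_M=\nabla_M$ for every member $M$ of ${\cal C}$. The inclusion $[\nabla_M,\nabla_M]_M\subseteq \nabla_M$ is automatic (it is the first bullet of Remark~\ref{r2.8} applied to $\alpha=\beta=\nabla_M$, or simply the fact that $\nabla_M$ is the top of ${\rm Con}(M)$), so only the reverse inclusion requires work, and this is where the two hypotheses come in.

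Part~(\ref{distribsemid2}) is immediate: if ${\cal C}$ is congruence--distributive, then by Theorem~\ref{distrib} the commutator coincides with the intersection of congruences in every member of ${\cal C}$, so $[\nabla_M,\nabla_M]_M=\nabla_M\cap \nabla_M=\nabla_M$ for each $M$, which is the first clause of Proposition~\ref{prodcongr}.

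For part~(\ref{distribsemid1}) I would argue by contradiction, using that maximal congruences are prime. Assume ${\cal C}$ is semi--degenerate and fix a member $M$ of ${\cal C}$; by Proposition~\ref{2.6}, $\nabla_M$ is finitely generated. Suppose $[\nabla_M,\nabla_M]_M\neq \nabla_M$. Then $[\nabla_M,\nabla_M]_M$ is a proper congruence of $M$, so, since $\nabla_M$ is finitely generated, Lemma~\ref{folclor} yields a maximal congruence $\mu$ of $M$ with $[\nabla_M,\nabla_M]_M\subseteq \mu$, and it also yields that $\mu$ is prime. Applying Definition~\ref{1.4} to $[\nabla_M,\nabla_M]_M\subseteq \mu$ with $\alpha=\beta=\nabla_M$ forces $\nabla_M\subseteq \mu$, whence $\mu=\nabla_M$, contradicting the fact that a maximal congruence is proper. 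Hence $[\nabla_M,\nabla_M]_M=\nabla_M$, which is again the first clause of Proposition~\ref{prodcongr}.

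There is no genuine obstacle here: the only point needing attention is that the hypothesis of Lemma~\ref{folclor} (finite generation of $\nabla_M$) be available in the semi--degenerate case, which is precisely the content of Proposition~\ref{2.6}; everything else is just the definitions of the commutator, of prime congruences and of maximal congruences recorded in the Preliminaries.
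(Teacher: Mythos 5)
Part (\ref{distribsemid2}) of your proposal is exactly the paper's argument: Theorem \ref{distrib} gives $[\nabla _M,\nabla _M]_M=\nabla _M\cap \nabla _M=\nabla _M$. For part (\ref{distribsemid1}) the paper offers no argument at all --- it simply cites \cite[Lemma 5.2]{agl} --- whereas you derive it from Lemma \ref{folclor} together with Proposition \ref{2.6}: if $[\nabla _M,\nabla _M]_M$ were proper it would lie below a maximal, hence prime, congruence $\mu $, and primeness applied with $\alpha =\beta =\nabla _M$ forces $\mu =\nabla _M$, contradicting properness. As a deduction from the lemmas quoted in the Preliminaries this is correct (and the trivial--algebra case is vacuous, as you implicitly note). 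The caveat is that it inverts the logical order of the source: in \cite{agl} the second bullet of Lemma \ref{folclor} (maximal implies prime) is itself obtained \emph{from} $[\nabla _M,\nabla _M]_M=\nabla _M$ --- the standard proof takes $[\alpha ,\beta ]_M\subseteq \mu $ with $\alpha ,\beta \nsubseteq \mu $, joins with $\mu $ and uses distributivity of the commutator over joins to get $[\nabla _M,\nabla _M]_M\subseteq \mu $, which is only a contradiction once one knows $[\nabla _M,\nabla _M]_M=\nabla _M$. That implication genuinely needs some such hypothesis: in the congruence--modular variety of groups, $\nabla $ is principal in $\Z _2$ yet its unique maximal congruence $\Delta $ is not prime, because $[\nabla ,\nabla ]=\Delta $ there. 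So your argument is a legitimate shortcut inside the paper's stated framework, but it is circular as an independent proof of the underlying fact; a genuinely self--contained proof of (\ref{distribsemid1}) would have to establish $[\nabla _M,\nabla _M]_M=\nabla _M$ (or reprove Lemma \ref{folclor}) directly from semi--degeneracy.
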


\begin{proof} (\ref{distribsemid1}) This is exactly \cite[Lemma 5.2]{agl}.

\noindent (\ref{distribsemid2}) Clear, from Theorem \ref{distrib}.\end{proof}

\begin{definition}{\rm \cite{agl}} A non--empty subset $S\subseteq A^2$ is called an {\em m--system} iff, for any $(a,b),(c,d)\in S$, we have $[Cg_A(a,b),Cg_A(c,d)]_A\cap S\neq \emptyset $.\label{msist}\end{definition}

\begin{lemma} For any $\phi \in {\rm Spec}(A)$, $\nabla _A\setminus \phi $ is an m--system.\label{1.6}\end{lemma}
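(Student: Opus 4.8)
The plan is to show directly that $S=\nabla _A\setminus \phi $ satisfies the two requirements in Definition \ref{msist}: non--emptiness and the m--system closure condition. Non--emptiness is immediate, since $\phi $ is a \emph{proper} congruence, so $\phi \neq \nabla _A$ and hence $\nabla _A\setminus \phi \neq \emptyset $.

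For the closure condition, I would take arbitrary pairs $(a,b),(c,d)\in S$ and argue by contraposition. Suppose, toward a contradiction, that $[Cg_A(a,b),Cg_A(c,d)]_A\cap S=\emptyset $; since $[Cg_A(a,b),Cg_A(c,d)]_A\subseteq Cg_A(a,b)\cap Cg_A(c,d)\subseteq \nabla _A$, emptiness of the intersection with $S=\nabla _A\setminus \phi $ forces $[Cg_A(a,b),Cg_A(c,d)]_A\subseteq \phi $. Now apply the definition of prime congruence (Definition \ref{1.4}) with $\alpha =Cg_A(a,b)$ and $\beta =Cg_A(c,d)$: we get $Cg_A(a,b)\subseteq \phi $ or $Cg_A(c,d)\subseteq \phi $. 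In the first case $(a,b)\in Cg_A(a,b)\subseteq \phi $, contradicting $(a,b)\in S=\nabla _A\setminus \phi $; symmetrically for the second case. Hence the intersection is non--empty, which is exactly the m--system condition.

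There is no real obstacle here; the only point to keep straight is the trivial observation that $(a,b)\in Cg_A(a,b)$ and that $[\alpha ,\beta ]_A\subseteq \alpha \cap \beta $ (from Remark \ref{r2.8}), so that everything lands inside $\nabla _A$ and the set--theoretic bookkeeping with $S=\nabla _A\setminus \phi $ goes through. The proof is essentially a one--line unwinding of the definitions of prime congruence and m--system.
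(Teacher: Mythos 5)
Your proof is correct and is essentially the paper's own argument, just phrased contrapositively: the paper derives $[Cg_A(a,b),Cg_A(c,d)]_A\nsubseteq \phi $ directly from $Cg_A(a,b)\nsubseteq \phi $ and $Cg_A(c,d)\nsubseteq \phi $ via primeness, while you assume the intersection with $S$ is empty and reach the same contradiction. Both hinge on the same two facts ($[\alpha ,\beta ]_A\subseteq \nabla _A$ and the definition of prime congruence), so there is nothing to add.
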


\begin{proof} Let $\phi \in {\rm Spec}(A)$ and $S=\nabla _A\setminus \phi $, so that $S\neq \emptyset $ since $\phi $ is a proper congruence of $A$. Let $(a,b),(c,d)\in S$, so that $(a,b),(c,d)\notin \phi $, thus $Cg_A(a,b)\nsubseteq \phi $ and $Cg_A(c,d)\nsubseteq \phi $, hence $[Cg_A(a,b),Cg_A(c,d)]_A\nsubseteq \phi $ since $\phi $ is a prime congruence. Thus $[Cg_A(a,b),Cg_A(c,d)]_A\cap S=[Cg_A(a,b),Cg_A(c,d)]_A\cap (\nabla _A\setminus \phi )\neq \emptyset $. Therefore $S$ is an m--system.\end{proof}

\begin{lemma}{\rm \cite{agl}} Assume that $\nabla _A$ is finitely generated. Let $\alpha \in {\rm Con}(A)$ and $S\subseteq A^2$ an m--system. If $\phi $ is a maximal element of the set $\{\theta \in {\rm Con}(A)\ |\ \alpha \subseteq \theta ,\theta \cap S=\emptyset \}$, then $\phi \in {\rm Spec}(A)$.\label{1.7}\end{lemma}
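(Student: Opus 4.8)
The plan is to mimic the classical Zorn's lemma argument from commutative ring theory, where one shows that an ideal maximal with respect to missing a multiplicatively closed set is prime. Here the role of the multiplicative set is played by the m--system $S$, and the role of products of elements is played by commutators of principal congruences. So I would fix $\phi$ as in the hypothesis — a maximal element of $\Sigma = \{\theta \in {\rm Con}(A) \mid \alpha \subseteq \theta,\ \theta \cap S = \emptyset\}$ — and argue that $\phi$ is prime. First note $\phi$ is proper: since $S \neq \emptyset$ (m--systems are non--empty by Definition 2.16) and $\nabla_A \cap S = S \neq \emptyset$, we have $\nabla_A \notin \Sigma$, so $\phi \neq \nabla_A$.

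Next comes the primeness check. Suppose, for contradiction, that $\beta, \gamma \in {\rm Con}(A)$ satisfy $[\beta, \gamma]_A \subseteq \phi$ but $\beta \nsubseteq \phi$ and $\gamma \nsubseteq \phi$. Consider the congruences $\phi \vee \beta$ and $\phi \vee \gamma$; both strictly contain $\phi$ and both contain $\alpha$, so by maximality of $\phi$ in $\Sigma$ neither lies in $\Sigma$, hence $(\phi \vee \beta) \cap S \neq \emptyset$ and $(\phi \vee \gamma) \cap S \neq \emptyset$. Pick $(a,b) \in (\phi \vee \beta) \cap S$ and $(c,d) \in (\phi \vee \gamma) \cap S$. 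Since $S$ is an m--system, there is some pair $(e,f) \in [Cg_A(a,b), Cg_A(c,d)]_A \cap S$. The key computation is then: $Cg_A(a,b) \subseteq \phi \vee \beta$ and $Cg_A(c,d) \subseteq \phi \vee \gamma$ (as each principal congruence is the least congruence containing its generating pair), so by monotonicity of the commutator (Proposition 2.9),
$$[Cg_A(a,b), Cg_A(c,d)]_A \subseteq [\phi \vee \beta, \phi \vee \gamma]_A.$$
Now expand the right--hand side using join--distributivity of the commutator (Proposition 2.9): $[\phi \vee \beta, \phi \vee \gamma]_A = [\phi,\phi]_A \vee [\phi,\gamma]_A \vee [\beta,\phi]_A \vee [\beta,\gamma]_A$. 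Each of the first three terms is contained in $\phi$ since $[\cdot,\cdot]_A \subseteq \cdot \cap \cdot$ (Remark 2.7, first bullet) gives $[\phi,\phi]_A \subseteq \phi$, $[\phi,\gamma]_A \subseteq \phi$, $[\beta,\phi]_A \subseteq \phi$; and the fourth term $[\beta,\gamma]_A \subseteq \phi$ by our assumption. Hence $[\phi \vee \beta, \phi \vee \gamma]_A \subseteq \phi$, and therefore $(e,f) \in [Cg_A(a,b), Cg_A(c,d)]_A \subseteq \phi$. But $(e,f) \in S$ as well, contradicting $\phi \cap S = \emptyset$.

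This contradiction shows that $[\beta,\gamma]_A \subseteq \phi$ forces $\beta \subseteq \phi$ or $\gamma \subseteq \phi$, i.e. $\phi$ is prime, so $\phi \in {\rm Spec}(A)$. I do not expect any serious obstacle here: the only subtlety worth flagging is making sure the join $\phi \vee \beta$ (rather than the set--theoretic union $\phi \cup \beta$) is used so that it is genuinely a congruence containing $\alpha$, and that the four--term expansion of $[\phi \vee \beta, \phi \vee \gamma]_A$ is written out correctly before absorbing three of the terms into $\phi$. The hypothesis that $\nabla_A$ is finitely generated is not actually needed for this direction of the argument (it is used elsewhere, e.g. to guarantee existence of such maximal $\phi$ via Zorn), but it does no harm to keep it in the statement.
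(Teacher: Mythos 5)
Your argument is correct; the paper itself gives no proof of this lemma (it is quoted from Agliano's paper), and your proof is precisely the standard ``maximal among congruences disjoint from an m--system'' argument that the cited source uses, with the four--term expansion of $[\phi \vee \beta,\phi \vee \gamma]_A$ handled correctly. Your side remark is also accurate: the hypothesis that $\nabla_A$ is finitely generated is not used in this implication, only in guaranteeing (via Zorn) that such maximal elements exist.
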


For all $\theta \in {\rm Con}(A)$, we shall denote by $V_A(\theta )=\{\phi \in {\rm Spec}(A)\ |\ \theta \subseteq \phi \}$ and $D_A(\theta )={\rm Spec}(A)\setminus V_A(\theta )$. For all $a,b\in A$, we denote $V_A(a,b)=V_A(Cg_A(a,b))$ and $D_A(a,b)=D_A(Cg_A(a,b))$. It is well known that, if $\nabla _A$ is finitely generated, then $\{D_A(\theta )\ |\ \theta \in {\rm Con}(A)\}$ is a topology on ${\rm Spec}(A)$, called {\em the Stone topology}, having $\{D_A(a,b)\ |\ a,b\in A\}$ as a basis, $\{V_A(\theta )\ |\ \theta \in {\rm Con}(A)\}$ as the set of closed sets and $\{V_A(a,b)\ |\ a,b\in A\}$ as a basis of closed sets. For any $M\subseteq {\rm Spec}(A)$, we shall denote by $\overline{M}$ the closure of $M$ in the topological space ${\rm Spec}(A)$ with the Stone topology. Clearly, for all $\phi \in {\rm Spec}(A)$, $\overline{\{\phi \}}=V_A(\phi )$.

\section{Admissible Morphisms}
\label{admmorph}

In this section, we study {\em admissible morphisms}, that is those morphisms $f$ with the property that $f^*$ takes prime congruences to prime congruences. We recall some of their properties from \cite{euadm}, among which: surjectivity implies admissibility, but the converse is not true, nor does admissibility always hold. Throughout this section, $A,B,C$ shall be algebras from ${\cal C}$ and $f:A\rightarrow B$, $g:B\rightarrow C$ shall be morphisms in ${\cal C}$.

\begin{remark} For any subalgebra $S$ of $A$, if $i:S\rightarrow A$ is the canonical embedding and $\alpha \in {\rm Con}(A)$, then $i^*(\alpha )=\alpha \cap S^2\in {\rm Con}(S)$.\label{rex1}\end{remark}

\begin{remark}{\rm \cite[Lemma $6$, p. $19$, and Lemma $7$, p. $20$]{gratzer}} Any class of a congruence of a lattice $L$ is a convex sublattice of $L$, thus it has a unique writing as an intersection between a filter and an ideal of $L$.\label{convex}\end{remark}

\begin{remark} For any $\beta \in {\rm Con}(B)$, we have $\Delta _B\subseteq \beta $, thus ${\rm Ker}(f)=f^*(\Delta _B)\subseteq f^*(\beta )$.\label{ker}\end{remark}

\begin{remark} If $\phi ,\theta \in {\rm Con}(A)$ and $\psi \in {\rm Con}(A/\theta )$ such that $p_{\theta }^*(\psi )=\phi $, then, by Remark \ref{ker}, $\theta ={\rm Ker}(p_{\theta })\subseteq \phi $.\label{surjcan}\end{remark}

\begin{remark} ${\rm Ker}(f)\subseteq {\rm Ker}(g\circ f)$, because ${\rm Ker}(f)=f^*(\Delta _B)\subseteq f^*(g^*(\Delta _C))=(g\circ f)^*(\Delta _C)={\rm Ker}(g\circ f)$.\label{kerker}\end{remark}

Following \cite{euadm}, for any algebra M, we shall denote by ${\rm Con}_2(M)$ the set of the two--class congruences of $M$: ${\rm Con}_2(M)=\{\theta \in {\rm Con}(M)\ |\ |M/\theta |=2\}$.

\begin{remark}{\rm \cite{euadm}} ${\rm Con}_2(A)\subseteq {\rm Max}(A)$.\label{2clsmax}\end{remark}

\begin{lemma}{\rm \cite{euadm}}\begin{enumerate}
\item\label{2cls0} If ${\cal C}$ is semi--degenerate, then $f^*(\{\nabla _A\})=\{\nabla _B\}$.

\item\label{2cls1} If $f^*(\{\nabla _A\})=\{\nabla _B\}$, then $f^*({\rm Con}_2(B))\subseteq {\rm Con}_2(A)$.
\item\label{2cls2} If ${\cal C}$ is semi--degenerate, then $f^*({\rm Con}_2(B))\subseteq {\rm Con}_2(A)$.\end{enumerate}\label{2cls}\end{lemma}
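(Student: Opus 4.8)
The three parts of Lemma~\ref{2cls} form a short chain, so the plan is to prove them in the order (\ref{2cls0}), (\ref{2cls1}), (\ref{2cls2}), with the last one being immediate.

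\textbf{Part (\ref{2cls0}).} We must show that if $\psi\in{\rm Con}(B)$ satisfies $f^*(\psi)=\nabla_A$, then $\psi=\nabla_B$. The inclusion $f^*(\{\nabla_A\})\supseteq\{\nabla_B\}$ is trivial since $f^*(\nabla_B)=\nabla_A$. For the nontrivial direction, suppose $f^*(\psi)=\nabla_A$; then $(a,a')\in\psi$ for all $a,a'\in f(A)$, i.e.\ $f(A)^2\subseteq\psi$, so $f(A)$ collapses to a single class $a_0/\psi$ in $B/\psi$, which therefore supports a subalgebra of $B/\psi$ (the image of $A$ under $p_\psi\circ f$). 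Since ${\cal C}$ is semi--degenerate, a one--element subalgebra can only occur in a trivial algebra, so $B/\psi$ is trivial, i.e.\ $\psi=\nabla_B$. One should be slightly careful here: $f(A)$ need not itself be a subalgebra of $B$ unless $f$ is a morphism in the usual sense — but it is, $f$ being a morphism, so $f(A)$ is a subalgebra of $B$ and $p_\psi(f(A))$ is a subalgebra of $B/\psi$; a one-element such subalgebra forces $B/\psi$ to be trivial. (Alternatively one can just invoke Proposition~\ref{2.6}: semi--degeneracy gives $\nabla_B$ finitely generated by pairs $(s_j,t_j)$ with $s_j=t_j$ forced in every quotient where $f(A)$ collapses, but the subalgebra argument is cleaner.)

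\textbf{Part (\ref{2cls1}).} Assume $f^*(\{\nabla_A\})=\{\nabla_B\}$ and let $\beta\in{\rm Con}_2(B)$, i.e.\ $|B/\beta|=2$; we must show $|A/f^*(\beta)|=2$. First, $f^*(\beta)\ne\nabla_A$: otherwise by hypothesis $\beta=\nabla_B$, contradicting $|B/\beta|=2$. Second, the map $a/f^*(\beta)\mapsto f(a)/\beta$ is a well-defined injection $A/f^*(\beta)\hookrightarrow B/\beta$ (well-defined and injective precisely because $f^*(\beta)=\{(a,a')\mid (f(a),f(a'))\in\beta\}$), so $|A/f^*(\beta)|\le|B/\beta|=2$. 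Combining, $|A/f^*(\beta)|=2$, i.e.\ $f^*(\beta)\in{\rm Con}_2(A)$.

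\textbf{Part (\ref{2cls2})} is then immediate: semi--degeneracy gives the hypothesis of (\ref{2cls0}), hence its conclusion $f^*(\{\nabla_A\})=\{\nabla_B\}$, which is precisely the hypothesis of (\ref{2cls1}), whose conclusion is the desired inclusion. The only real content is Part (\ref{2cls0}); the main (minor) obstacle there is making the semi--degeneracy argument precise, namely correctly identifying that the collapse of $f(A)$ produces a one--element subalgebra of the quotient $B/\psi$ and invoking the definition of semi--degenerate to conclude triviality of that quotient.
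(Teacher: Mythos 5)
Your proof is correct. Note that the paper itself gives no proof of this lemma (it is quoted from \cite{euadm}), so there is nothing to compare against; but your argument is the natural one and is sound: you correctly read $f^*(\{\nabla_A\})=\{\nabla_B\}$ as $(f^*)^{-1}(\{\nabla_A\})=\{\nabla_B\}$ (the reading forced by how the lemma is used in Lemma \ref{specall}), the collapse of the subalgebra $p_\psi(f(A))$ to a single element together with the definition of semi--degeneracy does give $\psi=\nabla_B$ in part (\ref{2cls0}), and the injection $a/f^*(\beta)\mapsto f(a)/\beta$ you use in part (\ref{2cls1}) is exactly the map $f_\beta$ the paper later verifies to be well defined and injective in Remark \ref{fbetainj}.
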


Throughout the rest of this section, ${\cal C}$ shall be congruence--modular.

\begin{remark}{\rm \cite{euadm}} Clearly, if ${\cal C}$ is congruence--distributive (see Theorem \ref{distrib}) or the commutator in $A$ equals the intersection of congruences, then the prime congruences of $A$ are exactly the prime elements of the lattice ${\rm Con}(A)$. If, additionally, ${\rm Con}(A)$ is finite, then the prime congruences of $A$ are exactly the elements of ${\rm Con}(A)$ that have exactly one successor in the lattice ${\rm Con}(A)$. Thus, if, moreover, ${\rm Con}(A)$ is a non--trivial finite chain, then ${\rm Spec}(A)={\rm Con}(A)\setminus \{\nabla _A\}$.\label{rex2}\end{remark}

Remarks \ref{rex1}, \ref{convex} and \ref{rex2} are easy to use for determining all congruences and the prime congruences in the examples we shall give. Sometimes, we shall use the remarks in this paper without referencing them.

\begin{lemma}{\rm \cite{euadm}}\begin{enumerate}
\item\label{specmax1} If ${\cal C}$ is semi--degenerate, then ${\rm Max}(A)\subseteq {\rm Spec}(A)$. 
\item\label{specmax0} If $A$ is congruence--distributive and the commutator in $A$ equals the intersection, then ${\rm Max}(A)\subseteq {\rm Spec}(A)$.
\item\label{specmax2} If ${\cal C}$ is congruence--distributive, then ${\rm Max}(A)\subseteq {\rm Spec}(A)$.
\item\label{specmax3} If the commutator in $A$ equals the intersection of congruences and ${\rm Con}(A)$ is a Boolean algebra, then ${\rm Spec}(A)={\rm Max}(A)$.
\item\label{specmax4} If ${\cal C}$ is congruence--distributive and ${\rm Con}(A)$ is a Boolean algebra, then ${\rm Spec}(A)={\rm Max}(A)$.
\end{enumerate}\label{specmax}\end{lemma}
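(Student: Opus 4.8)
The plan is to reduce each of the five statements to an elementary fact about the bounded lattice ${\rm Con}(A)$, after translating ``prime congruence'' into ``prime element of ${\rm Con}(A)$''. Part~(\ref{specmax1}) is immediate: when ${\cal C}$ is semi--degenerate, Proposition~\ref{2.6} gives that $\nabla _A$ is finitely generated, and then the second bullet of Lemma~\ref{folclor} says precisely that every maximal congruence of $A$ is prime, i.e. ${\rm Max}(A)\subseteq {\rm Spec}(A)$.

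For part~(\ref{specmax0}), the hypothesis that the commutator in $A$ coincides with intersection lets me invoke Remark~\ref{rex2}: the prime congruences of $A$ are exactly the prime elements of the lattice ${\rm Con}(A)$, that is, those proper $\mu \in {\rm Con}(A)$ such that $\alpha \cap \beta \subseteq \mu $ forces $\alpha \subseteq \mu $ or $\beta \subseteq \mu $. Since $A$ is congruence--distributive, ${\rm Con}(A)$ is a distributive bounded lattice, so it suffices to check that every $\mu \in {\rm Max}(A)$, being a maximal element of ${\rm Con}(A)\setminus \{\nabla _A\}$, is prime in ${\rm Con}(A)$. Given such a $\mu $ and $\alpha ,\beta \in {\rm Con}(A)$ with $\alpha \cap \beta \subseteq \mu $ but $\alpha \nsubseteq \mu $ and $\beta \nsubseteq \mu $, the strict inclusions $\mu \subsetneq \mu \vee \alpha $ and $\mu \subsetneq \mu \vee \beta $ together with maximality of $\mu $ force $\mu \vee \alpha =\mu \vee \beta =\nabla _A$; then distributivity gives $\nabla _A=(\mu \vee \alpha )\cap (\mu \vee \beta )=\mu \vee (\alpha \cap \beta )=\mu $, contradicting $\mu \neq \nabla _A$. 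Part~(\ref{specmax2}) follows at once, since by Theorem~\ref{distrib} congruence--distributivity of ${\cal C}$ makes the commutator in $A$ equal to intersection, so part~(\ref{specmax0}) applies.

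For part~(\ref{specmax3}), a Boolean algebra is distributive, so the argument just given already yields ${\rm Max}(A)\subseteq {\rm Spec}(A)$. For the reverse inclusion, let $\phi \in {\rm Spec}(A)$; being a prime congruence, $\phi $ is proper, i.e. $\phi \neq \nabla _A$, and by Remark~\ref{rex2} it is a prime element of ${\rm Con}(A)$. If $\phi $ were not maximal in ${\rm Con}(A)\setminus \{\nabla _A\}$, choose $\gamma \in {\rm Con}(A)$ with $\phi \subsetneq \gamma \subsetneq \nabla _A$ and let $\gamma ^{\prime }$ be the complement of $\gamma $ in the Boolean algebra ${\rm Con}(A)$; from $\gamma \cap \gamma ^{\prime }=\Delta _A\subseteq \phi $ and $\gamma \nsubseteq \phi $ we get $\gamma ^{\prime }\subseteq \phi \subsetneq \gamma $, hence $\nabla _A=\gamma \vee \gamma ^{\prime }\subseteq \gamma $, a contradiction. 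Therefore $\phi \in {\rm Max}(A)$, and ${\rm Spec}(A)={\rm Max}(A)$. Finally, part~(\ref{specmax4}) follows by combining Theorem~\ref{distrib} with part~(\ref{specmax3}).

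I do not anticipate a genuine obstacle: once prime congruences are identified with prime elements of ${\rm Con}(A)$ via Remark~\ref{rex2} (and Theorem~\ref{distrib} supplies the commutator--equals--intersection hypothesis in the congruence--distributive cases), the whole lemma is a routine check. The only point needing attention is consistently using the definition of ${\rm Max}(A)$ as the set of maximal elements of $({\rm Con}(A)\setminus \{\nabla _A\},\subseteq )$ and the fact that prime congruences are, by Definition~\ref{1.4}, proper; this is exactly what legitimizes each of the contradictions above.
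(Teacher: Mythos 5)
Your proof is correct. Note that the paper itself gives no proof of this lemma---it is quoted from \cite{euadm}---so there is nothing to compare against; your argument (part (i) via Proposition \ref{2.6} and Lemma \ref{folclor}, parts (ii)--(v) by identifying prime congruences with prime elements of ${\rm Con}(A)$ and then running the standard distributive-lattice, respectively Boolean-complement, computations) is the natural and expected one, and each step, including the use of distributivity of ${\rm Con}(A)$ to show maximal elements are prime and the use of complements to show prime elements are maximal, is sound.
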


\begin{remark} By Remark \ref{2clsmax} and Lemmas \ref{folclor} and \ref{specmax}, if $\nabla _A$ is finitely generated or $A$ is congruence--distributive and the commutator in $A$ equals the intersection, or ${\cal C}$ is semi--degenerate or congruence--distributive, then ${\rm Con}_2(A)\subseteq {\rm Max}(A)\subseteq {\rm Spec}(A)$.\label{2clsspecmax}\end{remark}

\begin{remark} If we consider the conditions:

$(c_1)\ $ ${\rm Spec}(A)={\rm Max}(A)$, $\quad (c_2)\ $ $({\rm Spec}(A),\subseteq )$ is unorderred, $\quad $ then: $(c_1)\Rightarrow (c_2)$ and, if ${\rm Max}(A)\subseteq {\rm Spec}(A)$, for instance if $\nabla _A$ is finitely generated or $A$ is congruence--distributive and the commutator in $A$ equals the intersection, or ${\cal C}$ is semi--degenerate or congruence--distributive (see Lemmas \ref{folclor} and \ref{specmax}), then $(c_1)\Leftrightarrow (c_2)$.\label{unord}\end{remark}

\begin{definition} We say that the morphism $f:A\rightarrow B$ is {\em admissible} iff, for all $\psi \in {\rm Spec}(B)$, we have $f^*(\psi )\in {\rm Spec}(A)$.\label{morfadm}\end{definition}

So $f$ is admissible iff $f^*({\rm Spec}(B))\subseteq {\rm Spec}(A)$, where by $f^*$ we denote the direct image of the function $f^*:{\rm Con}(B)\rightarrow {\rm Con}(A)$. If $f$ is admissible, then we may consider the restriction $f^*\mid _{{\rm Spec}(B)}:{\rm Spec}(B)\rightarrow {\rm Spec}(A)$.

\begin{remark} $f$ is admissible iff $f^*({\rm Spec}(B))\subseteq V_A({\rm Ker}(f))$. Indeed, $f^*({\rm Spec}(B))\subseteq f^*({\rm Con}(B))\subseteq [{\rm Ker}(f))$ by Remark \ref{ker}, and, if $f$ is admissible, then $f^*({\rm Spec}(B))\subseteq {\rm Spec}(A)$ by the above, hence $f^*({\rm Spec}(B))\subseteq {\rm Spec}(A)\cap [{\rm Ker}(f))=V_A({\rm Ker}(f))$. The converse implication is trivial.

If $f$ is admissible, then, for all $\theta \in {\rm Con}(B)$, $f^*(V_B(\theta ))\subseteq V_A(f^*(\theta ))$, because: $f^*(V_B(\theta ))\subseteq f^*({\rm Spec}(B))\subseteq {\rm Spec}(A)$ by the admissibility of $f$, and $f^*(V_B(\theta ))\subseteq f^*([\theta ))$, hence $f^*(V_B(\theta ))\subseteq [f^*(\theta ))\cap {\rm Spec}(A)=V_A(f^*(\theta ))$.\label{admv}\end{remark}

\begin{proposition}{\rm \cite{euadm}}\begin{enumerate}
\item\label{bdlat1} Any morphism in the class of bounded distributive lattices is admissible.
\item\label{bdlat2} Moreover: any bounded lattice morphism whose co--domain is distributive is admissible.\end{enumerate}\label{bdlat}\end{proposition}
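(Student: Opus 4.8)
The plan is to derive both parts from a single structural fact about bounded distributive lattices. Note first that part~(\ref{bdlat1}) is the particular case of part~(\ref{bdlat2}) in which the domain is distributive as well: the commutator of any lattice is the intersection of congruences irrespective of the ambient congruence--modular variety (Theorem~\ref{distrib}), so $\mathrm{Spec}$ of a bounded lattice is the same computed in the variety of bounded distributive lattices or in that of bounded lattices, and admissibility is insensitive to this change of ambient class. Hence it suffices to treat~(\ref{bdlat2}); throughout, $\mathcal C$ shall be the variety of bounded lattices, which is semi--degenerate and congruence--distributive (the congruence lattice of a lattice being always distributive).

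The crucial auxiliary claim is: \emph{if $L$ is a bounded distributive lattice and $\psi\in\mathrm{Spec}(L)$, then $|L/\psi|=2$.} Since $\psi$ is proper, $L/\psi$ is non--trivial, so I only have to exclude $|L/\psi|\geq 3$. Assuming it, pick $\overline m\in L/\psi$ with $0<\overline m<1$ and a preimage $m\in L$ of $\overline m$ under $p_\psi$, and set $\alpha=\{(x,y)\in L^2\ |\ (x\wedge m,y\wedge m)\in\psi\}$ and $\beta=\{(x,y)\in L^2\ |\ (x\vee m,y\vee m)\in\psi\}$. Using distributivity of $L$ to handle the crossed operation in each case (e.g.\ $(x\vee z)\wedge m=(x\wedge m)\vee(z\wedge m)$), one checks routinely that $\alpha,\beta\in\mathrm{Con}(L)$. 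Because $L/\psi$ is distributive, and in a distributive lattice $u\wedge w=v\wedge w$ together with $u\vee w=v\vee w$ forces $u=v$, one gets $\alpha\cap\beta\subseteq\psi$; on the other hand $(1,m)\in\alpha\setminus\psi$ and $(0,m)\in\beta\setminus\psi$ since $\overline m\notin\{0,1\}$, so $\alpha\nsubseteq\psi$ and $\beta\nsubseteq\psi$. As $\mathcal C$ is congruence--distributive, $[\alpha,\beta]_L=\alpha\cap\beta\subseteq\psi$ by Theorem~\ref{distrib}, which contradicts the primeness of $\psi$ (Definition~\ref{1.4}). This proves the claim.

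Now let $f:A\rightarrow B$ be a bounded lattice morphism with $B$ distributive and let $\psi\in\mathrm{Spec}(B)$; I must show $f^*(\psi)\in\mathrm{Spec}(A)$. By the claim, $B/\psi$ is the two--element bounded lattice. The morphism $p_\psi\circ f:A\rightarrow B/\psi$ has as image a bounded sublattice of $B/\psi$, hence one containing both (distinct) bounds, so $p_\psi\circ f$ is surjective; therefore $A/\mathrm{Ker}(p_\psi\circ f)\cong B/\psi$ has two elements, i.e.\ $\mathrm{Ker}(p_\psi\circ f)\in\mathrm{Con}_2(A)$. Moreover $\mathrm{Ker}(p_\psi\circ f)=(p_\psi\circ f)^*(\Delta_{B/\psi})=f^*(p_\psi^*(\Delta_{B/\psi}))=f^*(\mathrm{Ker}(p_\psi))=f^*(\psi)$. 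Since $\mathcal C$ is semi--degenerate, $\mathrm{Con}_2(A)\subseteq\mathrm{Max}(A)\subseteq\mathrm{Spec}(A)$ by Remark~\ref{2clsspecmax}, whence $f^*(\psi)\in\mathrm{Spec}(A)$; thus $f$ is admissible.

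The only genuine obstacle is the auxiliary claim $\mathrm{Spec}(L)\subseteq\mathrm{Con}_2(L)$ for bounded distributive $L$: everything afterwards is bookkeeping. Its delicate point is that, a priori, the prime congruences of such an $L$ need not be the maximal ones, so one cannot invoke a maximality argument directly; instead one must fabricate, from a hypothetical strictly intermediate class $\overline m$ of $L/\psi$, the two congruences $\alpha,\beta$ above and use distributivity twice — once to see that they are congruences, once to see that their intersection falls inside $\psi$.
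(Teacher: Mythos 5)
Your proof is correct and follows essentially the same route as the paper, which derives Proposition \ref{bdlat} from Lemma \ref{speclat}, (\ref{speclat1}) (${\rm Spec}(L)={\rm Con}_2(L)$ for bounded distributive $L$) together with Lemma \ref{2clsadm}, (\ref{2clsadm1}) (in a semi--degenerate class, ${\rm Spec}(B)={\rm Con}_2(B)$ forces admissibility, since $f^*$ carries two--class congruences to two--class, hence maximal, hence prime ones). The only difference is that you supply self--contained proofs of these two ingredients, which the paper imports from \cite{euadm} without proof; your argument for ${\rm Spec}(L)\subseteq{\rm Con}_2(L)$ via the congruences $\alpha,\beta$ and the cancellation property of distributive lattices, and your direct verification that $f^*(\psi)={\rm Ker}(p_\psi\circ f)\in{\rm Con}_2(A)$, are both sound.
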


\begin{example} Let ${\cal L}_2^2$, ${\cal D}$ and ${\cal P}$ have the elements denoted as in the following Hasse diagrams, $i:{\cal L}_2^2\rightarrow {\cal D}$ and $j:{\cal L}_2^2\rightarrow {\cal P}$ be the canonical bounded lattice embeddings and $h:{\cal P}\rightarrow {\cal D}$ and $k:{\cal P}\rightarrow {\cal D}$ be the bounded lattice morphisms given by the following table:\vspace*{-15pt}

\begin{center}\begin{tabular}{ccccccc}
\begin{picture}(40,80)(0,0)

\put(30,25){\line(0,1){40}}
\put(30,25){\line(1,1){20}}
\put(30,25){\line(-1,1){20}}
\put(30,65){\line(1,-1){20}}
\put(30,65){\line(-1,-1){20}}
\put(30,25){\circle*{3}}
\put(10,45){\circle*{3}}
\put(3,42){$x$}
\put(30,45){\circle*{3}}
\put(33,42){$y$}
\put(50,45){\circle*{3}}
\put(53,42){$z$}
\put(30,65){\circle*{3}}
\put(28,15){$0$}
\put(28,68){$1$}
\put(25,0){${\cal D}$}
\end{picture}
&
\begin{picture}(100,80)(0,0)
\put(100,70){\vector(-1,0){83}}
\put(58,72){$h$}
\put(100,59){\vector(-1,0){83}}
\put(58,61){$k$}
\put(61,25){\line(1,1){10}}
\put(61,25){\line(-1,1){10}}
\put(61,45){\line(1,-1){10}}
\put(61,45){\line(-1,-1){10}}
\put(61,25){\circle*{3}}
\put(86,35){\vector(1,0){15}}
\put(34,35){\vector(-1,0){15}}
\put(27,37){$i$}
\put(92,38){$j$}
\put(44,33){$x$}
\put(74,33){$y$}
\put(51,35){\circle*{3}}
\put(71,35){\circle*{3}}

\put(61,45){\circle*{3}}
\put(59,15){$0$}
\put(59,48){$1$}
\put(56,0){${\cal L}_2^2$}
\end{picture}
&
\begin{picture}(40,80)(0,0)
\put(30,25){\line(1,1){10}}
\put(30,25){\line(-1,1){20}}
\put(30,65){\line(1,-1){10}}
\put(30,65){\line(-1,-1){20}}
\put(40,35){\line(0,1){20}}
\put(30,25){\circle*{3}}
\put(10,45){\circle*{3}}
\put(3,43){$x$}
\put(40,35){\circle*{3}}
\put(43,33){$y$}
\put(30,65){\circle*{3}}
\put(40,55){\circle*{3}}
\put(43,53){$z$}
\put(25,0){${\cal P}$}
\put(28,15){$0$}
\put(28,68){$1$}
\end{picture}
&\hspace*{1.5pt}
\begin{picture}(100,80)(0,0)
\put(0,20){\begin{tabular}{c|ccccc}
$u$ & $0$ & $x$ & $y$ & $z$ & $1$\\ \hline 
$h(u)$ & $0$ & $x$ & $y$ & $y$ & $1$\\ 
$k(u)$ & $0$ & $0$ & $1$ & $1$ & $1$\end{tabular}}

\end{picture}
&\hspace*{1.5pt}

\begin{picture}(40,80)(0,0)
\put(20,30){\line(0,1){10}}
\put(20,30){\circle*{3}}
\put(20,40){\circle*{3}}
\put(16,20){$\Delta _{\cal D}$}
\put(16,43){$\nabla _{\cal D}$}
\put(6,0){${\rm Con}({\cal D})$}
\end{picture}
&
\begin{picture}(40,80)(0,0)
\put(20,25){\line(1,1){10}}
\put(20,25){\line(-1,1){10}}
\put(20,45){\line(1,-1){10}}
\put(20,45){\line(-1,-1){10}}
\put(20,25){\circle*{3}}
\put(2,33){$\rho $}
\put(33,33){$\sigma $}
\put(10,35){\circle*{3}}
\put(30,35){\circle*{3}}
\put(20,45){\circle*{3}}
\put(16,15){$\Delta _{{\cal L}_2^2}$}
\put(16,48){$\nabla _{{\cal L}_2^2}$}
\put(3,0){${\rm Con}({\cal L}_2^2)$}
\end{picture}
&

\begin{picture}(40,80)(0,0)
\put(20,45){\line(1,1){10}}
\put(20,45){\line(-1,1){10}}
\put(20,65){\line(1,-1){10}}
\put(20,65){\line(-1,-1){10}}
\put(20,45){\line(0,-1){15}}
\put(16,20){$\Delta _{\cal P}$}
\put(16,68){$\nabla _{\cal P}$}
\put(20,30){\circle*{3}}
\put(23,40){$\gamma $}
\put(2,53){$\alpha $}
\put(32,52){$\beta $}
\put(20,45){\circle*{3}}
\put(10,55){\circle*{3}}
\put(30,55){\circle*{3}}
\put(20,65){\circle*{3}}
\put(4,0){${\rm Con}({\cal P})$}\end{picture}\end{tabular}\end{center}\vspace*{-4pt}

${\rm Con}({\cal D})=\{\Delta _{\cal D},\nabla _{\cal D}\}\cong {\cal L}_2$, so ${\rm Spec}({\cal D})=\{\Delta _{\cal D}\}$.

${\rm Con}({\cal L}_2^2)=\{\Delta _{{\cal L}_2^2},\rho ,\sigma ,\nabla _{{\cal L}_2^2}\}\cong {\cal L}_2^2$, as is the case for any finite Boolean algebra, where $\rho =eq(\{\{0,x\},\{y,1\}\})$ and $\sigma =eq(\{\{0,y\},\{x,1\}\})$, so ${\rm Spec}({\cal L}_2^2)=\{\rho ,\sigma \}$.

${\rm Con}({\cal P})=\{\Delta _{\cal P},\alpha ,\beta ,\gamma ,\nabla _{\cal P}\}$, with the Hasse diagram above, where $\alpha =eq(\{0,y,z\},\{x,1\})$, $\beta =eq(\{0,x\},$\linebreak $\{y,z,1\})$ and $\gamma =eq(\{0\},\{x\},\{y,z\},\{1\})$, thus ${\rm Spec}({\cal P})=\{\Delta _{\cal P},\alpha ,\beta \}$. $\gamma \notin {\rm Spec}({\cal P})$, because $\gamma =\alpha \cap \beta =[\alpha ,\beta ]_{\cal P}\supseteq [\alpha ,\beta ]_{\cal P}$, but $\alpha \nsubseteq \gamma $ and $\beta \nsubseteq \gamma $.

$\Delta _{\cal D}\in {\rm Spec}({\cal D})$ and $i^*(\Delta _{\cal D})=\Delta _{{\cal L}_2^2}\notin {\rm Spec}({\cal L}_2^2)$, thus $i$ is not admissible.

$\Delta _{\cal P}\in {\rm Spec}({\cal P})$ and $j^*(\Delta _{\cal P})=\Delta _{{\cal L}_2^2}\notin {\rm Spec}({\cal L}_2^2)$, thus $j$ is not admissible.

$\Delta _{\cal D}\in {\rm Spec}({\cal D})$ and $h^*(\Delta _{\cal D})=\gamma \notin {\rm Spec}({\cal P})$, therefore $h$ is not admissible.

For all $\theta \in {\rm Con}({\cal P})\setminus \{\Delta _{\cal P}\}$, $k^*(\theta )=\Delta _{\cal D}$, thus $ k^*(\Delta _{\cal P})=k^*(\alpha )=k^*(\beta )=\Delta _{\cal D}\in {\rm Spec}({\cal D})$, hence $k$ is admissible.\label{exadm}\end{example}

\begin{lemma} Any surjective morphism in ${\cal C}$ is admissible, but the converse is not true, not even when ${\cal C}$ is semi--degenerate and congruence--distributive.\label{surjadm}\end{lemma}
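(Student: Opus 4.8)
The plan is to handle the two assertions separately, the first by a direct computation with the commutator and the second by invoking a morphism already exhibited in the paper. For the first assertion, fix a surjective morphism $f:A\rightarrow B$ in ${\cal C}$, set $\theta={\rm Ker}(f)$, take an arbitrary $\psi\in{\rm Spec}(B)$ and put $\gamma=f^*(\psi)\in{\rm Con}(A)$; the aim is to prove $\gamma\in{\rm Spec}(A)$. First I would check that $\gamma$ is a proper congruence of $A$: since $f$ is surjective, $f^*$ is injective and $f^*(\nabla_B)=\nabla_A$, so $\gamma=\nabla_A$ would force $\psi=\nabla_B$, contradicting that $\psi$ is proper. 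Next, Remark \ref{ker} yields $\theta\subseteq\gamma$. For primeness, assume $\alpha,\beta\in{\rm Con}(A)$ satisfy $[\alpha,\beta]_A\subseteq\gamma$; then $[\alpha,\beta]_A\vee\theta\subseteq\gamma$, hence $f([\alpha,\beta]_A\vee\theta)\subseteq f(\gamma)=f(f^*(\psi))=\psi$, the last equality by surjectivity of $f$. The commutator transfer identity for surjective morphisms from Remark \ref{r2.8} gives $f([\alpha,\beta]_A\vee\theta)=[f(\alpha\vee\theta),f(\beta\vee\theta)]_B$, so $[f(\alpha\vee\theta),f(\beta\vee\theta)]_B\subseteq\psi$; since $\psi$ is prime, $f(\alpha\vee\theta)\subseteq\psi$ or $f(\beta\vee\theta)\subseteq\psi$. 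In the first case $\alpha\subseteq\alpha\vee\theta\subseteq f^*(f(\alpha\vee\theta))\subseteq f^*(\psi)=\gamma$, and symmetrically in the second; thus $\gamma$ is prime and $f$ is admissible. Alternatively, one could first factor $f$ through the canonical surjection $p_\theta:A\rightarrow A/\theta$, note that an isomorphism trivially carries prime congruences to prime congruences in both directions, and run the same computation for $p_\theta$ using the third item of Remark \ref{r2.8}.

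For the converse, it suffices to produce a single non-surjective admissible morphism inside a variety that is simultaneously semi-degenerate and congruence-distributive. The class of bounded lattices is such a variety --- lattices form a congruence-distributive variety and the class of bounded lattices is semi-degenerate --- and the bounded lattice morphism $k:{\cal P}\rightarrow{\cal D}$ of Example \ref{exadm} has range $\{0,1\}$, so it is not surjective, while it was checked there to be admissible; this settles the second assertion. One may also stay within the variety of bounded distributive lattices (also semi-degenerate and congruence-distributive) and take the inclusion ${\cal L}_2\hookrightarrow{\cal L}_3$, which is visibly not surjective and is admissible by Proposition \ref{bdlat}(\ref{bdlat1}).

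The one step that calls for a little care is the primeness argument: the hypothesis $[\alpha,\beta]_A\subseteq\gamma$ has to be pushed through the join with ${\rm Ker}(f)$ before the transfer identity of Remark \ref{r2.8} becomes applicable, and the resulting conclusion must be pulled back through $f^*$ using only the general inclusion $\phi\subseteq f^*(f(\phi))$ together with the monotonicity of $f^*$, rather than any formula for $f^*$ of a commutator. Everything else is routine bookkeeping with the elementary properties of direct and inverse images collected in Section \ref{preliminaries}, and the second assertion is immediate once the right ambient variety and morphism have been fixed.
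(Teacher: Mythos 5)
Your proof is correct, and for the second assertion it coincides with the paper's: the non--surjective admissible morphism $k$ of Example \ref{exadm} (and the infinitely many non--surjective morphisms supplied by Proposition \ref{bdlat}) live in the semi--degenerate, congruence--distributive class of bounded lattices. The only real difference is in the first assertion, where the paper simply cites \cite[Proposition 2.1, (i)]{agl}, whereas you prove the statement from scratch; your argument is sound. Properness of $f^*(\psi )$ follows from the injectivity of $f^*$ (a consequence of the surjectivity of $f$) together with $f^*(\nabla _B)=\nabla _A$; joining with ${\rm Ker}(f)$ costs nothing because ${\rm Ker}(f)\subseteq f^*(\psi )$ by Remark \ref{ker}; the transfer identity $f([\alpha ,\beta ]_A\vee {\rm Ker}(f))=[f(\alpha \vee {\rm Ker}(f)),f(\beta \vee {\rm Ker}(f))]_B$ of Remark \ref{r2.8} applies precisely because $f$ is surjective, and surjectivity also guarantees that $f(\alpha \vee {\rm Ker}(f))$ and $f(\beta \vee {\rm Ker}(f))$ are congruences of $B$, which is needed (and worth saying explicitly) before invoking the primeness of $\psi $; finally the pull--back $\alpha \subseteq \alpha \vee {\rm Ker}(f)\subseteq f^*(f(\alpha \vee {\rm Ker}(f)))\subseteq f^*(\psi )$ uses only the elementary direct--/inverse--image facts from Section \ref{preliminaries}. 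What your version buys is self--containedness; what the paper's citation buys is brevity. Either is acceptable.
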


\begin{proof} \cite[Proposition $2.1$, (i)]{agl} shows that every surjective morphism is admissible. Proposition \ref{bdlat} provides us with an infinity of examples of admissible morphisms that are not surjective. Also, the admissible morphism $k$ from Example \ref{exadm} is not surjective. These are examples of morphisms in the semi--degenerate congruence--distributive class of bounded lattices.\end{proof}

\begin{proposition} Not all morphisms are admissible, not even when ${\cal C}$ is semi--degenerate and congruence--distributive.\label{notalladm}\end{proposition}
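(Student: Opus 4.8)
The plan is to produce a counterexample inside a single variety that happens to be both semi--degenerate and congruence--distributive, so that neither of those two hypotheses could possibly rescue admissibility. The class of bounded lattices is exactly such a variety: semi--degeneracy was recorded just after Lemma \ref{folclor}, and congruence--distributivity of lattices is stated in the Preliminaries. Hence it suffices to exhibit one bounded lattice morphism $f:A\rightarrow B$ together with a congruence $\psi\in{\rm Spec}(B)$ such that $f^*(\psi)\notin{\rm Spec}(A)$.

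Such a morphism is already at hand in Example \ref{exadm}: take the canonical bounded lattice embedding $i:{\cal L}_2^2\rightarrow{\cal D}$. There $\Delta_{\cal D}\in{\rm Spec}({\cal D})$, because ${\rm Con}({\cal D})\cong{\cal L}_2$, whereas $i^*(\Delta_{\cal D})=\Delta_{{\cal L}_2^2}\notin{\rm Spec}({\cal L}_2^2)$, since ${\rm Con}({\cal L}_2^2)\cong{\cal L}_2^2$, in which the commutator equals the intersection and $\Delta_{{\cal L}_2^2}=\rho\cap\sigma=[\rho,\sigma]_{{\cal L}_2^2}$ with $\rho,\sigma$ incomparable, so the defining implication of a prime congruence fails. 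Therefore $i$ is not admissible. The embedding $j:{\cal L}_2^2\rightarrow{\cal P}$ and the morphism $h:{\cal P}\rightarrow{\cal D}$ from the same example work equally well, so I would simply cite Example \ref{exadm}.

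There is essentially no obstacle: the proposition is a one--line consequence of Example \ref{exadm}. The only things worth double--checking are that bounded lattices genuinely satisfy both hypotheses (immediate) and that, in a congruence--distributive algebra, the prime congruences coincide with the meet--irreducible elements of the congruence lattice -- which is Remark \ref{rex2} combined with Theorem \ref{distrib} -- so that the computation ``$\Delta_{{\cal L}_2^2}\notin{\rm Spec}({\cal L}_2^2)$'' is justified.
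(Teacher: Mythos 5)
Your proposal is correct and is essentially the paper's own proof: the paper likewise disposes of the proposition by pointing to the non-admissible morphisms $i$, $j$ and $h$ of Example \ref{exadm} in the semi-degenerate, congruence-distributive class of bounded lattices. The extra justification you give for $\Delta_{{\cal L}_2^2}\notin{\rm Spec}({\cal L}_2^2)$ via Theorem \ref{distrib} and Remark \ref{rex2} matches what the example itself already records.
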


\begin{proof} In Example \ref{exadm}, the morphisms $i$, $j$ and $h$ in the semi--degenerate congruence--distributive class of bounded lattices are not admissible.\end{proof}

\begin{remark} For any $\theta \in {\rm Con}(A)$, the morphism $p_{\theta }:A\rightarrow A/\theta $ is surjective, and thus admissible by Lemma \ref{surjadm}.\label{surjcanadm}\end{remark}

\begin{remark} Any composition of admissible morphisms is admissible. Indeed, assume that $f$ and $g$ are admissible; then, for all $\chi \in {\rm Spec}(C)$, it follows that $g^*(\chi )\in {\rm Spec}(B)$, hence $(g\circ f)^*(\chi )=(f^*\circ g^*)(\chi )=f^*(g^*(\chi ))\in {\rm Spec}(A)$, therefore $g\circ f$ is admissible.\label{compadm}\end{remark}

\begin{remark} Clearly, if $g$ is an isomorphism, then: $f$ is admissible iff $g\circ f$ is admissible. Similarly, if $f$ is an isomorphism, then: $g$ is admissible iff $g\circ f$ is admissible.\label{admcompizom}\end{remark}

\begin{lemma}{\rm \cite{euadm}}\begin{enumerate}
\item\label{2clsadm1} If ${\cal C}$ is semi--degenerate and ${\rm Spec}(B)={\rm Con}_2(B)$, then $f$ is admissible.
\item\label{2clsadm2} If $\nabla _A$ is finitely generated, $f^*(\{\nabla _A\})=\{\nabla _B\}$ and ${\rm Spec}(B)={\rm Con}_2(B)$, then $f$ is admissible. 
\item\label{2clsadm3} If $A$ is congruence--distributive and the commutator in $A$ equals the intersection, $f^*(\{\nabla _A\})=\{\nabla _B\}$ and ${\rm Spec}(B)={\rm Con}_2(B)$, then $f$ is admissible.
\item\label{2clsadm4} If ${\cal C}$ is congruence--distributive, $f^*(\{\nabla _A\})=\{\nabla _B\}$ and ${\rm Spec}(B)={\rm Con}_2(B)$, then $f$ is admissible.\end{enumerate}\label{2clsadm}\end{lemma}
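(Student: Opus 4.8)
The plan is to reduce all four items to a single scheme. Under the hypotheses of each item it suffices to establish two inclusions: first, $f^*({\rm Con}_2(B))\subseteq {\rm Con}_2(A)$; second, ${\rm Con}_2(A)\subseteq {\rm Spec}(A)$. Granting these and using the assumption ${\rm Spec}(B)={\rm Con}_2(B)$, we obtain $f^*({\rm Spec}(B))=f^*({\rm Con}_2(B))\subseteq {\rm Con}_2(A)\subseteq {\rm Spec}(A)$, which is precisely the statement that $f$ is admissible, in the sense of Definition \ref{morfadm}.

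For the first inclusion, the tool is Lemma \ref{2cls}. In item (\ref{2clsadm1}), semi--degeneracy of ${\cal C}$ gives $f^*({\rm Con}_2(B))\subseteq {\rm Con}_2(A)$ at once, by part (\ref{2cls2}) of that lemma; moreover, by part (\ref{2cls0}), semi--degeneracy already forces $f^*(\{\nabla_A\})=\{\nabla_B\}$, so the extra hypothesis $f^*(\{\nabla_A\})=\{\nabla_B\}$ appearing in items (\ref{2clsadm2})--(\ref{2clsadm4}) holds automatically in item (\ref{2clsadm1}). In items (\ref{2clsadm2}), (\ref{2clsadm3}) and (\ref{2clsadm4}), one feeds the assumed equality $f^*(\{\nabla_A\})=\{\nabla_B\}$ into part (\ref{2cls1}) of Lemma \ref{2cls} and again obtains $f^*({\rm Con}_2(B))\subseteq {\rm Con}_2(A)$.

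For the second inclusion, I would simply invoke Remark \ref{2clsspecmax}: it states that if $\nabla_A$ is finitely generated, or $A$ is congruence--distributive with commutator equal to the intersection, or ${\cal C}$ is semi--degenerate or congruence--distributive, then ${\rm Con}_2(A)\subseteq {\rm Max}(A)\subseteq {\rm Spec}(A)$. Each of the four items sits in one of these cases (item (\ref{2clsadm1}): ${\cal C}$ semi--degenerate; item (\ref{2clsadm2}): $\nabla_A$ finitely generated; item (\ref{2clsadm3}): $A$ congruence--distributive with commutator equal to the intersection; item (\ref{2clsadm4}): ${\cal C}$ congruence--distributive), so ${\rm Con}_2(A)\subseteq {\rm Spec}(A)$ holds throughout. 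If one prefers to avoid the omnibus remark, the same conclusions follow from Remark \ref{2clsmax} together with Lemma \ref{specmax}.(\ref{specmax1}), Lemma \ref{folclor}, Lemma \ref{specmax}.(\ref{specmax0}) and Lemma \ref{specmax}.(\ref{specmax2}), respectively.

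Assembling the two inclusions as in the first paragraph completes every item. I do not anticipate any genuine difficulty here: the argument is pure assembly of facts already established in Sections \ref{preliminaries} and \ref{admmorph}. The only thing that needs care is matching each hypothesis package to the correct part of Lemma \ref{2cls} and to the correct instance of the ``maximal congruences are prime'' results, together with the observation, used implicitly in item (\ref{2clsadm1}), that semi--degeneracy supplies both $f^*(\{\nabla_A\})=\{\nabla_B\}$ and the primality of two--class congruences, which is why that item can be stated with one hypothesis fewer than the other three.
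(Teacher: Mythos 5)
Your proof is correct. The paper itself gives no proof of Lemma \ref{2clsadm} (it is quoted from \cite{euadm}), but your assembly is exactly the intended derivation from the surrounding material: Lemma \ref{2cls} supplies $f^*({\rm Con}_2(B))\subseteq {\rm Con}_2(A)$ under each hypothesis package, Remark \ref{2clsspecmax} supplies ${\rm Con}_2(A)\subseteq {\rm Max}(A)\subseteq {\rm Spec}(A)$ in each of the four cases, and the hypothesis ${\rm Spec}(B)={\rm Con}_2(B)$ closes the chain $f^*({\rm Spec}(B))\subseteq {\rm Spec}(A)$ required by Definition \ref{morfadm}. Your side observation that semi--degeneracy already yields $f^*(\{\nabla_A\})=\{\nabla_B\}$ via Lemma \ref{2cls}, (\ref{2cls0}), correctly accounts for the shorter hypothesis list in item (\ref{2clsadm1}).
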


\begin{remark}{\rm \cite{euadm}}\begin{itemize}
\item If $L$ and $M$ are bounded lattices and $h:L\rightarrow M$ is a bounded lattice morphism with $h(L)=\{0,1\}$, then $h^*({\rm Con}(M)\setminus \{\nabla _M\})\subseteq {\rm Con}_2(L)$, thus $h$ is admissible by Lemma \ref{2cls}, (\ref{2cls2}). Notice, also, that, if $M$ is non--trivial, then $h^*({\rm Con}(M)\setminus \{\nabla _M\})=h^*({\rm Spec}(M))=h^*(\{\Delta _M\})=\{{\rm Ker}(h)\}$; in particular, ${\rm Ker}(h)\in {\rm Con}_2(L)\subseteq {\rm Spec}(L)$. This is the case for the bounded lattice morphism $k$ from Example \ref{exadm}.
\item The above actually holds for any equational class of bounded orderred structures.\end{itemize}\label{2cls01}\end{remark}

\begin{lemma}{\rm \cite{euadm}} Let $L$ be a bounded lattice.\begin{enumerate}

\item\label{speclat1} If $L$ is distributive, then ${\rm Spec}(L)={\rm Max}(L)={\rm Con}_2(L)$.
\item\label{speclat2} If $L$ can be obtained through finite direct products and/or finite ordinal sums from bounded distributive lattices and/or finite modular lattices and/or relatively complemented bounded lattices with ACC, then ${\rm Spec}(L)={\rm Max}(L)$.\end{enumerate}\label{speclat}\end{lemma}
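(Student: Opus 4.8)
The plan is to treat the two parts separately, using throughout that $\nabla _L=Cg_L(0,1)$ is finitely generated, so that Lemma \ref{folclor} gives ${\rm Max}(L)\subseteq {\rm Spec}(L)$; together with Remark \ref{2clsmax} this already yields ${\rm Con}_2(L)\subseteq {\rm Max}(L)\subseteq {\rm Spec}(L)$ in both parts, and, by Remark \ref{unord}, reduces part (ii) to showing that $({\rm Spec}(L),\subseteq )$ is an antichain. I shall also use that, the variety of bounded lattices being congruence--distributive, by Theorem \ref{distrib} the commutator of $L$ and of every quotient of $L$ is the intersection of congruences, so a congruence is prime exactly when it is a meet--prime element of the corresponding (distributive) congruence lattice.

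For part (i) it then remains to prove ${\rm Spec}(L)\subseteq {\rm Con}_2(L)$. Given $\phi \in {\rm Spec}(L)$, using Remark \ref{r2.8} one sees that $\Delta _{L/\phi }$ is a prime congruence of $L/\phi $, hence a meet--prime element of the distributive lattice ${\rm Con}(L/\phi )$, where $L/\phi $ is again a bounded distributive lattice. Assuming $|L/\phi |\geq 3$, I would pick $a\in L/\phi $ with $0/\phi <a<1/\phi $ and note that the retractions $x\mapsto x\vee a$ of $L/\phi $ onto the filter $[a)$ and $x\mapsto x\wedge a$ of $L/\phi $ onto the ideal $(a]$ are lattice homomorphisms (distributivity being needed for compatibility with $\wedge $, respectively $\vee $), whose kernels are non--trivial congruences containing $(0/\phi ,a)$, respectively $(a,1/\phi )$; hence $Cg_{L/\phi }(0/\phi ,a)$ and $Cg_{L/\phi }(a,1/\phi )$ are non--trivial, and their intersection is contained in the set of pairs $(u,v)$ with $u\vee a=v\vee a$ and $u\wedge a=v\wedge a$, which equals $\Delta _{L/\phi }$ by the cancellation law in distributive lattices. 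This contradicts the meet--primeness of $\Delta _{L/\phi }$, so $|L/\phi |=2$, that is $\phi \in {\rm Con}_2(L)$; combining with the inclusions above, part (i) follows.

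For part (ii), I would argue by induction on the construction of $L$, having first proved that the class of bounded lattices $K$ with ${\rm Spec}(K)={\rm Max}(K)$ is closed under finite direct products and finite ordinal sums. For a finite direct product $\prod _{i=1}^nK_i$, congruence--distributivity gives ${\rm Con}(\prod _iK_i)=\prod _i{\rm Con}(K_i)$ by Lemma \ref{distribsemid}(\ref{distribsemid2}) and Proposition \ref{prodcongr}, and by Proposition \ref{comutprod} the commutator on this product is computed componentwise; a short argument then shows that $\prod _i\theta _i$ is prime, respectively maximal, precisely when exactly one coordinate $\theta _j$ is a proper congruence and that $\theta _j$ is prime, respectively maximal, in $K_j$, so that $({\rm Spec}(\prod _iK_i),\subseteq )$ and $({\rm Max}(\prod _iK_i),\subseteq )$ are order--isomorphic to the disjoint unions of the corresponding posets of the factors; hence ${\rm Spec}={\rm Max}$ is inherited. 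For a finite ordinal sum I would describe ${\rm Con}(K_1\oplus \cdots \oplus K_n)$ in terms of the ${\rm Con}(K_i)$ — the congruence classes interacting only at the gluing points — and check that, again, the prime and the maximal congruences of the sum correspond to prime, respectively maximal, congruences of a single summand, which yields the same disjoint--union description and the same closure.

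It would then suffice to establish ${\rm Spec}(L_0)={\rm Max}(L_0)$ for the three kinds of basic blocks. For bounded distributive lattices this is part (i). For a finite modular lattice $M$, I would show that ${\rm Con}(M)$ is a Boolean lattice: by Dedekind's transposition principle a transpose of a prime quotient in a modular lattice is again a prime quotient, so among prime quotients weak projectivity reduces to ordinary (symmetric) projectivity, forcing the join--irreducible congruences of $M$ — which are among the principal congruences generated by the covering pairs of $M$ — to be pairwise incomparable, whence the finite distributive lattice ${\rm Con}(M)$ is Boolean; Lemma \ref{specmax}(\ref{specmax4}) then gives ${\rm Spec}(M)={\rm Max}(M)$. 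For a relatively complemented bounded lattice $L_0$ with ACC I would argue similarly that ${\rm Con}(L_0)$ is Boolean — each congruence admitting a complement, by the classical theory of congruences of relatively complemented lattices together with the chain condition — and again invoke Lemma \ref{specmax}(\ref{specmax4}); an induction on the way $L$ is obtained from these blocks through finite products and finite ordinal sums then finishes part (ii). The step I expect to be the main obstacle is the ordinal--sum case: an ordinal sum is not a direct product as an algebra, so the decomposition of its congruence lattice — in particular the behaviour of the classes straddling the gluing points — must be verified directly before the prime and the maximal congruences can be matched with those of the summands; the relatively--complemented--with--ACC base case is the secondary difficulty, as it relies on a non--trivial classical structure result rather than on a direct computation.
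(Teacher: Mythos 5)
First, a point of comparison: the paper does not prove this lemma at all — it is quoted from \cite{euadm} — so there is no in-paper argument to measure yours against, and I can only assess the proposal on its own terms.

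Your part (i) is complete and correct. That $\nabla _L=Cg_L(0,1)$ is finitely generated gives ${\rm Con}_2(L)\subseteq {\rm Max}(L)\subseteq {\rm Spec}(L)$ via Remark \ref{2clsmax} and Lemma \ref{folclor}, and your argument that a prime congruence has exactly two classes — passing to $L/\phi $, where $\Delta _{L/\phi }$ is prime by Lemma \ref{speccat}, (\ref{speccat1}), and exhibiting it as the intersection of the two non-trivial kernels of $x\mapsto x\vee a$ and $x\mapsto x\wedge a$ for an intermediate $a$ — is the standard one and is watertight, since by Theorem \ref{distrib} the commutator is the intersection, so this contradicts meet-primeness of $\Delta _{L/\phi }$.

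Part (ii), as written, is a plan rather than a proof, although the plan is sound. The two constructor steps you flag as the main obstacle are in fact already available in the paper: the description of ${\rm Spec}$ of a finite direct product is Lemma \ref{specprodfin}, and the congruence and spectrum decomposition of a finite ordinal sum is Lemma \ref{sumord} (both themselves quoted from \cite{euadm}); combined with the parallel observation that under ${\rm Con}(L)\cong \prod _i{\rm Con}(K_i)$ the maximal proper congruences are exactly the tuples with one proper, maximal coordinate and $\nabla $ elsewhere, your disjoint-union argument does inherit ${\rm Spec}={\rm Max}$ through both constructions. What remains genuinely unproved in your text are the two non-distributive base cases: that ${\rm Con}(M)$ is Boolean when $M$ is a finite modular lattice, and when $M$ is a relatively complemented bounded lattice with ACC. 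Both are true classical theorems (the projectivity-of-prime-quotients argument for modular lattices of finite length, and the Crawley--Dilworth theorem for relatively complemented lattices with ACC, see \cite{cwdw}), and once granted, Lemma \ref{specmax}, (\ref{specmax4}), finishes exactly as you say; but in your proposal they are appeals to ``classical theory'' rather than arguments, so part (ii) should be read as a correct outline whose hardest ingredients are outsourced, not as a self-contained proof.
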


Note that Proposition \ref{bdlat} above follows from Lemma \ref{2clsadm}, (\ref{2clsadm1}), and Lemma \ref{speclat}, (\ref{speclat1}).

\begin{lemma} Assume that ${\rm Spec}(A)={\rm Con}(A)\setminus \{\nabla _A\}$.

\begin{enumerate}
\item\label{specall1} If $(f^*)^{-1}(\{\nabla _A\})=\{\nabla _B\}$, then $f$ is admissible. 
\item\label{specall2} If ${\cal C}$ is semi--degenerate, then $f$ is admissible.\end{enumerate}\label{specall}\end{lemma}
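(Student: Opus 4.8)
The plan is to observe that, under the standing hypothesis ${\rm Spec}(A)={\rm Con}(A)\setminus\{\nabla_A\}$, admissibility of $f$ collapses to a single triviality: since $f^*(\psi)\in{\rm Con}(A)$ for every $\psi\in{\rm Con}(B)$ (as recalled in Section \ref{preliminaries}), in order to get $f^*(\psi)\in{\rm Spec}(A)$ for $\psi\in{\rm Spec}(B)$ it suffices to check that $f^*(\psi)\neq\nabla_A$, i.e. that $f^*$ does not collapse the proper congruence $\psi$ to $\nabla_A$.

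For (\ref{specall1}), I would take an arbitrary $\psi\in{\rm Spec}(B)$; being prime, $\psi$ is proper, so $\psi\neq\nabla_B$, and hence $\psi\notin(f^*)^{-1}(\{\nabla_A\})$ by the hypothesis $(f^*)^{-1}(\{\nabla_A\})=\{\nabla_B\}$. Thus $f^*(\psi)\neq\nabla_A$, so $f^*(\psi)\in{\rm Con}(A)\setminus\{\nabla_A\}={\rm Spec}(A)$; by Definition \ref{morfadm}, this is precisely the admissibility of $f$. For (\ref{specall2}), I would reduce to (\ref{specall1}): if ${\cal C}$ is semi--degenerate, then $(f^*)^{-1}(\{\nabla_A\})=\{\nabla_B\}$ by Lemma \ref{2cls}, (\ref{2cls0}), so $f$ is admissible by the previous item.

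There is essentially no obstacle here; the whole argument rests on the recalled facts that $f^*$ maps ${\rm Con}(B)$ into ${\rm Con}(A)$ and on Lemma \ref{2cls}, (\ref{2cls0}). The only point that requires a little care is to use the hypothesis of (\ref{specall1}) in its contrapositive form — a proper congruence of $B$ cannot lie in the fibre of $\nabla_A$ under $f^*$ — because for a general, non--surjective $f$ a proper congruence $\psi$ of $B$ can perfectly well satisfy $f^*(\psi)=\nabla_A$, namely exactly when $f(A)$ is contained in a single class of $\psi$; it is precisely this pathology that the two hypotheses rule out.
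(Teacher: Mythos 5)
Your proof is correct and is essentially the paper's own argument: part (\ref{specall1}) is exactly the observation that the hypothesis forces $f^*({\rm Spec}(B))\subseteq f^*({\rm Con}(B)\setminus\{\nabla_B\})\subseteq{\rm Con}(A)\setminus\{\nabla_A\}={\rm Spec}(A)$, and part (\ref{specall2}) is reduced to (\ref{specall1}) via Lemma \ref{2cls}, (\ref{2cls0}), just as in the paper. No gaps.
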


\begin{proof} (\ref{specall1}) $(f^*)^{-1}(\{\nabla _A\})=\{\nabla _B\}$ implies $f^*({\rm Con}(B)\setminus \{\nabla _B\})\subseteq {\rm Con}(A)\setminus \{\nabla _A\}$, hence $f^*({\rm Spec}(B))\subseteq f^*({\rm Con}(B)\setminus \{\nabla _B\})\subseteq {\rm Con}(A)\setminus \{\nabla _A\}={\rm Spec}(A)$, so $f$ is admissible.

\noindent (\ref{specall2}) By (\ref{specall1}) and Lemma \ref{2cls}, (\ref{2cls0}).\end{proof}

\begin{lemma} Assume that ${\cal C}$ is congruence--distributive or the commutator in $A$ equals the intersection. If ${\rm Con}(A)$ is a non--trivial finite chain, then:\begin{enumerate}
\item\label{conlantfin1} If $(f^*)^{-1}(\{\nabla _A\})=\{\nabla _B\}$, then $f$ is admissible. 
\item\label{conlantfin2} If ${\cal C}$ is semi--degenerate, then $f$ is admissible.\end{enumerate}\label{conlantfin}\end{lemma}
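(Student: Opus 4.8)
The plan is to observe that the hypotheses here are precisely those of Lemma \ref{specall}, once we translate the assumption on ${\rm Con}(A)$ via Remark \ref{rex2}. Indeed, since $\mathcal{C}$ is congruence--modular and we are assuming that $\mathcal{C}$ is congruence--distributive or that the commutator in $A$ equals the intersection of congruences, Remark \ref{rex2} applies to $A$; and since, on top of that, ${\rm Con}(A)$ is a non--trivial finite chain, the last sentence of Remark \ref{rex2} yields ${\rm Spec}(A)={\rm Con}(A)\setminus\{\nabla_A\}$.

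Having established ${\rm Spec}(A)={\rm Con}(A)\setminus\{\nabla_A\}$, statement (\ref{conlantfin1}) is immediate from Lemma \ref{specall}, (\ref{specall1}): if $(f^*)^{-1}(\{\nabla_A\})=\{\nabla_B\}$, then $f$ is admissible. Likewise, statement (\ref{conlantfin2}) is immediate from Lemma \ref{specall}, (\ref{specall2}): if $\mathcal{C}$ is semi--degenerate, then $f$ is admissible (this last implication going through Lemma \ref{2cls}, (\ref{2cls0}), as in the proof of Lemma \ref{specall}). So the whole lemma is essentially a corollary of Remark \ref{rex2} together with Lemma \ref{specall}.

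There is no real obstacle to anticipate here; the only thing to be careful about is checking that the chain hypothesis genuinely licenses the conclusion ${\rm Spec}(A)={\rm Con}(A)\setminus\{\nabla_A\}$ of Remark \ref{rex2} — which it does, a non--trivial finite chain being finite, with every element except the top having exactly one successor, and the top $\nabla_A$ being the only non--prime (equivalently, non--meet--irreducible) element. After that, everything is a direct appeal to the already--proved Lemma \ref{specall}.
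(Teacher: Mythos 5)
Your proposal is correct and is exactly the paper's own argument: the paper proves this lemma by citing Lemma \ref{specall} and Remark \ref{rex2}, precisely the combination you use. Your additional check that a non--trivial finite chain yields ${\rm Spec}(A)={\rm Con}(A)\setminus\{\nabla_A\}$ is just an unpacking of the last sentence of Remark \ref{rex2}, so nothing is missing.
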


\begin{proof} By Lemma \ref{specall} and Remark \ref{rex2}.\end{proof}

\section{Properties Going Up and Lying Over}
\label{guandlo}

In this section we define conditions Going Up and Lying Over on admissible morphisms and start investigating their properties. We prove that they are non--trivial, that their study can be reduced to embeddings, that surjectivity implies Going Up and Going Up implies Lying Over, but the converses of these implications do not hold. Throughout this section, ${\cal C}$ shall be congruence--modular, $A,B,C$ shall be members of ${\cal C}$ and $f:A\rightarrow B$, $g:B\rightarrow C$ shall be admissible morphisms in ${\cal C}$. Then $g\circ f$ is admissible by Remark \ref{compadm}. Also, $M,N$ shall be members of ${\cal C}$ and $h:M\rightarrow N$ shall be a morphism in ${\cal C}$, not necessarily admissible.

\begin{definition} We say that $f$ fulfills property {\em Going Up} (abbreviated {\em GU}) iff, for any $\phi ,\psi \in {\rm Spec}(A)$ and any $\phi _1\in {\rm Spec}(B)$ such that $\phi \subseteq \psi $ and $f^*(\phi _1)=\phi $, there exists a $\psi _1\in {\rm Spec}(B)$ such that $\phi _1\subseteq \psi _1$ and $f^*(\psi _1)=\psi $.

We say that $f$ fulfills property {\em Lying Over} (abbreviated {\em LO}) iff, for any $\phi \in {\rm Spec}(A)$ such that ${\rm Ker}(f)\subseteq \phi $, there exists a $\phi _1\in {\rm Spec}(B)$ such that $f^*(\phi _1)=\phi $.\label{3.1}\end{definition}

\begin{remark} Trivially, any isomorphism is admissible (see also Lemma \ref{surjadm}) and fulfills GU and LO.

Clearly, if $g$ is an isomorphism, then: $f$ fulfills GU, respectively LO, iff $g\circ f$ fulfills GU, respectively LO. Similarly, if $f$ is an isomorphism, then: $g$ fulfills GU, respectively LO, iff $g\circ f$ fulfills GU, respectively LO.

Hence, if all canonical embeddings of $A$ into other algebras from ${\cal C}$ are admissible and fulfill GU, respectively LO, then all embeddings of $A$ into other algebras from ${\cal C}$ are admissible and fulfill GU, respectively LO.\label{compizom}\end{remark}

\begin{remark} If $A$ is a subalgebra of $B$ and $i:A\rightarrow B$ is the canonical embedding, then, for all $\beta \in {\rm Con}(B)$, $i^*(\beta )=\beta \cap A^2$, thus ${\rm Ker}(i)=i^*(\Delta _B)=\Delta _B\cap A^2=\Delta _A$, therefore, if $i$ is admissible:\begin{itemize}
\item $i$ fulfills GU iff, for any $\phi ,\psi \in {\rm Spec}(A)$ and any $\phi _1\in {\rm Spec}(B)$ such that $\phi \subseteq \psi $ and $\phi _1\cap A^2=\phi $, there exists a $\psi _1\in {\rm Spec}(B)$ such that $\phi _1\subseteq \psi _1$ and $\psi _1\cap A^2=\psi $;
\item $i$ fulfills LO iff, for any $\phi \in {\rm Spec}(A)$, there exists a $\phi _1\in {\rm Spec}(B)$ such that $\phi _1\cap A^2=\phi $.\end{itemize}\label{3.2}\end{remark}

\begin{lemma}\begin{enumerate}
\item\label{cargulo1} $f$ fulfills GU iff, for all $\psi \in {\rm Spec}(B)$, $V_A(f^*(\psi ))\subseteq f^*(V_B(\psi ))$.
\item\label{cargulo2} $f$ fulfills LO iff $V_A({\rm Ker}(f))\subseteq f^*({\rm Spec}(B))$ iff $V_A({\rm Ker}(f))=f^*({\rm Spec}(B))$.
\item\label{cargulo3} $h$ is admissible and fulfills LO iff $V_M({\rm Ker}(h))=h^*({\rm Spec}(N))$.\end{enumerate}\label{cargulo}\end{lemma}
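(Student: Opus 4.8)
The plan is to prove all three equivalences by directly translating Definition \ref{3.1} into the language of the closed sets $V_A(\cdot )$, $V_M(\cdot )$, using admissibility of $f$ (resp.\ $h$) to guarantee that the relevant inverse images of prime congruences are again prime. No serious computation is involved; the work is bookkeeping with inclusions.

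For (\ref{cargulo1}), I would first observe that, for $\psi \in {\rm Spec}(B)$, admissibility of $f$ gives $f^*(\psi )\in {\rm Spec}(A)$, so $V_A(f^*(\psi ))=\{\chi \in {\rm Spec}(A)\mid f^*(\psi )\subseteq \chi \}$, while $f^*(V_B(\psi ))=\{f^*(\rho )\mid \rho \in {\rm Spec}(B),\ \psi \subseteq \rho \}$. Then, if $f$ fulfills GU and $\chi \in V_A(f^*(\psi ))$, I apply GU with the roles $\phi :=f^*(\psi )$, $\phi _1:=\psi $ and ``$\psi $''$:=\chi $ (note $\phi \in {\rm Spec}(A)$ by admissibility and $\phi \subseteq \chi $) to get $\psi _1\in {\rm Spec}(B)$ with $\psi \subseteq \psi _1$ and $f^*(\psi _1)=\chi $, i.e.\ $\chi \in f^*(V_B(\psi ))$. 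Conversely, given $\phi \subseteq \psi $ in ${\rm Spec}(A)$ and $\phi _1\in {\rm Spec}(B)$ with $f^*(\phi _1)=\phi $, I have $\psi \in V_A(\phi )=V_A(f^*(\phi _1))\subseteq f^*(V_B(\phi _1))$, so $\psi =f^*(\psi _1)$ for some $\psi _1\in V_B(\phi _1)$; since $\phi _1\subseteq \psi _1$, this is exactly the conclusion of GU.

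For (\ref{cargulo2}), the hypothesis ``$\phi \in {\rm Spec}(A)$ and ${\rm Ker}(f)\subseteq \phi $'' in the definition of LO is literally ``$\phi \in V_A({\rm Ker}(f))$'', and the conclusion ``there exists $\phi _1\in {\rm Spec}(B)$ with $f^*(\phi _1)=\phi $'' is ``$\phi \in f^*({\rm Spec}(B))$''; hence $f$ fulfills LO iff $V_A({\rm Ker}(f))\subseteq f^*({\rm Spec}(B))$. The second equivalence then follows because the reverse inclusion $f^*({\rm Spec}(B))\subseteq V_A({\rm Ker}(f))$ holds for every admissible $f$ by Remark \ref{admv}.

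For (\ref{cargulo3}), if $h$ is admissible and fulfills LO, then $V_M({\rm Ker}(h))=h^*({\rm Spec}(N))$ by applying (\ref{cargulo2}) to $h$. The only point requiring care in the converse — since $h$ is not assumed admissible — is that admissibility has to be read off the set equality first: by definition $V_M({\rm Ker}(h))\subseteq {\rm Spec}(M)$, so $h^*({\rm Spec}(N))=V_M({\rm Ker}(h))\subseteq {\rm Spec}(M)$, which is precisely admissibility of $h$; then (\ref{cargulo2}) applies to $h$ and yields LO (equivalently, LO is immediate since $V_M({\rm Ker}(h))\subseteq h^*({\rm Spec}(N))$). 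This observation that the closed-set equality already encodes admissibility is the only genuinely substantive step; there is no real obstacle in the proof.
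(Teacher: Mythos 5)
Your proposal is correct and follows essentially the same route as the paper, which proves (i) directly from the definition of GU, (ii) from the definition of LO together with Remark \ref{admv}, and (iii) from (ii) plus the observation that the set equality forces admissibility. You have simply written out in full the bookkeeping that the paper leaves implicit.
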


\begin{proof} (\ref{cargulo1}) By the definition of GU.

\noindent (\ref{cargulo2}) Since $f$ is admissible, these equivalences follow from the definition of LO and Remark \ref{admv}.

\noindent (\ref{cargulo3}) By (\ref{cargulo2}) and the definition of admissibility.\end{proof}

\begin{proposition}\begin{enumerate}
\item\label{gulogu1} If $g\circ f$ fulfills GU, $g$ fulfills LO and ${\rm Spec}(B)\subset [{\rm Ker}(g))$, then $f$ fulfills GU.
\item\label{gulogu2} If $g\circ f$ fulfills GU and $g$ is injective and fulfills LO, then $f$ fulfills GU.\end{enumerate}\label{gulogu}\end{proposition}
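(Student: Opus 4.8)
The plan is to chase the data through $C$: push the given prime $\phi _1$ of $B$ up to a prime of $C$ using LO for $g$, apply GU for $g\circ f$ at the level of $C$, and then pull the result back to $B$ along $g^*$, using the admissibility of $g$ to stay inside the spectrum. The identity $(g\circ f)^*=f^*\circ g^*$ recalled in Section \ref{preliminaries} is what makes the two fibrations compatible, and the monotonicity of inverse images of congruences is what transports the inclusion $\phi _2\subseteq \psi _2$ downward.

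For part \ref{gulogu1}, I would start from $\phi ,\psi \in {\rm Spec}(A)$ with $\phi \subseteq \psi $ and $\phi _1\in {\rm Spec}(B)$ with $f^*(\phi _1)=\phi $, and look for $\psi _1\in {\rm Spec}(B)$ with $\phi _1\subseteq \psi _1$ and $f^*(\psi _1)=\psi $. Since $\phi _1\in {\rm Spec}(B)\subseteq [{\rm Ker}(g))$, we have ${\rm Ker}(g)\subseteq \phi _1$, so LO for $g$ supplies some $\phi _2\in {\rm Spec}(C)$ with $g^*(\phi _2)=\phi _1$; then $(g\circ f)^*(\phi _2)=f^*(g^*(\phi _2))=f^*(\phi _1)=\phi $. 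Now GU for $g\circ f$ applied to $\phi \subseteq \psi $ and $\phi _2$ yields $\psi _2\in {\rm Spec}(C)$ with $\phi _2\subseteq \psi _2$ and $(g\circ f)^*(\psi _2)=\psi $. Taking $\psi _1=g^*(\psi _2)$: admissibility of $g$ gives $\psi _1\in {\rm Spec}(B)$; monotonicity of $g^*$ gives $\phi _1=g^*(\phi _2)\subseteq g^*(\psi _2)=\psi _1$; and $f^*(\psi _1)=f^*(g^*(\psi _2))=(g\circ f)^*(\psi _2)=\psi $. So $\psi _1$ witnesses GU for $f$.

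Part \ref{gulogu2} should then be immediate: if $g$ is injective, then ${\rm Ker}(g)=g^*(\Delta _C)=\Delta _B$, hence $[{\rm Ker}(g))=[\Delta _B)={\rm Con}(B)\supseteq {\rm Spec}(B)$, and since $g$ also fulfills LO we are exactly in the hypotheses of part \ref{gulogu1}. I do not expect a genuine obstacle here; the one point that needs attention is that LO for $g$ can only be invoked on a prime $\phi _1$ of $B$ satisfying ${\rm Ker}(g)\subseteq \phi _1$, and the two extra hypotheses — ${\rm Spec}(B)\subseteq [{\rm Ker}(g))$ in \ref{gulogu1}, injectivity of $g$ in \ref{gulogu2} — are present precisely to secure exactly this. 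Everything else is the functoriality of $(\cdot )^*$ under composition, monotonicity of inverse images of congruences, and the admissibility of $g$ and of $g\circ f$.
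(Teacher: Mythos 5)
Your proof is correct and follows essentially the same route as the paper's: lift $\phi_1$ to $\mathrm{Spec}(C)$ via LO for $g$ (using ${\rm Ker}(g)\subseteq\phi_1$), apply GU for $g\circ f$, and set $\psi_1=g^*(\psi_2)$, with admissibility of $g$ guaranteeing $\psi_1\in{\rm Spec}(B)$; part (ii) then reduces to part (i) exactly as in the paper via ${\rm Ker}(g)=\Delta_B$.
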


\begin{proof} (\ref{gulogu1}) Let $\phi ,\psi \in {\rm Spec}(A)$ and $\phi _1\in {\rm Spec}(B)$ such that $\phi \subseteq \psi $ and $f^*(\phi _1)=\phi $. Then $\phi _1\supseteq {\rm Ker}(g)$, so, since $g$ fulfills LO, it follows that there exists a $\phi _2\in {\rm Spec}(C)$ such that $g^*(\phi _2)=\phi _1$, hence $(g\circ f)^*(\phi _2)=f^*(g^*(\phi _2))=f^*(\phi _1)=\phi $. $g\circ f$ fulfills GU, hence there exists a $\psi _2\in {\rm Spec}(C)$ such that $\phi _2\subseteq \psi _2$ and $\psi =(g\circ f)^*(\psi _2)=f^*(g^*(\psi _2))$. If we denote by $\psi _1=g^*(\psi _2)$, then $\psi _1\in {\rm Spec}(B)$, $f^*(\psi _1)=\psi $ and $\psi _1=g^*(\psi _2)\supseteq g^*(\phi _2)=\phi _1$. Hence $f$ fulfills GU.

\noindent (\ref{gulogu2}) By (\ref{gulogu1}) and the fact that, if $g$ is injective, then ${\rm Ker}(g)=g^*(\Delta _C)=\Delta _B$.\end{proof}

\begin{proposition}\begin{enumerate}
\item\label{carloinj1} If ${\rm Spec}(A)\subset [{\rm Ker}(f))$, then: $f$ fulfills LO iff the map $f^*\mid _{{\rm Spec}(B)}:{\rm Spec}(B)\rightarrow {\rm Spec}(A)$ is surjective.
\item\label{carloinj2} If $f$ is injective, then: $f$ fulfills LO iff the map $f^*\mid _{{\rm Spec}(B)}:{\rm Spec}(B)\rightarrow {\rm Spec}(A)$ is surjective.\end{enumerate}\label{carloinj}\end{proposition}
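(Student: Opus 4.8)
The plan is to reduce both parts to Lemma \ref{cargulo}, (\ref{cargulo2}), after observing that admissibility already confines the image of $f^*\mid _{{\rm Spec}(B)}$.

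\emph{Step 1 (part (\ref{carloinj1})).} I would recall from Remark \ref{admv} that, $f$ being admissible, $f^*({\rm Spec}(B))\subseteq V_A({\rm Ker}(f))={\rm Spec}(A)\cap [{\rm Ker}(f))$; in other words the image of the map $f^*\mid _{{\rm Spec}(B)}:{\rm Spec}(B)\rightarrow {\rm Spec}(A)$ always lies inside $V_A({\rm Ker}(f))$. Under the hypothesis ${\rm Spec}(A)\subseteq [{\rm Ker}(f))$ we get $V_A({\rm Ker}(f))={\rm Spec}(A)$, so the statement that $f^*\mid _{{\rm Spec}(B)}$ is surjective amounts to $f^*({\rm Spec}(B))={\rm Spec}(A)=V_A({\rm Ker}(f))$, which by Lemma \ref{cargulo}, (\ref{cargulo2}), is exactly the assertion that $f$ fulfills LO. Equivalently, one can argue straight from Definition \ref{3.1}: LO requires every $\phi \in {\rm Spec}(A)$ with ${\rm Ker}(f)\subseteq \phi $ to be of the form $f^*(\phi _1)$ for some $\phi _1\in {\rm Spec}(B)$, and the hypothesis makes such $\phi $ range over all of ${\rm Spec}(A)$, i.e. forces surjectivity of $f^*\mid _{{\rm Spec}(B)}$; the converse implication is immediate, since surjectivity in particular hits every prime of $A$ above ${\rm Ker}(f)$.

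\emph{Step 2 (part (\ref{carloinj2})).} If $f$ is injective, then ${\rm Ker}(f)=f^*(\Delta _B)=\Delta _A$ (the same observation used in the proof of Proposition \ref{gulogu}, (\ref{gulogu2})), so $[{\rm Ker}(f))={\rm Con}(A)\supseteq {\rm Spec}(A)$ and the hypothesis of (\ref{carloinj1}) is satisfied; hence (\ref{carloinj2}) follows from (\ref{carloinj1}).

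I expect no genuine obstacle here: the only point needing a little care is to keep apart the two nearby conditions --- ``every prime of $A$ lying above ${\rm Ker}(f)$ is in the image'' (LO) versus ``every prime of $A$ is in the image'' (surjectivity of $f^*\mid _{{\rm Spec}(B)}$) --- and to notice that each of the two hypotheses collapses the former into the latter. One also uses tacitly that $f^*\mid _{{\rm Spec}(B)}$ is a well-defined map into ${\rm Spec}(A)$, which is the standing admissibility assumption on $f$ in this section.
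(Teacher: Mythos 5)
Your proposal is correct and follows essentially the same route as the paper: part (\ref{carloinj1}) is exactly the observation that the hypothesis makes $V_A({\rm Ker}(f))={\rm Spec}(A)$, so Lemma \ref{cargulo}, (\ref{cargulo2}), gives the equivalence, and part (\ref{carloinj2}) reduces to part (\ref{carloinj1}) via ${\rm Ker}(f)=f^*(\Delta _B)=\Delta _A$. The paper's proof is just a terser version of the same argument.
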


\begin{proof} (\ref{carloinj1}) By Lemma \ref{cargulo}, (\ref{cargulo2}).

\noindent (\ref{carloinj2}) By (\ref{carloinj1}) and the fact that, if $f$ is injective, then ${\rm Ker}(f)=f^*(\Delta _B)=\Delta _A$.\end{proof}

\begin{example} The non--surjective bounded lattice morphism $k$ in Example \ref{exadm} is admissible and fulfills GU and LO (in a trivial way, because ${\rm Spec}({\cal D})=\{\Delta _{\cal D}\}$).

Here is an admissible bounded lattice morphism which does not fulfill GU, nor does it fulfill LO: let $H$ and $K$ be the bounded lattices given by the following Hasse diagrams, with $H$ a bounded sublattice of $K$, and $i:H\rightarrow K$ be the canonical bounded lattice embedding:\vspace*{-12pt}

\begin{center}\begin{tabular}{cccccc}
\begin{picture}(80,100)(0,0)
\put(40,20){\circle*{3}}
\put(38,10){$0$}
\put(40,20){\line(1,1){20}}
\put(40,20){\line(-1,1){20}}
\put(20,40){\circle*{3}}
\put(60,40){\circle*{3}}
\put(20,60){\circle*{3}}
\put(40,60){\circle*{3}}
\put(60,60){\circle*{3}}
\put(40,80){\circle*{3}}
\put(40,40){\circle*{3}}
\put(20,40){\line(0,1){20}}
\put(20,40){\line(1,1){20}}

\put(60,40){\line(-1,1){20}}

\put(60,40){\line(0,1){20}}
\put(40,80){\line(0,-1){60}}
\put(40,80){\line(-1,-1){20}}
\put(40,80){\line(1,-1){20}}
\put(14,34){$a$}
\put(42,34){$b$}
\put(61,34){$c$}
\put(14,62){$x$}
\put(34,60){$y$}
\put(61,62){$z$}
\put(38,83){$1$}
\put(25,0){$H$}
\end{picture}
&
\begin{picture}(20,100)(0,0)
\put(8,53){$i$}
\put(-20,50){\vector(1,0){60}}
\end{picture}
&
\begin{picture}(80,100)(0,0)
\put(40,20){\circle*{3}}
\put(38,10){$0$}
\put(40,20){\line(1,1){20}}
\put(40,20){\line(-1,1){20}}
\put(20,40){\circle*{3}}
\put(60,40){\circle*{3}}
\put(20,60){\circle*{3}}
\put(40,60){\circle*{3}}
\put(60,60){\circle*{3}}
\put(40,80){\circle*{3}}

\put(40,80){\line(1,-2){20}}
\put(50,60){\circle*{3}}
\put(40,40){\circle*{3}}
\put(20,40){\line(0,1){20}}
\put(20,40){\line(1,1){20}}
\put(60,40){\line(-1,1){20}}
\put(60,40){\line(0,1){20}}
\put(40,80){\line(0,-1){60}}
\put(40,80){\line(-1,-1){20}}
\put(40,80){\line(1,-1){20}}
\put(14,34){$a$}
\put(42,34){$b$}
\put(61,34){$c$}
\put(14,62){$x$}
\put(34,60){$y$}
\put(61,62){$z$}

\put(43,58){$u$}
\put(38,83){$1$}
\put(50,0){$K$}
\end{picture}
&
\begin{picture}(40,100)(0,0)
\put(20,25){\line(0,1){20}}
\put(20,25){\circle*{3}}
\put(20,35){\circle*{3}}
\put(20,45){\circle*{3}}
\put(16,15){$\Delta _H$}
\put(16,48){$\nabla _H$}
\put(22,33){$\chi $}
\put(3,0){${\rm Con}(H)$}
\end{picture}
&
\begin{picture}(40,100)(0,0)
\put(20,25){\line(0,1){10}}
\put(20,25){\circle*{3}}
\put(20,35){\circle*{3}}
\put(16,15){$\Delta _K$}
\put(16,38){$\nabla _K$}
\put(3,0){${\rm Con}(K)$}
\end{picture}
\end{tabular}
\end{center}\vspace*{-3pt}

Notice that ${\rm Con}(K)=\{\Delta _K,\nabla _K\}\cong {\cal L}_2$, so ${\rm Spec}(K)=\{\Delta _K\}$.

${\rm Con}(H)=\{\Delta _H,\chi ,\nabla _H\}\cong {\cal L}_3$, where $\chi =eq(\{0\},\{a,x\},\{b\},\{c,z\},\{y,1\})$, so ${\rm Spec}(H)=\{\Delta _H,\chi \}$.

$i^*(\Delta _K)=\Delta _H$, thus $i$ is admissible. ${\rm Ker}(i)=i^*(\Delta _K)=\Delta _H\subseteq \chi \in {\rm Spec}(H)$ and there exists no $\phi \in {\rm Spec}(K)=\{\Delta _K\}$ such that $i^*(\phi )=\chi $, therefore $i$ does not fulfill GU and it does not fulfill LO.\label{meproud}\end{example}

\begin{proposition} Not all admissible morphisms fulfill GU or LO, not even when ${\cal C}$ is congruence--distributive and semi--degenerate.\end{proposition}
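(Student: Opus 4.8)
The plan is to exhibit a single counterexample in a variety that is simultaneously congruence--distributive and semi--degenerate, and in fact Example \ref{meproud} already supplies it. Recall from Section \ref{preliminaries} that the class of (bounded) lattices is congruence--distributive, and from the discussion following Lemma \ref{folclor} that the class of bounded lattices is semi--degenerate; so it suffices to point to an admissible bounded lattice morphism that fails both GU and LO. The morphism $i:H\rightarrow K$ constructed in Example \ref{meproud} is exactly such a morphism: there we verified that ${\rm Con}(K)=\{\Delta _K,\nabla _K\}$, so ${\rm Spec}(K)=\{\Delta _K\}$, while ${\rm Con}(H)=\{\Delta _H,\chi ,\nabla _H\}$, so ${\rm Spec}(H)=\{\Delta _H,\chi \}$, and $i^*(\Delta _K)=\Delta _H$, which shows $i$ is admissible with ${\rm Ker}(i)=\Delta _H$.

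For the failure of LO, I would note that $\Delta _H\subseteq \chi $ with $\chi \in {\rm Spec}(H)$, yet the only prime congruence of $K$ is $\Delta _K$ and $i^*(\Delta _K)=\Delta _H\neq \chi $; hence there is no $\phi _1\in {\rm Spec}(K)$ with $i^*(\phi _1)=\chi $, so LO fails. Since ${\rm Ker}(i)\subseteq \Delta _H\subseteq \chi $, this is genuinely a violation of the LO condition as stated in Definition \ref{3.1}. For the failure of GU, take $\phi =\Delta _H$, $\psi =\chi $ in ${\rm Spec}(H)$ and $\phi _1=\Delta _K\in {\rm Spec}(K)$ with $i^*(\phi _1)=\Delta _H=\phi $; there is no $\psi _1\in {\rm Spec}(K)$ lying above $\phi _1$ with $i^*(\psi _1)=\psi =\chi$, again because ${\rm Spec}(K)=\{\Delta _K\}$. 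Thus $i$ is an admissible morphism, in a semi--degenerate congruence--distributive variety, satisfying neither GU nor LO, which is the assertion.

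There is essentially no obstacle here beyond having Example \ref{meproud} in hand: the only things to be careful about are that the two lattices $H\subseteq K$ have the stated congruence lattices (a finite computation using Remarks \ref{convex} and \ref{rex2}) and that the embedding is admissible, both of which were already checked in the example. So the proof reduces to a sentence invoking Example \ref{meproud} together with the observation that bounded lattices form a semi--degenerate congruence--distributive equational class.
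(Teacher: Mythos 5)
Your proposal is correct and follows exactly the paper's own argument: both invoke the embedding $i:H\rightarrow K$ from Example \ref{meproud} in the semi--degenerate, congruence--distributive class of bounded lattices, where ${\rm Spec}(K)=\{\Delta _K\}$ is too small to lie over or go up to $\chi \in {\rm Spec}(H)$. The extra detail you supply (the explicit witnesses $\phi =\Delta _H$, $\psi =\chi $, $\phi _1=\Delta _K$ for the failure of GU) is a correct unpacking of what the example already establishes.
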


\begin{proof} The bounded lattice embedding $i$ in Example \ref{meproud} is admissible and does not fulfill GU or LO.\end{proof}

\begin{remark} Clearly, by Remark \ref{rex2}:\begin{itemize}
\item If ${\cal C}$ is congruence--distributive and $h^*\mid _{{\rm Con}(N)}:{\rm Con}(N)\rightarrow {\rm Con}(M)$ is a bounded lattice isomorphism, then $h$ is admissible, $h^*\mid _{{\rm Spec}(N)}:{\rm Spec}(N)\rightarrow {\rm Spec}(M)$ is an order isomorphism and $h$ fulfills GU and LO.
\item If the commutator in $M$ and $N$ equals the intersection and $h^*\mid _{{\rm Con}(N)}:{\rm Con}(N)\rightarrow {\rm Con}(M)$ is a bounded lattice isomorphism, then $h$ is admissible, $h^*\mid _{{\rm Spec}(N)}:{\rm Spec}(N)\rightarrow {\rm Spec}(M)$ is an order isomorphism and $h$ fulfills GU and LO.
\end{itemize}\label{izomvsgu}\end{remark}

\begin{example} Let us see that the converses of the implications in Remark \ref{izomvsgu} do not hold, and let us see some more examples, which illustrate different cases that can appear, regarding admissibility, GU and LO. The following are examples of morphisms in the semi--degenerate congruence--distributive class of bounded lattices.

Let $H$, $K$ and the canonical embedding $i$ be as in Example \ref{meproud}. We have seen that $i$ is admissible and does not fulfill GU or LO. Note that ${\rm Con}(H)\ncong {\rm Con}(K)$, $|{\rm Spec}(H)|\neq |{\rm Spec}(K)|$ and $i^*\mid _{{\rm Con}(K)}:{\rm Con}(K)\rightarrow {\rm Con}(H)$ and $i^*\mid _{{\rm Spec}(K)}:{\rm Spec}(K)\rightarrow {\rm Spec}(H)$ are injective and they are not surjective.

Let us also consider the following bounded lattices:\vspace*{-12pt}

\begin{center}\begin{tabular}{ccccccc}
\begin{picture}(80,100)(0,0)
\put(40,20){\circle*{3}}
\put(38,10){$0$}
\put(40,20){\line(1,1){20}}
\put(40,20){\line(-1,1){20}}
\put(20,40){\circle*{3}}
\put(60,40){\circle*{3}}
\put(40,60){\circle*{3}}
\put(40,33){\circle*{3}}
\put(40,47){\circle*{3}}
\put(20,40){\line(1,1){20}}
\put(60,40){\line(-1,1){20}}
\put(40,60){\line(0,-1){40}}
\put(13,38){$a$}
\put(42,30){$b$}
\put(63,38){$c$}
\put(33,44){$d$}
\put(38,63){$1$}
\put(25,0){$E$}
\end{picture}
&\hspace*{-30pt}
\begin{picture}(80,100)(0,0)
\put(40,20){\circle*{3}}
\put(38,10){$0$}
\put(40,20){\line(1,1){20}}
\put(40,20){\line(-1,1){20}}
\put(20,40){\circle*{3}}
\put(60,40){\circle*{3}}
\put(20,60){\circle*{3}}
\put(40,60){\circle*{3}}
\put(40,70){\circle*{3}}
\put(60,60){\circle*{3}}
\put(40,80){\circle*{3}}
\put(40,40){\circle*{3}}
\put(20,40){\line(0,1){20}}
\put(20,40){\line(1,1){20}}
\put(60,40){\line(-1,1){20}}
\put(60,40){\line(0,1){20}}
\put(40,80){\line(0,-1){60}}
\put(40,80){\line(-1,-1){20}}
\put(40,80){\line(1,-1){20}}
\put(14,34){$a$}
\put(42,34){$b$}
\put(61,34){$c$}
\put(14,62){$x$}
\put(34,60){$y$}
\put(61,62){$z$}
\put(43,67){$t$}
\put(38,83){$1$}
\put(25,0){$F$}
\end{picture}
&\hspace*{-30pt}
\begin{picture}(80,100)(0,0)
\put(40,20){\circle*{3}}
\put(38,10){$0$}
\put(40,20){\line(1,1){20}}
\put(40,20){\line(-1,1){20}}
\put(20,40){\circle*{3}}
\put(60,40){\circle*{3}}
\put(20,60){\circle*{3}}
\put(40,60){\circle*{3}}
\put(60,60){\circle*{3}}
\put(40,80){\circle*{3}}
\put(40,33){\circle*{3}}
\put(40,47){\circle*{3}}
\put(20,40){\line(0,1){20}}
\put(20,40){\line(1,1){20}}
\put(60,40){\line(-1,1){20}}
\put(60,40){\line(0,1){20}}
\put(40,80){\line(0,-1){60}}
\put(40,80){\line(-1,-1){20}}
\put(40,80){\line(1,-1){20}}
\put(14,34){$a$}
\put(42,30){$b$}
\put(61,34){$c$}
\put(33,44){$d$}
\put(14,62){$x$}
\put(34,60){$y$}
\put(61,62){$z$}
\put(38,83){$1$}
\put(25,0){$L$}
\end{picture}
&\hspace*{-30pt}
\begin{picture}(80,100)(0,0)
\put(40,20){\circle*{3}}

\put(38,10){$0$}
\put(40,20){\line(1,1){20}}

\put(40,20){\line(-1,1){20}}
\put(20,40){\circle*{3}}
\put(60,40){\circle*{3}}
\put(20,60){\circle*{3}}
\put(60,60){\circle*{3}}
\put(40,80){\circle*{3}}
\put(40,50){\circle*{3}}
\put(20,40){\line(0,1){20}}
\put(60,40){\line(0,1){20}}
\put(40,80){\line(0,-1){60}}
\put(40,80){\line(-1,-1){20}}
\put(40,80){\line(1,-1){20}}
\put(14,34){$a$}
\put(42,47){$b$}
\put(61,34){$c$}
\put(14,62){$x$}
\put(61,62){$z$}
\put(38,83){$1$}
\put(25,0){$Q$}
\end{picture}
&\hspace*{-30pt}
\begin{picture}(80,100)(0,0)
\put(40,20){\circle*{3}}
\put(38,10){$0$}
\put(40,20){\line(1,1){20}}
\put(40,20){\line(-1,1){20}}
\put(20,40){\circle*{3}}
\put(60,40){\circle*{3}}
\put(20,60){\circle*{3}}
\put(40,60){\circle*{3}}
\put(60,60){\circle*{3}}
\put(40,80){\circle*{3}}
\put(40,33){\circle*{3}}
\put(40,47){\circle*{3}}
\put(50,60){\circle*{3}}
\put(20,40){\line(0,1){20}}
\put(20,40){\line(1,1){20}}
\put(60,40){\line(-1,1){20}}
\put(60,40){\line(0,1){20}}
\put(40,80){\line(0,-1){60}}
\put(40,80){\line(-1,-1){20}}
\put(40,80){\line(1,-1){20}}
\put(40,80){\line(1,-2){20}}
\put(43,58){$u$}
\put(14,34){$a$}
\put(42,30){$b$}
\put(61,34){$c$}
\put(33,44){$d$}
\put(14,62){$x$}
\put(34,60){$y$}
\put(61,62){$z$}
\put(38,83){$1$}
\put(50,0){$R$}
\end{picture}
&\hspace*{-30pt}
\begin{picture}(80,100)(0,0)
\put(40,20){\circle*{3}}
\put(38,10){$0$}
\put(40,20){\line(1,1){20}}
\put(40,20){\line(-1,1){20}}
\put(20,40){\circle*{3}}
\put(60,40){\circle*{3}}
\put(20,60){\circle*{3}}
\put(40,60){\circle*{3}}
\put(60,60){\circle*{3}}
\put(40,80){\circle*{3}}
\put(40,80){\line(1,-2){20}}
\put(53,55){\circle*{3}}
\put(47,65){\circle*{3}}
\put(40,40){\circle*{3}}
\put(20,40){\line(0,1){20}}
\put(20,40){\line(1,1){20}}
\put(60,40){\line(-1,1){20}}
\put(60,40){\line(0,1){20}}
\put(40,80){\line(0,-1){60}}
\put(40,80){\line(-1,-1){20}}
\put(40,80){\line(1,-1){20}}
\put(14,34){$a$}
\put(42,34){$b$}
\put(61,34){$c$}
\put(14,62){$x$}
\put(34,60){$y$}
\put(61,62){$z$}

\put(55,53){$u$}
\put(41,63){$v$}
\put(38,83){$1$}
\put(50,0){$S$}
\end{picture}
&\hspace*{-30pt}
\begin{picture}(80,100)(0,0)
\put(40,20){\circle*{3}}
\put(38,10){$0$}
\put(40,20){\line(1,1){20}}
\put(40,20){\line(-1,1){20}}
\put(40,33){\circle*{3}}
\put(40,47){\circle*{3}}
\put(20,40){\circle*{3}}
\put(60,40){\circle*{3}}
\put(20,60){\circle*{3}}
\put(40,60){\circle*{3}}
\put(60,60){\circle*{3}}
\put(40,80){\circle*{3}}
\put(40,80){\line(1,-2){20}}
\put(53,55){\circle*{3}}
\put(47,65){\circle*{3}}

\put(20,40){\line(0,1){20}}
\put(20,40){\line(1,1){20}}
\put(60,40){\line(-1,1){20}}
\put(60,40){\line(0,1){20}}
\put(40,80){\line(0,-1){60}}
\put(40,80){\line(-1,-1){20}}
\put(40,80){\line(1,-1){20}}
\put(14,34){$a$}
\put(42,30){$b$}
\put(61,34){$c$}
\put(33,44){$d$}
\put(14,62){$x$}
\put(34,60){$y$}
\put(61,62){$z$}
\put(55,53){$u$}
\put(41,63){$v$}
\put(38,83){$1$}
\put(50,0){$T$}
\end{picture}\end{tabular}\end{center}\vspace*{-2pt}

${\rm Con}(E)=\{\Delta _E,\varepsilon ,\nabla _E\}\cong {\cal L}_3$, where $\varepsilon =eq(\{0\},\{a\},\{b,d\},\{c\},\{1\})$, so ${\rm Spec}(E)=\{\Delta _E,\varepsilon \}$.

${\rm Con}(F)=\{\Delta _F,\phi _1,\phi _2,\nabla _F\}\cong {\cal L}_4$, where $\phi _1=eq(\{0\},\{a\},\{b\},\{c\},\{x\},\{y,t\},\{z\},\{1\})$ and $\phi _2=eq(\{0\},$\linebreak $\{a,x\},\{b\},\{c,z\},\{y,t,1\})$, so ${\rm Spec}(F)=\{\Delta _F,\phi _1,\phi _2\}$.

${\rm Con}(L)=\{\Delta _L,\lambda _1,\lambda _2,\lambda _3,\nabla _L\}$, with the Hasse diagram below, where $\lambda _1=eq(\{0\},\{a\},\{b,d\},\{c\},\{x\},\{y\},$\linebreak $\{z\},\{1\})$, $\lambda _2=eq(\{0\},\{a,x\},\{b\},\{d\},\{c,z\},\{y,1\}$ and $\lambda _3=eq(\{0\},\{a,x\},\{b,d\},\{c,z\},\{y,1\}$, so ${\rm Spec}(L)=\{\lambda _1,\lambda _2,\lambda _3\}$.

${\rm Con}(Q)=\{\Delta _Q,\gamma _1,\gamma _2,\gamma _3,\nabla _Q\}$, with the Hasse diagram below, where $\gamma _1=eq(\{0\},\{a,x\},\{b\},\{c\},\{z\},\{1\})$, $\gamma _2=eq(\{0\},\{a\},\{x\},\{b\},\{c,z\},\{1\})$ and $\gamma _3=eq(\{0\},\{a,x\},\{b\},\{c,z\},\{1\})$, so ${\rm Spec}(Q)=\{\gamma _1,\gamma _2,\gamma _3\}$.

${\rm Con}(R)=\{\Delta _R,\rho ,\nabla _R\}\cong {\cal L}_3$, where $\rho =eq(\{0\},\{a\},\{b,d\},\{c\},\{x\},\{y\},\{u\},\{z\},\{1\})$, so ${\rm Spec}(R)=\{\Delta _R,\rho \}$.

${\rm Con}(S)=\{\Delta _S,\sigma ,\nabla _S\}\cong {\cal L}_3$, where $\sigma =eq(\{0\},\{a\},\{b\},\{c\},\{x\},\{y\},\{u,v\},\{z\},\{1\})$, so ${\rm Spec}(S)=\{\Delta _S,\sigma \}$.

${\rm Con}(T)=\{\Delta _T,\tau _1,\tau _2,\tau _3,\nabla _T\}$, with the Hasse diagram below, where $\tau _1 =eq(\{0\},\{a\},\{b\},\{c\},\{d\},\{x\},\{y\},$\linebreak $\{u,v\},\{z\},\{1\})$, $\tau _2 =eq(\{0\},\{a\},\{b,d\},\{c\},\{x\},\{y\},\{u\},\{v\},\{z\},\{1\})$ and $\tau _3 =eq(\{0\},\{a\},\{b,d\},\{c\},\{x\},$\linebreak $\{y\},\{u,v\},\{z\},\{1\})$, so ${\rm Spec}(T)=\{\tau _1,\tau _2,\tau _3\}$.\vspace*{-5pt}

\begin{center}\begin{tabular}{ccccccc}
\begin{picture}(40,75)(0,0)
\put(20,25){\line(0,1){20}}
\put(20,25){\circle*{3}}
\put(20,35){\circle*{3}}
\put(20,45){\circle*{3}}
\put(16,15){$\Delta _E$}
\put(16,48){$\nabla _E$}
\put(22,32){$\varepsilon $}
\put(3,0){${\rm Con}(E)$}
\end{picture}
&
\begin{picture}(40,75)(0,0)
\put(20,25){\line(0,1){30}}
\put(20,25){\circle*{3}}
\put(20,35){\circle*{3}}

\put(20,45){\circle*{3}}
\put(20,55){\circle*{3}}
\put(16,15){$\Delta _F$}
\put(16,58){$\nabla _F$}
\put(9,44){$\phi _2$}
\put(22,33){$\phi _1$}
\put(3,0){${\rm Con}(F)$}
\end{picture}
&
\begin{picture}(40,75)(0,0) 
\put(20,25){\line(1,1){10}}
\put(20,25){\line(-1,1){10}}
\put(20,45){\line(1,-1){10}}
\put(20,45){\line(-1,-1){10}}
\put(20,45){\line(0,1){15}}
\put(20,25){\circle*{3}}
\put(20,60){\circle*{3}}
\put(10,35){\circle*{3}}
\put(30,35){\circle*{3}}
\put(20,45){\circle*{3}}
\put(16,15){$\Delta _L$}
\put(16,63){$\nabla _L$}
\put(22,44){$\lambda _3$}
\put(0,32){$\lambda _1$}
\put(31,32){$\lambda _2$}
\put(3,0){${\rm Con}(L)$}
\end{picture}
&
\begin{picture}(40,75)(0,0) 
\put(20,25){\line(1,1){10}}
\put(20,25){\line(-1,1){10}}
\put(20,45){\line(1,-1){10}}
\put(20,45){\line(-1,-1){10}}
\put(20,45){\line(0,1){15}}
\put(20,25){\circle*{3}}
\put(20,60){\circle*{3}}
\put(10,35){\circle*{3}}
\put(30,35){\circle*{3}}
\put(20,45){\circle*{3}}
\put(16,15){$\Delta _Q$}
\put(16,63){$\nabla _Q$}
\put(22,44){$\gamma _3$}
\put(0,32){$\gamma _1$}
\put(32,32){$\gamma _2$}
\put(3,0){${\rm Con}(Q)$}
\end{picture}
&
\begin{picture}(40,75)(0,0)
\put(20,25){\line(0,1){20}}
\put(20,25){\circle*{3}}
\put(20,35){\circle*{3}}
\put(20,45){\circle*{3}}
\put(16,15){$\Delta _R$}

\put(16,48){$\nabla _R$}
\put(22,33){$\rho $}
\put(3,0){${\rm Con}(R)$}
\end{picture}
&
\begin{picture}(40,75)(0,0)
\put(20,25){\line(0,1){20}}
\put(20,25){\circle*{3}}
\put(20,35){\circle*{3}}
\put(20,45){\circle*{3}}
\put(16,15){$\Delta _S$}
\put(16,48){$\nabla _S$}
\put(22,33){$\sigma $}
\put(3,0){${\rm Con}(S)$}
\end{picture}
&
\begin{picture}(40,75)(0,0) 
\put(20,25){\line(1,1){10}}
\put(20,25){\line(-1,1){10}}
\put(20,45){\line(1,-1){10}}
\put(20,45){\line(-1,-1){10}}
\put(20,45){\line(0,1){15}}
\put(20,25){\circle*{3}}
\put(20,60){\circle*{3}}
\put(10,35){\circle*{3}}
\put(30,35){\circle*{3}}
\put(20,45){\circle*{3}}
\put(16,15){$\Delta _T$}
\put(16,63){$\nabla _T$}
\put(22,44){$\tau _3$}
\put(1,32){$\tau _1$}
\put(33,32){$\tau _2$}
\put(3,0){${\rm Con}(T)$}
\end{picture}\end{tabular}\end{center}

Let $j:H\rightarrow R$, $k:L\rightarrow T$, $l:H\rightarrow L$ be the canonical bounded lattice embeddings and $m:R\rightarrow S$, $q:F\rightarrow L$, $r:E\rightarrow Q$ be the bounded lattice morphisms defined by:

$m(d)=b$ and $m(w)=w$ for all $w\in R\setminus \{d\}$,

$q(t)=y$ and $q(w)=w$ for all $q\in F\setminus \{t\}$,

$r(d)=b$ and $r(w)=w$ for all $r\in E\setminus \{d\}$.

None of these morphisms is surjective.

${\rm Con}(L)\cong {\rm Con}(T)$. $k^*(\tau _1)=\Delta _L\notin {\rm Spec}(L)$, so $k$ is not admissible.

${\rm Con}(H)\cong {\rm Con}(R)$. ${\rm Ker}(j)=j^*(\Delta _R)=j^*(\rho )=\Delta _H$, so, as in the case of $i$, $j$ is admissible and does not fulfill GU or LO. $j^*\mid _{{\rm Con}(R)}:{\rm Con}(R)\rightarrow {\rm Con}(H)$ and $j^*\mid _{{\rm Spec}(R)}:{\rm Spec}(R)\rightarrow {\rm Spec}(H)$ are neither injective, nor surjective.

${\rm Con}(H)\ncong {\rm Con}(L)$ and $|{\rm Spec}(H)|\neq |{\rm Spec}(L)|$. ${\rm Ker}(l)=l^*(\Delta _L)=l^*(\lambda _1)=\Delta _H$ and $l^*(\lambda _2)=l^*(\lambda _3)=\chi $, thus $l$ is admissible and fulfills GU and LO. $l^*\mid _{{\rm Con}(L)}:{\rm Con}(L)\rightarrow {\rm Con}(H)$ and $l^*\mid _{{\rm Spec}(L)}:{\rm Spec}(L)\rightarrow {\rm Spec}(H)$ are surjective and they are not injective.

${\rm Con}(E)\ncong {\rm Con}(Q)$ and $|{\rm Spec}(E)|\neq |{\rm Spec}(Q)|$. ${\rm Ker}(r)=r^*(\Delta _Q)=r^*(\gamma _1)=r^*(\gamma _2)=r^*(\gamma _3)=\varepsilon $, thus $r$ is admissible and fulfills GU and LO. $r^*\mid _{{\rm Con}(Q)}:{\rm Con}(Q)\rightarrow {\rm Con}(E)$ and $r^*\mid _{{\rm Spec}(Q)}:{\rm Spec}(Q)\rightarrow {\rm Spec}(E)$ are neither injective, nor surjective.

${\rm Con}(F)\ncong {\rm Con}(L)$; $|{\rm Spec}(F)|=|{\rm Spec}(L)|$, but the posets $({\rm Spec}(F),\subseteq )$ and $({\rm Spec}(L),\subseteq )$ are not isomorphic. ${\rm Ker}(q)=q^*(\Delta _L)=q^*(\lambda _1)=\phi _1$ and $q^*(\lambda _2)=q^*(\lambda _3)=\phi _2$, thus $q$ is admissible and fulfills GU and LO. $q^*\mid _{{\rm Con}(L)}:{\rm Con}(L)\rightarrow {\rm Con}(F)$ and $q^*\mid _{{\rm Spec}(L)}:{\rm Spec}(L)\rightarrow {\rm Spec}(F)$ are neither injective, nor surjective. 

${\rm Con}(R)\cong {\rm Con}(S)$. ${\rm Ker}(m)=m^*(\Delta _S)=m^*(\sigma )=\rho $, thus $m$ is admissible and fulfills GU and LO. $m^*\mid _{{\rm Con}(S)}:{\rm Con}(S)\rightarrow {\rm Con}(R)$ and $m^*\mid _{{\rm Spec}(S)}:{\rm Spec}(S)\rightarrow {\rm Spec}(R)$ are neither injective, nor surjective.\label{exadmgulo}\end{example}

\begin{lemma}\begin{enumerate}
\item\label{speccat1} For all $\theta \in {\rm Con}(M)$, ${\rm Spec}(M/\theta )=p_{\theta }(V_M(\theta ))$ and the mapping $\gamma \mapsto \gamma /\theta $ sets an order isomorphism from $(V_M(\theta ),\subseteq )$ to $({\rm Spec}(M/\theta ),\subseteq )$.
\item\label{speccat2} ${\rm Con}(h(M))=h([{\rm Ker}(h)))$.
\item\label{speccat3} ${\rm Spec}(h(M))=h(V_M({\rm Ker}(h)))$.
\item\label{speccat0} If $h$ is surjective, then ${\rm Spec}(N)=h(V_M({\rm Ker}(h)))$ and $V_M({\rm Ker}(h))=h^*({\rm Spec}(N))$.
\item\label{speccat4} For all $\alpha ,\beta \in {\rm Con}(M)$, $h(\alpha )\subseteq h(\beta )$ iff $\alpha \subseteq \beta $.
\item\label{speccat5} For all $\theta \in [{\rm Ker}(h))$, $V_{h(M)}(h(\theta ))=h(V_M({\rm Ker}(h)))$.\end{enumerate}\label{speccat}\end{lemma}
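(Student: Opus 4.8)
The plan is to reduce the whole statement to two facts recalled in Section \ref{preliminaries} --- the bounded lattice isomorphism $\gamma\mapsto\gamma/\theta$ of $[\theta)$ onto ${\rm Con}(M/\theta)$, and the canonical isomorphism $\bar h\colon M/{\rm Ker}(h)\rightarrow h(M)$, $\bar h(x/{\rm Ker}(h))=h(x)$ (note that $h(M)$ is a subalgebra of $N$, hence a member of ${\cal C}$, so that ${\rm Spec}(h(M))$ and $[\cdot,\cdot]_{h(M)}$ make sense, and that $h=\bar h\circ p_{{\rm Ker}(h)}$) --- plus one new ingredient, namely part (\ref{speccat1}): for $\gamma\in[\theta)$, $\gamma\in{\rm Spec}(M)$ iff $\gamma/\theta\in{\rm Spec}(M/\theta)$. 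I would establish (\ref{speccat1}) first and then read off (\ref{speccat2})--(\ref{speccat5}) by transporting along $p_\theta$ and $\bar h$.

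To prove (\ref{speccat1}) I use that the isomorphism $[\theta)\to{\rm Con}(M/\theta)$ matches properness ($\gamma=\nabla_M$ iff $\gamma/\theta=\nabla_{M/\theta}$) and inclusions, and that by Remark \ref{r2.8}, for $\alpha,\beta\in[\theta)$ one has $[\alpha/\theta,\beta/\theta]_{M/\theta}=([\alpha,\beta]_M\vee\theta)/\theta$; hence $[\alpha/\theta,\beta/\theta]_{M/\theta}\subseteq\gamma/\theta$ iff $[\alpha,\beta]_M\vee\theta\subseteq\gamma$ iff $[\alpha,\beta]_M\subseteq\gamma$, the last step because $\theta\subseteq\gamma$. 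So $\gamma/\theta$ is prime exactly when $\gamma\neq\nabla_M$ and $[\alpha,\beta]_M\subseteq\gamma$ forces $\alpha\subseteq\gamma$ or $\beta\subseteq\gamma$ \emph{for all} $\alpha,\beta\in[\theta)$; this is clearly implied by $\gamma\in{\rm Spec}(M)$, and for the converse I would take arbitrary $\alpha,\beta\in{\rm Con}(M)$ with $[\alpha,\beta]_M\subseteq\gamma$ and pass to $\alpha\vee\theta,\beta\vee\theta\in[\theta)$: by distributivity of the commutator over joins (Proposition \ref{1.3}) together with $[\delta,\theta]_M\subseteq\delta\cap\theta\subseteq\theta\subseteq\gamma$ (Remark \ref{r2.8}), we get $[\alpha\vee\theta,\beta\vee\theta]_M\subseteq\gamma$, so $\alpha\vee\theta\subseteq\gamma$ or $\beta\vee\theta\subseteq\gamma$, whence $\alpha\subseteq\gamma$ or $\beta\subseteq\gamma$. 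This yields ${\rm Spec}(M/\theta)=\{\gamma/\theta\mid\gamma\in V_M(\theta)\}=p_\theta(V_M(\theta))$, and restricting the lattice isomorphism gives the order isomorphism $(V_M(\theta),\subseteq)\cong({\rm Spec}(M/\theta),\subseteq)$.

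Writing $K={\rm Ker}(h)$ and $h=\bar h\circ p_K$ (with $\bar h$ an isomorphism, which transports the commutator and hence primality): (\ref{speccat2}) is ${\rm Con}(h(M))=\bar h({\rm Con}(M/K))=\bar h(p_K([K)))=h([K))$; (\ref{speccat3}) follows from (\ref{speccat1}) applied with $\theta=K$, giving ${\rm Spec}(h(M))=\bar h({\rm Spec}(M/K))=\bar h(p_K(V_M(K)))=h(V_M(K))$; for (\ref{speccat0}), if $h$ is surjective then $h(M)=N$, so ${\rm Spec}(N)=h(V_M(K))$ by (\ref{speccat3}), and applying $h^*$ gives $h^*({\rm Spec}(N))=h^*(h(V_M(K)))=V_M(K)$ once one records that $h^*(h(\gamma))=\gamma$ for every $\gamma\in[K)$ (the inclusion $\gamma\subseteq h^*(h(\gamma))$ is automatic; for the reverse, if $(h(x),h(y))\in h(\gamma)$, pick $(x',y')\in\gamma$ with $h(x')=h(x)$, $h(y')=h(y)$, so $(x,x'),(y',y)\in K\subseteq\gamma$ and hence $(x,y)\in\gamma$). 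Then (\ref{speccat4}) says precisely that $\gamma\mapsto h(\gamma)$ is an order embedding of $[K)$ (the forward implication is monotonicity of the direct image; the reverse one is $\alpha=h^*(h(\alpha))\subseteq h^*(h(\beta))=\beta$, again via $h^*\circ h={\rm id}$ on $[K)$), and (\ref{speccat5}) is the image under $\bar h$ of the order-isomorphism in (\ref{speccat1}): for $\theta\in[K)$ one gets $V_{h(M)}(h(\theta))=\bar h(V_{M/K}(\theta/K))=\bar h(p_K(V_M(\theta)))=h(V_M(\theta))$.

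I expect the only non-routine point to be the ``reflection'' direction of (\ref{speccat1}) --- that primality of $\gamma/\theta$ forces primality of $\gamma$ --- where the passage to $[\alpha\vee\theta,\beta\vee\theta]_M\subseteq\gamma$ (using distributivity of the commutator and $[\cdot,\theta]_M\subseteq\theta$) is exactly what cuts the quantifier over \emph{all} congruences of $M$ down to those in $[\theta)$; everything else is bookkeeping with the correspondence theorem and the first isomorphism theorem.
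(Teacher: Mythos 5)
Your proof is correct and follows essentially the same route as the paper's: part (\ref{speccat1}) via the correspondence $\gamma \mapsto \gamma /\theta $ together with Remark \ref{r2.8}, and parts (\ref{speccat2})--(\ref{speccat5}) by transporting along $p_{{\rm Ker}(h)}$ and the first--isomorphism map $M/{\rm Ker}(h)\cong h(M)$. Your treatment of (\ref{speccat1}) is in fact slightly more complete than the paper's: the paper silently identifies ``$[\alpha ,\beta ]_M\subseteq \phi $ forces $\alpha \subseteq \phi $ or $\beta \subseteq \phi $ for $\alpha ,\beta \in [\theta )$'' with genuine primality of $\phi $, whereas you supply the needed reduction by passing to $\alpha \vee \theta ,\beta \vee \theta $ and using distributivity of the commutator over joins. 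One small remark on (\ref{speccat5}): your conclusion $V_{h(M)}(h(\theta ))=h(V_M(\theta ))$ is exactly what the paper's own chain of equalities produces; the expression $h(V_M({\rm Ker}(h)))$ appearing in the statement (and at the very end of the paper's computation) looks like a typo for $h(V_M(\theta ))$, so your version is the one to keep.
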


\begin{proof} (\ref{speccat1}) Let $\theta \in {\rm Con}(M)$, and recall that the mapping $\gamma \mapsto p_{\theta }(\gamma )=\gamma /\theta $ sets a bounded lattice isomorphism from $[\theta )$ to ${\rm Con}(M/\theta )$. Let $\psi ,\gamma ,\delta \in {\rm Con}(M/\theta )$, so that $\psi =\phi /\theta $, $\gamma =\alpha /\theta $ and $\delta =\beta /\theta $ for some $\phi ,\alpha ,\beta \in [\theta )$. We have the following equivalences, according to Remark \ref{r2.8}: $[\gamma ,\delta ]_{M/\theta }\subseteq \psi $ iff $[\alpha /\theta ,\beta /\theta ]_{M/\theta }\subseteq \phi /\theta $ iff $([\alpha,\beta ]_M\vee \theta )/\theta \subseteq \phi /\theta $ iff $[\alpha,\beta ]_M\vee \theta \subseteq \phi $ iff $[\alpha,\beta ]_M\subseteq \phi $, since $\phi \supseteq \theta $. We also have: $\gamma \subseteq \psi $ iff $\alpha /\theta \subseteq \phi /\theta $ iff $\alpha \subseteq \phi $, and: $\delta \subseteq \psi $ iff $\beta /\theta \subseteq \phi /\theta $ iff $\beta \subseteq \phi $. Therefore: $\phi /\theta =\psi \in {\rm Spec}(M/\theta )$ iff $[\gamma ,\delta ]_{M/\theta }\subseteq \psi $ implies $\gamma \subseteq \psi $ or $\delta \subseteq \psi $, iff $[\alpha ,\beta ]_M\subseteq \phi $ implies $\alpha \subseteq \phi $ or $\beta \subseteq \phi $ iff $\phi \in {\rm Spec}(M)$. Hence ${\rm Spec}(M/\theta )=\{\phi /\theta \ |\ \phi \in [\theta )\cap {\rm Spec}(M)\}=\{\phi /\theta \ |\ \phi \in V_M(\theta )\}=p_{\theta }(V_M(\theta ))$. Thus the mapping above sets a surjection from $V_M(\theta )$ to ${\rm Spec}(M/\theta )$; since it sets a bounded lattice isomorphism, thus an order isomorphism, from $[\theta )$ to ${\rm Con}(M/\theta )$, it follows this map is also injective, thus it is a bijection from $V_M(\theta )$ to ${\rm Spec}(M/\theta )$, hence it is an order isomorphism between these orderred sets.

\noindent (\ref{speccat2}) By the Fundamental Isomorphism Theorem, the map $\varphi :M/{\rm Ker}(h)\rightarrow h(M)$, defined by $\varphi (a/{\rm Ker}(h))=h(a)$ for all $a\in A$, is well defined and it is an isomorphism in ${\cal C}$. Hence ${\rm Con}(h(M))=\{\varphi (\gamma )\ |\ \gamma \in {\rm Con}(M/{\rm Ker}(h))\}=\{\varphi (\theta /{\rm Ker}(h))\ |\ \theta \in [{\rm Ker}(h))\}=\{h(\theta )\ |\ \theta \in [{\rm Ker}(h))\}=h([{\rm Ker}(h)))$.

\noindent (\ref{speccat3}) By (\ref{speccat1}) and (\ref{speccat2}) and its proof, ${\rm Spec}(h(M))=\{\varphi (\psi )\ |\ \psi \in {\rm Spec}(M/{\rm Ker}(h))\}=\{\varphi (\phi /{\rm Ker}(h))\ |\ \phi \in V_M({\rm Ker}(h))\}=\{h(\phi )\ |\ \phi \in V_M({\rm Ker}(h))\}=h(V_M({\rm Ker}(h)))$.

\noindent (\ref{speccat0}) By (\ref{speccat3}) and the surjectivity of $h$, ${\rm Spec}(N)={\rm Spec}(h(M))=h(V_M({\rm Ker}(h)))$ and, for all $\theta \in V_M({\rm Ker}(h))$, $h^*(h(\theta ))=\theta $, thus $V_M({\rm Ker}(h))=h^*(h(V_M({\rm Ker}(h))))=h^*({\rm Spec}(N))$.

\noindent (\ref{speccat4}) By the proof of (\ref{speccat2}), $h(\alpha )\subseteq h(\beta )$ iff $\varphi (\alpha /{\rm Ker}(h))\subseteq \varphi (\beta /{\rm Ker}(h))$ iff $\alpha /{\rm Ker}(h)\subseteq \beta /{\rm Ker}(h)$ iff $\alpha \subseteq \beta $.

\noindent (\ref{speccat5}) By (\ref{speccat2}) and its proof, along with (\ref{speccat3}) and (\ref{speccat4}), for all $\theta \in [{\rm Ker}(h))$, $V_{h(M)}(h(\theta ))=[h(\theta ))\cap {\rm Spec}(h(M))=[h(\theta ))\cap \{h(\phi )\ |\ \phi \in V_M({\rm Ker}(h))\}=\{h(\phi )\ |\ \phi \in V_M({\rm Ker}(h)),h(\theta )\subseteq h(\phi )\}=\{h(\phi )\ |\ \phi \in V_M({\rm Ker}(h)),\theta \subseteq \phi \}=\{h(\phi )\ |\ \phi \in {\rm Spec}(M)\cap [{\rm Ker}(h))\cap [\theta )\}=\{h(\phi )\ |\ \phi \in {\rm Spec}(M)\cap [\theta )\}=\{h(\phi )\ |\ \phi \in V_M(\theta )\}=h(V_M({\rm Ker}(h)))$, since ${\rm Ker}(h)\subseteq \theta $.\end{proof}

\begin{proposition} If $h$ is surjective, then $h$ is admissible and fulfills GU and LO. The converse is not true, not even when ${\cal C}$ is congruence--distributive and semi--degenerate.\label{surjgulo}\end{proposition}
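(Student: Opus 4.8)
The plan is to establish the three positive claims separately, each by a direct appeal to lemmas already available, and then to read the converse off the examples already constructed. That a surjective $h$ is admissible is exactly Lemma~\ref{surjadm} (citing \cite{agl}), so no new work is needed there; with admissibility secured, $h$ falls under the scope of the characterizations in Lemma~\ref{cargulo}. For LO, I would use Lemma~\ref{cargulo},~(\ref{cargulo3}): an admissible morphism fulfills LO precisely when $V_M({\rm Ker}(h))=h^*({\rm Spec}(N))$, and this equality is delivered, for surjective $h$, by Lemma~\ref{speccat},~(\ref{speccat0}). So LO is immediate.

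For GU I would argue straight from Definition~\ref{3.1}. Fix $\phi,\psi\in{\rm Spec}(M)$ and $\phi_1\in{\rm Spec}(N)$ with $\phi\subseteq\psi$ and $h^*(\phi_1)=\phi$. Since ${\rm Ker}(h)\subseteq\phi\subseteq\psi$ (Remark~\ref{ker}), we get $\psi\in V_M({\rm Ker}(h))=h^*({\rm Spec}(N))$ by Lemma~\ref{speccat},~(\ref{speccat0}), so $\psi=h^*(\psi_1)$ for some $\psi_1\in{\rm Spec}(N)$ (in fact $\psi_1=h(\psi)$, because $h^*$ is injective when $h$ is surjective). It remains to check $\phi_1\subseteq\psi_1$: from $h^*(\phi_1)=\phi\subseteq\psi=h^*(\psi_1)$, applying the direct image $h$ and using the identity $h(h^*(\cdot))=\cdot$ valid for surjective $h$ (recorded in Section~\ref{preliminaries}), we obtain $\phi_1\subseteq\psi_1$. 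This $\psi_1$ witnesses GU. One can equally run this through Lemma~\ref{cargulo},~(\ref{cargulo1}), showing $V_M(h^*(\psi))\subseteq h^*(V_N(\psi))$ for every $\psi\in{\rm Spec}(N)$, by the same calculation.

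For the failure of the converse, I would simply point to morphisms already exhibited: the non--surjective bounded lattice morphism $k$ of Example~\ref{exadm} --- or, equally well, any of $l,q,r,m$ of Example~\ref{exadmgulo} --- is admissible and fulfills GU and LO, and all of these are morphisms in the class of bounded lattices, which is congruence--distributive and semi--degenerate; this settles the last sentence. The computations throughout are routine. The only point deserving a moment's care is the verification, inside the GU argument, that the prime congruence $\psi_1$ produced from $\psi$ actually sits above $\phi_1$; this hinges on the interaction of $h^*$ with the direct image $h$ for a surjective $h$, and presents no genuine obstacle.
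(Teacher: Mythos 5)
Your proposal is correct and follows essentially the same route as the paper: admissibility via Lemma \ref{surjadm}, LO by combining Lemma \ref{speccat}, (\ref{speccat0}), with Lemma \ref{cargulo}, and GU by producing $\psi_1$ from LO and pulling the inclusion $\phi_1\subseteq\psi_1$ back through $h(h^*(\cdot))=\cdot$ for surjective $h$, with the converse settled by the same non--surjective bounded lattice morphisms from Example \ref{exadmgulo} (the paper uses $l,r,q,m$; your additional mention of $k$ from Example \ref{exadm} is also valid).
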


\begin{proof} By Lemma \ref{surjadm}, $h$ is admissible. By Lemma \ref{speccat}, (\ref{speccat0}), and Lemma \ref{cargulo}, (\ref{cargulo2}), $V_M({\rm Ker}(h))=h^*({\rm Spec}(N))$, thus $h$ fulfills LO. Now let let $\phi ,\psi \in {\rm Spec}(M)$ and $\phi _1\in {\rm Spec}(N)$ such that $h^*(\phi _1)=\phi $ and $\phi \subseteq \psi $. Then, by Remark \ref{ker}, ${\rm Ker}(h)\subseteq \phi $, thus ${\rm Ker}(h)\subseteq \psi $, so, since $h$ fulfills LO, it follows that $h^*(\psi _1)=\psi $ for some $\psi _1\in {\rm Spec}(N)$. We have $h^*(\phi _1)=\phi \subseteq \psi =h^*(\psi _1)$, hence, by the surjectivity of $h$, $\phi _1=h(h^*(\phi _1))\subseteq h(h^*(\psi _1))=\psi _1$. Thus $h$ fulfills GU.

The bounded lattice morphisms $l$, $r$, $q$ and $m$ from Example \ref{exadmgulo} are admissible and fulfill GU and LO, but they are not surjective.\end{proof}

\begin{corollary} For every $\theta \in {\rm Con}(A)$, $p_{\theta }$ is admissible and fulfills GU and LO.\label{3.5}\end{corollary}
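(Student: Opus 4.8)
The plan is to recognize this as an immediate instance of Proposition \ref{surjgulo}. Recall from Section \ref{preliminaries} that, for any $\theta \in {\rm Con}(A)$, the canonical surjection $p_{\theta }:A\rightarrow A/\theta $ is a surjective morphism in ${\cal C}$ (indeed, $A/\theta $ is taken with the canonically defined operations, and $p_{\theta }$ is the quotient map). Hence Proposition \ref{surjgulo}, applied with $M=A$, $N=A/\theta $ and $h=p_{\theta }$, yields at once that $p_{\theta }$ is admissible and fulfills GU and LO.

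There is essentially no obstacle to overcome here, since all the content sits in Proposition \ref{surjgulo} (whose proof rests on Lemma \ref{speccat}, (\ref{speccat0}), and Lemma \ref{cargulo}, (\ref{cargulo2})). One could alternatively note that admissibility of $p_{\theta }$ was already recorded in Remark \ref{surjcanadm}, so strictly only the GU and LO parts need the appeal to Proposition \ref{surjgulo}. If a self-contained argument were preferred, one would instead specialize the proof of Proposition \ref{surjgulo}: by Lemma \ref{speccat}, (\ref{speccat1}), with $h=p_{\theta }$ we have ${\rm Ker}(p_{\theta })=\theta $ and $p_{\theta }^*({\rm Spec}(A/\theta ))=V_A(\theta )=V_A({\rm Ker}(p_{\theta }))$, which is exactly the LO criterion of Lemma \ref{cargulo}, (\ref{cargulo2}); GU then follows because $p_{\theta }$ is surjective, so $p_{\theta }(p_{\theta }^*(\phi _1))=\phi _1$ for every $\phi _1\in {\rm Spec}(A/\theta )$, and the comparison $p_{\theta }^*(\phi _1)\subseteq p_{\theta }^*(\psi _1)$ lifts to $\phi _1\subseteq \psi _1$.

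The only point deserving a word of care is that the statement quantifies over \emph{all} $\theta \in {\rm Con}(A)$, including $\theta =\nabla _A$, in which case $A/\theta $ is trivial and ${\rm Spec}(A/\theta )=\emptyset $; but then $V_A(\nabla _A)=\emptyset $ as well, so GU and LO hold vacuously and admissibility is trivial, consistently with Proposition \ref{surjgulo}. Thus no separate degenerate case needs to be treated.
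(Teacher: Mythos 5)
Your proposal is correct and matches the paper's (implicit) argument exactly: the paper states Corollary \ref{3.5} without a separate proof precisely because it is the instance $h=p_{\theta }$ of Proposition \ref{surjgulo}, $p_{\theta }$ being a surjective morphism. Your additional remarks (the alternative self-contained argument and the vacuous case $\theta =\nabla _A$) are accurate but not needed.
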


\begin{lemma}\begin{enumerate}
\item\label{gulocomp1} If $f$ and $g$ fulfill GU, then $g\circ f$ fulfills GU. 
\item\label{gulocomp2} If $f$ is surjective and $g$ fulfills LO, then $g\circ f$ fulfills LO.
\item\label{gulocomp3} If $f$ and $g$ fulfill LO and $g$ is injective, then $g\circ f$ fulfills LO.\end{enumerate}\label{gulocomp}\end{lemma}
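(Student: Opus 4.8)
The plan is to prove the three items separately, each time producing the needed prime congruence of $C$ by first transporting data along $g$ to a prime congruence of $B$ (using that $g$ is admissible, satisfies GU, or satisfies LO, respectively) and then invoking the hypothesis on $f$. Throughout I would use the identity $(g\circ f)^*=f^*\circ g^*$ from Section \ref{preliminaries}, together with its consequence ${\rm Ker}(g\circ f)=(g\circ f)^*(\Delta _C)=f^*(g^*(\Delta _C))=f^*({\rm Ker}(g))$. For item (\ref{gulocomp1}): given $\phi ,\psi \in {\rm Spec}(A)$ and $\phi _1\in {\rm Spec}(C)$ with $\phi \subseteq \psi $ and $(g\circ f)^*(\phi _1)=\phi $, I would set $\chi _1=g^*(\phi _1)$, which lies in ${\rm Spec}(B)$ by admissibility of $g$ and satisfies $f^*(\chi _1)=\phi $; then GU for $f$ gives $\psi '\in {\rm Spec}(B)$ with $\chi _1\subseteq \psi '$ and $f^*(\psi ')=\psi $, and finally GU for $g$ applied to $\chi _1\subseteq \psi '$ and $g^*(\phi _1)=\chi _1$ gives $\psi _1\in {\rm Spec}(C)$ with $\phi _1\subseteq \psi _1$ and $g^*(\psi _1)=\psi '$, so that $(g\circ f)^*(\psi _1)=f^*(\psi ')=\psi $, which is exactly what GU for $g\circ f$ asks for.

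For item (\ref{gulocomp3}): since $g$ is injective, ${\rm Ker}(g)=g^*(\Delta _C)=\Delta _B$, hence ${\rm Ker}(g\circ f)=f^*(\Delta _B)={\rm Ker}(f)$. So given $\phi \in {\rm Spec}(A)$ with ${\rm Ker}(g\circ f)\subseteq \phi $ we have ${\rm Ker}(f)\subseteq \phi $, and LO for $f$ yields $\chi \in {\rm Spec}(B)$ with $f^*(\chi )=\phi $; as $\Delta _B={\rm Ker}(g)\subseteq \chi $, LO for $g$ yields $\phi _1\in {\rm Spec}(C)$ with $g^*(\phi _1)=\chi $, so $(g\circ f)^*(\phi _1)=f^*(\chi )=\phi $.

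For item (\ref{gulocomp2}): here $f$ is surjective, hence admissible and, by Proposition \ref{surjgulo}, satisfies LO. Given $\phi \in {\rm Spec}(A)$ with ${\rm Ker}(g\circ f)\subseteq \phi $, I note that ${\rm Ker}(f)\subseteq {\rm Ker}(g\circ f)\subseteq \phi $, i.e.\ $\phi \in V_A({\rm Ker}(f))$, so Lemma \ref{speccat}, (\ref{speccat0}), applied to $f$ gives $\chi :=f(\phi )\in {\rm Spec}(B)$ with $f^*(\chi )=f^*(f(\phi ))=\phi $. The point requiring care is checking ${\rm Ker}(g)\subseteq \chi $ before applying LO for $g$: from $f^*({\rm Ker}(g))={\rm Ker}(g\circ f)\subseteq \phi $, applying the direct image $f$ and using surjectivity (so $f(f^*(\cdot ))={\rm id}$ on ${\rm Con}(B)$) gives ${\rm Ker}(g)=f(f^*({\rm Ker}(g)))\subseteq f(\phi )=\chi $; then LO for $g$ produces $\phi _1\in {\rm Spec}(C)$ with $g^*(\phi _1)=\chi $, and $(g\circ f)^*(\phi _1)=f^*(\chi )=\phi $. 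I expect this last verification to be the only step that is more than routine manipulation of $(g\circ f)^*=f^*\circ g^*$; it is where surjectivity of $f$ is genuinely used, and in item (\ref{gulocomp3}) the analogous point is free because ${\rm Ker}(g)=\Delta _B$, which is the structural reason LO (unlike GU) needs an extra hypothesis on one of the two morphisms to pass to the composition.
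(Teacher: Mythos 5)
Your proposal is correct and follows essentially the same route as the paper's proof: item (i) by chaining GU for $f$ then $g$ through $(g\circ f)^*=f^*\circ g^*$, item (iii) by noting ${\rm Ker}(g)=\Delta _B$, and item (ii) by producing a prime $\chi $ of $B$ over $\phi $ via the surjectivity of $f$ and then verifying ${\rm Ker}(g)=f(f^*({\rm Ker}(g)))\subseteq \chi $ before applying LO for $g$. The only cosmetic difference is that in (ii) you take $\chi =f(\phi )$ directly from Lemma \ref{speccat} rather than first invoking LO for $f$, which amounts to the same computation.
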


\begin{proof} (\ref{gulocomp1}) Let $\phi ,\psi \in {\rm Spec}(A)$ and $\phi _2\in {\rm Spec}(C)$ such that $\phi \subseteq \psi $ and $\phi =(g\circ f)^*(\phi _2)=f^*(g^*(\phi _2))$. Denote $\phi _1=g^*(\phi _2)\in {\rm Spec}(B)$, so that $f^*(\phi _1)=\phi $. Since $f$ fulfills GU, it follows that there exists a $\psi _1\in {\rm Spec}(B)$ such that $\phi _1\subseteq \psi _1$ and $f^*(\psi _1)=\psi $. Since $g^*(\phi _2)=\phi _1$ and $g$ fulfills GU , it follows that there exists a $\psi _2\in {\rm Spec}(C)$ such that $\phi _2\subseteq \psi _2$ and $g^*(\psi _2)=\psi _1$. Then $(g\circ f)^*(\psi _2)=f^*(g^*(\psi _2))=f^*(\psi _1)=\psi $. Therefore $g\circ f$ fulfills GU. 

\noindent (\ref{gulocomp2}) Since $f$ is surjective, it follows that $f$ fulfills LO by Proposition \ref{surjgulo}. Let $\phi \in {\rm Spec}(A)$ such that ${\rm Ker}(g\circ f)\subseteq \phi $. Then, by Remark \ref{kerker}, the fact that $f$ fulfills LO and the surjectivity of $f$, ${\rm Ker}(f)\subseteq \phi $, there exists a $\phi _1\in {\rm Spec}(B)$ such that $f^*(\phi _1)=\phi \supseteq {\rm Ker}(g\circ f)=(g\circ f)^*(\Delta _C)=(f^*\circ g^*)(\Delta _C)=f^*(g^*(\Delta _C))=f^*({\rm Ker}(g))$, therefore $\phi _1=f(f^*(\phi _1))\supseteq f(f^*({\rm Ker}(g)))={\rm Ker}(g)$, hence there exists a $\phi _2\in {\rm Spec}(C)$ such that $g^*(\phi _2)=\phi _1$, so $(g\circ f)^*(\phi _2)=(f^*\circ g^*)(\phi _2)=f^*(g^*(\phi _2))=f^*(\phi _1)=\phi $. Thus $g\circ f$ fulfills LO.

\noindent (\ref{gulocomp3}) Let $\phi \in {\rm Spec}(A)$ such that ${\rm Ker}(g\circ f)\subseteq \phi $. Then, by Remark \ref{kerker} and the fact that $f$ fulfills LO, ${\rm Ker}(f)\subseteq \phi $, hence there exists a $\phi _1\in {\rm Spec}(B)$ such that $f^*(\phi _1)=\phi $. Since $g$ is injective and fulfills LO, ${\rm Ker}(g)=g^*(\Delta _C)=\Delta _B\subseteq \phi _1$, hence there exists a $\phi _2\in {\rm Spec}(C)$ such that $g^*(\phi _2)=\phi _1$, so $(g\circ f)^*(\phi _2)=(f^*\circ g^*)(\phi _2)=f^*(g^*(\phi _2))=f^*(\phi _1)=\phi $. Thus $g\circ f$ fulfills LO.\end{proof}

\begin{proposition} Let $i:h(M)\rightarrow N$ be the canonical embedding. Then:\begin{enumerate}
\item\label{embed1} $h$ is admissible iff $i$ is admissible;
\item\label{embed2} if $h$ is admissible, then: $h$ fulfills GU iff $i$ fulfills GU;
\item\label{embed3} if $h$ is admissible, then: $h$ fulfills LO iff $i$ fulfills LO.\end{enumerate}\label{embed}\end{proposition}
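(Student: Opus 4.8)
The plan is to factor $h$ through its image: write $h=i\circ h'$, where $h':M\to h(M)$ is the corestriction of $h$, a surjective morphism in ${\cal C}$ (since $h(M)$ is a subalgebra of $N$ and ${\cal C}$ is an equational class), and $i:h(M)\to N$ is the canonical embedding. Then $h^*=(h')^*\circ i^*$ by the functoriality of $(\cdot)^*$ recalled in the Preliminaries, and the surjectivity of $h'$ supplies the tools I will use repeatedly: $(h')^*$ is injective on ${\rm Con}(h(M))$, hence it reflects set inclusions between subsets of ${\rm Con}(h(M))$, and $h'((h')^*(\chi))=\chi$ for every $\chi\in{\rm Con}(h(M))$; moreover ${\rm Ker}(h')={\rm Ker}(h)$, by Proposition~\ref{surjgulo} $h'$ is admissible and fulfills GU and LO, and by Lemma~\ref{speccat}, (\ref{speccat0}), $V_M({\rm Ker}(h))=(h')^*({\rm Spec}(h(M)))$. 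I also note that $i^*(\beta)=\beta\cap h(M)^2\in{\rm Con}(h(M))$ for every $\beta\in{\rm Con}(N)$ and that ${\rm Ker}(i)=\Delta_{h(M)}$.

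For part (\ref{embed1}), the direction from $i$ admissible to $h$ admissible is just Remark~\ref{compadm} applied to $h=i\circ h'$, since $h'$ is admissible. For the converse, assume $h$ is admissible and take $\psi\in{\rm Spec}(N)$; then $(h')^*(i^*(\psi))=h^*(\psi)\in{\rm Spec}(M)$, and $h^*(\psi)\supseteq{\rm Ker}(h)$ by Remark~\ref{ker}, so $h^*(\psi)\in V_M({\rm Ker}(h))=(h')^*({\rm Spec}(h(M)))$; choosing $\chi\in{\rm Spec}(h(M))$ with $(h')^*(\chi)=h^*(\psi)=(h')^*(i^*(\psi))$ and cancelling the injective map $(h')^*$ gives $i^*(\psi)=\chi\in{\rm Spec}(h(M))$, so $i$ is admissible.

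For parts (\ref{embed2}) and (\ref{embed3}) assume $h$ is admissible, so $i$ is admissible by part (\ref{embed1}) and GU and LO are defined for both; the idea is to transport the criteria of Lemma~\ref{cargulo} along $(h')^*$. A preliminary identity is $V_M((h')^*(\chi))=(h')^*(V_{h(M)}(\chi))$ for $\chi\in{\rm Spec}(h(M))$: the inclusion $V_M((h')^*(\chi))\subseteq(h')^*(V_{h(M)}(\chi))$ is Lemma~\ref{cargulo}, (\ref{cargulo1}) for the GU-morphism $h'$, and the reverse inclusion holds because $\chi\subseteq\chi'$ in ${\rm Spec}(h(M))$ forces $(h')^*(\chi)\subseteq(h')^*(\chi')\in{\rm Spec}(M)$ by admissibility of $h'$. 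For LO, ${\rm Ker}(i)=\Delta_{h(M)}$ yields $V_{h(M)}({\rm Ker}(i))={\rm Spec}(h(M))$, so by Lemma~\ref{cargulo}, (\ref{cargulo2}), $i$ fulfills LO iff ${\rm Spec}(h(M))=i^*({\rm Spec}(N))$; applying $(h')^*$ (legitimate since both sides lie in ${\rm Con}(h(M))$, where $(h')^*$ is injective) this becomes $(h')^*({\rm Spec}(h(M)))=(h')^*(i^*({\rm Spec}(N)))$, that is $V_M({\rm Ker}(h))=h^*({\rm Spec}(N))$, which by Lemma~\ref{cargulo}, (\ref{cargulo2}) means $h$ fulfills LO. For GU, by Lemma~\ref{cargulo}, (\ref{cargulo1}), $h$ fulfills GU iff $V_M(h^*(\psi))\subseteq h^*(V_N(\psi))$ for every $\psi\in{\rm Spec}(N)$; rewriting $h^*=(h')^*\circ i^*$, inserting the preliminary identity, and using that $(h')^*$ reflects inclusions of subsets of ${\rm Con}(h(M))$, this is equivalent to $V_{h(M)}(i^*(\psi))\subseteq i^*(V_N(\psi))$ for every $\psi$, i.e.\ to $i$ fulfilling GU. (Alternatively: that $i$ fulfilling GU implies $h$ fulfilling GU is immediate from Lemma~\ref{gulocomp}, (\ref{gulocomp1}) since $h'$ fulfills GU; and the reverse can be done directly, lifting a chain $i^*(\phi_1)=\phi\subseteq\psi$ in ${\rm Spec}(h(M))$ to the chain $(h')^*(\phi)\subseteq(h')^*(\psi)$ in ${\rm Spec}(M)$, applying GU for $h$ to obtain $\psi_1\in{\rm Spec}(N)$ with $\phi_1\subseteq\psi_1$ and $h^*(\psi_1)=(h')^*(\psi)$, and concluding $i^*(\psi_1)=\psi$ by injectivity of $(h')^*$.)

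The only step that is more than formal bookkeeping is the identity $V_M((h')^*(\chi))=(h')^*(V_{h(M)}(\chi))$ and its kernel version $V_M({\rm Ker}(h))=(h')^*({\rm Spec}(h(M)))$ drawn from Lemma~\ref{speccat} --- in effect, the fact that the prime congruences of $M$ lying above ${\rm Ker}(h)$ are precisely the $(h')^*$-images of the prime congruences of $h(M)$, via an inclusion-preserving correspondence. Granting that, parts (\ref{embed2}) and (\ref{embed3}) are pure transport along the injective, inclusion-reflecting map $(h')^*$, and part (\ref{embed1}) is a short argument each way.
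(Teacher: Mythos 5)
Your proof is correct and follows essentially the same route as the paper's: factor $h=i\circ h'$ through its image, use that the surjective corestriction (the paper's $s$) is admissible and fulfills GU and LO by Proposition \ref{surjgulo}, combine this with Lemma \ref{speccat}, and transfer each property across the factorization. The only difference is cosmetic: where the paper chases individual prime congruences (via $i^*(\theta)=h(h^*(\theta))$ and Lemma \ref{speccat}, (\ref{speccat4})), you carry out the transfer at the level of $V$-sets using Lemma \ref{cargulo} and the injectivity of $(h')^*$; both arguments are sound.
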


\begin{proof} Let $s:M\rightarrow h(M)$, for all $x\in M$, $s(x)=h(x)$. Then $s$ is surjective, thus $s$ is admissible and fulfills GU and LO by Proposition \ref{surjgulo}. We have: $h=i\circ s$, so $h^*=s^*\circ i^*$. Since $s$ is surjective, it follows that $s^*$ is injective. For all $\theta \in {\rm Con}(N)$, $i^*(\theta )=\theta \cap h(M^2)=h(h^*(\theta ))=s(h^*(\theta ))$.\vspace*{-15pt}

\begin{center}\begin{picture}(120,53)(0,0)
\put(5,30){$M$}
\put(15,33){\vector (1,0){92}}
\put(13,28){\vector (3,-2){35}}
\put(73,4){\vector (3,2){36}}
\put(56,34){$h$}
\put(23,12){$s$}
\put(92,10){$i$}
\put(108,30){$N$}

\put(48,0){$h(M)$}
\end{picture}\end{center}\vspace*{-5pt}

\noindent (\ref{embed1}) $s$ is admissible, thus, by Remark \ref{compadm}, if $i$ is admissible, then $h=s\circ i$ is admissible. Now assume that $h$ is admissible, and let $\chi \in {\rm Spec}(N)$, so that $h^*(\chi )\in {\rm Spec}(M)$ and, by Remark \ref{ker}, ${\rm Ker}(h)\subseteq h^*(\chi )$, thus $h^*(\chi )\in V_M({\rm Ker}(h))$, so that $i^*(\chi )=h(h^*(\chi ))\in {\rm Spec}(h(M))$ by Lemma \ref{speccat}, (\ref{speccat3}), hence $i$ is admissible. 

From now until the end of this proof, $h$ shall be admissible, so that, by (\ref{embed1}), $i$ shall be admissible, too.

\noindent (\ref{embed2}) $s$ fulfills GU, thus, by Lemma \ref{gulocomp}, (\ref{gulocomp1}), if $i$ fulfills GU, then $h=s\circ i$ fulfills GU. Now assume that $h$ fulfills GU, and let $\phi _1,\psi _1\in {\rm Spec}(h(M))$ and $\phi _2\in {\rm Spec}(N)$ such that $\phi _1\subseteq \psi _1$ and $\phi _1=i^*(\phi _2)$. Let $\phi =h^*(\phi _2)\in {\rm Spec}(M)$, since $h$ is admissible, and $\psi =s^*(\psi _1)\in {\rm Spec}(M)$, since $s$ is admissible. Then $\phi _1=i^*(\phi _2)=h(h^*(\phi _2))=h(\phi )$ and, since $s$ is surjective, $\psi _1=s(s^*(\psi _1))=s(\psi )=h(\psi )$. We have $h(\phi )=\phi _1\subseteq \psi _1=h(\psi )$, hence $\phi \subseteq \psi $ by Lemma \ref{speccat}, (\ref{speccat4}), so, since $h$ fulfills GU, it follows that there exists a $\psi _2\in {\rm Spec}(N)$ such that $\phi _2\subseteq \psi _2$ and $\psi =h^*(\psi _2)$, so that $i^*(\psi _2)=h(h^*(\psi _2))=h(\psi )=\psi _1$. Therefore $i$ fulfills GU.

\noindent (\ref{embed3}) $s$ is surjective, thus, by Lemma \ref{gulocomp}, (\ref{gulocomp2}), if $i$ fulfills LO, then $h=s\circ i$ fulfills LO. Now assume that $h$ fulfills LO, and let $\psi \in {\rm Spec}(h(M))$. Trivially, $\psi \supseteq \Delta _{h(M)}={\rm Ker}(i)$. Since $s$ is admissible, $s^*(\psi )\in {\rm Spec}(M)$ and, by Remark \ref{ker}, $s^*(\psi )\supseteq {\rm Ker}(s)=s^*(\Delta _{h(M)})=s^*(i^*(\Delta _N))=h^*(\Delta _N)={\rm Ker}(h)$. Since $h$ fulfills LO, it follows that there exists a $\chi \in {\rm Spec}(N)$ such that $s^*(\psi )=h^*(\chi )$, so $i^*(\chi )=h(h^*(\chi ))=h(s^*(\psi ))=s(s^*(\psi ))=\psi $, by the surjectivity of $\psi $. Hence $i$ fulfills LO.\end{proof}

\begin{corollary}\begin{enumerate} 
\item\label{sufembed1} The following are equivalent:\begin{itemize}
\item any morphism in ${\cal C}$ is admissible;
\item any canonical embedding in ${\cal C}$ is admissible.\end{itemize}
\item\label{sufembed2} The following are equivalent:\begin{itemize}
\item any admissible morphism in ${\cal C}$ fulfills GU;
\item any admissible canonical embedding in ${\cal C}$ fulfills GU.\end{itemize}
\item\label{sufembed3} The following are equivalent:\begin{itemize}
\item any admissible morphism in ${\cal C}$ fulfills LO;
\item any admissible canonical embedding in ${\cal C}$ fulfills LO.\end{itemize}\end{enumerate}\label{sufembed}\end{corollary}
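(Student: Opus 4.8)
The plan is to obtain all three equivalences as immediate consequences of Proposition \ref{embed}, which already carries out the reduction from an arbitrary morphism $h\colon M\rightarrow N$ to the canonical embedding $i\colon h(M)\rightarrow N$ for each of the three properties. The single preliminary observation needed is that, since ${\cal C}$ is an equational class, it is closed under subalgebras; as $h(M)$ is a subalgebra of $N$, we have $h(M)\in {\cal C}$, so $i\colon h(M)\rightarrow N$ is genuinely a canonical embedding in ${\cal C}$ and the universally quantified hypotheses on canonical embeddings apply to it.

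For (\ref{sufembed1}), one implication is trivial: every canonical embedding is a morphism, so if all morphisms in ${\cal C}$ are admissible then so are all canonical embeddings. For the converse, assuming every canonical embedding in ${\cal C}$ is admissible, I would take an arbitrary morphism $h\colon M\rightarrow N$ in ${\cal C}$, apply the hypothesis to $i\colon h(M)\rightarrow N$ to get that $i$ is admissible, and then invoke Proposition \ref{embed}, (\ref{embed1}), to conclude that $h$ is admissible.

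For (\ref{sufembed2}) and (\ref{sufembed3}) the argument follows the same template. The forward implications are again immediate since canonical embeddings are morphisms. For the backward implications, I would let $h\colon M\rightarrow N$ be an admissible morphism in ${\cal C}$; by Proposition \ref{embed}, (\ref{embed1}), the canonical embedding $i\colon h(M)\rightarrow N$ is admissible, hence the hypothesis applies to $i$ and yields that $i$ fulfills GU (respectively LO); then, since $h$ is admissible, Proposition \ref{embed}, (\ref{embed2}) (respectively (\ref{embed3})), gives that $h$ fulfills GU (respectively LO).

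I do not expect any real obstacle: the entire content has been packaged into Proposition \ref{embed}, whose proof already factors $h$ as $i\circ s$ with $s$ the canonical surjection onto $h(M)$ and exploits Proposition \ref{surjgulo} together with Lemma \ref{gulocomp}. The only point deserving a moment's care is the closure of ${\cal C}$ under subalgebras, which guarantees $h(M)\in {\cal C}$; this is automatic because ${\cal C}$ is a variety, and without it the statement about ``canonical embeddings in ${\cal C}$'' would not cover $i$.
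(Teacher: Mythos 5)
Your proposal is correct and follows exactly the route the paper intends: the corollary is stated without an explicit proof precisely because it is the immediate consequence of Proposition \ref{embed} (parts (\ref{embed1})--(\ref{embed3})) that you describe, with the trivial forward implications and the reduction of an arbitrary (admissible) morphism $h$ to the canonical embedding $i\colon h(M)\rightarrow N$. Your remark that closure of ${\cal C}$ under subalgebras guarantees $h(M)\in {\cal C}$ is a correct and worthwhile point of care, though the paper leaves it implicit.
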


\begin{lemma} Let $\alpha \in {\rm Con}(A)$ and $a,b\in A$. If $f$ is surjective, then:\begin{enumerate}
\item\label{lo1(1)} {\rm \cite[Proposition 1.2,1,v]{urs}} $Cg_B(f(\alpha ))=f(\alpha \vee {\rm Ker}(f))$;
\item\label{lo1(2)} {\rm \cite[Proposition 1.2,2]{urs}} $Cg_B(f(Cg_A(a,b)))=Cg_B(f(a),f(b))$.\end{enumerate}\label{lo1}\end{lemma}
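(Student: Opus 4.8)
The plan is to deduce both parts from the elementary bookkeeping on direct and inverse images of congruences collected in Section~\ref{preliminaries}: for a surjective morphism $f$ one has $f(f^*(\psi))=\psi$ for every $\psi\in{\rm Con}(B)$ (since $f(f^*(\psi))=\psi\cap f(A^2)$ and $f(A^2)=B^2$); moreover $f(\gamma)\in{\rm Con}(B)$ whenever $\gamma\in{\rm Con}(A)$ satisfies ${\rm Ker}(f)\subseteq\gamma$; and, with no hypotheses on $f$, $\gamma\subseteq f^*(f(\gamma))$ for all $\gamma\in{\rm Con}(A)$, ${\rm Ker}(f)=f^*(\Delta_B)\subseteq f^*(\psi)$ for all $\psi\in{\rm Con}(B)$ (Remark~\ref{ker}), and both $f$ and $f^*$ are monotone. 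No commutator theory is needed.

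For (\ref{lo1(1)}) I would prove the two inclusions separately. Since $\alpha\vee{\rm Ker}(f)$ contains ${\rm Ker}(f)$ and $f$ is surjective, $f(\alpha\vee{\rm Ker}(f))$ is a congruence of $B$; it contains $f(\alpha)$, hence it contains $Cg_B(f(\alpha))$. For the reverse inclusion, set $\theta=f^*(Cg_B(f(\alpha)))\in{\rm Con}(A)$. From $f(\alpha)\subseteq Cg_B(f(\alpha))$ I get $\alpha\subseteq f^*(f(\alpha))\subseteq\theta$, and from $\Delta_B\subseteq Cg_B(f(\alpha))$ I get ${\rm Ker}(f)=f^*(\Delta_B)\subseteq\theta$; hence $\alpha\vee{\rm Ker}(f)\subseteq\theta$. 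Applying $f$ and using $f(\theta)=f(f^*(Cg_B(f(\alpha))))=Cg_B(f(\alpha))$ (surjectivity of $f$) yields $f(\alpha\vee{\rm Ker}(f))\subseteq Cg_B(f(\alpha))$, which completes the equality.

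For (\ref{lo1(2)}) the inclusion $Cg_B(f(a),f(b))\subseteq Cg_B(f(Cg_A(a,b)))$ is immediate, since $(f(a),f(b))\in f(Cg_A(a,b))\subseteq Cg_B(f(Cg_A(a,b)))$. Conversely, writing $\psi=Cg_B(f(a),f(b))$, I have $(a,b)\in f^*(\psi)\in{\rm Con}(A)$, so $Cg_A(a,b)\subseteq f^*(\psi)$ and therefore $f(Cg_A(a,b))\subseteq f(f^*(\psi))=\psi$ by surjectivity of $f$; hence $Cg_B(f(Cg_A(a,b)))\subseteq\psi=Cg_B(f(a),f(b))$. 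Alternatively, one may specialize (\ref{lo1(1)}) to $\alpha=Cg_A(a,b)$ and check that $Cg_A(a,b)\vee{\rm Ker}(f)$ is the least congruence of $A$ above ${\rm Ker}(f)$ identifying $a$ and $b$, which corresponds, under $f$, to $Cg_B(f(a),f(b))$.

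I do not anticipate a genuine obstacle here; these statements are essentially the standard correspondence between congruences of $B$ and congruences of $A$ above ${\rm Ker}(f)$, and the references cited in Lemma~\ref{lo1} prove exactly this. The only point deserving care is to keep the two notations apart --- $f(\cdot)$ for the direct image of $f^2$ and $f^*(\cdot)$ for its inverse image --- and to invoke surjectivity precisely where it is needed, namely through $f(f^*(\cdot))=\cdot$ and through the fact that the $f$-image of a congruence containing ${\rm Ker}(f)$ is again a congruence; surjectivity is essential to (\ref{lo1(1)}) but, as the argument shows, not actually to (\ref{lo1(2)}).
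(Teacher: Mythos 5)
Your proof is correct. The paper gives no argument for this lemma --- both parts are simply imported from Ursini's paper (the cited Proposition 1.2) --- so there is nothing in the text to compare against; your derivation via the standard correspondence between congruences of $B$ and congruences of $A$ lying above ${\rm Ker}(f)$ is the expected one, every step checks out (in particular $f(\gamma)\in{\rm Con}(B)$ for $\gamma\supseteq{\rm Ker}(f)$ with $f$ surjective, and $f(f^*(\psi))=\psi\cap f(A^2)$), and your closing observation that surjectivity is only genuinely needed for part (i) is accurate.
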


\begin{lemma} For all $a,b,c,d\in A$, $f([Cg_A(a,b),Cg_A(c,d)]_A)\subseteq [Cg_B(f(a),f(b)),Cg_B(f(c),f(d))]_B$.\label{lo2}\end{lemma}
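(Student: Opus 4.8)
The plan is to factor $f$ through its image, reducing the inclusion to the case of a surjective morphism followed by an embedding, and to handle each factor with tools already established. Write $s\colon A\rightarrow f(A)$ for the corestriction of $f$, defined by $s(x)=f(x)$; it is a surjective morphism in ${\cal C}$ (the variety ${\cal C}$ is closed under subalgebras, so $f(A)\in{\cal C}$), and if $i\colon f(A)\rightarrow B$ denotes the canonical embedding then $f=i\circ s$. Since $i$ is an inclusion, the direct image of $f^2$ and that of $s^2$ coincide: $f(X)=s(X)$ for every $X\subseteq A^2$. Thus it suffices to prove the two inclusions
\[
s([Cg_A(a,b),Cg_A(c,d)]_A)\subseteq [Cg_{f(A)}(f(a),f(b)),Cg_{f(A)}(f(c),f(d))]_{f(A)}
\]
and
\[
[Cg_{f(A)}(f(a),f(b)),Cg_{f(A)}(f(c),f(d))]_{f(A)}\subseteq [Cg_B(f(a),f(b)),Cg_B(f(c),f(d))]_B .
\]

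For the first inclusion I would invoke the commutator identity for surjective morphisms from Remark~\ref{r2.8}, applied to $s$ with $\alpha=Cg_A(a,b)$ and $\beta=Cg_A(c,d)$: it gives $s([\alpha,\beta]_A\vee{\rm Ker}(s))=[\,s(\alpha\vee{\rm Ker}(s)),s(\beta\vee{\rm Ker}(s))\,]_{f(A)}$. By Lemma~\ref{lo1}, $s(\alpha\vee{\rm Ker}(s))=Cg_{f(A)}(s(\alpha))=Cg_{f(A)}(s(a),s(b))=Cg_{f(A)}(f(a),f(b))$, and similarly $s(\beta\vee{\rm Ker}(s))=Cg_{f(A)}(f(c),f(d))$. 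Since $[\alpha,\beta]_A\subseteq[\alpha,\beta]_A\vee{\rm Ker}(s)$ and the direct image is monotone, we may drop the join with ${\rm Ker}(s)$ on the left, obtaining exactly the first inclusion.

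For the second inclusion, $f(A)$ is a subalgebra of $B$, so Lemma~\ref{1.8} yields, for all $\mu,\nu\in{\rm Con}(B)$, that $[\mu\cap f(A)^2,\nu\cap f(A)^2]_{f(A)}\subseteq[\mu,\nu]_B\cap f(A)^2\subseteq[\mu,\nu]_B$. Take $\mu=Cg_B(f(a),f(b))$ and $\nu=Cg_B(f(c),f(d))$. Then $\mu\cap f(A)^2$ is a congruence of $f(A)$ containing $(f(a),f(b))$, so $Cg_{f(A)}(f(a),f(b))\subseteq\mu\cap f(A)^2$, and likewise $Cg_{f(A)}(f(c),f(d))\subseteq\nu\cap f(A)^2$; by the monotonicity of the commutator (Proposition~\ref{1.3}) we get $[Cg_{f(A)}(f(a),f(b)),Cg_{f(A)}(f(c),f(d))]_{f(A)}\subseteq[\mu\cap f(A)^2,\nu\cap f(A)^2]_{f(A)}\subseteq[\mu,\nu]_B$, which is the second inclusion. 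Chaining the two inclusions and using $f(X)=s(X)$ completes the proof.

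No step presents a genuine difficulty; the only thing to be careful about is the bookkeeping across the three ambient algebras $A$, $f(A)$ and $B$ --- in particular, recognizing that passing to the image leaves the underlying set of pairs unchanged, which is what lets the surjective-morphism commutator identity of Remark~\ref{r2.8} and the subalgebra inequality of Lemma~\ref{1.8} be composed, and checking (via Lemma~\ref{lo1}) that the joins $\alpha\vee{\rm Ker}(s)$ collapse to the expected principal congruences in $f(A)$.
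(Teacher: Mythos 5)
Your proof is correct and takes essentially the same route as the paper's: both pass to the surjective corestriction onto $f(A)$, apply the surjective-morphism commutator identity together with Lemma \ref{lo1} to identify $f(Cg_A(a,b)\vee {\rm Ker}(f))$ with $Cg_{f(A)}(f(a),f(b))$, and then use monotonicity (Proposition \ref{1.3}) and the subalgebra inequality (Lemma \ref{1.8}) to pass from the commutator in $f(A)$ to the commutator in $B$. The only cosmetic difference is that you name the corestriction $s$ explicitly, whereas the paper reuses the symbol $f$ for it.
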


\begin{proof} $f:A\rightarrow f(A)$ is a surjective morphism. Let $a,b,c,d\in A$. By Remark \ref{r2.5}, $f([Cg_A(a,b),Cg_A(c,d)]_A\vee {\rm Ker}(f))=[f(Cg_A(a,b)\vee {\rm Ker}(f)),f(Cg_A(c,d)\vee {\rm Ker}(f))]_{f(A)}$. By Lemma \ref{lo1}, $f(Cg_A(a,b)\vee {\rm Ker}(f))=$\linebreak $Cg_{f(A)}(f(Cg_A(a,b))=Cg_{f(A)}(f(a),f(b))\subseteq Cg_B(f(a),f(b))\cap f(A^2)$ and, analogously, $f(Cg_A(c,d)\vee {\rm Ker}(f))\subseteq Cg_B(f(c),f(d))\cap f(A^2)$. By Proposition \ref{1.3} and Lemma \ref{1.8}, it follows that $[f(Cg_A(a,b)\vee {\rm Ker}(f)),f(Cg_A(c,d)\vee {\rm Ker}(f))]_{f(A)}\subseteq [Cg_B(f(a),f(b))\cap f(A^2),Cg_B(f(c),f(d))\cap f(A^2)]_{f(A)}\subseteq [Cg_B(f(a),f(b)),Cg_B(f(c),f(d))]_B$. Trivially, $f([Cg_A(a,b),Cg_A(c,d)]_A)\subseteq f([Cg_A(a,b),Cg_A(c,d)]_A\vee {\rm Ker}(f))$. From all the above, it follows that $f([Cg_A(a,b),Cg_A(c,d)]_A)\subseteq [Cg_B(f(a),f(b)),Cg_B(f(c),f(d))]_B$.\end{proof}

\begin{lemma}\begin{enumerate}
\item\label{fmsist1} If $S$ is an m--system in $M$, then $h(S)$ is an m--system in $N$.
\item\label{fmsist2} If $M\subseteq N$ and $S$ is an m--system in $M$, then $S$ is an m--system in $N$.\end{enumerate}\label{fmsist}\end{lemma}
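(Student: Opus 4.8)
The plan is to prove part (\ref{fmsist1}) as a direct consequence of Lemma \ref{lo2}, and then to obtain part (\ref{fmsist2}) for free by applying part (\ref{fmsist1}) to a canonical embedding.

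For part (\ref{fmsist1}), first recall that, by our conventions, $h(S)$ denotes the direct image of $S$ under $h^2$; hence $h(S)$ is non--empty because $S$ is, and every element of $h(S)$ has the form $(h(a),h(b))$ for some $(a,b)\in S$. Given two such elements $(h(a),h(b)),(h(c),h(d))\in h(S)$, I would use that $S$ is an m--system in $M$ to pick a pair $(u,v)\in [Cg_M(a,b),Cg_M(c,d)]_M\cap S$. Then $(h(u),h(v))\in h(S)$, and Lemma \ref{lo2} applied to $h$ gives $h([Cg_M(a,b),Cg_M(c,d)]_M)\subseteq [Cg_N(h(a),h(b)),Cg_N(h(c),h(d))]_N$, so that $(h(u),h(v))$ lies in $[Cg_N(h(a),h(b)),Cg_N(h(c),h(d))]_N\cap h(S)$. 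Since this intersection is non--empty for every choice of the two elements, $h(S)$ is an m--system in $N$.

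For part (\ref{fmsist2}), when $M$ is a subalgebra of $N$ the canonical embedding $i:M\rightarrow N$ is a morphism in ${\cal C}$, and, being the set--theoretic inclusion on underlying sets, it satisfies $i(S)=\{(i(x),i(y))\ |\ (x,y)\in S\}=S$. Applying part (\ref{fmsist1}) with $h:=i$ then yields that $S=i(S)$ is an m--system in $N$.

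I do not expect a genuine obstacle here: the whole commutator manipulation has already been isolated in Lemma \ref{lo2} (which holds for any morphism, not only admissible ones), and the only point to keep straight is the notational convention that $h$ on subsets of $M^2$ denotes the direct image of $h^2$, so that the generators appearing in Lemma \ref{lo2} are exactly $h(a),h(b),h(c),h(d)$. Part (\ref{fmsist2}) then costs nothing beyond recognizing it as the special case of part (\ref{fmsist1}) for inclusions.
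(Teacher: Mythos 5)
Your proof is correct and follows essentially the same route as the paper: part (\ref{fmsist1}) is obtained by applying Lemma \ref{lo2} to the generators of two pairs in $h(S)$ and pushing a witness from $[Cg_M(a,b),Cg_M(c,d)]_M\cap S$ forward through $h$, and part (\ref{fmsist2}) is the special case of the canonical embedding, for which $i(S)=S$. The only cosmetic difference is that the paper phrases the key step as $\emptyset\neq h([Cg_M(a,b),Cg_M(c,d)]_M\cap S)\subseteq h(\cdots)\cap h(S)$ while you pick an explicit element $(u,v)$; these are the same argument.
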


\begin{proof} (\ref{fmsist1}) Let $(x,y),(z,u)\in h(S)$, so that $x=h(a),y=h(b),z=h(c),u=h(d)$ for some $(a,b),(c,d)\in S\subseteq M^2$. Since $S$ is an m--system, it follows that $[Cg_M(a,b),Cg_M(c,d)]_M\cap S\neq \emptyset $, thus, by Lemma \ref{lo2}, $\emptyset \neq h([Cg_M(a,b),Cg_M(c,d)]_M\cap S)\subseteq h([Cg_M(a,b),Cg_M(c,d)]_M)\cap h(S)\subseteq [Cg_N(h(a),h(b)),Cg_N(h(c),h(d))]_N\cap h(S)=[Cg_N(x,y),Cg_N(z,u)]_N\cap h(S)$, thus $[Cg_N(x,y),Cg_N(z,u)]_N\cap h(S)\neq \emptyset $. Therefore $h(S)$ is an m--system.

\noindent (\ref{fmsist2}) Let $i:M\rightarrow N$ be the canonical embedding. Then $i$ is a morphism, so, by (\ref{fmsist1}), $i(S)=S$ is an m--system in $N$.\end{proof}

\begin{lemma} If $A\subseteq B$, $\phi \in {\rm Spec}(A)$, $\nabla _B$ is finitely generated and $\psi $ is a maximal element of the set $\{\theta \in {\rm Con}(B)\setminus \{\nabla _B\}\ |\ \theta \cap (\nabla _A\setminus \phi )=\emptyset \}$, then $\psi \in {\rm Spec}(B)$.\label{specmsist}\end{lemma}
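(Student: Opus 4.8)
The statement asks us to show that a maximal element $\psi$ of the set $\mathcal{T}=\{\theta\in{\rm Con}(B)\setminus\{\nabla_B\}\ |\ \theta\cap(\nabla_A\setminus\phi)=\emptyset\}$ is a prime congruence of $B$. The natural route is to reduce this to Lemma \ref{1.7}, which says exactly that a maximal element of $\{\theta\in{\rm Con}(B)\ |\ \alpha\subseteq\theta,\ \theta\cap S=\emptyset\}$ is prime, provided $S$ is an m--system in $B$ and $\nabla_B$ is finitely generated. So the plan has two ingredients: first, identify the set $S$; second, check the set $\mathcal{T}$ agrees (as far as its maximal elements are concerned) with the set from Lemma \ref{1.7}.

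\textbf{Step 1: $S=\nabla_A\setminus\phi$ is an m--system in $B$.} Since $\phi\in{\rm Spec}(A)$, Lemma \ref{1.6} gives that $\nabla_A\setminus\phi$ is an m--system in $A$. Because $A\subseteq B$ as a subalgebra, Lemma \ref{fmsist}, (\ref{fmsist2}), promotes this to: $S=\nabla_A\setminus\phi$ is an m--system in $B$. (Here $S$ is non--empty because $\phi$ is a proper congruence of $A$, so indeed it qualifies as an m--system.)

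\textbf{Step 2: match the two sets and apply Lemma \ref{1.7}.} Take $\alpha=\Delta_B$ in Lemma \ref{1.7}, so the relevant set is $\mathcal{S}'=\{\theta\in{\rm Con}(B)\ |\ \Delta_B\subseteq\theta,\ \theta\cap S=\emptyset\}=\{\theta\in{\rm Con}(B)\ |\ \theta\cap S=\emptyset\}$, since every congruence contains $\Delta_B$. Now $\mathcal{T}$ differs from $\mathcal{S}'$ only by the exclusion of $\nabla_B$. I must argue this exclusion does not change the maximal elements, or rather does not stop the maximal element $\psi$ of $\mathcal{T}$ from being a maximal element of $\mathcal{S}'$. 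The point is: either $\nabla_B\cap S\ne\emptyset$, in which case $\nabla_B\notin\mathcal{S}'$ and so $\mathcal{T}=\mathcal{S}'$ outright; or $\nabla_B\cap S=\emptyset$, which would force $S=\emptyset$ (as $\nabla_B=B^2\supseteq S$), contradicting that $S$ is non--empty. Hence $\nabla_B\cap S\ne\emptyset$ always, $\mathcal{T}=\mathcal{S}'$, and $\psi$ is a maximal element of $\mathcal{S}'$. By Lemma \ref{1.7}, applied with $\alpha=\Delta_B$ and the m--system $S$ (legitimate since $\nabla_B$ is finitely generated by hypothesis), we conclude $\psi\in{\rm Spec}(B)$.

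\textbf{Main obstacle.} There is essentially no hard computation here; the only subtlety to be careful about is the bookkeeping in Step 2 — namely that excluding $\nabla_B$ from the defining set is harmless precisely because $S\ne\emptyset$ forces $\nabla_B$ to meet $S$ anyway, so $\nabla_B$ was never a candidate. One should also double--check that the hypotheses of Lemma \ref{1.7} ($\nabla_B$ finitely generated, $S$ an m--system in $B$) are the ones supplied, which they are. So the proof is short and the work is entirely in correctly invoking Lemmas \ref{1.6}, \ref{fmsist} and \ref{1.7}.
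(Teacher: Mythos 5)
Your proof is correct and follows essentially the same route as the paper's: apply Lemma \ref{1.6} to get that $\nabla_A\setminus\phi$ is an m--system in $A$, promote it to an m--system in $B$ via Lemma \ref{fmsist}, (\ref{fmsist2}), and conclude by Lemma \ref{1.7}. Your extra check in Step 2 that excluding $\nabla_B$ is harmless (because $S\neq\emptyset$ forces $\nabla_B\cap S\neq\emptyset$) is a detail the paper leaves implicit, and it is handled correctly.
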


\begin{proof} Since $\phi \in {\rm Spec}(A)$, by Lemma \ref{1.6} it follows that $\nabla _A\setminus \phi $ is an m--system in $A$. Then, by Lemma \ref{fmsist}, (\ref{fmsist2}), $\nabla _A\setminus \phi $ is an m--system in $B$. By Lemma \ref{1.7}, it follows that $\psi \in {\rm Spec}(B)$.\end{proof}

\begin{proposition} Assume that $\nabla _B$ is finitely generated, $A\subseteq B$ and the canonical embedding $i:A\rightarrow B$ is admissible. Then the following are equivalent:\begin{enumerate}
\item\label{altacargu1} $i$ fulfills GU;
\item\label{altacargu2} for all $\phi \in {\rm Spec}(A)$, if $\psi $ is a maximal element of the set $\{\theta \in {\rm Con}(B)\setminus \{\nabla _B\}\ |\ \theta \cap (\nabla _A\setminus \phi )=\emptyset \}$, then $\psi \cap A^2=\phi $.\end{enumerate}\label{altacargu}\end{proposition}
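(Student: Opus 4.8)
The plan is to prove the two implications using the characterization of GU for admissible canonical embeddings recorded in Remark \ref{3.2}, together with Lemma \ref{specmsist}, which recognizes the maximal congruences of $B$ avoiding $\nabla _A\setminus \phi $ as prime. Throughout, recall that $i^*(\theta )=\theta \cap A^2$ for $\theta \in {\rm Con}(B)$ and ${\rm Ker}(i)=\Delta _A$, and write $S_{\phi }=\{\theta \in {\rm Con}(B)\setminus \{\nabla _B\}\ |\ \theta \cap (\nabla _A\setminus \phi )=\emptyset \}$ for $\phi \in {\rm Spec}(A)$. First I would check that $S_{\phi }$ is non--empty and that Zorn's Lemma applies to it: since $\phi $ is a proper congruence of $A$, the set $\nabla _A\setminus \phi $ is non--empty, $\Delta _B\in S_{\phi }$, and the union of a chain in $S_{\phi }$ is again a congruence that avoids the non--empty set $\nabla _A\setminus \phi \subseteq \nabla _B$, hence is still distinct from $\nabla _B$; thus $S_{\phi }$ has maximal elements, and each of them belongs to ${\rm Spec}(B)$ by Lemma \ref{specmsist}.

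For $(\ref{altacargu1})\Rightarrow (\ref{altacargu2})$, suppose $i$ fulfills GU, let $\phi \in {\rm Spec}(A)$ and let $\psi $ be a maximal element of $S_{\phi }$; then $\psi \in {\rm Spec}(B)$ by Lemma \ref{specmsist}, and $\psi \cap A^2=i^*(\psi )\in {\rm Spec}(A)$ by admissibility of $i$. From $\psi \cap (\nabla _A\setminus \phi )=\emptyset $ we get at once $\psi \cap A^2\subseteq \phi $. Now apply GU to $\psi \cap A^2\subseteq \phi $ in ${\rm Spec}(A)$ and to $\psi \in {\rm Spec}(B)$ with $i^*(\psi )=\psi \cap A^2$: there is $\psi _1\in {\rm Spec}(B)$ with $\psi \subseteq \psi _1$ and $\psi _1\cap A^2=\phi $. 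Then $\psi _1\cap (\nabla _A\setminus \phi )=(\psi _1\cap A^2)\cap (\nabla _A\setminus \phi )=\phi \cap (\nabla _A\setminus \phi )=\emptyset $ and $\psi _1\neq \nabla _B$, so $\psi _1\in S_{\phi }$; the maximality of $\psi $ forces $\psi =\psi _1$, hence $\psi \cap A^2=\phi $, which is $(\ref{altacargu2})$.

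For $(\ref{altacargu2})\Rightarrow (\ref{altacargu1})$, by Remark \ref{3.2} it suffices, given $\phi ,\psi \in {\rm Spec}(A)$ with $\phi \subseteq \psi $ and $\phi _1\in {\rm Spec}(B)$ with $\phi _1\cap A^2=\phi $, to produce $\psi _1\in {\rm Spec}(B)$ with $\phi _1\subseteq \psi _1$ and $\psi _1\cap A^2=\psi $. Consider $T=\{\theta \in S_{\psi }\ |\ \phi _1\subseteq \theta \}$. Since $\phi \subseteq \psi $, we have $\phi _1\cap (\nabla _A\setminus \psi )=(\phi _1\cap A^2)\cap (\nabla _A\setminus \psi )=\phi \cap (\nabla _A\setminus \psi )=\emptyset $, so $\phi _1\in T$, and the same chain--union argument (joins stay below $\nabla _B$ because they avoid the non--empty $\nabla _A\setminus \psi $) gives a maximal element $\psi _1$ of $T$ with $\phi _1\subseteq \psi _1$. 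Crucially, $\psi _1$ is then also maximal in $S_{\psi }$: any $\theta \in S_{\psi }$ with $\psi _1\subseteq \theta $ satisfies $\phi _1\subseteq \theta $, hence lies in $T$, so $\theta =\psi _1$. Therefore $\psi _1\in {\rm Spec}(B)$ by Lemma \ref{specmsist}, $\psi _1\cap A^2=\psi $ by hypothesis $(\ref{altacargu2})$ applied to $\psi $, and $\phi _1\subseteq \psi _1$ by construction; this $\psi _1$ is as required.

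The one step demanding attention — the main obstacle — is this passage from a maximal element of the smaller set $T$, which is reachable from $\phi _1$ by Zorn, to a maximal element of the set $S_{\psi }$ that actually occurs in condition $(\ref{altacargu2})$: it works exactly because imposing the extra lower bound $\phi _1$ does not produce any new congruences lying above a congruence that already contains $\phi _1$. The remaining ingredients — non--emptiness, stability of the defining constraints under directed unions, and staying off $\nabla _B$ — are routine Zorn's Lemma bookkeeping.
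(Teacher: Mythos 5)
Your proof is correct and follows essentially the same route as the paper's: both directions use Lemma \ref{specmsist} to recognize maximal avoiding congruences as prime, with the forward implication forcing $\psi =\psi _1$ by maximality and the converse passing from a Zorn-maximal element above $\phi _1$ to a maximal element of the full avoiding set. Your explicit verification that maximality over $T$ implies maximality over $S_{\psi }$ is exactly the step the paper dismisses as ``straightforward,'' so you have merely spelled out the same argument in more detail.
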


\begin{proof} (\ref{altacargu1})$\Rightarrow $(\ref{altacargu2}): Let $\phi $ and $\psi $ be as in the enunciation. Then $\psi \in {\rm Spec}(B)$ by Lemma \ref{specmsist}. Let $\phi _0=\psi \cap A^2=i^*(\psi )\in {\rm Spec}(A)$, because $i$ is admissible. We have: $\emptyset =\psi \cap (\nabla _A\setminus \phi)=(\psi \cap \nabla _A)\setminus (\psi \cap \phi)=(\psi \cap A^2)\setminus (\psi \cap \phi)$, thus $\psi \cap A^2\subseteq \psi \cap \phi \subseteq \psi \cap A^2$, because $\phi \subseteq A^2$. Hence $\phi _0=\psi \cap A^2=\psi \cap \phi \subseteq \phi $. Since $i$ fulfills GU, it follows that there exists a $\psi _1\in {\rm Spec}(B)$ such that $\psi _1\cap A^2=i^*(\psi _1)=\phi $ and $\psi \subseteq \psi _1$. Then $\psi _1\neq \nabla _B$ and $\psi _1\cap (\nabla _A\setminus \phi)=\psi _1\cap \nabla _A\cap (\nabla _A\setminus \phi)=\phi \cap (\nabla _A\setminus \phi)=\emptyset $. By the maximality of $\psi $, it follows that $\psi =\psi _1$, hence $\phi =\psi _1\cap A^2=\psi \cap A^2$.

\noindent (\ref{altacargu2})$\Rightarrow $(\ref{altacargu1}): Let $\phi _0,\phi \in {\rm Spec}(A)$ and $\psi _0\in {\rm Spec}(B)$ such that $\phi _0\subseteq \phi $ and $\psi _0\cap A^2=i^*(\psi _0)=\phi _0$. $\psi _0\cap (\nabla _A\setminus \phi )=\psi _0\cap \nabla _A\cap (\nabla _A\setminus \phi )=\phi _0\cap (\nabla _A\setminus \phi )\subseteq \phi \cap (\nabla _A\setminus \phi )=\emptyset $, because $\phi _0\subseteq \phi $; so $\psi _0\cap (\nabla _A\setminus \phi )=\emptyset $. Let $\psi $ be a maximal element of the set $\{\theta \in {\rm Con}(A)\setminus \{\nabla _A\}\ |\ \psi _0\subseteq \theta ,\theta \cap (\nabla _A\setminus \phi )=\emptyset \}$. Then it is straightforward that $\psi $ is a maximal element of the set $\{\theta \in {\rm Con}(A)\setminus \{\nabla _A\}\ |\ \theta \cap (\nabla _A\setminus \phi )=\emptyset \}$, so $\psi \in {\rm Spec}(B)$ by Lemma \ref{specmsist}, and $i^*(\psi )=\psi \cap A^2=\phi $ by the hypothesis of this implication. Thus $i$ fulfills GU.\end{proof}

\begin{proposition} Assume that $\nabla _B$ is finitely generated, $A\subseteq B$ and the canonical embedding $i:A\rightarrow B$ is admissible. Then: if $i$ fulfills GU, then $i$ fulfills LO.\label{embedgulo}\end{proposition}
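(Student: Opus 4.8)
The goal is to verify condition LO for $i$, which, since ${\rm Ker}(i)=i^*(\Delta _B)=\Delta _A$ is contained in every congruence of $A$, amounts to showing that every $\phi \in {\rm Spec}(A)$ can be written as $\phi =\psi \cap A^2=i^*(\psi )$ for some $\psi \in {\rm Spec}(B)$. The plan is to produce such a $\psi $ by a single maximality argument inside $B$ relative to the m--system $\nabla _A\setminus \phi $, and then to feed the outcome into the GU hypothesis by way of the already proved Proposition \ref{altacargu}, whose hypotheses ($\nabla _B$ finitely generated, $A\subseteq B$, $i$ admissible) coincide with ours.

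So I would fix $\phi \in {\rm Spec}(A)$. By Lemma \ref{1.6}, $S:=\nabla _A\setminus \phi $ is an m--system in $A$, and by Lemma \ref{fmsist}, (\ref{fmsist2}), it is also an m--system in $B$. Next I would consider the set $\Sigma _{\phi }=\{\theta \in {\rm Con}(B)\setminus \{\nabla _B\}\ |\ \theta \cap S=\emptyset \}$. If $B$ is trivial then so is its subalgebra $A$, hence ${\rm Spec}(A)=\emptyset $ and LO holds vacuously, so one may assume $\Delta _B\neq \nabla _B$; then, $\phi $ being reflexive, $\Delta _B\cap S=\emptyset $, so $\Delta _B\in \Sigma _{\phi }$ and $\Sigma _{\phi }\neq \emptyset $. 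The decisive point is that $\Sigma _{\phi }$ has a maximal element: the union of a chain in $\Sigma _{\phi }$ is again a congruence of $B$ meeting $S$ emptily, and it stays distinct from $\nabla _B$ precisely because $\nabla _B$ is finitely generated --- a finite generating set of $\nabla _B$ cannot be absorbed by a directed union without already lying inside one member of the chain, which would then equal $\nabla _B$. Hence Zorn's Lemma applies and produces a maximal $\psi \in \Sigma _{\phi }$, and by Lemma \ref{specmsist} we get $\psi \in {\rm Spec}(B)$.

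To conclude, I would invoke that $i$ is admissible and fulfills GU, so the implication (\ref{altacargu1})$\Rightarrow $(\ref{altacargu2}) of Proposition \ref{altacargu}, applied to this $\phi $ and to the maximal element $\psi $ of $\Sigma _{\phi }$, gives exactly $i^*(\psi )=\psi \cap A^2=\phi $. Thus a prime congruence of $B$ lying over $\phi $ has been exhibited, and $i$ fulfills LO.

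As for difficulty, there is essentially no deep obstacle: the statement is an assembly of tools already built in the excerpt, and the only step that genuinely requires care is the Zorn's Lemma argument for the existence of $\psi $, where the finite generation of $\nabla _B$ is exactly what makes the ``no chain reaches $\nabla _B$'' verification work (this is the same device underlying Lemmas \ref{1.7} and \ref{specmsist}). One could alternatively avoid citing Proposition \ref{altacargu} and argue directly --- set $\phi _0=i^*(\psi )\in {\rm Spec}(A)$, observe that $\psi \cap S=\emptyset $ forces $\psi \cap A^2=\psi \cap \phi \subseteq \phi $, so $\phi _0\subseteq \phi $, and then apply GU to $\phi _0\subseteq \phi $ together with $i^*(\psi )=\phi _0$ to obtain $\psi _1\in {\rm Spec}(B)$ with $i^*(\psi _1)=\phi $ --- but this just re--proves the inductive step of Proposition \ref{altacargu}, so quoting the latter is cleaner.
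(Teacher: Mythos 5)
Your proof is correct and follows essentially the same route as the paper's: fix $\phi \in {\rm Spec}(A)$, obtain a maximal proper congruence $\psi $ of $B$ disjoint from the m--system $\nabla _A\setminus \phi $, conclude $\psi \in {\rm Spec}(B)$ via Lemma \ref{specmsist}, and then apply the implication (\ref{altacargu1})$\Rightarrow $(\ref{altacargu2}) of Proposition \ref{altacargu} to get $i^*(\psi )=\phi $. The only difference is that you spell out the Zorn's Lemma step (non--emptiness of the set and compactness of $\nabla _B$) that the paper leaves implicit, which is a welcome addition rather than a divergence.
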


\begin{proof} Assume that $i$ fulfills GU, and let $\phi \in {\rm Spec}(A)$; of course, $\nabla _A=\nabla _B\cap A^2=i^*(\nabla _B)\subseteq \phi $. Then, by Proposition \ref{altacargu} and Lemma \ref{specmsist}, there exists a $\psi \in {\rm Spec}(B)$ such that $i^*(\psi )=\psi \cap A^2=\phi $, therefore $i$ fulfills LO.\end{proof}

\begin{proposition} If $\nabla _B$ is finitely generated and $f$ fulfills GU, then $f$ fulfills LO, but the converse is not true.\label{gulo}\end{proposition}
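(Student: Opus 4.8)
The plan is to treat the two assertions separately. The implication ``$\nabla _B$ finitely generated and $f$ fulfils GU $\Rightarrow $ $f$ fulfils LO'' will be obtained by reducing to the embedding case, which was handled in Propositions~\ref{embed} and~\ref{embedgulo}; the non--implication will be witnessed by an explicit bounded lattice example.

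For the implication, suppose $\nabla _B$ is finitely generated and $f$ fulfils GU. I would factor $f=i\circ s$, with $s:A\rightarrow f(A)$, $s(x)=f(x)$, surjective and $i:f(A)\rightarrow B$ the canonical embedding, exactly as in the proof of Proposition~\ref{embed}. Since $f$ is admissible, $i$ is admissible by Proposition~\ref{embed},~(\ref{embed1}); since $f$ fulfils GU, $i$ fulfils GU by Proposition~\ref{embed},~(\ref{embed2}). Now $f(A)$ is a subalgebra of $B$, $\nabla _B$ is finitely generated, and $i$ is an admissible embedding that fulfils GU, so Proposition~\ref{embedgulo} gives that $i$ fulfils LO, whence $f$ fulfils LO by Proposition~\ref{embed},~(\ref{embed3}). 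There is no real obstacle here; it is a direct chaining of results already obtained in this section.

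For the non--implication, the goal is an admissible morphism that fulfils LO but not GU and whose codomain $B$ has $\nabla _B$ finitely generated; since canonical embeddings are morphisms, it suffices to produce such a canonical embedding, and I would do so in the (semi--degenerate, congruence--distributive) class of bounded lattices. Take $A$ to be a bounded lattice with ${\rm Con}(A)\cong {\cal L}_3$, say with non--trivial congruence $\chi $, so that ${\rm Spec}(A)=\{\Delta _A,\chi \}$ is a two--element chain and $A/\chi $ is simple; concretely, one may take $A$ to be the eight--element lattice $H$ of Example~\ref{meproud}, for which ${\rm Con}(H)\cong {\cal L}_3$ with $\chi =eq(\{0\},\{a,x\},\{b\},\{c,z\},\{y,1\})$ and $H/\chi \cong {\cal D}$. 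Now fix a simple bounded lattice $S$ containing $A$ as a $\{0,1\}$--sublattice (such an $S$ exists, for instance a suitable finite partition lattice), identify ${\cal D}$ with $A/\chi $, and put $B={\cal D}\times S$. As the class of bounded lattices is congruence--distributive and ${\cal D},S$ are simple, ${\rm Con}(B)\cong {\rm Con}({\cal D})\times {\rm Con}(S)\cong {\cal L}_2^2$; hence, by Remark~\ref{rex2}, ${\rm Spec}(B)=\{\beta _1,\beta _2\}$, the two incomparable maximal congruences of $B$, where $\beta _1$ is the kernel of the projection $B\rightarrow {\cal D}$ and $\beta _2$ that of the projection $B\rightarrow S$. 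Embed $A$ into $B$ by $a\mapsto (a/\chi ,a)$; this gives a $\{0,1\}$--sublattice embedding $i:A\rightarrow B$ with $i^*(\beta _1)=\beta _1\cap A^2=\chi $, $i^*(\beta _2)=\beta _2\cap A^2=\Delta _A$ and $i^*(\Delta _B)={\rm Ker}(i)=\Delta _A$. Then $i^*({\rm Spec}(B))=\{\chi ,\Delta _A\}={\rm Spec}(A)$, so $i$ is admissible; $\nabla _B$ is finitely generated by semi--degeneracy; $i$ fulfils LO because $\Delta _A=i^*(\beta _2)$ and $\chi =i^*(\beta _1)$ both lie over ${\rm Ker}(i)=\Delta _A$; but $i$ does not fulfil GU, since, taking $\phi =\Delta _A\subseteq \psi =\chi $ in ${\rm Spec}(A)$ and $\phi _1=\beta _2$ (which satisfies $i^*(\beta _2)=\Delta _A=\phi $), the only prime congruence of $B$ restricting to $\chi $ is $\beta _1$, and $\beta _1\nsupseteq \beta _2$, so there is no $\psi _1\in {\rm Spec}(B)$ with $\beta _2\subseteq \psi _1$ and $i^*(\psi _1)=\chi $.

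The one genuinely non--routine point is the choice of the second factor of $B$: one must not take $B={\cal D}\times A$, because there the prime congruence $(\nabla _{\cal D},\chi )$ of $B$, which restricts to $\chi $, contains $(\nabla _{\cal D},\Delta _A)$, which restricts to $\Delta _A$, so GU would actually hold; passing to a \emph{simple} lattice $S\supseteq A$ is precisely what breaks this comparability. Granting the standard fact that every finite bounded lattice admits a $\{0,1\}$--embedding into a simple bounded lattice, all the verifications above are routine, using throughout that in a congruence--distributive variety the commutator is the intersection of congruences, so that the prime congruences are exactly the meet--irreducible elements of the congruence lattice.
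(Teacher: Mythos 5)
Your proof of the implication is exactly the paper's: factor $f$ through its image and chain Proposition \ref{embed} with Proposition \ref{embedgulo}; there is nothing to add there. Where you genuinely diverge is the non--converse: the paper simply points to a type of ring extension in \cite[Exercise 3, p. 41]{kap}, whereas you build an explicit admissible bounded--lattice embedding $i:H\rightarrow {\cal D}\times S$ that fulfills LO but not GU. The logic of your construction is correct: with ${\cal D}=H/\chi $ and $S$ simple, ${\rm Con}({\cal D}\times S)\cong {\cal L}_2^2$, the two prime congruences of the product are the incomparable kernels of the two projections, they pull back to $\chi $ and $\Delta _H$ respectively, so admissibility and LO hold, and GU fails at the pair $\Delta _H\subseteq \chi $ lying under the kernel of the projection onto $S$. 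Your warning that ${\cal D}\times H$ would not work (because the relevant primes become comparable) is also well taken.

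The one soft spot is the ingredient you wave through as a ``standard fact'': the existence of a \emph{simple} bounded lattice $S$ containing $H$ as a $\{0,1\}$--sublattice. The bound--preserving form of the simple--extension theorem is less off--the--shelf than you suggest: the Pudl\'ak--T\r{u}ma embedding of a finite lattice into a finite partition lattice need not send $0$ to $\hat 0$ and $1$ to $\hat 1$, and restricting to the interval above the image of $0$ (again a partition lattice) only fixes the bottom. For this particular $H$ the gap closes concretely: sending $a,b,c$ to the partitions $12|3|4$, $13|2|4$, $23|1|4$ of a four--element set, $y$ to $123|4$, $x$ to $12|34$, $z$ to $23|14$, and $0,1$ to $\hat 0,\hat 1$ is a $\{0,1\}$--embedding of $H$ into the simple partition lattice $\Pi _4$, as one checks directly on all joins and meets; so you may take $S=\Pi _4$. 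With that witness supplied, your counterexample is complete and, unlike the paper's, stays entirely inside the class of bounded lattices used throughout the examples of Sections \ref{admmorph} and \ref{guandlo}, which is a genuine improvement in self--containedness.
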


\begin{proof} By Propositions \ref{embed} and \ref{embedgulo}, if $f$ fulfills GU, then $f$ fulfills LO. 

See in \cite[Exercise 3, p. 41]{kap} a type of ring extension which proves that not all admissible morphisms fulfilling LO from a semi--degenerate congruence--modular equational class also fulfill GU.\end{proof}

\begin{corollary} If ${\cal C}$ is semi--degenerate, then, in ${\cal C}$, GU implies LO, but the converse is not true.\label{classgulo}\end{corollary}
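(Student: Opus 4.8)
The plan is to obtain this corollary as an immediate consequence of Proposition \ref{gulo}, once the hypothesis on $\nabla _B$ being finitely generated has been secured via the characterization of semi--degeneracy. First I would unwind what the statement means: since GU and LO are defined only for admissible morphisms, the assertion ``GU implies LO in ${\cal C}$'' is to be read as ``every admissible morphism in ${\cal C}$ that fulfills GU also fulfills LO''. So I would fix an arbitrary admissible morphism $f:A\rightarrow B$ in ${\cal C}$ and assume $f$ fulfills GU.

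Next, I would invoke Proposition \ref{2.6}: since ${\cal C}$ is semi--degenerate, the congruence $\nabla _M$ is finitely generated for every member $M$ of ${\cal C}$; in particular $\nabla _B$ is finitely generated. This is exactly the hypothesis needed to apply Proposition \ref{gulo}, which then yields that $f$ fulfills LO. As $f$ was an arbitrary admissible morphism, this establishes that GU implies LO throughout ${\cal C}$.

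For the failure of the converse, I would reuse the witness already cited in the proof of Proposition \ref{gulo}: the ring extension from \cite[Exercise 3, p. 41]{kap}. The variety of unitary rings is semi--degenerate (as observed just before Proposition \ref{2.6}) and congruence--modular, and that extension is an admissible morphism fulfilling LO but not GU, so it provides the required counterexample inside a semi--degenerate congruence--modular equational class.

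No real obstacle is anticipated here; this is a routine specialization of Proposition \ref{gulo}. The only point deserving a line of care is the matching of quantifiers described in the first paragraph, so that the global statement about the class ${\cal C}$ is correctly derived from the per--morphism statement of Proposition \ref{gulo}.
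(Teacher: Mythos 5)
Your proposal is correct and follows exactly the paper's route: Proposition \ref{2.6} supplies the finite generation of $\nabla_B$ from semi--degeneracy, Proposition \ref{gulo} then gives GU $\Rightarrow$ LO for each admissible morphism, and the failure of the converse is witnessed by the same ring extension from \cite{kap} already invoked there. Nothing is missing.
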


\begin{proof} By Propositions \ref{2.6} and \ref{gulo}.\end{proof}

\section{Going Up and Lying Over in Particular Cases}
\label{gulopartic}

In this section we list some cases in which admissibility and GU hold and we show how admissibility, GU and LO relate to each other in some particular cases. Throughout this section, ${\cal C}$ shall be congruence--modular, $A,B,M,N$ shall be members of ${\cal C}$, $f:A\rightarrow B$ shall be an admissible morphism in ${\cal C}$ and $h:M\rightarrow N$ shall be a morphism in ${\cal C}$, not necessarily admissible.

\begin{lemma}\begin{enumerate}
\item\label{specmaxgu1} If ${\rm Spec}(A)={\rm Max}(A)$, then $f$ fulfills GU.
\item\label{specmaxgu2} If the commutator in $A$ equals the intersection of congruences and ${\rm Con}(A)$ is a Boolean algebra, then $f$ fulfills GU.
\item\label{specmaxgu3} If ${\cal C}$ is congruence--distributive and ${\rm Con}(A)$ is a Boolean algebra, then $f$ fulfills GU.\end{enumerate}\label{specmaxgu}\end{lemma}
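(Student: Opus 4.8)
The plan is to derive everything from part (\ref{specmaxgu1}). The hypotheses of parts (\ref{specmaxgu2}) and (\ref{specmaxgu3}) are tailored so that Lemma \ref{specmax} applies: under the assumptions of (\ref{specmaxgu2}) we obtain ${\rm Spec}(A)={\rm Max}(A)$ from Lemma \ref{specmax}, (\ref{specmax3}), and under the assumptions of (\ref{specmaxgu3}) we obtain the same equality from Lemma \ref{specmax}, (\ref{specmax4}). Hence, once (\ref{specmaxgu1}) is established, parts (\ref{specmaxgu2}) and (\ref{specmaxgu3}) follow immediately by invoking it; the only care needed is to cite the correct clause of Lemma \ref{specmax} in each case.

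For part (\ref{specmaxgu1}), the key point is that the equality ${\rm Spec}(A)={\rm Max}(A)$ forces the poset $({\rm Spec}(A),\subseteq )$ to be unordered. Indeed, ${\rm Max}(A)$ is the set of maximal elements of $({\rm Con}(A)\setminus \{\nabla _A\},\subseteq )$, hence an antichain: if $\mu _1\subseteq \mu _2$ with $\mu _1,\mu _2\in {\rm Max}(A)$, then $\mu _2\neq \nabla _A$ together with the maximality of $\mu _1$ forces $\mu _1=\mu _2$. This is exactly the implication $(c_1)\Rightarrow (c_2)$ recorded in Remark \ref{unord}, which I would simply cite.

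With this in hand, GU is immediate from Definition \ref{3.1}. Given $\phi ,\psi \in {\rm Spec}(A)$ and $\phi _1\in {\rm Spec}(B)$ with $\phi \subseteq \psi $ and $f^*(\phi _1)=\phi $, the fact that $({\rm Spec}(A),\subseteq )$ is unordered yields $\phi =\psi $; one may then take $\psi _1=\phi _1$, which lies in ${\rm Spec}(B)$, satisfies $\phi _1\subseteq \psi _1$ trivially, and has $f^*(\psi _1)=f^*(\phi _1)=\phi =\psi $. Hence $f$ fulfills GU. I do not expect any real obstacle: the whole content lies in the observation that ``${\rm Spec}={\rm Max}$'' collapses the spectrum to an antichain, after which the GU condition is satisfied vacuously on comparable pairs, and the admissibility of $f$ is not even needed beyond ensuring that the $\phi _1$ appearing in the definition is a genuine element of ${\rm Spec}(B)$.
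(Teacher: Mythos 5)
Your proposal is correct and matches the paper's proof essentially verbatim: part (i) is proved by noting via Remark \ref{unord} that ${\rm Spec}(A)={\rm Max}(A)$ makes the spectrum an antichain, so $\phi=\psi$ and one takes $\psi_1=\phi_1$; parts (ii) and (iii) reduce to (i) via Lemma \ref{specmax}, (\ref{specmax3}) and (\ref{specmax4}) respectively. No issues.
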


\begin{proof} (\ref{specmaxgu1}) Let $\phi ,\psi \in {\rm Spec}(A)$ and $\phi _1\in {\rm Spec}(B)$ such that $f^*(\phi _1)=\phi $ and $\phi \subseteq \psi $. Then, by Remark \ref{unord}, $\phi =\psi $, so we may take $\psi _1=\phi _1\in {\rm Spec}(B)$ and we have: $\phi _1=\psi _1\subseteq \psi _1$ and $f^*(\psi _1)=f^*(\phi _1)=\phi =\psi $, hence $f$ fulfills GU.

\noindent (\ref{specmaxgu2}) By (\ref{specmaxgu1}) and Lemma \ref{specmax}, (\ref{specmax3}).

\noindent (\ref{specmaxgu3}) By (\ref{specmaxgu1}) and Lemma \ref{specmax}, (\ref{specmax4}).\end{proof}

\begin{proposition}\begin{enumerate}
\item\label{specmaxadmgu1} If ${\cal C}$ is semi--degenerate, ${\rm Spec}(M)={\rm Max}(M)$ and ${\rm Spec}(N)={\rm Con}_2(N)$, then $h$ is admissible and fulfills GU.
\item\label{specmaxadmgu2} If ${\cal C}$ is semi--degenerate, the commutator in $M$ equals the intersection of congruences, ${\rm Con}(M)$ is a Boolean algebra and ${\rm Spec}(N)={\rm Con}_2(N)$, then $h$ is admissible and fulfills GU.
\item\label{specmaxadmgu3} If ${\cal C}$ is semi--degenerate and congruence--distributive, ${\rm Con}(M)$ is a Boolean algebra and ${\rm Spec}(N)={\rm Con}_2(N)$, then $h$ is admissible and fulfills GU.
\item\label{specmaxadmgu4} If $h^*(\{\nabla _M\})=\{\nabla _N\}$, ${\rm Spec}(M)={\rm Max}(M)$ and ${\rm Spec}(N)={\rm Con}_2(N)$, then $h$ is admissible and fulfills GU. 
\item\label{specmaxadmgu5} If $h^*(\{\nabla _M\})=\{\nabla _N\}$, the commutator in $M$ equals the intersection, ${\rm Con}(M)$ is a Boolean algebra and ${\rm Spec}(N)={\rm Con}_2(N)$, then $h$ is admissible and fulfills GU. 
\item\label{specmaxadmgu6} If $h^*(\{\nabla _M\})=\{\nabla _N\}$, ${\cal C}$ is congruence--distributive, ${\rm Con}(M)$ is a Boolean algebra and ${\rm Spec}(N)={\rm Con}_2(N)$, then $h$ is admissible and fulfills GU.\end{enumerate}\label{specmaxadmgu}\end{proposition}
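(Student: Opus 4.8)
The plan is to deduce all six parts from two facts: that $h$ is admissible, and that ${\rm Spec}(M) = {\rm Max}(M)$. Once both are in hand, property GU is immediate from Lemma \ref{specmaxgu}, (\ref{specmaxgu1}), applied to $h$ in place of $f$. Accordingly I would treat (\ref{specmaxadmgu1}) and (\ref{specmaxadmgu4}) as the two genuine cases and reduce the remaining four to them.

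For (\ref{specmaxadmgu1}): since ${\cal C}$ is semi--degenerate and ${\rm Spec}(N) = {\rm Con}_2(N)$, Lemma \ref{2clsadm}, (\ref{2clsadm1}) gives at once that $h$ is admissible; as ${\rm Spec}(M) = {\rm Max}(M)$ is assumed, Lemma \ref{specmaxgu}, (\ref{specmaxgu1}) yields GU. Parts (\ref{specmaxadmgu2}) and (\ref{specmaxadmgu3}) then reduce to (\ref{specmaxadmgu1}) once one observes that ${\rm Spec}(M) = {\rm Max}(M)$ holds in them as well: this is Lemma \ref{specmax}, (\ref{specmax3}), in (\ref{specmaxadmgu2}) and Lemma \ref{specmax}, (\ref{specmax4}), in (\ref{specmaxadmgu3}).

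For (\ref{specmaxadmgu4}): here ${\rm Spec}(M) = {\rm Max}(M)$ is a hypothesis, so from $h^*(\{\nabla _M\}) = \{\nabla _N\}$ and Lemma \ref{2cls}, (\ref{2cls1}), we obtain $h^*({\rm Con}_2(N)) \subseteq {\rm Con}_2(M)$, and then, using ${\rm Spec}(N) = {\rm Con}_2(N)$, Remark \ref{2clsmax} and the hypothesis,
\[ h^*({\rm Spec}(N)) = h^*({\rm Con}_2(N)) \subseteq {\rm Con}_2(M) \subseteq {\rm Max}(M) = {\rm Spec}(M), \]
so $h$ is admissible, and Lemma \ref{specmaxgu}, (\ref{specmaxgu1}) again gives GU. Parts (\ref{specmaxadmgu5}) and (\ref{specmaxadmgu6}) reduce to (\ref{specmaxadmgu4}) by supplying ${\rm Spec}(M) = {\rm Max}(M)$: via Lemma \ref{specmax}, (\ref{specmax3}), in (\ref{specmaxadmgu5}) (the commutator in $M$ equals the intersection there by hypothesis) and via Lemma \ref{specmax}, (\ref{specmax4}), in (\ref{specmaxadmgu6}) (${\cal C}$ is congruence--distributive there by hypothesis). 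Alternatively, for admissibility in (\ref{specmaxadmgu5})--(\ref{specmaxadmgu6}) one may invoke Lemma \ref{2clsadm}, (\ref{2clsadm3})--(\ref{2clsadm4}), directly, noting that a Boolean lattice ${\rm Con}(M)$ is distributive.

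There is essentially no obstacle: the statement is an assembly of previously established facts. The only point demanding a little care is the admissibility step in (\ref{specmaxadmgu4})--(\ref{specmaxadmgu6}), where one must route through $h^*(\{\nabla _M\}) = \{\nabla _N\}$ together with ${\rm Max}(M) = {\rm Spec}(M)$, rather than through semi--degeneracy as in (\ref{specmaxadmgu1})--(\ref{specmaxadmgu3}).
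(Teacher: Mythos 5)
Your proposal is correct and follows essentially the same route as the paper, whose entire proof is the citation ``By Lemmas \ref{2clsadm} and \ref{specmaxgu}.'' Your treatment of part (\ref{specmaxadmgu4}) is in fact slightly more careful than that bare citation, since no clause of Lemma \ref{2clsadm} literally matches the hypotheses of (\ref{specmaxadmgu4}) (each of (\ref{2clsadm2})--(\ref{2clsadm4}) assumes something extra about $M$ or ${\cal C}$), and one does need to route through Lemma \ref{2cls}, (\ref{2cls1}), and Remark \ref{2clsmax} together with ${\rm Spec}(M)={\rm Max}(M)$, exactly as you do.
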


\begin{proof} By Lemmas \ref{2clsadm} and \ref{specmaxgu}.\end{proof}

\begin{proposition} Any morphism in the class of bounded distributive lattices is admissible and fulfills GU.\label{distriblatgu}\end{proposition}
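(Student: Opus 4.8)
The plan is to obtain this as a specialization of the results already proved for semi--degenerate congruence--modular varieties, using the known structure of the prime spectrum of a distributive lattice. First I would record the two structural facts that drive the argument. The class of bounded distributive lattices is congruence--distributive, hence congruence--modular, so all the machinery of Sections \ref{admmorph}, \ref{guandlo} and of this section applies; and it is semi--degenerate, since a one--element subalgebra of a bounded lattice forces $0=1$, i.e.\ triviality (this is the same observation that makes the class of bounded lattices semi--degenerate, as noted above). Secondly, by Lemma \ref{speclat}, (\ref{speclat1}), every bounded distributive lattice $L$ satisfies ${\rm Spec}(L)={\rm Max}(L)={\rm Con}_2(L)$.

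Now let $h:M\rightarrow N$ be an arbitrary morphism of bounded distributive lattices. For admissibility I would invoke Lemma \ref{2clsadm}, (\ref{2clsadm1}): the variety is semi--degenerate and ${\rm Spec}(N)={\rm Con}_2(N)$ by the fact just recalled, whence $h$ is admissible (this is also exactly Proposition \ref{bdlat}, (\ref{bdlat1})). Having established that $h$ is admissible, and since ${\rm Spec}(M)={\rm Max}(M)$, Lemma \ref{specmaxgu}, (\ref{specmaxgu1}), applies directly and yields that $h$ fulfills GU. Equivalently, both conclusions can be extracted in one stroke from Proposition \ref{specmaxadmgu}, (\ref{specmaxadmgu1}), whose three hypotheses --- semi--degeneracy of the ambient variety, ${\rm Spec}(M)={\rm Max}(M)$, and ${\rm Spec}(N)={\rm Con}_2(N)$ --- are precisely what Lemma \ref{speclat}, (\ref{speclat1}), supplies for $M$ and $N$.

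There is no genuine obstacle here; the statement is a direct consequence of Proposition \ref{specmaxadmgu}, (\ref{specmaxadmgu1}), once the two structural facts above are in place. The only point requiring a little care is the logical ordering: Lemma \ref{specmaxgu}, (\ref{specmaxgu1}), is phrased for admissible morphisms, so admissibility of $h$ must be secured first (via semi--degeneracy together with ${\rm Spec}(N)={\rm Con}_2(N)$) before that lemma is quoted.
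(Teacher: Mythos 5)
Your proof is correct and follows essentially the same route as the paper, which cites Proposition \ref{bdlat}, (\ref{bdlat1}), Lemma \ref{speclat}, (\ref{speclat1}), and Lemma \ref{specmaxgu}, (\ref{specmaxgu1}); your derivation of admissibility via Lemma \ref{2clsadm}, (\ref{2clsadm1}), is exactly how the paper itself justifies Proposition \ref{bdlat}. The repackaging through Proposition \ref{specmaxadmgu}, (\ref{specmaxadmgu1}), is the same argument in a single citation.
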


\begin{proof} By Proposition \ref{bdlat}, (\ref{bdlat1}), Lemma \ref{speclat}, (\ref{speclat1}), and Lemma \ref{specmaxgu}, (\ref{specmaxgu1}).\end{proof}

\begin{proposition} Let $L^{\prime }$ be a bounded lattice, $L$ be a bounded lattice that can be obtained through finite direct products and/or finite ordinal sums from bounded distributive lattices and/or finite modular lattices and/or relatively complemented bounded lattices with ACC and $m:L\rightarrow L^{\prime }$ be a bounded lattice morphism.

\begin{enumerate}
\item\label{latgu1} If $m$ is admissible, then $m$ fulfills GU.
\item\label{latgu2} If $L^{\prime }$ is distributive, then $m$ is admissible and fulfills GU.
\item\label{latgu3} If $m(L)=\{0,1\}$, then $m$ is admissible and fulfills GU.\end{enumerate}\label{latgu}\end{proposition}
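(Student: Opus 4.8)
The plan is to reduce all three parts to Lemma \ref{specmaxgu}, (\ref{specmaxgu1}), using the structure theorem for the prime spectra involved. I would begin by instantiating the ambient class as the class of bounded lattices, which is semi--degenerate and congruence--distributive (hence congruence--modular), so that the commutator theory of the previous sections applies and, by Theorem \ref{distrib}, the commutator in every bounded lattice is just intersection. The crucial observation is that, by hypothesis, $L$ is built by finitely many direct products and ordinal sums from bounded distributive lattices, finite modular lattices and relatively complemented bounded lattices with ACC, so Lemma \ref{speclat}, (\ref{speclat2}), yields ${\rm Spec}(L)={\rm Max}(L)$.

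For (\ref{latgu1}): assuming $m$ admissible, the equality ${\rm Spec}(L)={\rm Max}(L)$ makes the poset $({\rm Spec}(L),\subseteq )$ unorderred (Remark \ref{unord}), so Lemma \ref{specmaxgu}, (\ref{specmaxgu1}), applied with $A=L$ gives directly that $m$ fulfills GU. For (\ref{latgu2}): when $L^{\prime }$ is distributive, Lemma \ref{speclat}, (\ref{speclat1}), gives ${\rm Spec}(L^{\prime })={\rm Con}_2(L^{\prime })$; since the class of bounded lattices is semi--degenerate, Lemma \ref{2clsadm}, (\ref{2clsadm1}), shows that $m$ is admissible, and then part (\ref{latgu1}) finishes the job --- alternatively one can invoke Proposition \ref{specmaxadmgu}, (\ref{specmaxadmgu1}), in one stroke with $M=L$, $N=L^{\prime }$. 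For (\ref{latgu3}): if $m(L)=\{0,1\}$, then Remark \ref{2cls01} already tells us that $m$ is admissible, so part (\ref{latgu1}) again applies.

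I do not expect a genuine obstacle here: every step is a citation of an earlier result, and the only care needed is to verify that the hypotheses match --- in particular that the class of bounded lattices is simultaneously semi--degenerate and congruence--distributive, and that the construction process for $L$ in the statement is exactly the one covered by Lemma \ref{speclat}, (\ref{speclat2}). If anything is slightly delicate, it is noticing that parts (\ref{latgu2}) and (\ref{latgu3}) are not independent derivations but should be bootstrapped off part (\ref{latgu1}) once admissibility has been secured.
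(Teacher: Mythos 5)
Your proposal is correct and follows essentially the same route as the paper: Lemma \ref{speclat}, (\ref{speclat2}), gives ${\rm Spec}(L)={\rm Max}(L)$, Lemma \ref{specmaxgu}, (\ref{specmaxgu1}), then yields GU, and admissibility in parts (\ref{latgu2}) and (\ref{latgu3}) comes from Proposition \ref{bdlat}, (\ref{bdlat2}) (equivalently, your Lemma \ref{2clsadm} plus Lemma \ref{speclat}, (\ref{speclat1}), argument) and Remark \ref{2cls01}, respectively. The detour through Remark \ref{unord} in part (\ref{latgu1}) is unnecessary but harmless, since Lemma \ref{specmaxgu}, (\ref{specmaxgu1}), already takes ${\rm Spec}(A)={\rm Max}(A)$ as its hypothesis.
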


\begin{proof} (\ref{latgu1}) By Lemma \ref{speclat}, (\ref{speclat2}), and Lemma \ref{specmaxgu}.

\noindent (\ref{latgu2}) By (\ref{latgu1}) and Proposition \ref{bdlat}, (\ref{bdlat2}).

\noindent (\ref{latgu3}) By (\ref{latgu2}) and Remark \ref{2cls01}.\end{proof}

\begin{remark} Of course, Lemma \ref{speclat}, (\ref{speclat1}), and Lemma \ref{specmaxgu}, (\ref{specmaxgu1}), show that, if $L$ is a bounded lattice with ${\rm Spec}(L)={\rm Max}(L)$ and $L^{\prime }$ is a bounded distributive lattice, then any bounded lattice morphism $m:L\rightarrow L^{\prime }$ is admissible and fulfills GU. See in \cite{euadm} examples of finite lattices whose lattice of congruences is Boolean, thus whose prime congruences coincide to their maximal ones, and which can not be obtained through direct products and/or ordinal sums from modular lattices and relatively complemented lattices.\end{remark}

\begin{proposition}\begin{enumerate}
\item\label{conlant1} If $f$ fulfills LO and $({\rm Spec}(B),\subseteq )$ is a chain, then $f$ fulfills GU.
\item\label{conlant2} If $f$ fulfills LO and ${\rm Con}(B)$ is a chain, then $f$ fulfills GU.\end{enumerate}\label{conlant}\end{proposition}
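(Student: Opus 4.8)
The plan is to obtain (ii) immediately from (i): since ${\rm Spec}(B)\subseteq {\rm Con}(B)$, if the lattice ${\rm Con}(B)$ is a chain then the poset $({\rm Spec}(B),\subseteq )$ is a chain as well, so (i) applies. Thus all the work lies in proving (i).

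For (i), I would argue straight from the definition of GU. Take $\phi ,\psi \in {\rm Spec}(A)$ and $\phi _1\in {\rm Spec}(B)$ with $\phi \subseteq \psi $ and $f^*(\phi _1)=\phi $, and look for $\psi _1\in {\rm Spec}(B)$ with $\phi _1\subseteq \psi _1$ and $f^*(\psi _1)=\psi $. First, by Remark \ref{ker}, ${\rm Ker}(f)\subseteq f^*(\phi _1)=\phi \subseteq \psi $, so the hypothesis of LO is met for $\psi $; since $f$ fulfills LO, there is some $\psi _1\in {\rm Spec}(B)$ with $f^*(\psi _1)=\psi $. The only thing remaining is to arrange the comparability $\phi _1\subseteq \psi _1$, and this is exactly where the chain hypothesis is used.

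Since $({\rm Spec}(B),\subseteq )$ is totally ordered, either $\phi _1\subseteq \psi _1$ — in which case $\psi _1$ is already the congruence we want — or $\psi _1\subseteq \phi _1$. In the latter case, using that $f^*$ is order preserving (being an inverse image map), $\psi =f^*(\psi _1)\subseteq f^*(\phi _1)=\phi $, which together with $\phi \subseteq \psi $ forces $\phi =\psi $; then $\phi _1$ itself works, since $\phi _1\subseteq \phi _1$ and $f^*(\phi _1)=\phi =\psi $. In either case GU is verified, which completes (i).

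There is essentially no hard step here: once LO has produced a preimage of $\psi $, the argument is a two-line case split. The only point that needs a little care is the second case, where one must notice that comparability of $\phi _1$ and $\psi _1$ in the wrong direction collapses $\phi $ and $\psi $, so that the original $\phi _1$ can be recycled as $\psi _1$. Admissibility of $f$ enters only through the standing hypotheses of the section, guaranteeing that the various $f^*$ images we invoke are indeed prime congruences.
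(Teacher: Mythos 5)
Your proof is correct and follows essentially the same route as the paper: apply LO to produce a preimage $\psi _1$ of $\psi $, then use the totality of the order on ${\rm Spec}(B)$ to see that the only failure of $\phi _1\subseteq \psi _1$ forces $\phi =\psi $, in which case $\phi _1$ itself serves as the witness. The paper merely arranges the same case split in the other order (first disposing of $\phi =\psi $, then deriving a contradiction from $\phi _1\nsubseteq \psi _1$), and obtains (ii) from (i) exactly as you do.
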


\begin{proof} (\ref{conlant1}) Let $\phi ,\psi \in {\rm Spec}(A)$ and $\phi _1\in {\rm Spec}(B)$ such that $f^*(\phi _1)=\phi $ and $\phi \subseteq \psi $. If $\phi =\psi $, then we may take $\psi _1=\phi _1$, as in the proof of Lemma \ref{specmaxgu}. Now assume that $\phi \neq \psi $. By Remark \ref{ker}, we have $\psi \supseteq \phi \supseteq {\rm Ker}(f)$, thus, since $f$ fulfills LO, there exists a $\psi _1\in {\rm Spec}(B)$ such that $f^*(\psi _1)=\psi $. Assume by absurdum that $\phi _1\nsubseteq \psi _1$, so that $\psi _1\subset \phi _1$ since $({\rm Spec}(B),\subseteq )$ is totally orderred. Then $\psi =f^*(\psi _1)\subseteq f^*(\phi _1)=\phi $, thus, since $\phi \subseteq \psi $, it follows that $\phi =\psi $, and we have a contradiction. Hence $\phi _1\subseteq \psi _1$, therefore $f$ fulfills GU.

\noindent (\ref{conlant2}) By (\ref{conlant1}).\end{proof}

\begin{proposition}\begin{enumerate}
\item\label{contriv1} If ${\rm Con}(M)=\{\Delta _M,\nabla _M\}$, $M$ is non--trivial and $h^*(\{\nabla _M\})=\{\nabla _N\}$, then $h$ is admissible and fulfills GU.

\item\label{contriv2} If ${\rm Con}(M)=\{\Delta _M,\nabla _M\}$ and ${\cal C}$ is semi--degenerate, then $h$ is admissible and fulfills GU.\end{enumerate}\label{contriv}\end{proposition}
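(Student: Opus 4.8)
The plan is to reduce both statements to the already-established facts about algebras whose prime spectrum coincides with the maximal spectrum. The only substantive step is to pin down ${\rm Spec}(M)$: under the standing assumption ${\rm Con}(M)=\{\Delta _M,\nabla _M\}$ the spectrum of $M$ has at most one element, and the whole proof hinges on showing it has exactly one, namely $\Delta _M$.

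For part (i), I would first note that, $M$ being non--trivial, there is a pair $(a,b)\in \nabla _M\setminus \Delta _M$, and then $Cg_M(a,b)$ is a congruence different from $\Delta _M$, hence equal to $\nabla _M$ since ${\rm Con}(M)=\{\Delta _M,\nabla _M\}$; thus $\nabla _M$ is finitely generated. Since $\Delta _M$ is now the unique proper congruence of $M$, we have ${\rm Max}(M)=\{\Delta _M\}$, and Lemma \ref{folclor} gives that this maximal congruence is prime; therefore ${\rm Spec}(M)=\{\Delta _M\}={\rm Con}(M)\setminus \{\nabla _M\}={\rm Max}(M)$. The hypothesis $h^*(\{\nabla _M\})=\{\nabla _N\}$ is exactly the condition $(h^*)^{-1}(\{\nabla _M\})=\{\nabla _N\}$ required by Lemma \ref{specall}, (\ref{specall1}), so that lemma yields that $h$ is admissible; and since ${\rm Spec}(M)={\rm Max}(M)$ and $h$ is now known to be admissible, Lemma \ref{specmaxgu}, (\ref{specmaxgu1}), yields that $h$ fulfills GU.

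For part (ii), I would split according to whether $M$ is trivial. If $M$ is trivial, then $h(M)$ is a one--element subalgebra of $N$, so semi--degeneracy of ${\cal C}$ forces $N$ to be trivial as well; then ${\rm Spec}(N)=\emptyset $ and $h$ is admissible and fulfills GU vacuously. If $M$ is non--trivial, then Proposition \ref{2.6} gives that $\nabla _M$ is finitely generated and Lemma \ref{2cls}, (\ref{2cls0}), gives $h^*(\{\nabla _M\})=\{\nabla _N\}$, so $M$, $N$ and $h$ satisfy all the hypotheses of part (i), which then applies.

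I expect the only real obstacle to be the identity ${\rm Spec}(M)={\rm Con}(M)\setminus \{\nabla _M\}$, that is, the fact that the least congruence of a non--trivial algebra whose congruence lattice is the two--element chain is prime (equivalently, that $[\nabla _M,\nabla _M]_M=\nabla _M$). Everything I use after that is routine matching of hypotheses to Lemmas \ref{specall} and \ref{specmaxgu}; the content lies in applying Lemma \ref{folclor}, which is legitimate here precisely because a non--trivial simple algebra automatically has $\nabla _M$ finitely generated, and, in part (ii), because semi--degeneracy supplies both that fact and the condition on $h^*$.
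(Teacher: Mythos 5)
Your proof is correct and follows essentially the same route as the paper's: for (i), establish ${\rm Spec}(M)={\rm Max}(M)=\{\Delta _M\}={\rm Con}(M)\setminus \{\nabla _M\}$ and then invoke Lemma \ref{specall}, (\ref{specall1}), for admissibility and Lemma \ref{specmaxgu}, (\ref{specmaxgu1}), for GU; for (ii), split on whether $M$ is trivial and reduce the non--trivial case to (i) via Lemma \ref{2cls}, (\ref{2cls0}). You are in fact slightly more explicit than the paper, which simply asserts ${\rm Spec}(M)={\rm Max}(M)=\{\Delta _M\}$, whereas you justify the primeness of $\Delta _M$ by observing that $\nabla _M=Cg_M(a,b)$ for any $(a,b)\notin \Delta _M$ and appealing to Lemma \ref{folclor}; this is exactly the point that needs justification, and your treatment of it is an improvement rather than a deviation.
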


\begin{proof} (\ref{contriv1}) Since $M$ is non--trivial, we have $\Delta _M\neq \nabla _M$, hence ${\rm Con}(M)=\{\Delta _M,\nabla _M\}\cong {\cal L}_2$, thus ${\rm Spec}(M)={\rm Max}(M)=\{\Delta _M\}={\rm Con}(M)\setminus \{\nabla _M\}$. By Lemma \ref{specall}, (\ref{specall1}), and Lemma \ref{specmaxgu}, (\ref{specmaxgu1}), it follows that $h$ is admissible and fulfills GU.

\noindent (\ref{contriv2}) If $M$ is the trivial algebra, then so is $h(M)$, thus so is $N$, because $h(M)$ is a subalgebra of $N$ and ${\cal C}$ is semi--degenerate. In this case, $h$ is an isomorphism, thus $h$ is admissible and fulfills GU. Now assume that $M$ is non--trivial. Then $h$ is admissible and fulfills GU by (\ref{contriv1}) and Lemma \ref{2cls}, (\ref{2cls0}).\end{proof}

\begin{example} ${\rm Con}({\cal L}_2)\cong {\cal L}_2$, because ${\cal L}_2$ is a finite Boolean algebra, thus ${\rm Con}({\cal L}_2)=\{\Delta _{{\cal L}_2},\nabla _{{\cal L}_2}\}$. We have seen in Example \ref{exadm} that ${\rm Con}({\cal D})=\{\Delta _{{\cal D}},\nabla _{{\cal D}}\}$. Therefore, by Proposition \ref{contriv}, (\ref{contriv2}), any bounded lattice morphism whose domain is ${\cal L}_2$ or ${\cal D}$ is admissible and fulfills GU. Many examples of such bounded lattices can be given. See some in \cite{euadm}, including one that is finite and can not be obtained through direct products and/or ordinal sums from modular lattices and relatively complemented lattices.\label{dcontriv}\end{example}

\section{Going Up and Lying Over in Direct Products of Algebras and Ordinal Sums of Bounded Orderred Structures}
\label{prodsum}

In this section, we prove that admissibility, GU and LO are preserved by finite direct products and, in the class of bounded lattices, also by finite ordinal sums; actually, the latter holds in any congruence--modular equational class of bounded orderred structures that fulfills a certain condition on congruences. Throughout this section, ${\cal C}$ shall be congruence--modular, $n\in \N ^*$, $A_1,\ldots ,A_n,B_1,\ldots ,B_n$ shall be algebras from ${\cal C}$, $f_i:A_i\rightarrow B_i$ shall be a morphism in ${\cal C}$ for all $i\in \overline{1,n}$, $\displaystyle A=\prod _{i=1}^nA_i$, $\displaystyle B=\prod _{i=1}^nB_i$ and $\displaystyle f=\prod _{i=1}^nf_i:A\rightarrow B$. We shall also assume that ${\cal C}$ fulfills the equivalent conditions from Proposition \ref{prodcongr}. Recall from Lemma \ref{distribsemid} that this is the case if ${\cal C}$ is semi--degenerate or congruence--distributive.

\begin{remark} Under the assumptions above, every $\beta \in {\rm Con}(B)$ is of the form $\displaystyle \beta =\prod _{i=1}^n\beta _i$ for some $\beta _1\in {\rm Con}(A_1),\ldots ,\beta _n\in {\rm Con}(A_n)$, so that $\displaystyle f^*(\beta )=(\prod _{i=1}^nf_i)^*(\prod _{i=1}^n\beta _i)=\prod _{i=1}^nf_i^*(\beta _i)$. Therefore $\displaystyle {\rm Ker}(f)=f^*(\Delta _B)=f^*(\prod _{i=1}^n\Delta _{B_i})=\prod _{i=1}^nf_i^*(\Delta _{B_i})=\prod _{i=1}^n{\rm Ker}(f_i)$.\label{iminv}\end{remark}

\begin{lemma}{\rm \cite{euadm}} $\displaystyle {\rm Spec}(A)=\bigcup _{i=1}^n\{\phi \times \prod _{j\in \overline{1,n}\setminus \{i\}}\nabla _{A_j}\ |\ \phi \in {\rm Spec}(A_i)\}$.\label{specprodfin}\end{lemma}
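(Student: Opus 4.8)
The plan is to prove the two inclusions separately, using throughout the standing assumptions of this section: every congruence of $A$ decomposes as a product $\prod_{i=1}^{n}\theta_i$ with $\theta_i\in{\rm Con}(A_i)$ (third bullet of Proposition \ref{prodcongr}), $[\prod_{i=1}^{n}\alpha_i,\prod_{i=1}^{n}\beta_i]_A=\prod_{i=1}^{n}[\alpha_i,\beta_i]_{A_i}$ (Proposition \ref{comutprod}), $[\nabla_{A_i},\nabla_{A_i}]_{A_i}=\nabla_{A_i}$ and $[\theta_i,\nabla_{A_i}]_{A_i}=\theta_i$ (first two bullets of Proposition \ref{prodcongr}), and the elementary fact that, all factor algebras being non--empty, $\prod_{i=1}^{n}\gamma_i\subseteq\prod_{i=1}^{n}\delta_i$ holds iff $\gamma_i\subseteq\delta_i$ for each $i$.

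For the inclusion ``$\supseteq$'', I would fix $i\in\overline{1,n}$ and $\phi\in{\rm Spec}(A_i)$ and put $\Phi=\phi\times\prod_{j\in\overline{1,n}\setminus\{i\}}\nabla_{A_j}$. Since $\phi\neq\nabla_{A_i}$, we get $\Phi\neq\nabla_A$, so $\Phi$ is a proper congruence of $A$. Given $\alpha=\prod_{k=1}^{n}\alpha_k$ and $\beta=\prod_{k=1}^{n}\beta_k$ in ${\rm Con}(A)$ with $[\alpha,\beta]_A\subseteq\Phi$, Proposition \ref{comutprod} and comparison of the $i$--th components yield $[\alpha_i,\beta_i]_{A_i}\subseteq\phi$; primeness of $\phi$ gives, say, $\alpha_i\subseteq\phi$, and then $\alpha=\prod_{k=1}^{n}\alpha_k\subseteq\phi\times\prod_{j\neq i}\nabla_{A_j}=\Phi$ because $\alpha_j\subseteq\nabla_{A_j}$ trivially for $j\neq i$. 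Hence $\Phi\in{\rm Spec}(A)$.

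For ``$\subseteq$'', let $\theta=\prod_{i=1}^{n}\theta_i\in{\rm Spec}(A)$. First I would show exactly one component is proper. Since $\theta\neq\nabla_A$, there is some $k$ with $\theta_k\neq\nabla_{A_k}$; consider the congruence $\alpha$ of $A$ whose $k$--th component is $\theta_k$ and whose other components are $\nabla_{A_j}$, and the congruence $\beta$ whose $k$--th component is $\nabla_{A_k}$ and whose $j$--th component ($j\neq k$) is $\theta_j$. Using Proposition \ref{comutprod} together with $[\theta_j,\nabla_{A_j}]_{A_j}=\theta_j$ one computes $[\alpha,\beta]_A=\theta\subseteq\theta$, so primeness of $\theta$ forces $\alpha\subseteq\theta$ or $\beta\subseteq\theta$. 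The option $\beta\subseteq\theta$ would give $\nabla_{A_k}\subseteq\theta_k$, contradicting the choice of $k$; hence $\alpha\subseteq\theta$, i.e. $\nabla_{A_j}\subseteq\theta_j$ for all $j\neq k$, so $\theta_j=\nabla_{A_j}$ for $j\neq k$ and $\theta=\theta_k\times\prod_{j\neq k}\nabla_{A_j}$. It then remains to check $\theta_k\in{\rm Spec}(A_k)$: it is proper since $\theta_k\neq\nabla_{A_k}$, and if $[\alpha',\beta']_{A_k}\subseteq\theta_k$ with $\alpha',\beta'\in{\rm Con}(A_k)$, then the congruences $\alpha'\times\prod_{j\neq k}\nabla_{A_j}$ and $\beta'\times\prod_{j\neq k}\nabla_{A_j}$ of $A$ have commutator $[\alpha',\beta']_{A_k}\times\prod_{j\neq k}\nabla_{A_j}\subseteq\theta$ (here $[\nabla_{A_j},\nabla_{A_j}]_{A_j}=\nabla_{A_j}$ is used), so primeness of $\theta$ yields $\alpha'\subseteq\theta_k$ or $\beta'\subseteq\theta_k$. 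This establishes the claimed equality.

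The bulk of the work is just bookkeeping with product congruences via Proposition \ref{comutprod}; the one point requiring care is, given a proper prime $\theta$, producing the pair $\alpha,\beta$ whose commutator equals $\theta$ \emph{exactly}, so that primeness collapses all but one coordinate of $\theta$ to $\nabla$. This is precisely where the identities $[\nabla_{A_j},\nabla_{A_j}]_{A_j}=\nabla_{A_j}$ and $[\theta_j,\nabla_{A_j}]_{A_j}=\theta_j$ from Proposition \ref{prodcongr} are indispensable, and it is why the hypothesis that ${\cal C}$ satisfies the equivalent conditions of that proposition cannot be dropped.
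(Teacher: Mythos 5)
Your proof is correct and complete: both inclusions are established by the natural bookkeeping with Proposition \ref{comutprod} and the identities $[\theta,\nabla]=\theta$, $[\nabla,\nabla]=\nabla$ guaranteed by the standing hypothesis that ${\cal C}$ satisfies the equivalent conditions of Proposition \ref{prodcongr}, and the key step --- exhibiting $\alpha,\beta$ with $[\alpha,\beta]_A=\theta$ to force all but one coordinate of a prime $\theta$ to be $\nabla$ --- is exactly the right idea. The paper itself gives no proof here (the lemma is quoted from \cite{euadm}), so there is nothing to compare against, but your argument is self-contained and uses only results available in the present paper.
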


\begin{proposition}\begin{enumerate}
\item\label{vprod1} For any $i\in \overline{1,n}$ and any $\theta \in {\rm Con}(A_i)$, $\displaystyle V_A(\theta \times \prod _{j\in \overline{1,n}\setminus \{i\}}\nabla _{A_j})=\{\phi \times \prod _{j\in \overline{1,n}\setminus \{i\}}\nabla _{A_j}\ |\ \phi \in V_{A_i}(\theta )\}$.
\item\label{vprod2} If $\theta _i\in {\rm Con}(A_i)$ for all $i\in \overline{1,n}$, then $\displaystyle V_A(\prod _{i=1}^n\theta _i)=\bigcup _{i=1}^n\{\phi \times \prod _{j\in \overline{1,n}\setminus \{i\}}\nabla _{A_j}\ |\ \phi \in V_{A_i}(\theta _i)\}$.\end{enumerate}\label{vprod}\end{proposition}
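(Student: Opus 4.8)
The plan is to read off everything from the explicit description of $\mathrm{Spec}(A)$ provided by Lemma~\ref{specprodfin}: every $\psi\in\mathrm{Spec}(A)$ has the form $\psi=\phi\times\prod_{k\in\overline{1,n}\setminus\{m\}}\nabla_{A_k}$ for some $m\in\overline{1,n}$ and some $\phi\in\mathrm{Spec}(A_m)$, with $\phi$ in particular a \emph{proper} congruence of $A_m$. Both equalities are then just a componentwise comparison, since by our standing assumption all congruences of $A$ (and of $B$) split as products of congruences of the factors (Proposition~\ref{prodcongr}), so that inclusion of congruences of $A$ is tested coordinate by coordinate.

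For (\ref{vprod1}), fix $i$ and $\theta\in\mathrm{Con}(A_i)$ and put $\alpha=\theta\times\prod_{j\in\overline{1,n}\setminus\{i\}}\nabla_{A_j}$. The inclusion $\supseteq$ is immediate: if $\phi\in V_{A_i}(\theta)$ then $\psi:=\phi\times\prod_{j\neq i}\nabla_{A_j}\in\mathrm{Spec}(A)$ by Lemma~\ref{specprodfin}, and $\alpha\subseteq\psi$ holds coordinatewise, so $\psi\in V_A(\alpha)$. For $\subseteq$, take $\psi\in V_A(\alpha)$ and write $\psi=\phi\times\prod_{k\neq m}\nabla_{A_k}$ with $\phi\in\mathrm{Spec}(A_m)$. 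Comparing coordinates in $\alpha\subseteq\psi$ gives $\nabla_{A_j}\subseteq\psi_j$, hence $\psi_j=\nabla_{A_j}$, for every $j\neq i$. If $m\neq i$ then $j=m$ is such an index, so $\phi=\psi_m=\nabla_{A_m}$, contradicting that $\phi$ is proper; therefore $m=i$ and $\psi=\phi\times\prod_{j\neq i}\nabla_{A_j}$, and the $i$-th coordinate of $\alpha\subseteq\psi$ says exactly $\theta\subseteq\phi$, i.e. $\phi\in V_{A_i}(\theta)$. (The degenerate subcase where $A_i$ is trivial, so $\theta=\nabla_{A_i}$ and $\alpha=\nabla_A$, is harmless: both sides are empty, as no prime congruence is improper.)

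For (\ref{vprod2}) I would argue in the same spirit, which is in fact easier. Put $\alpha=\prod_{i=1}^n\theta_i$ and take $\psi\in\mathrm{Spec}(A)$, written $\psi=\phi\times\prod_{k\neq m}\nabla_{A_k}$ with $\phi\in\mathrm{Spec}(A_m)$. Coordinatewise, $\alpha\subseteq\psi$ amounts to $\theta_m\subseteq\phi$ together with $\theta_j\subseteq\nabla_{A_j}$ for $j\neq m$, and the latter are automatic; hence $\psi\in V_A(\alpha)$ iff $\phi\in V_{A_m}(\theta_m)$, and letting $m$ range over $\overline{1,n}$ gives precisely the asserted union. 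Alternatively, one can deduce (\ref{vprod2}) from (\ref{vprod1}): in $\mathrm{Con}(A)$ one has $\prod_{i=1}^n\theta_i=\bigcap_{i=1}^n\bigl(\theta_i\times\prod_{j\in\overline{1,n}\setminus\{i\}}\nabla_{A_j}\bigr)$, and for any finite family $(\gamma_i)_{i\in\overline{1,n}}$ in $\mathrm{Con}(A)$ one has $V_A\bigl(\bigcap_i\gamma_i\bigr)=\bigcup_iV_A(\gamma_i)$ — the inclusion $\supseteq$ is monotonicity of $V_A$, and $\subseteq$ follows because if $\psi\in\mathrm{Spec}(A)$ and $\bigcap_i\gamma_i\subseteq\psi$ then, using $[\gamma,\delta]_A\subseteq\gamma\cap\delta$ (Remark~\ref{r2.8}) and primeness of $\psi$ inductively, some $\gamma_i\subseteq\psi$; then apply (\ref{vprod1}) to each term.

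There is no serious obstacle here: the only point that needs care is the forcing step in (\ref{vprod1}) — namely that a coordinate of $\psi$ dominating $\nabla_{A_j}$ must \emph{equal} $\nabla_{A_j}$, which then clashes with the unique non-full coordinate of a prime congruence of a product being a proper congruence — together with the routine bookkeeping for trivial factors and for the empty-$V$ cases $\theta=\nabla$.
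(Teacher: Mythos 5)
Your proof is correct and follows essentially the same route as the paper: both parts are read off from Lemma \ref{specprodfin} together with the coordinatewise description of (product) congruences, and your alternative derivation of (\ref{vprod2}) via $\prod_{i=1}^n\theta _i=\bigcap _{i=1}^n(\theta _i\times \prod _{j\neq i}\nabla _{A_j})$ is exactly the paper's stated argument. Your direct coordinatewise proof of (\ref{vprod2}) merely spells out the same computation without passing through the intersection, and the forcing step you highlight in (\ref{vprod1}) is the detail the paper leaves implicit.
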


\begin{proof} (\ref{vprod1}) Let $\alpha \in {\rm Con}(A)$. By Lemma \ref{specprodfin}, $\displaystyle \alpha \in V_A(\theta \times \prod _{j\in \overline{1,n}\setminus \{i\}}\nabla _{A_j})$ iff $\alpha \in {\rm Spec}(A)$ and $\displaystyle \alpha \supseteq \theta \times \prod _{j\in \overline{1,n}\setminus \{i\}}\nabla _{A_j}$ iff $\displaystyle \alpha =\phi \times \prod _{j\in \overline{1,n}\setminus \{i\}}\nabla _{A_j}$ for some $\phi \in V_{A_i}(\theta )$.

\noindent (\ref{vprod2}) By (\ref{vprod1}), Lemma \ref{specprodfin} and the fact that $\displaystyle \prod _{i=1}^n\theta _i=\bigcap _{i=1}^n(\theta _i\times \prod _{j\in \overline{1,n}\setminus \{i\}}\nabla _{A_j})$.\end{proof}

\begin{corollary}\begin{enumerate}
\item\label{admguloprodfin1} $f$ is admissible iff $f_1,\ldots ,f_n$ are admissible;
\item\label{admguloprodfin2} if $f$ is admissible, then: $f$ fulfills GU iff $f_1,\ldots ,f_n$ fulfill GU;
\item\label{admguloprodfin3} if $f$ is admissible, then: $f$ fulfills LO iff $f_1,\ldots ,f_n$ fulfill LO.\end{enumerate}\label{admguloprodfin}\end{corollary}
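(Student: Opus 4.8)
The plan is to reduce each of the three equivalences to its componentwise counterpart, using the explicit descriptions of ${\rm Spec}$ and of the closed sets $V$ for a finite direct product, namely Lemma \ref{specprodfin} and Proposition \ref{vprod}, together with Remark \ref{iminv} (which gives $f^*(\prod _{i=1}^n\beta _i)=\prod _{i=1}^nf_i^*(\beta _i)$ and ${\rm Ker}(f)=\prod _{i=1}^n{\rm Ker}(f_i)$) and the characterizations of GU and LO from Lemma \ref{cargulo}. Write $\nu _i=\prod _{j\in \overline{1,n}\setminus \{i\}}\nabla _{A_j}$ and $\mu _i=\prod _{j\in \overline{1,n}\setminus \{i\}}\nabla _{B_j}$. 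Since $f_j^*(\nabla _{B_j})=\nabla _{A_j}$, we have $f^*(\theta \times \mu _i)=f_i^*(\theta )\times \nu _i$ for every $i\in \overline{1,n}$ and every $\theta \in {\rm Con}(B_i)$; and, by Lemma \ref{specprodfin}, ${\rm Spec}(A)=\bigcup _{i=1}^n\{\phi \times \nu _i\ |\ \phi \in {\rm Spec}(A_i)\}$, with the analogous formula for $B$.

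For part (\ref{admguloprodfin1}): every $\psi \in {\rm Spec}(B)$ has the form $\theta \times \mu _i$ with $\theta \in {\rm Spec}(B_i)$ for some $i$, so $f^*(\psi )=f_i^*(\theta )\times \nu _i$. I would show that this lies in ${\rm Spec}(A)$ iff $f_i^*(\theta )\in {\rm Spec}(A_i)$: the ``if'' part is immediate from Lemma \ref{specprodfin}, and for ``only if'' one notes that a prime congruence of $A$ is proper and, by the uniqueness of the representation of a congruence of a finite product as a product of congruences of the factors (available thanks to the standing hypothesis that ${\cal C}$ satisfies the equivalent conditions of Proposition \ref{prodcongr}), matches $\phi \times \nu _l$ for a unique $l$ and some $\phi \in {\rm Spec}(A_l)$; comparing the $i$-th and $l$-th factors forces $l=i$ and $\phi =f_i^*(\theta )$ (the case $l\neq i$ would give $\phi =\nabla _{A_l}$, contradicting properness). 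Hence $f$ is admissible iff $f_i^*(\theta )\in {\rm Spec}(A_i)$ for all $i$ and all $\theta \in {\rm Spec}(B_i)$, i.e.\ iff each $f_i$ is admissible.

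For parts (\ref{admguloprodfin2}) and (\ref{admguloprodfin3}) I assume $f$ admissible, so each $f_i$ is admissible by part (\ref{admguloprodfin1}). For GU, using Lemma \ref{cargulo}, (\ref{cargulo1}): for $\psi =\theta \times \mu _i\in {\rm Spec}(B)$, Proposition \ref{vprod}, (\ref{vprod1}), yields $V_A(f^*(\psi ))=\{\phi \times \nu _i\ |\ \phi \in V_{A_i}(f_i^*(\theta ))\}$ and $V_B(\psi )=\{\chi \times \mu _i\ |\ \chi \in V_{B_i}(\theta )\}$, hence $f^*(V_B(\psi ))=\{f_i^*(\chi )\times \nu _i\ |\ \chi \in V_{B_i}(\theta )\}$; comparing $i$-th factors, the inclusion $V_A(f^*(\psi ))\subseteq f^*(V_B(\psi ))$ is equivalent to $V_{A_i}(f_i^*(\theta ))\subseteq f_i^*(V_{B_i}(\theta ))$, which is exactly the GU inclusion for $f_i$ at $\theta $; and as $\psi $ ranges over ${\rm Spec}(B)$, $\theta $ ranges over ${\rm Spec}(B_i)$ for every $i$. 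For LO, using Lemma \ref{cargulo}, (\ref{cargulo2}): by Remark \ref{iminv} and Proposition \ref{vprod}, (\ref{vprod2}), $V_A({\rm Ker}(f))=\bigcup _{i=1}^n\{\phi \times \nu _i\ |\ \phi \in V_{A_i}({\rm Ker}(f_i))\}$, while $f^*({\rm Spec}(B))=\bigcup _{i=1}^n\{f_i^*(\theta )\times \nu _i\ |\ \theta \in {\rm Spec}(B_i)\}$ by Lemma \ref{specprodfin}; the inclusion $f^*({\rm Spec}(B))\subseteq V_A({\rm Ker}(f))$ holds automatically since $f$ is admissible (Remark \ref{admv}), and for the reverse inclusion, matching some $\phi \times \nu _{i_0}$ on the left with some $f_k^*(\theta )\times \nu _k$ on the right again forces $k=i_0$ and $\phi =f_{i_0}^*(\theta )$, by properness and uniqueness of product decompositions exactly as in part (\ref{admguloprodfin1}). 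Thus $V_A({\rm Ker}(f))=f^*({\rm Spec}(B))$ iff $V_{A_i}({\rm Ker}(f_i))=f_i^*({\rm Spec}(B_i))$ for all $i$, i.e.\ iff each $f_i$ fulfills LO.

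The computations are routine once the product decompositions are set up; I expect the only point requiring care to be the bookkeeping that keeps the ``single non-$\nabla $ component at index $i$'' pieces of ${\rm Spec}(A)$, of $V_A({\rm Ker}(f))$, and of their $f^*$-images from collapsing into one another across distinct indices $i$ — which is precisely where properness of prime congruences and the uniqueness of the representation of a congruence of $\prod _{i=1}^nA_i$ as a product of congruences of the $A_i$ (ensured by the standing hypothesis coming from Proposition \ref{prodcongr}) are needed.
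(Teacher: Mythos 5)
Your proof is correct and follows essentially the same route as the paper: both reduce admissibility, GU and LO to their componentwise counterparts via Remark \ref{iminv}, Lemma \ref{specprodfin}, Proposition \ref{vprod} and Lemma \ref{cargulo}. The extra bookkeeping you supply (properness of primes plus uniqueness of the product decomposition of congruences, to keep the index-$i$ pieces from collapsing) is exactly what the paper leaves implicit.
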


\begin{proof} (\ref{admguloprodfin1}) This is a result in \cite{euadm}, which follows immediately from Remark \ref{iminv}, Lemma \ref{specprodfin} and Proposition \ref{vprod}.

\noindent (\ref{admguloprodfin2}) By Lemma \ref{cargulo}, (\ref{cargulo1}), Remark \ref{iminv}, Proposition \ref{vprod}, (\ref{vprod1}), and the fact that $f_i^*(\nabla _{B_i})=\nabla _{A_i}$ for all $i\in \overline{1,n}$, $f$ fulfills GU iff, for all $\psi \in {\rm Spec}(B)$, $V_A(f^*(\psi ))\subseteq f^*(V_B(\psi ))$ iff, for all $i\in \overline{1,n}$ and all $\phi \in {\rm Spec}(B_i)$, $\displaystyle V_A(f^*(\phi \times \prod _{j\in \overline{1,n}\setminus \{i\}}\nabla _{B_j}))\subseteq f^*(V_B(\phi \times \prod _{j\in \overline{1,n}\setminus \{i\}}\nabla _{B_j}))$ iff, for all $i\in \overline{1,n}$ and all $\phi \in {\rm Spec}(B_i)$, $\displaystyle V_{A_i}(f_i^*(\phi ))\times \prod _{j\in \overline{1,n}\setminus \{i\}}\nabla _{A_j}\subseteq f_i^*(V_{B_i}(\phi ))\times \prod _{j\in \overline{1,n}\setminus \{i\}}\nabla _{A_j}$ iff, for all $i\in \overline{1,n}$ and all $\phi \in {\rm Spec}(B_i)$, $V_{A_i}(f_i^*(\phi ))\subseteq f_i^*(V_{B_i}(\phi ))$ iff, for all $i\in \overline{1,n}$, $f_i$ fulfills GU.

\noindent (\ref{admguloprodfin3}) By Lemma \ref{cargulo}, (\ref{cargulo2}), Remark \ref{iminv}, Lemma \ref{specprodfin}, Proposition \ref{vprod}, (\ref{vprod1}) and (\ref{vprod2}), and the fact that $f_i^*(\nabla _{B_i})=\nabla _{A_i}$ for all $i\in \overline{1,n}$, $f$ fulfills LO iff $V_A({\rm Ker}(f))\subseteq f^*({\rm Spec}(B))$ iff $V_A({\rm Ker}(f))=f^*({\rm Spec}(B))$ iff $\displaystyle V_A(\prod _{i=1}^n{\rm Ker}(f_i))=f^*(\bigcup _{i=1}^n\{\phi \times \prod _{j\in \overline{1,n}\setminus \{i\}}\nabla _{B_j}\ |\ \phi \in {\rm Spec}(B_i)\})$ iff $\displaystyle \bigcup _{i=1}^n\{\chi \times \prod _{j\in \overline{1,n}\setminus \{i\}}\nabla _{A_j}\ |\ \chi \in V_{A_i}({\rm Ker}(f_i))\}=\bigcup _{i=1}^n\{f_i^*(\phi )\times \prod _{j\in \overline{1,n}\setminus \{i\}}\nabla _{A_j}\ |$\linebreak $\ \phi \in {\rm Spec}(B_i)\})$ iff, for all $i\in \overline{1,n}$, $\displaystyle \{\chi \times \prod _{j\in \overline{1,n}\setminus \{i\}}\nabla _{A_j}\ |\ \chi \in V_{A_i}({\rm Ker}(f_i))\}=\{f_i^*(\phi )\times \prod _{j\in \overline{1,n}\setminus \{i\}}\nabla _{A_j}\ |\ \phi \in {\rm Spec}(B_i)\})$ iff, for all $i\in \overline{1,n}$, $V_{A_i}({\rm Ker}(f_i))=f_i^*({\rm Spec}(B_i))$ iff, for all $i\in \overline{1,n}$, $f_i$ fulfills LO.\end{proof}

For any bounded lattices $L$ and $M$, we shall denote by $L\oplus M$ the ordinal sum of $L$ with $M$ and, if $\alpha \in {\rm Con}(L)$ and $\beta \in {\rm Con}(M)$, then we denote by $\alpha \oplus \beta =eq((L/\alpha \setminus c/\alpha )\cup (M/\beta \setminus c/\beta )\cup \{c/\alpha \cup c/\beta \})$, where $c$ is the common element of $L$ and $M$ in $L\oplus M$. If $L^{\prime }$ and $M^{\prime }$ are bounded lattices and $h:L\rightarrow M$ and $h^{\prime }:L^{\prime }\rightarrow M^{\prime }$ are bounded lattice morphisms, then we define $h\oplus h^{\prime }:L\oplus L^{\prime }\rightarrow M\oplus M^{\prime }$ by: for all $x\in L\oplus L^{\prime }$, $(h\oplus h^{\prime })(x)=\begin{cases}h(x) & x\in L,\\ h^{\prime }(x) & x\in L^{\prime }.\end{cases}$ Then, clearly, $\alpha \oplus \beta \in {\rm Con}(L\oplus M)$ and $h\oplus h^{\prime }$ is a bounded lattice morphism.

Throughout the rest of this section, for all $i\in \overline{1,n}$, $L_i,M_i$ shall be bounded lattices and $h_i:L_i\rightarrow M_i$ shall be a bounded lattice morphism. We shall denote by $\displaystyle L=\bigoplus _{i=1}^nL_i$, $\displaystyle M=\bigoplus _{i=1}^nM_i$ and $\displaystyle h=\bigoplus _{i=1}^nh_i:L\rightarrow M$.

\begin{remark} Let $\beta _i\in {\rm Con}(M_i)$ for all $i\in \overline{1,n}$ and $\displaystyle \beta =\bigoplus _{i=1}^n\beta _i\in {\rm Con}(M)$. Then $\displaystyle h^*(\beta )=(\bigoplus _{i=1}^nh_i)^*(\bigoplus _{i=1}^n\beta _i)=\bigoplus _{i=1}^nh_i^*(\beta _i)$. Therefore $\displaystyle {\rm Ker}(h)=h^*(\Delta _M)=h^*(\bigoplus _{i=1}^n\Delta _{M_i})=\bigoplus _{i=1}^nh_i^*(\Delta _{M_i})=\bigoplus _{i=1}^n{\rm Ker}(h_i)$.\label{iminvsumord}\end{remark}

\begin{lemma}{\rm \cite{euadm}}\begin{enumerate}
\item\label{sumord1} $\displaystyle {\rm Con}(L)=\{\bigoplus _{i=1}^n\theta _i\ |\ (\forall \, i\in \overline{1,n})\, (\theta _i\in {\rm Con}(L_i))\}\cong \prod _{i=1}^n{\rm Con}(L_i)$.
\item\label{sumord2} $\displaystyle {\rm Spec}(L)=\bigcup _{i=1}^n\{\nabla _{L_1}\oplus \ldots \nabla _{L_{i-1}}\oplus \phi \oplus \nabla _{L_{i+1}}\oplus \ldots \nabla _{L_n}\ |\ \phi \in {\rm Spec}(L_i)\}$.\end{enumerate}\label{sumord}\end{lemma}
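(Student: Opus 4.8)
The plan is to establish part (\ref{sumord1}) first and then read off (\ref{sumord2}) from it, exploiting that the class of bounded lattices is congruence--distributive. For (\ref{sumord1}), since the ordinal sum is associative it suffices to prove the case $n=2$ and then induct. So write $L=L_1\oplus L_2$ and let $c$ denote the element obtained by identifying the top of $L_1$ with the bottom of $L_2$. Both $L_1$ and $L_2$ are convex sublattices of $L$, so for any $\theta\in{\rm Con}(L)$ the restrictions $\theta_1=\theta\cap L_1^2$ and $\theta_2=\theta\cap L_2^2$ are congruences of $L_1$ and $L_2$ by Remark \ref{rex1}. The crucial observation is that, by Remark \ref{convex}, every $\theta$--class is convex in $L$; hence, if a class contains some $a\in L_1\setminus\{c\}$ and some $b\in L_2\setminus\{c\}$, then from $a<c<b$ and convexity it also contains $c$, so it equals $c/\theta$. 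Therefore every $\theta$--class other than $c/\theta$ lies inside $L_1\setminus\{c\}$ or inside $L_2\setminus\{c\}$; a short verification (using $x/\theta\cap L_i=x/\theta_i$) then shows that in the former case it is exactly a $\theta_1$--class, in the latter exactly a $\theta_2$--class, while $c/\theta=c/\theta_1\cup c/\theta_2$. Comparing this description of the partition of $\theta$ with the definition of $\theta_1\oplus\theta_2$ gives $\theta=\theta_1\oplus\theta_2$.

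Conversely, a direct computation of the restricted partition shows $(\alpha_1\oplus\alpha_2)\cap L_i^2=\alpha_i$, so the map $\theta\mapsto(\theta_1,\theta_2)$ is a bijection from ${\rm Con}(L)$ onto ${\rm Con}(L_1)\times{\rm Con}(L_2)$ with inverse $(\alpha_1,\alpha_2)\mapsto\alpha_1\oplus\alpha_2$. It is order preserving in both directions: the nontrivial direction is that $\alpha_i\subseteq\beta_i$ for $i=1,2$ forces every block of $\alpha_1\oplus\alpha_2$ into a block of $\beta_1\oplus\beta_2$ (handling the glued block $c/\alpha_1\cup c/\alpha_2\subseteq c/\beta_1\cup c/\beta_2$ separately). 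Since it also sends $\Delta_L$ to $(\Delta_{L_1},\Delta_{L_2})$ and $\nabla_L$ to $(\nabla_{L_1},\nabla_{L_2})$, it is a bounded lattice isomorphism. Iterating over $n$ yields ${\rm Con}(L)\cong\prod_{i=1}^n{\rm Con}(L_i)$ realised by $\bigoplus_{i=1}^n\theta_i\mapsto(\theta_1,\ldots,\theta_n)$.

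For (\ref{sumord2}), the class of bounded lattices is congruence--distributive, so by Theorem \ref{distrib} the commutator equals intersection in $L$ and in each $L_i$; hence by Remark \ref{rex2}, ${\rm Spec}(L)$ is precisely the set of proper meet--prime elements of ${\rm Con}(L)$, and likewise ${\rm Spec}(L_i)$ is the set of proper meet--prime elements of ${\rm Con}(L_i)$. Through the isomorphism of (\ref{sumord1}) it thus remains to identify the proper meet--prime elements of the finite product $\prod_{i=1}^n{\rm Con}(L_i)$, and I claim these are exactly the tuples with one coordinate $i$ equal to some $\phi\in{\rm Spec}(L_i)$ and every other coordinate equal to $\nabla_{L_j}$. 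One inclusion is immediate from componentwise computation of meets: if $\alpha\wedge\beta$ lies below such a tuple then $\alpha_i\wedge\beta_i\subseteq\phi$, so $\alpha_i\subseteq\phi$ or $\beta_i\subseteq\phi$, whence $\alpha$ or $\beta$ lies below the tuple. Conversely, given a proper meet--prime tuple $\psi=(\psi_1,\ldots,\psi_n)$, write it as the meet of the $n$ tuples obtained by replacing one coordinate of $\psi$ at a time by the corresponding $\nabla_{L_j}$; meet--primeness forces $\psi$ below one of them, which says all but one coordinate of $\psi$ is $\nabla_{L_j}$, and testing $\psi$ against tuples supported only on that remaining coordinate shows it is meet--prime there (and proper, since $\psi\ne\nabla$). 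Translating back through (\ref{sumord1}), the coordinate--$i$ tuple carrying $\phi\in{\rm Spec}(L_i)$ corresponds to $\nabla_{L_1}\oplus\cdots\oplus\nabla_{L_{i-1}}\oplus\phi\oplus\nabla_{L_{i+1}}\oplus\cdots\oplus\nabla_{L_n}$, which is exactly the asserted description of ${\rm Spec}(L)$.

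The main obstacle is the decomposition argument in (\ref{sumord1}): one has to treat the glued point $c$ carefully, proving both that $c/\theta=c/\theta_1\cup c/\theta_2$ and that no class other than $c/\theta$ straddles $L_1$ and $L_2$, and then match the resulting partition precisely with the definition of $\theta_1\oplus\theta_2$. Once (\ref{sumord1}) is secured, part (\ref{sumord2}) is a routine, purely lattice--theoretic identification of meet--primes in a finite direct product.
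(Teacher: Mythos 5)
Your argument is correct. Note that the paper does not prove this lemma at all: it is quoted from \cite{euadm}, so there is no in-paper proof to compare against; your write-up supplies the missing argument in what is surely the intended way. Part (\ref{sumord1}) is handled soundly: the convexity of congruence classes (Remark \ref{convex}) together with the fact that every element of $L_1$ lies below every element of $L_2$ in $L_1\oplus L_2$ correctly forces any class meeting both $L_1\setminus\{c\}$ and $L_2\setminus\{c\}$ to be $c/\theta$, and the resulting match of partitions with the definition of $\theta_1\oplus\theta_2$, plus the two-sided order preservation, gives the bounded lattice isomorphism. Part (\ref{sumord2}) correctly reduces, via congruence--distributivity and Remark \ref{rex2}, to identifying the proper meet--prime elements of a finite product of lattices.

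One small wording slip worth fixing: in the converse direction of the meet--prime identification you say you write $\psi$ as the meet of the tuples obtained by \emph{replacing one coordinate at a time by} $\nabla_{L_j}$ and that ``meet--primeness forces $\psi$ below one of them.'' Each of those tuples lies \emph{above} $\psi$, so that statement is vacuous and, taken literally, only yields that at least one coordinate equals $\nabla_{L_j}$ (insufficient for $n>2$). What you clearly intend — and what your stated conclusion matches — is the dual decomposition: write $\psi$ as the meet of the $n$ tuples $\chi^{(j)}$ having $\psi_j$ in coordinate $j$ and $\nabla$ elsewhere; meet--primeness then forces some $\chi^{(j)}\leq\psi$, i.e.\ $\psi_k=\nabla_{L_k}$ for all $k\neq j$. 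With that phrasing corrected, the proof is complete.
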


\begin{proposition}\begin{enumerate}
\item\label{vsumord1} For all $i\in \overline{1,n}$ and all $\theta \in {\rm Con}(L_i)$, $V_L(\nabla _{L_1}\oplus \ldots \nabla _{L_{i-1}}\oplus \theta \oplus \nabla _{L_{i+1}}\oplus \ldots \nabla _{L_n})=\{\nabla _{L_1}\oplus \ldots \nabla _{L_{i-1}}\oplus \phi \oplus \nabla _{L_{i+1}}\oplus \ldots \nabla _{L_n}\ |\ \phi \in V_{L_i}(\theta )\}$. 
\item\label{vsumord2} If $\theta _i\in {\rm Con}(L_i)$ for all $i\in \overline{1,n}$, $\displaystyle V_L(\bigoplus _{i=1}^n\theta _i)=\bigcup _{i=1}^n\{\nabla _{L_1}\oplus \ldots \nabla _{L_{i-1}}\oplus \phi \oplus \nabla _{L_{i+1}}\oplus \ldots \nabla _{L_n}\ |\ \phi \in V_{L_i}(\theta _i)\}$.\end{enumerate}\label{vsumord}\end{proposition}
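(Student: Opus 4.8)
The plan is to run the proof of Proposition \ref{vprod} almost verbatim, replacing the direct-product decomposition of congruences by the ordinal-sum one from Lemma \ref{sumord}. The one preparatory observation I need is that, by Lemma \ref{sumord}, (\ref{sumord1}), the assignment $\bigoplus_{i=1}^n\theta_i\mapsto(\theta_1,\dots,\theta_n)$ is a bounded lattice isomorphism ${\rm Con}(L)\to\prod_{i=1}^n{\rm Con}(L_i)$, hence an order isomorphism; consequently $\bigoplus_i\alpha_i\subseteq\bigoplus_i\beta_i$ iff $\alpha_i\subseteq\beta_i$ for all $i$, $\bigoplus_i\alpha_i=\bigoplus_i\beta_i$ iff $\alpha_i=\beta_i$ for all $i$, and meets in ${\rm Con}(L)$ are computed componentwise. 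For $\theta\in{\rm Con}(L_i)$ I will write $\theta^{(i)}$ for $\nabla_{L_1}\oplus\cdots\oplus\nabla_{L_{i-1}}\oplus\theta\oplus\nabla_{L_{i+1}}\oplus\cdots\oplus\nabla_{L_n}$.

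For part (i), I fix $i$ and $\theta\in{\rm Con}(L_i)$ and let $\alpha\in{\rm Con}(L)$ be arbitrary. By Lemma \ref{sumord}, (\ref{sumord2}), $\alpha\in V_L(\theta^{(i)})$ iff $\alpha\in{\rm Spec}(L)$ and $\theta^{(i)}\subseteq\alpha$, iff $\alpha=\phi^{(k)}$ for some $k\in\overline{1,n}$ and some $\phi\in{\rm Spec}(L_k)$ with $\theta^{(i)}\subseteq\phi^{(k)}$. I then unwind the last inclusion componentwise: for each $j\neq i$ it demands $\nabla_{L_j}\subseteq(\phi^{(k)})_j$, i.e.\ $(\phi^{(k)})_j=\nabla_{L_j}$; but $\phi$ is a proper congruence of $L_k$, so $(\phi^{(k)})_k=\phi\neq\nabla_{L_k}$, which forces $k=i$; the remaining component $j=i$ then gives $\theta\subseteq\phi$, i.e.\ $\phi\in V_{L_i}(\theta)$. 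Conversely, for $\phi\in V_{L_i}(\theta)$ we have $\phi^{(i)}\in{\rm Spec}(L)$ by Lemma \ref{sumord}, (\ref{sumord2}), and $\theta^{(i)}\subseteq\phi^{(i)}$ by the same componentwise criterion. This is exactly the equality asserted in (i).

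For part (ii), under the isomorphism of Lemma \ref{sumord}, (\ref{sumord1}), $\bigoplus_{i=1}^n\theta_i$ corresponds to $(\theta_1,\dots,\theta_n)=\bigcap_{i=1}^n(\nabla_{L_1},\dots,\theta_i,\dots,\nabla_{L_n})$, so $\bigoplus_{i=1}^n\theta_i=\bigcap_{i=1}^n\theta_i^{(i)}$ in ${\rm Con}(L)$. Since the members of ${\rm Spec}(L)$ are prime congruences, $V_L$ carries a finite intersection of congruences to the union of the corresponding closed sets: the inclusion $\supseteq$ is trivial, and $\subseteq$ follows from $[\gamma,\delta]_L\subseteq\gamma\cap\delta$ together with primeness, by induction on the number of congruences involved (this is the same computation that makes the sets $V_L(\cdot)$ closed under finite unions in the Stone topology). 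Hence $V_L(\bigoplus_i\theta_i)=V_L(\bigcap_i\theta_i^{(i)})=\bigcup_{i=1}^nV_L(\theta_i^{(i)})$, and feeding each $\theta_i^{(i)}$ into part (i) yields the claimed union.

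I expect the only delicate step to be the componentwise bookkeeping in part (i): recognizing that, among the $\phi^{(k)}$ exhausting ${\rm Spec}(L)$, the ones lying above $\theta^{(i)}$ are precisely those with $k=i$ and $\phi\supseteq\theta$, because a proper component $\phi\neq\nabla_{L_k}$ can occupy only the $i$-th slot if $\theta^{(i)}$ is to be dominated. This is the exact analogue of the phenomenon in Proposition \ref{vprod}, and no structural information about ordinal sums beyond Lemma \ref{sumord} is required, so the full argument should be about as short as the proof of \ref{vprod}.
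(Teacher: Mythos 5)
Your proof is correct and follows exactly the route the paper intends: its own proof of Proposition \ref{vsumord} is just the instruction to repeat the argument of Proposition \ref{vprod} with Lemma \ref{sumord} in place of Lemma \ref{specprodfin}, which is precisely what you do (the componentwise order comparison via the isomorphism ${\rm Con}(L)\cong\prod_{i=1}^n{\rm Con}(L_i)$, the identity $\bigoplus_{i=1}^n\theta_i=\bigcap_{i=1}^n\theta_i^{(i)}$, and the fact that $V_L$ turns finite intersections into unions on prime congruences). No discrepancies to report.
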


\begin{proof} Similar to that of Proposition \ref{vprod}, but using Lemma \ref{sumord} instead of Lemma \ref{specprodfin}.\end{proof}

\begin{corollary}\begin{enumerate}
\item\label{admgulosumord1} $h$ is admissible iff $h_1,\ldots ,h_n$ are admissible;
\item\label{admgulosumord2} if $h$ is admissible, then: $h$ fulfills GU iff $h_1,\ldots ,h_n$ fulfill GU; 
\item\label{admgulosumord3} if $h$ is admissible, then: $h$ fulfills LO iff $h_1,\ldots ,h_n$ fulfill LO.\end{enumerate}\label{admgulosumord}\end{corollary}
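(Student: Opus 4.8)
The plan is to transcribe the proof of Corollary \ref{admguloprodfin}, replacing the direct--product machinery by its ordinal--sum analogue: Remark \ref{iminvsumord} in place of Remark \ref{iminv}, Lemma \ref{sumord} in place of Lemma \ref{specprodfin}, and Proposition \ref{vsumord} in place of Proposition \ref{vprod}. Throughout, recall that $h_j^*(\nabla _{M_j})=\nabla _{L_j}$ for every $j\in \overline{1,n}$, and that, by Lemma \ref{sumord}, the ordinal--sum decomposition of a congruence of $L$ (or of $M$) is unique, while each member of ${\rm Spec}(L)$ has exactly one summand $\neq \nabla $, sitting at some position $i$ and lying in ${\rm Spec}(L_i)$. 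For part (\ref{admgulosumord1}): $h$ is admissible iff $h^*({\rm Spec}(M))\subseteq {\rm Spec}(L)$; by Lemma \ref{sumord}, (\ref{sumord2}), a typical $\psi \in {\rm Spec}(M)$ has the form $\nabla _{M_1}\oplus \ldots \oplus \phi \oplus \ldots \oplus \nabla _{M_n}$ with $\phi \in {\rm Spec}(M_i)$ in position $i$, and then $h^*(\psi )=\nabla _{L_1}\oplus \ldots \oplus h_i^*(\phi )\oplus \ldots \oplus \nabla _{L_n}$ by Remark \ref{iminvsumord}, which by Lemma \ref{sumord}, (\ref{sumord2}), lies in ${\rm Spec}(L)$ iff $h_i^*(\phi )\in {\rm Spec}(L_i)$; letting $i$ and $\phi $ range gives the equivalence.

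For part (\ref{admgulosumord2}), assume $h$ admissible, hence each $h_i$ admissible by (\ref{admgulosumord1}), and use Lemma \ref{cargulo}, (\ref{cargulo1}): $h$ fulfills GU iff $V_L(h^*(\psi ))\subseteq h^*(V_M(\psi ))$ for all $\psi \in {\rm Spec}(M)$. Writing $\psi $ as above, Proposition \ref{vsumord}, (\ref{vsumord1}), computes $V_L(h^*(\psi ))$ as the family of those ordinal sums whose $i$--th summand runs through $V_{L_i}(h_i^*(\phi ))$ and which are $\nabla $ elsewhere, while Lemma \ref{sumord}, (\ref{sumord2}), together with Remark \ref{iminvsumord}, computes $h^*(V_M(\psi ))$ as the family of those ordinal sums whose $i$--th summand runs through $h_i^*(V_{M_i}(\phi ))$ and which are $\nabla $ elsewhere. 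By uniqueness of the ordinal--sum decomposition, the inclusion holds for this $\psi $ iff $V_{L_i}(h_i^*(\phi ))\subseteq h_i^*(V_{M_i}(\phi ))$; since every prime congruence of $M$ arises in this way, and by Lemma \ref{cargulo}, (\ref{cargulo1}), applied to the admissible $h_i$, the inclusion holds for all $\psi $ iff every $h_i$ fulfills GU.

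For part (\ref{admgulosumord3}), assume $h$ admissible and use Lemma \ref{cargulo}, (\ref{cargulo2}): $h$ fulfills LO iff $V_L({\rm Ker}(h))=h^*({\rm Spec}(M))$. By Remark \ref{iminvsumord}, ${\rm Ker}(h)=\bigoplus _{i=1}^n{\rm Ker}(h_i)$, so Proposition \ref{vsumord}, (\ref{vsumord2}), writes $V_L({\rm Ker}(h))$ as $\bigcup _{i=1}^nX_i$, where $X_i$ is the set of ordinal sums with $i$--th summand in $V_{L_i}({\rm Ker}(h_i))$ and $\nabla $ elsewhere; likewise Lemma \ref{sumord}, (\ref{sumord2}), and Remark \ref{iminvsumord} write $h^*({\rm Spec}(M))$ as $\bigcup _{i=1}^nY_i$, with $Y_i$ the set of ordinal sums with $i$--th summand in $h_i^*({\rm Spec}(M_i))$ and $\nabla $ elsewhere. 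Both $X_i$ and $Y_i$ are contained in ${\rm Spec}(L)$ (the latter because $h_i$ is admissible), and --- discarding trivial $L_i$, for which ${\rm Spec}(L_i)=\emptyset $ and $X_i=Y_i=\emptyset $ --- every member of $X_i\cup Y_i$ has its unique non--$\nabla $ summand at position $i$, so the sets $X_i\cup Y_i$ are pairwise disjoint. Hence $\bigcup _iX_i=\bigcup _iY_i$ forces $X_i=Y_i$ for each $i$, i.e.\ $V_{L_i}({\rm Ker}(h_i))=h_i^*({\rm Spec}(M_i))$, i.e.\ (Lemma \ref{cargulo}, (\ref{cargulo2})) $h_i$ fulfills LO.

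The one point deserving care is the decoupling over the index $i$ made explicit in part (\ref{admgulosumord3}): the observation that a congruence of $L$ presented as ``proper in one coordinate, $\nabla $ in the others'' determines that coordinate, which rests on the uniqueness statement in Lemma \ref{sumord}, (\ref{sumord1}). Everything else is a routine transcription of the direct--product case of Corollary \ref{admguloprodfin}; in particular the ``only if'' directions in (\ref{admgulosumord2}) and (\ref{admgulosumord3}) use the hypothesis that $h$ --- equivalently, by (\ref{admgulosumord1}), each $h_i$ --- be admissible, exactly as there.
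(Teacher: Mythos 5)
Your proposal is correct and is essentially the paper's own proof: the paper disposes of this corollary by citing \cite{euadm} for part (i) and stating that parts (ii) and (iii) follow from Remark \ref{iminvsumord}, Lemma \ref{sumord}, Proposition \ref{vsumord} and Lemma \ref{cargulo} ``through a straightforward proof similar to that of'' the direct--product case, which is exactly the transcription you carried out. Your explicit justification of the index decoupling in part (iii) via uniqueness of the ordinal--sum decomposition is a detail the paper leaves implicit, but it is the right one.
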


\begin{proof} (\ref{admgulosumord1}) is a result in \cite{euadm}, which follows immediately from Remark \ref{iminvsumord}, Lemma \ref{sumord} and Proposition \ref{vsumord}.

\noindent (\ref{admgulosumord2}) and (\ref{admgulosumord3}) follow from the results mentioned above, along with Lemma \ref{cargulo}, through a straightforward proof similar to that of (\ref{admguloprodfin2}) and (\ref{admguloprodfin3}) from Corollary \ref{admguloprodfin}.\end{proof}

\begin{remark} Lemma \ref{sumord}, (\ref{sumord2}), and Proposition \ref{vsumord} hold for any congruence--modular equational class of bounded orderred structures whose finite ordinal sums have the congruences of the form in Lemma \ref{sumord}, (\ref{sumord1}). If in such a class finite ordinal sums of morphisms give morphisms, then that class fulfills Corollary \ref{admgulosumord}, as well.\end{remark}

\section{Characterizations for Properties Going Up and Lying Over}
\label{charguandlo}

In this section, we obtain several characterizations for properties GU and LO, including topological ones, and prove that GU and LO are preserved by quotients. Throughout this section, ${\cal C}$ shall be congruence--modular, $A$ and $B$ shall be algebras from ${\cal C}$ and $f:A\rightarrow B$ shall be a morphism in ${\cal C}$.

For every $\beta \in {\rm Con}(B)$, we define $f_{\beta }:A/f^*({\beta })\rightarrow B/\beta $, for all $a\in A$, $f_{\beta }(a/f^*({\beta }))=f(a)/\beta $.

\begin{remark} For each $\beta \in {\rm Con}(B)$, $f_{\beta }$ is well defined and injective, because, for all $a,b\in A$: $a/f^*({\beta })=b/f^*({\beta })$ iff $(a,b)\in f^*({\beta })$ iff $(f(a),f(b))\in \beta $ iff $f(a)/\beta =f(b)/\beta $ iff $f_{\beta }(a/f^*({\beta }))=f_{\beta }(b/f^*({\beta }))$. Clearly, $f_{\beta }$ is a morphism in ${\cal C}$ and, if $f$ is surjective, then $f_{\beta }$ is surjective. Also, the following diagram is commutative:\vspace*{-18pt}

\begin{center}\begin{picture}(180,60)(0,0)
\put(31,35){$A$}
\put(15,5){$A/f^*(\beta )$}
\put(135,35){$B$}
\put(129,5){$B/\beta $}
\put(9,24){$p_{f^*(\beta )}$}
\put(34,33){\vector(0,-1){19}}
\put(140,24){$p_{\beta }$}
\put(138,33){\vector(0,-1){19}}
\put(82,42){$f$}
\put(39,39){\vector(1,0){96}}
\put(82,12){$f_{\beta }$}
\put(53,9){\vector(1,0){75}}\end{picture}\end{center}\vspace*{-7pt}

For all $\psi \in [\beta )$, $f_{\beta }^*(\psi /\beta )=f^*(\psi )/f^*(\beta )$, because: $f_{\beta }^*(\psi /\beta )=\{(a/f^*(\beta ),b/f^*(\beta ))\ |\ a,b\in A,(f(a)/\beta ,f(b)/\beta )$\linebreak $\in \psi /\beta \}=\{(a/f^*(\beta ),b/f^*(\beta ))\ |\ a,b\in A,(f(a),f(b))\in \psi \}=\{(a/f^*(\beta ),b/f^*(\beta ))\ |\ (a,b)\in f^*(\psi )\}=f^*(\psi )/f^*(\beta )$.\label{fbetainj}\end{remark}

\begin{lemma} $f$ is admissible iff, for each $\beta \in {\rm Con}(B)$, $f_{\beta }$ is admissible.\label{fbadm}\end{lemma}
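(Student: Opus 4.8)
The statement to prove is Lemma~\ref{fbadm}: $f$ is admissible if and only if, for every $\beta \in {\rm Con}(B)$, the induced morphism $f_{\beta}:A/f^*(\beta)\rightarrow B/\beta$ is admissible. The plan is to prove the two implications separately, using the spectrum-transfer isomorphism for quotients recorded in Lemma~\ref{speccat}, (\ref{speccat1}), together with the compatibility identity from Remark~\ref{fbetainj} that $f_{\beta}^*(\psi/\beta)=f^*(\psi)/f^*(\beta)$ for all $\psi \in [\beta)$, and the fact (also in Remark~\ref{fbetainj}) that the square with vertices $A,B,A/f^*(\beta),B/\beta$ and horizontal maps $f,f_{\beta}$, vertical maps $p_{f^*(\beta)},p_{\beta}$, commutes.

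\textbf{The ``only if'' direction.} Assume $f$ is admissible; fix $\beta \in {\rm Con}(B)$. I want $f_{\beta}^*({\rm Spec}(B/\beta))\subseteq {\rm Spec}(A/f^*(\beta))$. Take $\chi \in {\rm Spec}(B/\beta)$; by Lemma~\ref{speccat}, (\ref{speccat1}), applied to $p_{\beta}$, we have $\chi = \psi/\beta$ for a unique $\psi \in V_B(\beta) = {\rm Spec}(B)\cap [\beta)$. Since $f$ is admissible, $f^*(\psi)\in {\rm Spec}(A)$, and by Remark~\ref{ker} $f^*(\beta)\subseteq f^*(\psi)$, so $f^*(\psi)\in V_A(f^*(\beta))$. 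Now Remark~\ref{fbetainj} gives $f_{\beta}^*(\chi)=f_{\beta}^*(\psi/\beta)=f^*(\psi)/f^*(\beta)$, which lies in ${\rm Spec}(A/f^*(\beta))$ precisely by the other direction of the isomorphism in Lemma~\ref{speccat}, (\ref{speccat1}), applied to $p_{f^*(\beta)}$ (namely $\phi/f^*(\beta)\in {\rm Spec}(A/f^*(\beta))$ iff $\phi\in V_A(f^*(\beta))$). Hence $f_{\beta}$ is admissible.

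\textbf{The ``if'' direction.} Conversely, assume every $f_{\beta}$ is admissible; I want $f$ admissible. Take $\psi \in {\rm Spec}(B)$ and apply the hypothesis with the specific choice $\beta = \psi$. Then $\psi \in [\beta)$ trivially, and $\psi/\beta = \psi/\psi = \Delta_{B/\psi}$; since $\psi$ is a prime, hence proper, congruence, $B/\psi$ is non-trivial, and $\Delta_{B/\psi}\in {\rm Spec}(B/\psi)$ because, by Lemma~\ref{speccat}, (\ref{speccat1}), ${\rm Spec}(B/\psi)$ corresponds to $V_B(\psi)=\{\psi\}\cup\{\text{primes above }\psi\}$, which contains $\psi$, whose image is $\Delta_{B/\psi}$. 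By admissibility of $f_{\psi}$, $f_{\psi}^*(\Delta_{B/\psi})\in {\rm Spec}(A/f^*(\psi))$. By Remark~\ref{fbetainj}, $f_{\psi}^*(\Delta_{B/\psi})=f_{\psi}^*(\psi/\psi)=f^*(\psi)/f^*(\psi)=\Delta_{A/f^*(\psi)}$. So $\Delta_{A/f^*(\psi)}\in {\rm Spec}(A/f^*(\psi))$, and applying the isomorphism of Lemma~\ref{speccat}, (\ref{speccat1}), to $p_{f^*(\psi)}$ (i.e.\ reading off which element of $V_A(f^*(\psi))$ maps to $\Delta_{A/f^*(\psi)}$, namely $f^*(\psi)$ itself) yields $f^*(\psi)\in V_A(f^*(\psi))\subseteq {\rm Spec}(A)$. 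Thus $f$ is admissible.

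\textbf{Main obstacle.} There is no serious obstacle; the only point requiring care is bookkeeping with the quotient-isomorphism of Lemma~\ref{speccat}, (\ref{speccat1}), to translate ``$\phi/\theta$ is prime in $M/\theta$'' into ``$\phi$ is prime in $M$ and $\theta\subseteq\phi$'' in both $A$ and $B$, and remembering that the diagonal $\Delta_{M/\theta}$ corresponds to $\theta$ under this correspondence. The key algebraic identity $f_{\beta}^*(\psi/\beta)=f^*(\psi)/f^*(\beta)$ is already supplied by Remark~\ref{fbetainj}, so the argument is essentially a diagram chase; the clean trick in the ``if'' direction is simply to specialize $\beta$ to the prime $\psi$ under test, so that $f_{\beta}$ carries the relevant primality information in its diagonal congruence.
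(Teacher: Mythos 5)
Your proof is correct. The ``only if'' direction is essentially the paper's own argument: both pass through the order isomorphism of Lemma \ref{speccat}, (\ref{speccat1}), the inclusion $f^*(V_B(\beta ))\subseteq V_A(f^*(\beta ))$ (Remark \ref{admv}), and the identity $f_{\beta }^*(\psi /\beta )=f^*(\psi )/f^*(\beta )$ from Remark \ref{fbetainj}. Where you genuinely diverge is in the converse. The paper specializes to the single congruence $\beta =\Delta _B$, observes that $p_{\Delta _B}$ is an isomorphism and $p_{{\rm Ker}(f)}$ is surjective hence admissible (Lemma \ref{surjadm}), and then deduces admissibility of $f$ from the commutative square via closure of admissibility under composition and under composition with isomorphisms (Remarks \ref{compadm} and \ref{admcompizom}). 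You instead specialize $\beta $ to each prime $\psi \in {\rm Spec}(B)$ under test, note that $\Delta _{B/\psi }=\psi /\psi $ is prime in $B/\psi $ because $\psi \in V_B(\psi )$, and read the primality of $f^*(\psi )$ directly off the fact that $f_{\psi }^*(\Delta _{B/\psi })=\Delta _{A/f^*(\psi )}$ must then be prime, using the injectivity of $\gamma \mapsto \gamma /f^*(\psi )$ on $[f^*(\psi ))$ to identify its preimage as $f^*(\psi )$ itself. Both arguments are sound; the paper's is shorter and reuses general closure properties of admissibility already established, while yours is more self-contained at this point (it needs neither Remark \ref{compadm} nor Lemma \ref{surjadm}) and makes transparent the underlying equivalence that a congruence $\theta $ is prime exactly when $\Delta _{M/\theta }$ is prime in $M/\theta $.
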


\begin{proof} For the converse implication, take $\beta =\Delta _B$, so that $p_{\beta }=p_{\Delta _B}:B\rightarrow B/\Delta _B$ is an isomorphism. By Lemma \ref{surjadm} and Remarks \ref{compadm} and \ref{admcompizom}, $p_{f^*(\Delta _B)}=p_{{\rm Ker}(f)}$ is surjective and thus admissible, thus, since $f_{\Delta _B}$ is admissible, $f\circ p_{\Delta _B}=f_{\Delta _B}\circ p_{{\rm Ker}(f)}$ is admissible, hence $f$ is admissible.

Now assume that $f$ is admissible and let $\beta \in {\rm Con}(B)$. By Lemma \ref{speccat}, (\ref{speccat2}), ${\rm Spec}(A/f^*(\beta ))=\{\phi /f^*(\beta )\ |\ \phi \in V_A(f^*(\beta ))\}$, ${\rm Spec}(B/\beta )=\{\chi /\beta \ |\ \chi \in V_B(\beta )\}$, and, by Remark \ref{admv}, $f^*(V_B(\beta ))\subseteq  V_A(f^*(\beta ))$, hence, for all $\chi \in V_B(\beta )$, $f_{\beta }^*(\chi /\beta )=\{(a/f^*(\beta ),b/f^*(\beta ))\ |\ (a,b)\in A^2, (f_{\beta }(a/f^*(\beta )),f_{\beta }(b/f^*(\beta )))\in \chi /\beta \}=\{(a/f^*(\beta ),b/f^*(\beta ))\ |$\linebreak $(a,b)\in A^2, (f(a)/\beta ,f(b)/\beta)\in \chi /\beta \}=\{(a/f^*(\beta ),b/f^*(\beta ))\ |\ (a,b)\in A^2, (f(a),f(b))\in \chi \}=\{(a/f^*(\beta ),$\linebreak $b/f^*(\beta ))\ |\ (a,b)\in f^*(\chi )\}=f^*(\chi )/f^*(\beta )\in {\rm Spec}(A/f^*(\beta ))$, therefore $f_{\beta }^*({\rm Spec}(B/\beta ))\subseteq {\rm Spec}(A/f^*(\beta ))$, that is $f_{\beta }$ is admissible.\end{proof}

\begin{proposition} If $\nabla _B$ is finitely generated and $f$ is admissible, then the following are equivalent:\begin{enumerate}
\item\label{fbeta1} $f$ fulfills GU;
\item\label{fbeta2} for all $\beta \in {\rm Spec}(B)$, the map $f_{\beta }^*\mid _{{\rm Spec}(B/\beta )}:{\rm Spec}(B/\beta )\rightarrow {\rm Spec}(A/f^*(\beta ))$ is surjective;
\item\label{fbeta3} the map $f^*\mid _{{\rm Spec}(B)}:{\rm Spec}(B)\rightarrow {\rm Spec}(A)$ is closed with respect to the Stone topologies;
\item\label{fbeta4} for all $\beta \in {\rm Con}(B)$, $f_{\beta }$ fulfills GU;
\item\label{fbeta5} for all $\beta \in {\rm Con}(B)$, $f_{\beta }$ fulfills LO;
\item\label{fbeta6} for all $\beta \in {\rm Spec}(B)$, $f_{\beta }$ fulfills GU;
\item\label{fbeta7} for all $\beta \in {\rm Spec}(B)$, $f_{\beta }$ fulfills LO.\end{enumerate}

Moreover, (\ref{fbeta1}), (\ref{fbeta2}), (\ref{fbeta4}) and (\ref{fbeta7}) are equivalent even if $\nabla _B$ is not finitely generated.\label{fbeta}\end{proposition}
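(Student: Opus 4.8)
The plan is to first establish the cycle of equivalences among $(\ref{fbeta1})$, $(\ref{fbeta2})$, $(\ref{fbeta4})$, $(\ref{fbeta7})$ without using that $\nabla _B$ is finitely generated, and then to fold $(\ref{fbeta3})$, $(\ref{fbeta5})$ and $(\ref{fbeta6})$ into it using that hypothesis. Throughout I would lean on the following dictionary: each $f_\beta $ is injective and admissible (Remark \ref{fbetainj} together with Lemma \ref{fbadm}); the order isomorphisms $(V_A(\theta ),\subseteq )\cong ({\rm Spec}(A/\theta ),\subseteq )$ and $(V_B(\beta ),\subseteq )\cong ({\rm Spec}(B/\beta ),\subseteq )$ from Lemma \ref{speccat}, (\ref{speccat1}); the identity $f_\beta ^*(\chi /\beta )=f^*(\chi )/f^*(\beta )$ for $\chi \in [\beta )$ from Remark \ref{fbetainj}; the reformulations of GU and LO in Lemma \ref{cargulo}; and Proposition \ref{carloinj}, (\ref{carloinj2}), by which, for an injective morphism, LO is equivalent to surjectivity of the induced map on spectra.

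First I would prove $(\ref{fbeta1})\Leftrightarrow (\ref{fbeta2})$. Fixing $\psi \in {\rm Spec}(B)$, under the isomorphisms above the map $f_\psi ^*\mid _{{\rm Spec}(B/\psi )}$ becomes, up to relabelling, precisely the restriction $f^*:V_B(\psi )\rightarrow V_A(f^*(\psi ))$, which is well defined by Remark \ref{admv}; hence it is surjective iff $V_A(f^*(\psi ))\subseteq f^*(V_B(\psi ))$, and by Lemma \ref{cargulo}, (\ref{cargulo1}), the conjunction of these inclusions over all $\psi \in {\rm Spec}(B)$ is exactly GU of $f$. Next, $(\ref{fbeta2})\Leftrightarrow (\ref{fbeta7})$ is immediate from Proposition \ref{carloinj}, (\ref{carloinj2}), applied to the injective morphisms $f_\beta $, $\beta \in {\rm Spec}(B)$. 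For $(\ref{fbeta1})\Rightarrow (\ref{fbeta4})$ I would argue directly in $A$ and $B$: given $\phi \subseteq \psi $ in ${\rm Spec}(A)$ with $f^*(\beta )\subseteq \phi $ and $\phi _1\in {\rm Spec}(B)$ with $\beta \subseteq \phi _1$ and $f^*(\phi _1)=\phi $, GU of $f$ produces $\psi _1\in {\rm Spec}(B)$ with $\phi _1\subseteq \psi _1$ and $f^*(\psi _1)=\psi $; since then $\beta \subseteq \phi _1\subseteq \psi _1$, the class $\psi _1/\beta $ witnesses GU for $f_\beta $. For $(\ref{fbeta4})\Rightarrow (\ref{fbeta1})$ I would specialize to $\beta =\Delta _B$: the commutative square of Remark \ref{fbetainj} reads $p_{\Delta _B}\circ f=f_{\Delta _B}\circ p_{{\rm Ker}(f)}$, with $p_{{\rm Ker}(f)}$ surjective, hence admissible and fulfilling GU by Corollary \ref{3.5}, and $p_{\Delta _B}$ an isomorphism; so Lemma \ref{gulocomp}, (\ref{gulocomp1}), gives that $f_{\Delta _B}\circ p_{{\rm Ker}(f)}=p_{\Delta _B}\circ f$ fulfills GU, and Remark \ref{compizom} then yields GU of $f$. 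This closes the cycle $(\ref{fbeta1})\Leftrightarrow (\ref{fbeta2})\Leftrightarrow (\ref{fbeta4})\Leftrightarrow (\ref{fbeta7})$ with no finiteness assumption, which settles the ``moreover'' claim.

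Next I would bring in $\nabla _B$ finitely generated, noting that then $\nabla _{B/\beta }$ is finitely generated for every $\beta \in {\rm Con}(B)$, being a homomorphic image of $\nabla _B$ under the surjection $p_\beta $. Consequently, for each such $\beta $, since $f_\beta $ is admissible, Proposition \ref{gulo} shows that $f_\beta $ fulfilling GU forces $f_\beta $ to fulfill LO; this gives $(\ref{fbeta4})\Rightarrow (\ref{fbeta5})$ and, on restricting $\beta $ to ${\rm Spec}(B)$, $(\ref{fbeta6})\Rightarrow (\ref{fbeta7})$, while $(\ref{fbeta4})\Rightarrow (\ref{fbeta6})$ and $(\ref{fbeta5})\Rightarrow (\ref{fbeta7})$ are trivial as ${\rm Spec}(B)\subseteq {\rm Con}(B)$. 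For $(\ref{fbeta3})$: assuming $(\ref{fbeta5})$, for every $\theta \in {\rm Con}(B)$ one has $f^*(V_B(\theta ))=V_A(f^*(\theta ))$, the inclusion $\subseteq $ being Remark \ref{admv} and $\supseteq $ being LO for the injective morphism $f_\theta $ read through the correspondence used for $(\ref{fbeta1})\Leftrightarrow (\ref{fbeta2})$; since every closed subset of ${\rm Spec}(B)$ has the form $V_B(\theta )$, this says $f^*\mid _{{\rm Spec}(B)}$ is closed. Conversely, if $f^*\mid _{{\rm Spec}(B)}$ is closed then for $\psi \in {\rm Spec}(B)$ we get $f^*(V_B(\psi ))=V_A(\sigma )$ for some $\sigma \in {\rm Con}(A)$; from $\psi \in V_B(\psi )$ we obtain $f^*(\psi )\in V_A(\sigma )$, so $\sigma \subseteq f^*(\psi )$ and hence $V_A(f^*(\psi ))\subseteq V_A(\sigma )=f^*(V_B(\psi ))$, which is $(\ref{fbeta1})$ by Lemma \ref{cargulo}, (\ref{cargulo1}). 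Thus all seven conditions become equivalent.

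The real work is the bookkeeping in the first two steps: installing the dictionary under which ``$f$ fulfills GU'', ``$f_\psi ^*\mid _{{\rm Spec}(B/\psi )}$ is surjective for all $\psi $'' and ``$f^*\mid _{{\rm Spec}(B)}$ is closed'' are literally one statement read in three registers. Once that is set up every implication is a line or two, and the only points that need attention are: checking in $(\ref{fbeta1})\Rightarrow (\ref{fbeta4})$ that the pushed-up prime automatically contains $\beta $; correctly identifying $f_{\Delta _B}$ in $(\ref{fbeta4})\Rightarrow (\ref{fbeta1})$ as $f$ post-composed with an isomorphism and pre-composed with a canonical surjection; and noticing that finite generation of $\nabla _B$ descends to every quotient $B/\beta $, which is exactly what lets Proposition \ref{gulo} convert GU into LO and so draw $(\ref{fbeta5})$, $(\ref{fbeta6})$ — and, via $(\ref{fbeta5})$, $(\ref{fbeta3})$ — into the equivalence.
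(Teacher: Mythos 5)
Your proposal is correct, and for most of the cycle it coincides with the paper's proof: (\ref{fbeta1})$\Leftrightarrow$(\ref{fbeta2}) via the order isomorphisms of Lemma \ref{speccat}, (\ref{speccat1}), and the identity $f_\beta ^*(\chi /\beta )=f^*(\chi )/f^*(\beta )$; (\ref{fbeta2})$\Leftrightarrow$(\ref{fbeta7}) via Proposition \ref{carloinj}, (\ref{carloinj2}); the insertion of (\ref{fbeta5}) and (\ref{fbeta6}) via Proposition \ref{gulo}; and the two directions linking (\ref{fbeta3}) to the rest (closedness giving $V_A(f^*(\beta ))=\overline{\{f^*(\beta )\}}\subseteq f^*(V_B(\beta ))$, and LO of the injective $f_\theta $ giving $f^*(V_B(\theta ))=V_A(f^*(\theta ))$). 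Where you genuinely diverge is the link between (\ref{fbeta1}) and (\ref{fbeta4}): the paper proves (\ref{fbeta2})$\Leftrightarrow$(\ref{fbeta4}) by a Second Isomorphism Theorem computation identifying $(f_{\beta })_{(\psi /\beta )}$ with $f_{\psi }$ up to isomorphisms and then quoting (\ref{fbeta1})$\Leftrightarrow$(\ref{fbeta2}) at every level, whereas you prove (\ref{fbeta1})$\Rightarrow$(\ref{fbeta4}) by directly transporting a GU instance for $f_\beta $ through the correspondence of Lemma \ref{speccat}, (\ref{speccat1}), and Remark \ref{fbetainj} (observing that the pushed-up prime $\psi _1$ automatically contains $\beta $), and (\ref{fbeta4})$\Rightarrow$(\ref{fbeta1}) by specializing to $\beta =\Delta _B$ and using $p_{\Delta _B}\circ f=f_{\Delta _B}\circ p_{{\rm Ker}(f)}$ with Lemma \ref{gulocomp}, (\ref{gulocomp1}), and Remark \ref{compizom}. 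Your route is shorter and avoids the most technical diagram chase in the paper; the paper's route has the side benefit of exhibiting the compatibility $(f_{\beta })_{(\psi /\beta )}\cong f_{\psi }$, which is reused elsewhere. You are also more careful than the paper on one point: you explicitly note that finite generation of $\nabla _B$ descends to $\nabla _{B/\beta }$, which is what legitimizes applying Proposition \ref{gulo} to $f_\beta $; the paper's proof uses this silently.
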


\begin{proof} Since $f$ is admissible, the map $f^*\mid _{{\rm Spec}(B)}:{\rm Spec}(B)\rightarrow {\rm Spec}(A)$ is well defined and, by Remark \ref{admv}, so is the map $f^*\mid _{V_B(\beta )}:V_B(\beta )\rightarrow V_A(f^*(\beta ))$, for any $\beta \in {\rm Con}(B)$. By Lemma \ref{fbadm}, for all $\beta \in {\rm Con}(B)$, $f_{\beta }$ is admissible, so that the map $f_{\beta }^*\mid _{{\rm Spec}(B/\beta )}:{\rm Spec}(B/\beta )\rightarrow {\rm Spec}(A/f^*(\beta ))$ is well defined.

\noindent (\ref{fbeta1})$\Leftrightarrow $(\ref{fbeta2}): Let $\beta \in {\rm Spec}(B)$. Let $g_A:V_A(f^*(\beta ))\rightarrow {\rm Spec}(A/f^*(\beta ))$ and $g_B:V_B(\beta )\rightarrow {\rm Spec}(B/\beta )$ be the bijections established in Lemma \ref{speccat}, (\ref{speccat1}): for all $\phi \in V_A(f^*(\beta ))$ and all $\psi \in V_B(\beta )$, $g_A(\phi )=\phi /f^*(\beta )$ and $g_B(\psi )=\psi /\beta $. Then  the following diagram is commutative:\vspace*{-15pt}

\begin{center}\begin{picture}(180,60)(0,0)
\put(22,35){$V_B(\beta )$}
\put(16,5){${\rm Spec}(B/\beta )$}
\put(135,35){$V_A(f^*(\beta ))$}
\put(129,5){${\rm Spec}(A/f^*(\beta ))$}
\put(21,22){$g_B$}
\put(34,33){\vector(0,-1){19}}
\put(150,22){$g_A$}
\put(147,33){\vector(0,-1){19}}
\put(73,42){$f^*\mid _{V_B(\beta )}$}
\put(49,38){\vector(1,0){86}}
\put(68,12){$f^*\mid _{{\rm Spec}(B/\beta )}$}

\put(64,8){\vector(1,0){64}}\end{picture}\end{center}\vspace*{-4pt}

Indeed, for all $\chi \in V_B(\beta )$, $g_A(f^*(\chi ))=f^*(\chi )/f^*(\beta )=f^*(\chi /\beta )=f_{\beta }^*(g_B(\chi ))$ by Remark \ref{fbetainj}, thus $f_{\beta }^*\circ g_B=g_A\circ f^*$. Since $g_A$ and $g_B$ are bijections, it follows that: $f^*\mid _{V_B(\beta )}:V_B(\beta )\rightarrow V_A(f^*(\beta ))$ is surjective iff $f^*\mid _{{\rm Spec}(B/\beta )}:{\rm Spec}(B/\beta )\rightarrow {\rm Spec}(A/f^*(\beta ))$ is surjective, that is: $f^*(V_B(\beta ))=V_A(f^*(\beta ))$ iff $f^*({\rm Spec}(B/\beta ))={\rm Spec}(A/f^*(\beta ))$. By Lemma \ref{cargulo}, (\ref{cargulo1}), it follows that: $f$ fulfills GU iff, for all $\beta \in {\rm Spec}(B)$, $f^*(V_B(\beta ))=V_A(f^*(\beta ))$ iff, for all $\beta \in {\rm Spec}(B)$, $f^*({\rm Spec}(B/\beta ))={\rm Spec}(A/f^*(\beta ))$ iff, for all $\beta \in {\rm Spec}(B)$, the map $f_{\beta }^*\mid _{{\rm Spec}(B/\beta )}:{\rm Spec}(B/\beta )\rightarrow {\rm Spec}(A/f^*(\beta ))$ is surjective. 

\noindent (\ref{fbeta2})$\Leftrightarrow $(\ref{fbeta4}): Let $\beta \in {\rm Con}(B)$ and $\psi \in V_B(\beta )$, arbitrary, so that $\psi /\beta \in {\rm Spec}(B/\beta )$, arbitrary. Then $(f_{\beta })_{(\psi /\beta )}:(A/f^*(\beta ))/f_{\beta }^*(\psi /\beta )\rightarrow (B/\beta )/(\psi /\beta )$ is defined by: for all $a\in A$, $(f_{\beta })_{(\psi /\beta )}((a/f^*(\beta ))/f_{\beta }^*(\psi /\beta ))=f_{\beta }(a/f^*(\beta ))/$\linebreak $f_{\beta }^*(\psi /\beta )=(f(a)/\beta )/(\psi /\beta )$. By Remark \ref{fbetainj} and Lemma \ref{fbadm}, $(f_{\beta })_{(\psi /\beta )}$ is a well--defined admissible injective morphism in ${\cal C}$. By Remark \ref{fbetainj}, $f_{\beta }^*(\psi /\beta )=f^*(\psi )/f^*(\beta )$. Let $g:A/f^*(\psi )\rightarrow (A/f^*(\beta ))/f_{\beta }^*(\psi /\beta )=(A/f^*(\beta ))/(f^*(\psi )/f^*(\beta ))$ and $h:B/\psi \rightarrow (B/\beta )/(\psi /\beta )$ be the isomorphisms given by the Second Isomorphism Theorem: for all $a\in A$, $g(a/f^*(\psi ))=(a/f^*(\beta ))/(f^*(\psi )/f^*(\beta ))=(a/f^*(\beta ))/f_{\beta }^*(\psi /\beta )$, and, for all $b\in B$, $h(b/\psi )=(b/\beta )/(\psi /\beta )$. Then, clearly, $g^*:{\rm Spec}((A/f^*(\beta )/f_{\beta }^*(\psi /\beta ))\rightarrow {\rm Spec}(A/f^*(\psi ))$ and $h^*:{\rm Spec}((B/\beta )/(\psi /\beta ))\rightarrow {\rm Spec}(B/\psi )$ are bijections, and the following diagram is commutative:\vspace*{-17pt}

\begin{center}\begin{picture}(180,60)(0,0)
\put(18,35){$A/f^*(\psi )$}
\put(-10,5){$(A/f^*(\beta ))/f_{\beta }^*(\psi /\beta )$}
\put(135,35){$B/\psi $}
\put(129,5){$(B/\beta )/(\psi /\beta )$}
\put(28,22){$g$}
\put(34,33){\vector(0,-1){19}}
\put(148,22){$h$}
\put(147,33){\vector(0,-1){19}}
\put(90,42){$f_{\psi }$}
\put(56,38){\vector(1,0){79}}
\put(82,12){$(f_{\beta })_{(\psi /\beta )}$}
\put(78,8){\vector(1,0){50}}\end{picture}\end{center}\vspace*{-6pt}

From this, it follows that the following diagram is commutative, that is $g^*\circ (f_{\beta })_{(\psi /\beta )}^*=f_{\psi }^*\circ h^*$, therefore, since $g^*$ and $h^*$ are bijective: $f_{\psi }^*\mid _{{\rm Spec}(B/\psi )}:{\rm Spec}(B/\psi )\rightarrow {\rm Spec}(A/f^*(\psi ))$ is surjective iff $(f_{\beta })_{(\psi /\beta )}^*\mid _{{\rm Spec}((B/\beta )/(\psi /\beta ))}:{\rm Spec}((B/\beta )/(\psi /\beta ))\rightarrow {\rm Spec}((A/f^*(\beta ))/f_{\beta }^*(\psi /\beta ))$ is surjective.\vspace*{-18pt}

\begin{center}\begin{picture}(180,60)(0,0)
\put(18,35){${\rm Spec}(B/\psi )$}
\put(-12,5){${\rm Spec}((B/\beta )/(\psi /\beta ))$} 
\put(135,35){${\rm Spec}(A/f^*(\psi ))$}
\put(127,5){${\rm Spec}((A/f^*(\beta ))/f_{\beta }^*(\psi /\beta ))$}
\put(24,20){$h^*$}
\put(34,14){\vector(0,1){19}}
\put(162,20){$g^*$}
\put(160,14){\vector(0,1){19}}
\put(94,42){$f_{\psi }^*$}
\put(67,38){\vector(1,0){67}}
\put(79,12){$(f_{\beta })_{(\psi /\beta )}^*$}
\put(75,7){\vector(1,0){50}}\end{picture}\end{center}\vspace*{-4pt}

Since $V_B(\Delta _B)={\rm Spec}(B)$, by the equivalence (\ref{fbeta1})$\Leftrightarrow $(\ref{fbeta2}) proven above it follows that: $f$ fulfills GU iff, for all $\beta \in {\rm Con}(B)$ and all $\psi \in V_B(\beta )$, $f_{\psi }^*:{\rm Spec}(B/\psi )\rightarrow {\rm Spec}(A/f^*(\psi ))$ is surjective, iff, for all $\beta \in {\rm Con}(B)$ and all $\psi \in V_B(\beta )$, $(f_{\beta })_{(\psi /\beta )}^*:{\rm Spec}((B/\beta )/(\psi /\beta ))\rightarrow {\rm Spec}((A/f^*(\beta ))/f_{\beta }^*(\psi /\beta ))$ is surjective, iff, for all $\beta \in {\rm Con}(B)$, $f_{\beta }$ fulfills GU.

\noindent (\ref{fbeta4})$\Rightarrow $(\ref{fbeta5})$\Rightarrow $(\ref{fbeta7}) and (\ref{fbeta4})$\Rightarrow $(\ref{fbeta6})$\Rightarrow $(\ref{fbeta7}): By Proposition \ref{gulo}.

\noindent (\ref{fbeta2})$\Leftrightarrow $(\ref{fbeta7}): By Proposition \ref{carloinj}, (\ref{carloinj2}), and Remark \ref{fbetainj}.

\noindent (\ref{fbeta3})$\Rightarrow $(\ref{fbeta1}): Let $\beta \in {\rm Spec}(B)$, so that $f^*(\beta )\in {\rm Spec}(A)$. Then $\overline{\{f^*(\beta )\}}=V_A(f^*(\beta ))$ and $\beta \in V_B(\beta )$, so $f^*(\beta )\in f^*(V_B(\beta ))$, that is $\{f^*(\beta )\}\subseteq f^*(V_B(\beta ))$, which is a closed set in ${\rm Spec}(A)$ with respect to the Stone topology, since $V_B(\beta )$ is closed in ${\rm Spec}(B)$ and $f^*\mid _{{\rm Spec}(B)}:{\rm Spec}(B)\rightarrow {\rm Spec}(A)$ is a closed function. Hence $V_A(f^*(\beta ))=\overline{\{f^*(\beta )\}}\subseteq f^*(V_B(\beta ))$. Therefore $f$ fulfills GU by Lemma \ref{cargulo}, (\ref{cargulo1}). 

\noindent (\ref{fbeta1})$\Rightarrow $(\ref{fbeta3}): Let $\beta \in {\rm Con}(B)$, so that $V_B(\beta )$ is an arbitrary closed set in ${\rm Spec}(B)$ with the Stone topology. The equivalence (\ref{fbeta1})$\Leftrightarrow $(\ref{fbeta4}) follows from the above, so, since $f$ fulfills GU, $f_{\beta }$ fulfills GU, hence $f_{\beta }$ fulfills LO by Proposition \ref{gulo}. By Remark \ref{fbetainj}, $f_{\beta }$ is injective. By Proposition \ref{carloinj}, (\ref{carloinj2}), and again Remark \ref{fbetainj}, it follows that $f_{\beta }^*({\rm Spec}(B/\beta ))={\rm Spec}(A/f^*(\beta ))$, that is $\{f^*(\psi )/f^*(\beta )\ |\ \psi \in V_B(\beta )\}=\{f_{\beta }^*(\psi /\beta )\ |\ \psi \in V_B(\beta )\}=\{\phi /f^*(\beta )\ |\ \phi \in V_A(f^*(\beta ))\}$, hence $f^*(V_B(\beta ))=\{f^*(\psi )\ |\ \psi \in V_B(\beta )\}=V_A(f^*(\beta ))$, which is a closed set in ${\rm Spec}(A)$. Therefore the map $f^*:{\rm Spec}(B)\rightarrow {\rm Spec}(A)$ is closed with respect to the Stone topologies.\end{proof}

Let us define $\varphi _f:{\rm Con}(B)\rightarrow {\rm Con}(A/{\rm Ker}(f))$, for all $\beta \in {\rm Con}(B)$, $\varphi _f(\beta )=f^*(\beta )/{\rm Ker}(f)$. Here is a generalization of Proposition \ref{carloinj}:

\begin{lemma} If $f$ is admissible, then the restriction $\varphi _f\mid _{{\rm Spec}(B)}:{\rm Spec}(B)\rightarrow {\rm Spec}(A/{\rm Ker}(f))$ is well defined and the following are equivalent:\begin{enumerate}
\item\label{6.4lo1} $f$ fulfills LO;
\item\label{6.4lo2} the map $\varphi _f\mid _{{\rm Spec}(B)}:{\rm Spec}(B)\rightarrow {\rm Spec}(A/{\rm Ker}(f))$ is surjective.\end{enumerate}\label{6.4lo}\end{lemma}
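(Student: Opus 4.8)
The plan is to realize $\varphi_f\mid_{{\rm Spec}(B)}$ as the composite of the corestricted map $f^*\mid_{{\rm Spec}(B)}\colon{\rm Spec}(B)\to V_A({\rm Ker}(f))$ with the canonical order isomorphism $V_A({\rm Ker}(f))\to{\rm Spec}(A/{\rm Ker}(f))$ supplied by Lemma \ref{speccat}, (\ref{speccat1}), and then transport the characterization of LO from Lemma \ref{cargulo}, (\ref{cargulo2}), across this isomorphism.

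First I would settle well-definedness. For any $\beta\in{\rm Con}(B)$, Remark \ref{ker} gives ${\rm Ker}(f)=f^*(\Delta_B)\subseteq f^*(\beta)$, so $f^*(\beta)\in[{\rm Ker}(f))$ and $\varphi_f(\beta)=f^*(\beta)/{\rm Ker}(f)\in{\rm Con}(A/{\rm Ker}(f))$; hence $\varphi_f$ is well defined. If moreover $\beta\in{\rm Spec}(B)$, then admissibility of $f$ yields $f^*(\beta)\in{\rm Spec}(A)$, so $f^*(\beta)\in{\rm Spec}(A)\cap[{\rm Ker}(f))=V_A({\rm Ker}(f))$; by Lemma \ref{speccat}, (\ref{speccat1}), applied with the algebra $A$ and the congruence ${\rm Ker}(f)$, the assignment $\gamma\mapsto\gamma/{\rm Ker}(f)$ is an order isomorphism from $V_A({\rm Ker}(f))$ onto ${\rm Spec}(A/{\rm Ker}(f))$, so $\varphi_f(\beta)\in{\rm Spec}(A/{\rm Ker}(f))$. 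Thus $\varphi_f\mid_{{\rm Spec}(B)}\colon{\rm Spec}(B)\to{\rm Spec}(A/{\rm Ker}(f))$ is well defined.

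For the equivalence, I would note that Remark \ref{admv} (which uses admissibility of $f$) lets us regard $f^*\mid_{{\rm Spec}(B)}$ as a map ${\rm Spec}(B)\to V_A({\rm Ker}(f))$, and that by construction $\varphi_f(\beta)=p_{{\rm Ker}(f)}(f^*(\beta))$ for every $\beta\in{\rm Spec}(B)$; that is, $\varphi_f\mid_{{\rm Spec}(B)}$ is the composite of $f^*\mid_{{\rm Spec}(B)}\colon{\rm Spec}(B)\to V_A({\rm Ker}(f))$ with the bijection $p_{{\rm Ker}(f)}\mid_{V_A({\rm Ker}(f))}\colon V_A({\rm Ker}(f))\to{\rm Spec}(A/{\rm Ker}(f))$ from Lemma \ref{speccat}, (\ref{speccat1}). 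Since the latter is a bijection, $\varphi_f\mid_{{\rm Spec}(B)}$ is surjective iff $f^*\mid_{{\rm Spec}(B)}\colon{\rm Spec}(B)\to V_A({\rm Ker}(f))$ is surjective, i.e. iff $f^*({\rm Spec}(B))=V_A({\rm Ker}(f))$; and by Lemma \ref{cargulo}, (\ref{cargulo2}), this last equality holds precisely when $f$ fulfills LO. This establishes (\ref{6.4lo1})$\Leftrightarrow$(\ref{6.4lo2}).

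I do not anticipate a genuine obstacle: the argument is a bookkeeping reduction to Lemma \ref{cargulo}, (\ref{cargulo2}), through the order isomorphism of Lemma \ref{speccat}, (\ref{speccat1}). The only point requiring care is to keep the domains and codomains of the various restrictions exactly right — in particular, to use admissibility (via Remark \ref{admv}) to justify that $f^*\mid_{{\rm Spec}(B)}$ really lands inside $V_A({\rm Ker}(f))$, so that its composition with $p_{{\rm Ker}(f)}$ factors through ${\rm Spec}(A/{\rm Ker}(f))$ and the bijection can be applied.
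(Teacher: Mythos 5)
Your proof is correct and follows essentially the same route as the paper's: well-definedness via Remark \ref{admv} (admissibility) together with the order isomorphism of Lemma \ref{speccat}, (\ref{speccat1}), and the equivalence by transporting the characterization $f^*({\rm Spec}(B))=V_A({\rm Ker}(f))$ from Lemma \ref{cargulo}, (\ref{cargulo2}), across that bijection. No issues.
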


\begin{proof} By Remark \ref{admv} and Lemma \ref{speccat}, (\ref{speccat1}), $f^*({\rm Spec}(B))\subseteq V_A({\rm Ker}(f))$, hence $\varphi _f({\rm Spec}(B))\subseteq {\rm Spec}(A/{\rm Ker}(f))$, so the restriction $\varphi _f\mid _{{\rm Spec}(B)}:{\rm Spec}(B)\rightarrow {\rm Spec}(A/{\rm Ker}(f))$ is well defined. By Lemma \ref{cargulo}, (\ref{cargulo2}), and again Lemma \ref{speccat}, (\ref{speccat1}), $f$ fulfills LO iff $f^*({\rm Spec}(B))=V_A({\rm Ker}(f))$ iff $\varphi _f({\rm Spec}(B))={\rm Spec}(A/{\rm Ker}(f))$ iff the map $\varphi _f\mid _{{\rm Spec}(B)}:{\rm Spec}(B)\rightarrow {\rm Spec}(A/{\rm Ker}(f))$ is surjective.\end{proof}

For every $\theta \in {\rm Con}(A)$, we shall denote by $\displaystyle \rho (\theta )=\bigcap _{\phi \in V_A(\theta )}\phi $, that is the intersection of the prime congruences of $A$ which include $\theta $; $\rho (\theta )$ is called the {\em radical} of $\theta $. Clearly, if $\theta \in {\rm Spec}(A)$, then $\rho (\theta )=\theta $. Actually, $\rho (\theta )=\theta $ iff $\theta $ is an intersection of prime congruences.

\begin{lemma} If $f$ is admissible, $\nabla _B$ is finitely generated and $\phi \in {\rm Spec}(A)$, then the following are equivalent:\begin{enumerate}
\item\label{lcharlo1} there exists a $\psi \in {\rm Spec}(B)$ such that $f^*(\psi )=\phi $;
\item\label{lcharlo2} $f^*(Cg_B(f(\phi )))=\phi $.\end{enumerate}\label{lcharlo}\end{lemma}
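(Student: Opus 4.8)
The implication (\ref{lcharlo2})$\Rightarrow $(\ref{lcharlo1}) is immediate: if $f^*(Cg_B(f(\phi )))=\phi $, then $\psi =Cg_B(f(\phi ))$ is a congruence of $B$ that maps to $\phi $ under $f^*$, and since $f$ is admissible and $\phi \in {\rm Spec}(A)$ is proper (so $\psi $ is proper, as $f^*(\nabla _B)=\nabla _A\neq \phi $), in fact $f^*(\psi )\in{\rm Spec}(A)$ — but we only need $f^*(\psi )=\phi $, so we can simply take this $\psi $. Actually we must check $\psi \in {\rm Spec}(B)$: since $f$ admissible means $f^*$ sends prime congruences of $B$ to prime congruences of $A$, this does not directly give that $\psi $ is prime. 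So instead I would argue more carefully, or — better — observe that for the statement we only need \emph{some} $\psi\in{\rm Spec}(B)$ with $f^*(\psi)=\phi$, and produce it from $Cg_B(f(\phi))$ by the m-system argument below. The cleaner route for this direction: set $\beta _0=Cg_B(f(\phi ))$; since $f^*(\beta _0)=\phi \neq \nabla _A$, $\beta _0$ is proper, and $\beta _0\cap (\nabla _A\setminus \phi )$ — viewing $\nabla _A\setminus\phi\subseteq A^2\subseteq B^2$ via $f$ when $f$ is injective — needs handling; in the general (not-necessarily-injective) case I would pass to $f(A)$ and $A/{\rm Ker}(f)$, but since only $f^*(\psi)=\phi$ is asked, take $\psi$ to be a maximal element of $\{\theta\in{\rm Con}(B)\setminus\{\nabla_B\}\mid \theta\supseteq\beta_0,\ f^*(\theta)=\phi\}$ — wait, that set may not be closed under the needed operations. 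I think the honest approach is the one used in Lemma~\ref{specmsist}.

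\textbf{The m-system approach for (\ref{lcharlo1})$\Rightarrow $(\ref{lcharlo2}).} Assume $\psi \in {\rm Spec}(B)$ with $f^*(\psi )=\phi $. By Remark~\ref{ker}, ${\rm Ker}(f)\subseteq \phi $. Since $f^*(\psi)=\phi$, we have $f(\phi )=f(f^*(\psi ))=\psi \cap f(A^2)\subseteq \psi $, hence $Cg_B(f(\phi ))\subseteq \psi $, and therefore $\phi =f^*(\phi )\subseteq f^*(Cg_B(f(\phi )))\subseteq f^*(\psi )=\phi $. Wait — this already gives the equality $f^*(Cg_B(f(\phi )))=\phi $ directly, without needing $\nabla_B$ finitely generated or any m-system machinery! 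So the forward direction is a two-line computation: $f(\phi)\subseteq\psi$ gives $Cg_B(f(\phi))\subseteq\psi$, apply $f^*$ (monotone), sandwich between $\phi$ and $f^*(\psi)=\phi$.

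\textbf{Re-examining the converse.} For (\ref{lcharlo2})$\Rightarrow $(\ref{lcharlo1}): we are given $f^*(Cg_B(f(\phi )))=\phi $ and want $\psi \in {\rm Spec}(B)$ with $f^*(\psi )=\phi $. Here is where $\nabla _B$ finitely generated and admissibility of $f$ are used. Put $\beta _0=Cg_B(f(\phi ))$, a proper congruence of $B$ since $f^*(\beta _0)=\phi \neq \nabla _A$. By Lemma~\ref{1.6}, $\nabla _A\setminus \phi $ is an m-system in $A$; its image $f(\nabla _A\setminus \phi )$ is an m-system in $B$ by Lemma~\ref{fmsist}, (\ref{fmsist1}). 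I claim $\beta _0\cap f(\nabla _A\setminus \phi )=\emptyset $: if $(f(a),f(b))\in \beta _0$ with $(a,b)\notin \phi $, then $(a,b)\in f^*(\beta _0)=\phi $, contradiction. Now let $\psi $ be a maximal element of $\{\theta \in {\rm Con}(B)\mid \beta _0\subseteq \theta ,\ \theta \cap f(\nabla _A\setminus \phi )=\emptyset \}$; such exists by Zorn since $\nabla_B$ is finitely generated (so $\nabla_B$ is not in this set, as it meets the nonempty m-system — here I need the m-system to be nonempty, which holds because $\phi$ is proper). By Lemma~\ref{1.7}, $\psi \in {\rm Spec}(B)$. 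Finally, $f^*(\psi )\supseteq f^*(\beta _0)=\phi $, and $f^*(\psi )\cap (\nabla _A\setminus \phi )=\emptyset$ because $\psi\cap f(\nabla_A\setminus\phi)=\emptyset$ forces $(f(a),f(b))\notin\psi$ for $(a,b)\notin\phi$, i.e. $(a,b)\notin f^*(\psi)$; hence $f^*(\psi )\subseteq \phi $, so $f^*(\psi )=\phi $. The main obstacle is just bookkeeping the correspondence between subsets of $A^2$ and their $f$-images in $B^2$, which is routine; I'd present it carefully but briefly.
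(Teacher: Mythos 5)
Your final argument is correct and is essentially the paper's own proof: the direction (i)$\Rightarrow$(ii) is the sandwich $\phi \subseteq f^*(f(\phi ))\subseteq f^*(Cg_B(f(\phi )))\subseteq f^*(\psi )=\phi $, and (ii)$\Rightarrow$(i) takes a maximal congruence containing $Cg_B(f(\phi ))$ and disjoint from the m--system $f(\nabla _A\setminus \phi )$ (Lemmas \ref{1.6}, \ref{fmsist} and \ref{1.7}), exactly as in the paper. The only blemishes are expository: the false starts at the beginning should be deleted, and ``$\phi =f^*(\phi )$'' should read ``$\phi \subseteq f^*(f(\phi ))$'', since $f^*$ is defined on subsets of $B^2$, not of $A^2$.
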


\begin{proof} (\ref{lcharlo1})$\Rightarrow $(\ref{lcharlo2}): Since $\phi =f^*(\psi )$, it follows that $f(\phi )=f(f^*(\psi ))=\psi \cap f(A^2)\subseteq \psi $, hence $Cg_B(f(\phi ))\subseteq \psi $, thus $f^*(Cg_B(f(\phi )))\subseteq f^*(\psi )=\phi $. We also have $\phi \subseteq f^*(f(\phi ))\subseteq f^*(Cg_B(f(\phi )))$. Therefore $f^*(Cg_B(f(\phi )))=\phi $.

\noindent (\ref{lcharlo2})$\Rightarrow $(\ref{lcharlo1}): By Lemma \ref{1.6} and Lemma \ref{fmsist}, (\ref{fmsist1}), $\nabla _A\setminus \phi $ is an m--system in $A$, hence $f(\nabla _A\setminus \phi )=f(A^2\setminus \phi )$ is an m--system in $B$. Let us notice that $f(A^2\setminus \phi )\cap Cg_B(f(\phi ))=\emptyset $. Indeed, assume by absurdum that there exists a $(u,v)\in f(A^2\setminus \phi )\cap Cg_B(f(\phi ))$, that is there exists an $(x,y)\in A^2\setminus \phi $ such that $(f(x),f(y))\in Cg_B(f(\phi ))$, which means that $(x,y)\in (A^2\setminus \phi )\cap f^*(Cg_B(f(\phi )))=(A^2\setminus \phi )\cap \phi =\emptyset $; we have a contradiction. Now let $\psi \in {\rm Con}(B)$ be a maximal element of the set of congruences $\theta $ of $B$ which fulfill $Cg_B(f(\phi ))\subseteq \theta $ and $\theta \cap f(A^2\setminus \phi )=\emptyset $. Then $\psi \in {\rm Spec}(B)$ by Lemma \ref{1.7}. Let us prove that $f^*(\psi )=\theta $. Since $Cg_B(f(\phi ))\subseteq \psi $, we have: $\phi =f^*(Cg_B(f(\phi )))\subseteq f^*(\psi )$. Now let $(x,y)\in f^*(\psi )$, so that $(f(x),f(y))\in \psi $. Since $\psi \cap f(A^2\setminus \phi )=\emptyset $, it follows that $(f(x),f(y))\notin f(A^2\setminus \phi )$, thus $(x,y)\notin A^2\setminus \phi $, which means that $(x,y)\in \phi $. Hence we also have $f^*(\psi )\subseteq \phi $, therefore $f^*(\psi )=\phi $.\end{proof}

\begin{proposition} If $f$ is admissible and $\nabla _B$ is finitely generated, then the following are equivalent:\begin{enumerate}
\item\label{propcharlo1} $f$ fulfills LO;
\item\label{propcharlo2} for all $\phi \in {\rm Spec}(A)$ such that ${\rm Ker}(f)\subseteq \phi $, $f^*(Cg_B(f(\phi )))=\phi $;
\item\label{propcharlo3} for all $\theta \in {\rm Con}(A)$ such that ${\rm Ker}(f)\subseteq \theta $, $f^*(\rho (Cg_B(f(\theta ))))=\rho (\theta )$.\end{enumerate}\label{propcharlo}\end{proposition}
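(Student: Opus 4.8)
The plan is to obtain (\ref{propcharlo1})$\Leftrightarrow$(\ref{propcharlo2}) essentially for free from Lemma \ref{lcharlo}, and then to run the cycle (\ref{propcharlo1})$\Rightarrow$(\ref{propcharlo3})$\Rightarrow$(\ref{propcharlo2}), the engine in both remaining implications being the elementary fact that the inverse--image operator $f^*=(f^2)^{-1}$ commutes with arbitrary intersections of congruences.

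First, (\ref{propcharlo1})$\Leftrightarrow$(\ref{propcharlo2}). Property LO says that, for each $\phi\in{\rm Spec}(A)$ with ${\rm Ker}(f)\subseteq\phi$, there is a $\psi\in{\rm Spec}(B)$ with $f^*(\psi)=\phi$. Since $f$ is admissible and $\nabla_B$ is finitely generated, Lemma \ref{lcharlo} applies to each such $\phi$ and tells us that this existence statement holds for a given $\phi$ iff $f^*(Cg_B(f(\phi)))=\phi$; quantifying over all such $\phi$ yields precisely (\ref{propcharlo2}). It does no harm that (\ref{propcharlo2}) and LO are stated only for $\phi\supseteq{\rm Ker}(f)$: both conditions force ${\rm Ker}(f)=f^*(\Delta_B)\subseteq f^*(Cg_B(f(\phi)))=\phi$ anyway.

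For the remaining implications I will use that, for any family $(\psi_i)_{i\in I}\subseteq{\rm Con}(B)$, $f^*(\bigcap_{i\in I}\psi_i)=\bigcap_{i\in I}f^*(\psi_i)$ (an identity of inverse images, valid also for $I=\emptyset$, both sides then equal to $\nabla_A$), together with the chain of identities, for $\theta\in{\rm Con}(A)$,
\begin{align*}
V_B(Cg_B(f(\theta)))&=\{\psi\in{\rm Spec}(B)\ |\ Cg_B(f(\theta))\subseteq\psi\}=\{\psi\in{\rm Spec}(B)\ |\ f(\theta)\subseteq\psi\}\\
&=\{\psi\in{\rm Spec}(B)\ |\ \theta\subseteq f^*(\psi)\},
\end{align*}
so that $f^*(\rho(Cg_B(f(\theta))))=\bigcap\,\{f^*(\psi)\ |\ \psi\in{\rm Spec}(B),\ \theta\subseteq f^*(\psi)\}$. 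For (\ref{propcharlo1})$\Rightarrow$(\ref{propcharlo3}): fix $\theta$ with ${\rm Ker}(f)\subseteq\theta$; by admissibility each $f^*(\psi)$ occurring in that last set is prime and contains $\theta$, so the set is contained in $V_A(\theta)$, and conversely each $\phi\in V_A(\theta)$ satisfies ${\rm Ker}(f)\subseteq\theta\subseteq\phi$, so LO provides a $\psi\in{\rm Spec}(B)$ with $f^*(\psi)=\phi$, putting $\phi$ into that set; hence the set equals $V_A(\theta)$ and intersecting gives $f^*(\rho(Cg_B(f(\theta))))=\rho(\theta)$, which is (\ref{propcharlo3}). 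For (\ref{propcharlo3})$\Rightarrow$(\ref{propcharlo2}): given $\phi\in{\rm Spec}(A)$ with ${\rm Ker}(f)\subseteq\phi$, apply (\ref{propcharlo3}) with $\theta=\phi$; since $\phi$ is prime, $\rho(\phi)=\phi$, so $f^*(\rho(Cg_B(f(\phi))))=\phi$, and then $Cg_B(f(\phi))\subseteq\rho(Cg_B(f(\phi)))$ together with monotonicity of $f^*$ gives $\phi\subseteq f^*(f(\phi))\subseteq f^*(Cg_B(f(\phi)))\subseteq\phi$, i.e. $f^*(Cg_B(f(\phi)))=\phi$.

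I do not anticipate a real obstacle; the one point requiring attention is the ``$\supseteq$'' inclusion in the identification of $\{f^*(\psi)\mid \psi\in{\rm Spec}(B),\ \theta\subseteq f^*(\psi)\}$ with $V_A(\theta)$ inside (\ref{propcharlo1})$\Rightarrow$(\ref{propcharlo3}) --- this is exactly where LO is genuinely used --- and, relatedly, checking that the degenerate case $V_A(\theta)=\emptyset$ (equivalently, no prime congruence of $B$ lies above $Cg_B(f(\theta))$, which under $\nabla_B$ finitely generated forces $Cg_B(f(\theta))=\nabla_B$ by Lemma \ref{folclor}) is handled correctly by the empty--intersection convention, giving $f^*(\rho(Cg_B(f(\theta))))=f^*(\nabla_B)=\nabla_A=\rho(\theta)$.
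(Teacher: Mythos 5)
Your proposal is correct and follows essentially the same route as the paper: the equivalence of (\ref{propcharlo1}) and (\ref{propcharlo2}) via Lemma \ref{lcharlo}, then the cycle through (\ref{propcharlo3}) using that $f^*$ commutes with intersections, admissibility for one containment and LO for the other, and the standard squeeze $\phi\subseteq f^*(Cg_B(f(\phi)))\subseteq f^*(\rho(Cg_B(f(\phi))))=\phi$ for the return implication. Your version of (\ref{propcharlo1})$\Rightarrow$(\ref{propcharlo3}) is slightly cleaner than the paper's (which proves the two inclusions of the intersections separately, with an element chase for the harder one), since you simply identify the two families $\{f^*(\psi)\ |\ \psi\in V_B(Cg_B(f(\theta)))\}$ and $V_A(\theta)$ as sets before intersecting, but the underlying ingredients are identical.
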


\begin{proof} (\ref{propcharlo1})$\Leftrightarrow $(\ref{propcharlo2}): By Lemma \ref{lcharlo} and the definition of LO. 

\noindent (\ref{propcharlo2})$\Rightarrow $(\ref{propcharlo3}): Let $\theta \in {\rm Con}(A)$ such that ${\rm Ker}(f)\subseteq \theta $. We have: $\displaystyle f^*(\rho (Cg_B(f(\theta ))))=f^*(\bigcap _{\beta \in V_B(Cg_B(f(\theta )))}\beta )=\bigcap _{\beta \in V_B(Cg_B(f(\theta )))}f^*(\beta )$. Let $\beta \in V_B(Cg_B(f(\theta )))$. Then $\beta \in {\rm Spec}(B)$, so $f^*(\beta )\in {\rm Spec}(A)$, and $f(\theta )\subseteq \beta $, thus $\theta \subseteq f^*(f(\theta ))\subseteq f^*(\beta )$, so $\rho (\theta )\subseteq f^*(\beta )$. Hence $\displaystyle \rho (\theta )\subseteq \bigcap _{\beta \in V_B(Cg_B(f(\theta )))}f^*(\beta )=f^*(\rho (Cg_B(f(\theta ))))$. By the equivalence (\ref{propcharlo1})$\Leftrightarrow $(\ref{propcharlo2}) proven above, since ${\rm Ker}(f)\subseteq \theta $, we have: $\displaystyle \rho (\theta )=\bigcap _{\alpha \in V_A(\theta )}\alpha =\bigcap _{\alpha \in V_A(\theta )}f^*(Cg_B(f(\alpha )))$. Let $\displaystyle (a,b)\in f^*(\rho (Cg_B(f(\theta ))))=\bigcap _{\beta \in V_B(Cg_B(f(\theta )))}f^*(\beta )$, so that, for all $\beta \in V_B(Cg_B(f(\theta )))$, $(a,b)\in f^*(\beta )$. Since $f$ fulfills LO, for every $\alpha \in {\rm Spec}(A)$ such that $\alpha \supseteq \theta \supseteq {\rm Ker}(f)$, there exists a $\beta \in {\rm Spec}(B)$ such that $f^*(\beta )=\alpha $, so that $f(\theta )\subseteq f(\alpha )=f(f^*(\beta ))=\beta \cap f(A^2)\subseteq \beta $. Hence $(a,b)\in f^*(\beta )=\alpha $, thus $(f(a),f(b))\in f(f^*(\beta ))=f(\alpha )\subseteq Cg_B(f(\alpha ))$, so $(a,b)\in f^*(Cg_B(f(\alpha )))$. Therefore $\displaystyle (a,b)\in \bigcap _{\alpha \in V_A(\theta )}f^*(Cg_B(f(\alpha )))=\rho (\theta )$, hence $f^*(\rho (Cg_B(f(\theta ))))\subseteq \rho (\theta )$. Therefore $f^*(\rho (Cg_B(f(\theta ))))=\rho (\theta )$.

\noindent (\ref{propcharlo3})$\Rightarrow $(\ref{propcharlo2}): Let $\phi \in {\rm Spec}(A)$ such that ${\rm Ker}(f)\subseteq \phi $. Then $\rho (\phi )=\phi $, so we have $f^*(\rho (Cg_B(f(\phi ))))=\phi $. Since $Cg_B(f(\phi ))\subseteq \rho (Cg_B(f(\phi )))$ it follows that $f^*(Cg_B(f(\phi )))\subseteq f^*(\rho (Cg_B(f(\phi ))))=\phi $. But $f(\phi )\subseteq Cg_B(f(\phi ))$, thus $\phi \subseteq f^*(f(\phi ))\subseteq f^*(Cg_B(f(\phi )))$. Therefore $f^*(Cg_B(f(\phi )))=\phi $, hence $f$ fulfills LO by the equivalence (\ref{propcharlo1})$\Leftrightarrow $(\ref{propcharlo2}) proven above.\end{proof}

For any $\theta \in {\rm Con}(A)$, we shall denote by $f_{[\theta ]}:A/\theta \rightarrow B/Cg_B(f(\theta ))$, for all $a\in A$, $f_{[\theta ]}(a/\theta )=f(a)/Cg_B(f(\theta ))$.

\begin{remark} For any $\theta \in {\rm Con}(A)$ and any $a,b\in A$, if $a/\theta =b/\theta $, which means that $(a,b)\in \theta $, then $(f(a),f(b))\in f(\theta )\subseteq Cg_B(f(\theta ))$, thus $f(a)/Cg_B(f(\theta ))=f(b)/Cg_B(f(\theta ))$, so $f_{[\theta ]}$ is well defined. Clearly, $f_{[\theta ]}$ is a morphism and the following diagram is commutative:\vspace*{-22pt}

\begin{center}\begin{picture}(180,60)(0,0)
\put(31,35){$A$}
\put(24,5){$A/\theta $}
\put(143,35){$B$}
\put(122,5){$B/Cg_B(f(\theta ))$}
\put(23,22){$p_{\theta }$}
\put(34,33){\vector(0,-1){19}}
\put(150,22){$p_{Cg_B(f(\theta ))}$}
\put(147,33){\vector(0,-1){19}}
\put(86,42){$f$}
\put(40,38){\vector(1,0){100}}
\put(73,12){$f_{[\theta ]}$}
\put(43,8){\vector(1,0){77}}\end{picture}\end{center}\vspace*{-7pt}\label{fteta}

Clearly, if $f^*(Cg_B(f(\theta )))=\theta $, then $f_{[\theta ]}=f_{Cg_B(f(\theta ))}$ (see the definition of $f_{\beta }$ at the beginning of this section), so, by Proposition \ref{propcharlo}, if $f$ fulfills LO and $\nabla _B$ is finitely generated, then, for every $\phi \in V_A({\rm Ker}(f))$, $f_{[\phi ]}=f_{Cg_B(f(\phi ))}$. Since ${\rm Ker}(f)=\Delta _A$ if $f$ is injective, we obtain: if $f$ is injective and fulfills LO and $\nabla _B$ is finitely generated, then, for every $\phi \in {\rm Spec}(A)$, $f_{[\phi ]}=f_{Cg_B(f(\phi ))}$.\label{fthetafbeta}\end{remark}

\begin{lemma} Let $\theta \in {\rm Con}(A)$ and $\lambda \in {\rm Con}(B)$. Then:\begin{itemize}
\item $\theta \subseteq f^*(\lambda )$ iff $Cg_B(f(\theta ))\subseteq \lambda $; \item if $\theta \subseteq f^*(\lambda )$, then $f_{[\theta ]}^*(\lambda /Cg_B(f(\theta )))=f^*(\lambda )/\theta $.\end{itemize}\label{f(theta)}\end{lemma}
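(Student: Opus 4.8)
The plan is to get the first (iff) statement as a straightforward adjunction between direct and inverse images, and then read off the second statement by a direct computation that uses the first statement only to guarantee well--definedness.

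First I would prove the equivalence. By the definition of $f^*$ as $(f^2)^{-1}$ and of $f$ as the direct image of $f^2$, for any $\gamma\in{\rm Con}(A)$ one has $\gamma\subseteq f^*(\lambda)$ iff $(f(a),f(b))\in\lambda$ for all $(a,b)\in\gamma$, i.e.\ iff $f(\gamma)\subseteq\lambda$; taking $\gamma=\theta$ gives $\theta\subseteq f^*(\lambda)$ iff $f(\theta)\subseteq\lambda$. Since $\lambda$ is a congruence of $B$ and $Cg_B(f(\theta))$ is by definition the least congruence of $B$ containing the set $f(\theta)$, we also have $f(\theta)\subseteq\lambda$ iff $Cg_B(f(\theta))\subseteq\lambda$. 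Concatenating the two equivalences yields the first item.

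Next, assuming $\theta\subseteq f^*(\lambda)$, I would observe that $f^*(\lambda)\in[\theta)$, so $f^*(\lambda)/\theta\in{\rm Con}(A/\theta)$, and, by the first item, $Cg_B(f(\theta))\subseteq\lambda$, so $\lambda/Cg_B(f(\theta))\in{\rm Con}(B/Cg_B(f(\theta)))$; thus both sides of the claimed identity are genuine congruences of $A/\theta$. Then I would unfold $f_{[\theta]}^*$: for $a,b\in A$, the pair $(a/\theta,b/\theta)$ lies in $f_{[\theta]}^*(\lambda/Cg_B(f(\theta)))$ iff $(f_{[\theta]}(a/\theta),f_{[\theta]}(b/\theta))=(f(a)/Cg_B(f(\theta)),f(b)/Cg_B(f(\theta)))\in\lambda/Cg_B(f(\theta))$, which, since $Cg_B(f(\theta))\subseteq\lambda$, is equivalent by the quotient--congruence correspondence recalled in Section~\ref{preliminaries} to $(f(a),f(b))\in\lambda$, i.e.\ to $(a,b)\in f^*(\lambda)$. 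Applying that same correspondence in $A$, with $\theta\subseteq f^*(\lambda)$, gives $(a/\theta,b/\theta)\in f^*(\lambda)/\theta$ iff $(a,b)\in f^*(\lambda)$. Hence both $f_{[\theta]}^*(\lambda/Cg_B(f(\theta)))$ and $f^*(\lambda)/\theta$ equal $\{(a/\theta,b/\theta)\ |\ (a,b)\in f^*(\lambda)\}$, and the second item follows.

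There is no real obstacle here; the only thing to watch is to invoke the correspondence between congruences above $\mu$ and congruences of the quotient on the correct side --- in $B$ above $Cg_B(f(\theta))$ for the left--hand side, and in $A$ above $\theta$ for the right--hand side --- and to note that the inclusions needed for $\lambda/Cg_B(f(\theta))$ and $f^*(\lambda)/\theta$ to be defined are precisely what the first item supplies.
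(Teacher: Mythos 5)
Your proposal is correct and follows essentially the same route as the paper: the first item via the direct/inverse image adjunction combined with the minimality of the generated congruence, and the second item via the same chain of membership equivalences through the quotient--congruence correspondence on each side. The extra remarks on well--definedness are sound but not needed beyond what the paper already records in Section \ref{preliminaries}.
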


\begin{proof} If $\theta \subseteq f^*(\lambda )$, then $f(\theta )\subseteq f(f^*(\lambda ))\subseteq \lambda $, hence $Cg_B(f(\theta ))\subseteq \lambda $. If $Cg_B(f(\theta ))\subseteq \lambda $, then $f(\theta )\subseteq \lambda $, thus $\theta \subseteq f^*(f(\theta ))\subseteq f^*(\lambda )$.

Now assume that $\theta \subseteq f^*(\lambda )$, so that $Cg_B(f(\theta ))\subseteq \lambda $ by the above. Then, for every $a,b\in A$, the following equivalences hold: $(a/\theta ,b/\theta )\in f_{[\theta ]}^*(\lambda /Cg_B(f(\theta )))$ iff $(f_{[\theta ]}(a/\theta ),f_{[\theta ]}(b/\theta ))\in \lambda /Cg_B(f(\theta ))$ iff $(f(a)/Cg_B(f(\theta )),$\linebreak $f(b)/Cg_B(f(\theta )))\in \lambda /Cg_B(f(\theta ))$ iff $(f(a),f(b))\in \lambda $ iff $(a,b)\in f^*(\lambda )$ iff $(a/\theta ,b/\theta )\in f^*(\lambda )/\theta $. Hence the equality in the enunciation.\end{proof}

\begin{lemma} The following are equivalent:\begin{enumerate}
\item\label{f(theta)adm1} $f$ is admissible;
\item\label{f(theta)adm2} for any $\theta \in {\rm Con}(A)$, $f_{[\theta ]}$ is admissible;
\item\label{f(theta)adm3} $f_{[{\rm Ker}(f)]}$ is admissible.\end{enumerate}\label{f(theta)adm}\end{lemma}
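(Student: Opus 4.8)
The plan is to prove the cycle (\ref{f(theta)adm2})$\Rightarrow$(\ref{f(theta)adm3})$\Rightarrow$(\ref{f(theta)adm1})$\Rightarrow$(\ref{f(theta)adm2}). The first implication is immediate: one simply specializes $\theta ={\rm Ker}(f)$.

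For (\ref{f(theta)adm3})$\Rightarrow$(\ref{f(theta)adm1}), I would first observe that $Cg_B(f({\rm Ker}(f)))=\Delta _B$, since $f({\rm Ker}(f))=f(f^*(\Delta _B))=\Delta _B\cap f(A^2)\subseteq \Delta _B$, so $Cg_B(f({\rm Ker}(f)))\subseteq Cg_B(\Delta _B)=\Delta _B$. Hence the codomain of $f_{[{\rm Ker}(f)]}$ is $B/\Delta _B$, and the commutative square from Remark \ref{fteta} reads $p_{\Delta _B}\circ f=f_{[{\rm Ker}(f)]}\circ p_{{\rm Ker}(f)}$. Now $p_{{\rm Ker}(f)}$ is surjective, hence admissible by Lemma \ref{surjadm}, so by Remark \ref{compadm} the composite $f_{[{\rm Ker}(f)]}\circ p_{{\rm Ker}(f)}=p_{\Delta _B}\circ f$ is admissible; since $p_{\Delta _B}:B\rightarrow B/\Delta _B$ is an isomorphism, Remark \ref{admcompizom} then yields that $f$ itself is admissible.

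For (\ref{f(theta)adm1})$\Rightarrow$(\ref{f(theta)adm2}), fix $\theta \in {\rm Con}(A)$ and let $\chi \in {\rm Spec}(B/Cg_B(f(\theta )))$. By Lemma \ref{speccat}, (\ref{speccat1}), we have $\chi =\lambda /Cg_B(f(\theta ))$ for some $\lambda \in V_B(Cg_B(f(\theta )))$, i.e. $\lambda \in {\rm Spec}(B)$ with $Cg_B(f(\theta ))\subseteq \lambda $; by the first bullet of Lemma \ref{f(theta)} this is equivalent to $\theta \subseteq f^*(\lambda )$, and then the second bullet of that lemma gives $f_{[\theta ]}^*(\chi )=f^*(\lambda )/\theta $. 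Since $f$ is admissible, $f^*(\lambda )\in {\rm Spec}(A)$, and $\theta \subseteq f^*(\lambda )$ means $f^*(\lambda )\in V_A(\theta )$, so by Lemma \ref{speccat}, (\ref{speccat1}), applied now to $A$ and $\theta $, we get $f^*(\lambda )/\theta \in {\rm Spec}(A/\theta )$. Thus $f_{[\theta ]}^*({\rm Spec}(B/Cg_B(f(\theta ))))\subseteq {\rm Spec}(A/\theta )$, that is, $f_{[\theta ]}$ is admissible.

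The only points beyond routine bookkeeping are the identity $Cg_B(f({\rm Ker}(f)))=\Delta _B$, which is what makes the commutative square of Remark \ref{fteta} usable in (\ref{f(theta)adm3})$\Rightarrow$(\ref{f(theta)adm1}), and the correct use of the order isomorphism $\gamma \mapsto \gamma /\theta $ between $V_M(\theta )$ and ${\rm Spec}(M/\theta )$ from Lemma \ref{speccat}, (\ref{speccat1}), which must be invoked once for $B$ with $Cg_B(f(\theta ))$ and once for $A$ with $\theta $; neither step poses a real obstacle once Lemma \ref{f(theta)} is in hand.
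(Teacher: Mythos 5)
Your proposal is correct and follows essentially the same route as the paper: the implication (\ref{f(theta)adm1})$\Rightarrow$(\ref{f(theta)adm2}) via Lemmas \ref{speccat}, (\ref{speccat1}), and \ref{f(theta)}, and the implication (\ref{f(theta)adm3})$\Rightarrow$(\ref{f(theta)adm1}) via $Cg_B(f({\rm Ker}(f)))=\Delta _B$ and the commutative square, are exactly the paper's arguments. The only (harmless) difference is organizational: you close the cycle through (\ref{f(theta)adm3}), whereas the paper also gives a direct proof of (\ref{f(theta)adm2})$\Rightarrow$(\ref{f(theta)adm1}) by specializing to $\theta =\Delta _A$, which your arrangement renders unnecessary.
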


\begin{proof} (\ref{f(theta)adm1})$\Leftrightarrow $(\ref{f(theta)adm2}) is a result in \cite{euadm}, but we provide a proof for it, for the sake of completeness.

\noindent (\ref{f(theta)adm2})$\Rightarrow $(\ref{f(theta)adm1}): Take $\theta =\Delta _A$, so that $f(\theta )=f(\Delta _A)\subseteq \Delta _B$, thus $Cg_B(f(\theta ))=\Delta _B$, and so $p_{\theta }=p_{\Delta _A}:A\rightarrow A/\Delta _A$ and $p_{Cg_B(f(\theta ))}=p_{\Delta _B}:B\rightarrow B/\Delta _B$ are isomorphisms, and $p_{\Delta _B}\circ f=f_{[\Delta _A]}\circ p_{\Delta _A}$. By Remark \ref{admcompizom}, since $f_{[\Delta _A]}$ is admissible, it follows that $f$ is admissible.

\noindent (\ref{f(theta)adm1})$\Rightarrow $(\ref{f(theta)adm2}): Let $\theta \in {\rm Con}(A)$ and $\phi \in {\rm Spec}(B/Cg_B(f(\theta )))$, so that $\phi =\psi /Cg_B(f(\theta ))$ for some $\psi \in V_B(Cg_B(f(\theta )))$. Then $\psi \in {\rm Spec}(B)$ and $Cg_B(f(\theta ))\subseteq \psi $, thus $\theta \subseteq f^*(\psi )$ by Lemma \ref{f(theta)}, thus $f^*(\psi )\in {\rm Spec}(A)\cap [\theta )=V_A(\theta )$ since $f$ is admissible. Then, by Lemmas \ref{f(theta)} and \ref{speccat}, (\ref{speccat1}), $f_{[\theta ]}^*(\phi )=f_{[\theta ]}^*(\psi /Cg_B(f(\theta )))=f^*(\psi )/\theta \in {\rm Spec}(A/\theta )$. Thus $f_{[\theta ]}$ is admissible.

\noindent (\ref{f(theta)adm2})$\Rightarrow $(\ref{f(theta)adm3}): Trivial.

\noindent (\ref{f(theta)adm3})$\Rightarrow $(\ref{f(theta)adm1}): $f({\rm Ker}(f))=f(f^*(\Delta _B))\subseteq \Delta _B$, so $Cg_B(f({\rm Ker}(f)))=\Delta _B$, thus $p_{Cg_B(f({\rm Ker}(f)))}=p_{\Delta _B}:B\rightarrow B/\Delta _B$ is an isomorphism. By Lemma \ref{surjadm} and Remarks \ref{compadm} and \ref{admcompizom}, $p_{{\rm Ker}(f)}$ is surjective and thus admissible, so $f_{[{\rm Ker}(f)]}\circ p_{{\rm Ker}(f)}=p_{\Delta _B}\circ f$ is admissible, hence $f$ is admissible.\end{proof}

\begin{proposition} If $f$ is admissible, then the following are equivalent:\begin{enumerate}
\item\label{f(theta)gu1} $f$ fulfills GU;
\item\label{f(theta)gu2} for all $\theta \in {\rm Con}(A)$, $f_{[\theta ]}$ fulfills GU;
\item\label{f(theta)gu3} for all $\theta \in {\rm Spec}(A)$, $f_{[\theta ]}$ fulfills GU;
\item\label{f(theta)gu4} for all $\theta \in {\rm Min}(A)$, $f_{[\theta ]}$ fulfills GU;
\item\label{f(theta)gu5} $f_{[{\rm Ker}(f)]}$ fulfills GU.\end{enumerate}\label{f(theta)gu}\end{proposition}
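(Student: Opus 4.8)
The plan is to prove the cycle of implications (\ref{f(theta)gu1})$\Rightarrow$(\ref{f(theta)gu2})$\Rightarrow$(\ref{f(theta)gu3})$\Rightarrow$(\ref{f(theta)gu4})$\Rightarrow$(\ref{f(theta)gu1}), together with (\ref{f(theta)gu2})$\Rightarrow$(\ref{f(theta)gu5})$\Rightarrow$(\ref{f(theta)gu1}); the links (\ref{f(theta)gu2})$\Rightarrow$(\ref{f(theta)gu3})$\Rightarrow$(\ref{f(theta)gu4}) and (\ref{f(theta)gu2})$\Rightarrow$(\ref{f(theta)gu5}) are immediate, since ${\rm Min}(A)\subseteq {\rm Spec}(A)\subseteq {\rm Con}(A)$ and ${\rm Ker}(f)\in {\rm Con}(A)$. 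Throughout I will use that every $f_{[\theta ]}$ is admissible (Lemma~\ref{f(theta)adm}), so that Lemma~\ref{cargulo}, (\ref{cargulo1}), applies to each of them; that, by Lemma~\ref{speccat}, (\ref{speccat1}), the assignment $\gamma \mapsto \gamma /\theta $ is an order isomorphism from $V_A(\theta )$ onto ${\rm Spec}(A/\theta )$, and likewise for $B$ and $Cg_B(f(\theta ))$; and the two identities of Lemma~\ref{f(theta)}: for $\psi \in {\rm Con}(B)$ one has $\theta \subseteq f^*(\psi )$ iff $Cg_B(f(\theta ))\subseteq \psi $, and in that case $f_{[\theta ]}^*(\psi /Cg_B(f(\theta )))=f^*(\psi )/\theta $.

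For (\ref{f(theta)gu1})$\Rightarrow$(\ref{f(theta)gu2}), fix $\theta \in {\rm Con}(A)$; by Lemma~\ref{cargulo}, (\ref{cargulo1}), it suffices to check $V_{A/\theta }(f_{[\theta ]}^*(\psi '))\subseteq f_{[\theta ]}^*(V_{B/Cg_B(f(\theta ))}(\psi '))$ for each $\psi '\in {\rm Spec}(B/Cg_B(f(\theta )))$. Write $\psi '=\psi /Cg_B(f(\theta ))$ with $\psi \in {\rm Spec}(B)$, $Cg_B(f(\theta ))\subseteq \psi $; then $\theta \subseteq f^*(\psi )$ and $f_{[\theta ]}^*(\psi ')=f^*(\psi )/\theta $. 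An element of $V_{A/\theta }(f^*(\psi )/\theta )$ has the form $\phi /\theta $ with $\phi \in V_A(f^*(\psi ))$, so GU of $f$ produces $\chi \in {\rm Spec}(B)$ with $\psi \subseteq \chi $ and $f^*(\chi )=\phi $; then $Cg_B(f(\theta ))\subseteq \chi $, hence $\chi /Cg_B(f(\theta ))\in V_{B/Cg_B(f(\theta ))}(\psi ')$ and $f_{[\theta ]}^*(\chi /Cg_B(f(\theta )))=f^*(\chi )/\theta =\phi /\theta $, which is exactly what is needed. The implication (\ref{f(theta)gu5})$\Rightarrow$(\ref{f(theta)gu1}) uses the commutative square of Remark~\ref{fteta}: since $f({\rm Ker}(f))\subseteq \Delta _B$ we have $Cg_B(f({\rm Ker}(f)))=\Delta _B$, so $p_{\Delta _B}\circ f=f_{[{\rm Ker}(f)]}\circ p_{{\rm Ker}(f)}$ with $p_{\Delta _B}$ an isomorphism; now $p_{{\rm Ker}(f)}$ is surjective, hence admissible and fulfilling GU by Corollary~\ref{3.5}, and $f_{[{\rm Ker}(f)]}$ is admissible (Lemma~\ref{f(theta)adm}) and fulfills GU by hypothesis, so $p_{\Delta _B}\circ f$ fulfills GU by Lemma~\ref{gulocomp}, (\ref{gulocomp1}), whence $f$ fulfills GU by Remark~\ref{compizom}.

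The step I expect to be the main obstacle is (\ref{f(theta)gu4})$\Rightarrow$(\ref{f(theta)gu1}), because it requires the auxiliary fact that every prime congruence of $A$ contains a minimal one. I plan to prove this by Zorn's Lemma applied to $\{\theta \in {\rm Spec}(A)\mid \theta \subseteq \phi \}$ ordered by reverse inclusion; the point to verify is that the intersection of a chain of prime congruences all contained in a fixed proper congruence is again prime (if $[\alpha ,\beta ]_A$ lies in the intersection but neither $\alpha $ nor $\beta $ does, choose members of the chain witnessing this and the smaller of the two contradicts primeness). Granting this, given $\psi \in {\rm Spec}(B)$ put $\phi _0=f^*(\psi )\in {\rm Spec}(A)$, pick $\theta \in {\rm Min}(A)$ with $\theta \subseteq \phi _0$, so that $Cg_B(f(\theta ))\subseteq \psi $ and $f_{[\theta ]}^*(\psi /Cg_B(f(\theta )))=\phi _0/\theta $; then for any $\phi \in V_A(\phi _0)$ we have $\phi /\theta \in V_{A/\theta }(\phi _0/\theta )$, GU of $f_{[\theta ]}$ yields some $\chi /Cg_B(f(\theta ))\in {\rm Spec}(B/Cg_B(f(\theta )))$ above $\psi /Cg_B(f(\theta ))$ with $f_{[\theta ]}^*(\chi /Cg_B(f(\theta )))=\phi /\theta $, and transporting back along the order isomorphisms produces $\chi \in {\rm Spec}(B)$ with $\psi \subseteq \chi $ and $f^*(\chi )=\phi $; hence $V_A(f^*(\psi ))\subseteq f^*(V_B(\psi ))$ and $f$ fulfills GU by Lemma~\ref{cargulo}, (\ref{cargulo1}). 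Apart from the minimal prime existence, every remaining step is a routine transfer across the isomorphisms $V_A(\theta )\cong {\rm Spec}(A/\theta )$ and the identities of Lemma~\ref{f(theta)}.
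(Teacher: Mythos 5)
Your proof is correct and follows essentially the same route as the paper's: the same trivial implications, the same transfer of GU across the order isomorphisms of Lemma \ref{speccat}, (\ref{speccat1}), via the identities of Lemma \ref{f(theta)}, and the same use of the commutative square with $p_{{\rm Ker}(f)}$ for (\ref{f(theta)gu5})$\Rightarrow$(\ref{f(theta)gu1}). The only substantive addition is that you spell out the Zorn's Lemma argument for the existence of a minimal prime below a given prime (intersection of a chain of primes contained in a proper congruence is prime), which the paper asserts without proof; your sketch of that step is correct.
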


\begin{proof} By Lemma \ref{f(theta)adm}, for all $\theta \in {\rm Con}(A)$, $f_{[\theta ]}$ is admissible.

\noindent (\ref{f(theta)gu2})$\Rightarrow $(\ref{f(theta)gu1}): As in the proof of Lemma \ref{f(theta)adm}, take $\theta =\Delta _A$ and obtain that $p_{\Delta _B}\circ f=f_{[\Delta _A]}\circ p_{\Delta _A}$, where $p_{\Delta _A}$ and $p_{\Delta _B}$ are isomorphisms. Now, from Remark \ref{compizom}, since $f_{[\Delta _A]}$ fulfills GU, it follows that $f$ fulfills GU.

\noindent (\ref{f(theta)gu1})$\Rightarrow $(\ref{f(theta)gu2}): Let $\theta \in {\rm Con}(A)$, $\phi ,\psi \in {\rm Spec}(A/\theta )$ and $\phi _1\in {\rm Spec}(B/Cg_B(f(\theta )))$ such that $\phi \subseteq \psi $ and $f_{[\theta ]}^*(\phi _1)=\phi $. Then, by Lemma \ref{speccat}, (\ref{speccat1}), we have $\phi =\alpha /\theta $, $\psi =\beta /\theta $ and $\phi _1=\alpha _1/Cg_B(f(\theta ))$ for some $\alpha ,\beta \in V_A(\theta )$ such that $\alpha \subseteq \beta $ and some $\alpha _1\in V_B(Cg_B(f(\theta )))$. Then $Cg_B(f(\theta ))\subseteq \alpha _1$, so, by Lemma \ref{f(theta)}, $\alpha /\theta =\phi =f_{[\theta ]}^*(\phi _1)=f_{[\theta ]}^*(\alpha _1/Cg_B(f(\theta )))=f^*(\alpha _1)/\theta $, hence $\alpha =f^*(\alpha _1)$, therefore, since $f$ fulfills GU, there exists a $\beta _1\in {\rm Spec}(B)$ such that $f^*(\beta _1)=\beta $ and $\beta _1\supseteq \alpha _1\supseteq Cg_B(f(\theta ))$. Denote $\psi _1=\beta _1/Cg_B(f(\theta ))\in {\rm Spec}(B/Cg_B(f(\theta )))$ by Lemma \ref{speccat}, (\ref{speccat1}). Then $\phi _1\subseteq \psi _1$ and, again by Lemma \ref{f(theta)}, $f_{[\theta ]}^*(\psi _1)=f_{[\theta ]}^*(\beta _1/Cg_B(f(\theta )))=f^*(\beta _1)/\theta =\beta /\theta =\psi $. Therefore $f_{[\theta ]}$ fulfills GU.

\noindent (\ref{f(theta)gu2})$\Rightarrow $(\ref{f(theta)gu3})$\Rightarrow $(\ref{f(theta)gu4}) and (\ref{f(theta)gu2})$\Rightarrow $(\ref{f(theta)gu5}): Trivial. 

\noindent (\ref{f(theta)gu4})$\Rightarrow $(\ref{f(theta)gu1}): Let $\alpha ,\beta \in {\rm Spec}(A)$ and $\alpha _1\in {\rm Spec}(B)$ such that $\alpha \subseteq \beta $ and $f^*(\alpha _1)=\alpha $. It is easy to derive from Zorn`s Lemma that there exists a $\theta \in {\rm Min}(A)$ such that $\theta \subseteq \alpha \subseteq \beta $, so that $\alpha /\theta ,\beta /\theta \in {\rm Spec}(A/\theta )$ and $\alpha /\theta \subseteq \beta /\theta $ by Lemma \ref{speccat}, (\ref{speccat1}). We have: $f(\theta )\subseteq f(\alpha )=f(f^*(\alpha _1))=\alpha _1\cap f(A^2)\subseteq \alpha _1$, thus $Cg_B(f(\theta ))\subseteq \alpha _1$, hence $\alpha _1/Cg_B(f(\theta ))\in {\rm Spec}(B/Cg_B(f(\theta )))$ by Lemma \ref{speccat}, (\ref{speccat1}), and, by Lemma \ref{f(theta)}, $f_{[\theta ]}^*(\alpha _1/Cg_B(f(\theta )))=f^*(\alpha _1)/\theta =\alpha /\theta $. Since $f_{[\theta ]}$ fulfills GU, it follows that there exists a $\psi _1\in {\rm Spec}(B/Cg_B(f(\theta )))$ such that $\alpha _1/Cg_B(f(\theta ))\subseteq \psi _1$ and $f_{[\theta ]}^*(\psi _1)=\beta /\theta $. Again by Lemma \ref{speccat}, (\ref{speccat1}), $\psi _1=\beta _1/Cg_B(f(\theta ))$ for some $\beta _1\in V_B(Cg_B(f(\theta )))$, so that $\alpha  _1/Cg_B(f(\theta ))\subseteq \beta _1/Cg_B(f(\theta ))$, hence $\alpha _1\subseteq \beta _1$. And, again by Lemma \ref{f(theta)}, $f^*(\beta _1)/\theta =f_{[\theta ]}^*(\beta _1/Cg_B(f(\theta )))=f_{[\theta ]}^*(\psi _1)=\beta /\theta $, thus $f^*(\beta _1)=\beta $. Therefore $f$ fulfills GU.

\noindent (\ref{f(theta)gu5})$\Rightarrow $(\ref{f(theta)gu1}): By the proof of Lemma \ref{f(theta)adm}, $f_{[{\rm Ker}(f)]}\circ p_{{\rm Ker}(f)}=p_{\Delta _B}\circ f$, with $p_{\Delta _B}$ an isomorphism and $p_{{\rm Ker}(f)}$ surjective, thus admissible and with GU by Proposition \ref{surjgulo}. Since $f_{[{\rm Ker}(f)]}$ fulfills GU, by Lemma \ref{gulocomp}, (\ref{gulocomp1}), and Remark \ref{compizom} it follows that $f_{[{\rm Ker}(f)]}\circ p_{{\rm Ker}(f)}$ fulfills GU, thus $f$ fulfills GU.\end{proof}

\begin{proposition} If $f$ is admissible, then the following are equivalent:\begin{enumerate}
\item\label{charlo1} $f$ fulfills LO;
\item\label{charlo2} for all $\theta \in {\rm Con}(A)$, $f_{[\theta ]}$ fulfills LO;
\item\label{charlo3} $f_{[{\rm Ker}(f)]}$ fulfills LO.\end{enumerate}\label{charlo}\end{proposition}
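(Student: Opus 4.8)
\noindent\emph{Proposed proof.} The plan is to establish the cycle (\ref{charlo1})$\Rightarrow$(\ref{charlo2})$\Rightarrow$(\ref{charlo3})$\Rightarrow$(\ref{charlo1}), following closely the proof of Proposition \ref{f(theta)gu}, with the simplification that, just as for GU, no finite--generation hypothesis on $\nabla _B$ is needed here. Two preliminary facts will be used repeatedly: first, by Lemma \ref{f(theta)adm}, each $f_{[\theta ]}$ is admissible, so that condition LO is meaningful for it; second, applying Lemma \ref{f(theta)} with $\lambda =Cg_B(f(\theta ))$ identifies ${\rm Ker}(f_{[\theta ]})=f_{[\theta ]}^*(Cg_B(f(\theta ))/Cg_B(f(\theta )))$ with $f^*(Cg_B(f(\theta )))/\theta $, a congruence of $A/\theta $ lying above $\theta $.

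For (\ref{charlo1})$\Rightarrow$(\ref{charlo2}) I would fix $\theta \in {\rm Con}(A)$ and, assuming $f$ fulfills LO, take an arbitrary $\phi \in {\rm Spec}(A/\theta )$ with ${\rm Ker}(f_{[\theta ]})\subseteq \phi $. By Lemma \ref{speccat}, (\ref{speccat1}), $\phi =\alpha /\theta $ for a unique $\alpha \in V_A(\theta )$, and the inclusion ${\rm Ker}(f_{[\theta ]})\subseteq \phi $, read through the order isomorphism between $[\theta )$ and ${\rm Con}(A/\theta )$, amounts to $f^*(Cg_B(f(\theta )))\subseteq \alpha $; in particular ${\rm Ker}(f)=f^*(\Delta _B)\subseteq f^*(Cg_B(f(\theta )))\subseteq \alpha $. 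Then LO for $f$ supplies $\alpha _1\in {\rm Spec}(B)$ with $f^*(\alpha _1)=\alpha $, and from $\theta \subseteq \alpha =f^*(\alpha _1)$ I get $f(\theta )\subseteq f(f^*(\alpha _1))=\alpha _1\cap f(A^2)\subseteq \alpha _1$, hence $Cg_B(f(\theta ))\subseteq \alpha _1$. Thus $\alpha _1\in V_B(Cg_B(f(\theta )))$, so $\phi _1:=\alpha _1/Cg_B(f(\theta ))\in {\rm Spec}(B/Cg_B(f(\theta )))$ by Lemma \ref{speccat}, (\ref{speccat1}); and since $\theta \subseteq f^*(\alpha _1)$, Lemma \ref{f(theta)} gives $f_{[\theta ]}^*(\phi _1)=f^*(\alpha _1)/\theta =\alpha /\theta =\phi $. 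Hence $f_{[\theta ]}$ fulfills LO.

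The implication (\ref{charlo2})$\Rightarrow$(\ref{charlo3}) is trivial (take $\theta ={\rm Ker}(f)$). For (\ref{charlo3})$\Rightarrow$(\ref{charlo1}) I would argue exactly as in the proof of Lemma \ref{f(theta)adm}: since $f({\rm Ker}(f))\subseteq \Delta _B$ one has $Cg_B(f({\rm Ker}(f)))=\Delta _B$, so $p_{\Delta _B}$ is an isomorphism and $f_{[{\rm Ker}(f)]}\circ p_{{\rm Ker}(f)}=p_{\Delta _B}\circ f$; now $p_{{\rm Ker}(f)}$ is surjective, hence admissible and, by Proposition \ref{surjgulo}, fulfills LO, so Lemma \ref{gulocomp}, (\ref{gulocomp2}), shows that $p_{\Delta _B}\circ f$ fulfills LO, and Remark \ref{compizom}, applied to the isomorphism $p_{\Delta _B}$, yields that $f$ fulfills LO.

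I do not expect a genuine obstacle here: the only point requiring care is the bookkeeping of which congruences lie above $\theta $, respectively above $Cg_B(f(\theta ))$, so that the quotient--spectrum correspondence of Lemma \ref{speccat}, (\ref{speccat1}), and the transfer Lemma \ref{f(theta)} apply where invoked; and, unlike several of the equivalences of Proposition \ref{fbeta}, this characterization costs nothing in terms of a finite--generation assumption on $\nabla _B$.
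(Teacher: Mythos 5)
Your proposal is correct and follows essentially the same route as the paper: the same use of Lemma \ref{f(theta)adm}, Lemma \ref{f(theta)} and Lemma \ref{speccat}, (\ref{speccat1}), for (\ref{charlo1})$\Rightarrow$(\ref{charlo2}), and the same factorization $f_{[{\rm Ker}(f)]}\circ p_{{\rm Ker}(f)}=p_{\Delta _B}\circ f$ with Lemma \ref{gulocomp}, (\ref{gulocomp2}), and Remark \ref{compizom} for (\ref{charlo3})$\Rightarrow$(\ref{charlo1}). The only cosmetic difference is that you close the cycle through (\ref{charlo3}) and omit the paper's (redundant) direct proof of (\ref{charlo2})$\Rightarrow$(\ref{charlo1}) via $\theta =\Delta _A$, which is perfectly fine.
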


\begin{proof} (\ref{charlo2})$\Rightarrow $(\ref{charlo1}): As in the proof of Proposition \ref{f(theta)gu}, take $\theta =\Delta _A$ and apply Remark \ref{compizom}.

\noindent (\ref{charlo1})$\Rightarrow $(\ref{charlo2}): Let $\theta \in {\rm Con}(A)$ and $\phi \in {\rm Spec}(A/\theta )$ such that ${\rm Ker}(f_{[\theta ]})\subseteq \phi $. By Lemma \ref{speccat}, (\ref{speccat1}), and Lemma \ref{f(theta)}, $\phi =\alpha /\theta $ for some $\alpha \in V_A(\theta )$, and $\alpha /\theta =\phi \supseteq {\rm Ker}(f_{[\theta ]})=f_{[\theta ]}^*(\Delta _{B/Cg_B(f(\theta ))})=f_{[\theta ]}^*(Cg_B(f(\theta ))/Cg_B(f(\theta )))=f^*(Cg_B(f(\theta )))/\theta $, so that $\alpha =f^*(Cg_B(f(\theta )))\supseteq {\rm Ker}(f)$ by Remark \ref{ker}. Since $f$ fulfills LO, it follows that $\alpha =f^*(\beta )$ for some $\beta \in {\rm Spec}(B)$, so that, by Lemmas \ref{f(theta)} and \ref{speccat}, (\ref{speccat1}), $\theta \subseteq \alpha =f^*(\beta )$, hence $Cg_B(f(\theta ))\subseteq \beta $, thus $\beta \in V_B(Cg_B(f(\theta )))$, hence $\beta /Cg_B(f(\theta ))\in {\rm Spec}(B/Cg_B(f(\theta )))$, and $\phi =\alpha /\theta =f^*(\beta )/\theta =f_{[\theta ]}^*(\beta /Cg_B(f(\theta )))$. Therefore $f_{[\theta ]}$ fulfills LO.

\noindent (\ref{charlo2})$\Rightarrow $(\ref{charlo3}): Trivial.

\noindent (\ref{charlo3})$\Rightarrow $(\ref{charlo1}): By the proof of Lemma \ref{f(theta)adm}, $f_{[{\rm Ker}(f)]}\circ p_{{\rm Ker}(f)}=p_{\Delta _B}\circ f$, with $p_{\Delta _B}$ an isomorphism. Since $f_{[{\rm Ker}(f)]}$ fulfills LO and $p_{{\rm Ker}(f)}$ surjective and thus admissible according to Lemma \ref{surjadm}, by Lemma \ref{gulocomp}, (\ref{gulocomp2}), it follows that $f_{[{\rm Ker}(f)]}\circ p_{{\rm Ker}(f)}$ fulfills LO, thus $f$ fulfills LO by Remark \ref{compizom}.\end{proof}

\begin{corollary} Assume that $f$ fulfills LO and $\nabla _B$ is finitely generated. Then:\begin{enumerate}
\item\label{logu1} if $\{Cg_B(f(\phi ))\ |\ \phi \in V_A({\rm Ker}(f))\}\supseteq {\rm Spec}(B)$, then $f$ fulfills GU;
\item\label{logu2} if $f$ is injective and $\{Cg_B(f(\phi ))\ |\ \phi \in {\rm Spec}(A)\}\supseteq {\rm Spec}(B)$, then $f$ fulfills GU.\end{enumerate}\label{logu}\end{corollary}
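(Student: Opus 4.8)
The plan is to prove part~(\ref{logu1}) directly from the definition of GU and then read off (\ref{logu2}) as a special case. The engine of the argument is a rigidity observation that I would isolate first: \emph{if $f$ fulfils LO and $\phi\in{\rm Spec}(A)$ satisfies ${\rm Ker}(f)\subseteq\phi$, then $f^*(Cg_B(f(\phi)))=\phi$.} To see this, choose (by LO) some $\psi\in{\rm Spec}(B)$ with $f^*(\psi)=\phi$; then $f(\phi)=f(f^*(\psi))=\psi\cap f(A^2)\subseteq\psi$, so $Cg_B(f(\phi))\subseteq\psi$, and therefore $\phi\subseteq f^*(f(\phi))\subseteq f^*(Cg_B(f(\phi)))\subseteq f^*(\psi)=\phi$. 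This is exactly the implication (\ref{lcharlo1})$\Rightarrow$(\ref{lcharlo2}) of Lemma~\ref{lcharlo}, and is also contained in Proposition~\ref{propcharlo}, (\ref{propcharlo2}). The consequence I want: if $\beta\in{\rm Spec}(B)$ happens to be of the form $Cg_B(f(\phi))$ with $\phi\in V_A({\rm Ker}(f))$, then necessarily $\phi=f^*(\beta)$, so such a $\phi$ is uniquely determined by $\beta$ and $\beta=Cg_B(f(f^*(\beta)))$.

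For (\ref{logu1}): take $\phi,\psi\in{\rm Spec}(A)$ and $\phi_1\in{\rm Spec}(B)$ with $\phi\subseteq\psi$ and $f^*(\phi_1)=\phi$. By Remark~\ref{ker}, ${\rm Ker}(f)=f^*(\Delta_B)\subseteq f^*(\phi_1)=\phi\subseteq\psi$, so both $\phi$ and $\psi$ lie in $V_A({\rm Ker}(f))$. By the hypothesis of (\ref{logu1}), $\phi_1=Cg_B(f(\phi'))$ for some $\phi'\in V_A({\rm Ker}(f))$; the observation above applied to $\phi'$ gives $\phi=f^*(\phi_1)=f^*(Cg_B(f(\phi')))=\phi'$, hence $\phi_1=Cg_B(f(\phi))$. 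Next, since $f$ fulfils LO and ${\rm Ker}(f)\subseteq\psi$, there is a $\psi_1\in{\rm Spec}(B)$ with $f^*(\psi_1)=\psi$; repeating the same identification for $\psi_1$ (again via the hypothesis and the observation) yields $\psi_1=Cg_B(f(\psi))$. Finally, from $\phi\subseteq\psi$ we get $f(\phi)\subseteq f(\psi)$, hence $\phi_1=Cg_B(f(\phi))\subseteq Cg_B(f(\psi))=\psi_1$, while $f^*(\psi_1)=\psi$. This is precisely GU.

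For (\ref{logu2}): when $f$ is injective, ${\rm Ker}(f)=f^*(\Delta_B)=\Delta_A$, so $V_A({\rm Ker}(f))={\rm Spec}(A)$ and LO for $f$ ranges over all of ${\rm Spec}(A)$; thus the hypothesis of (\ref{logu2}) is exactly that of (\ref{logu1}), and (\ref{logu2}) follows at once. The only mildly delicate point in the whole argument is the identification $\phi_1=Cg_B(f(\phi))$ (and likewise $\psi_1=Cg_B(f(\psi))$): once each prime involved is pinned down as the canonical congruence on the image of its contraction, GU drops out by monotonicity of $Cg_B(\cdot)$, so I do not anticipate any genuine obstacle — the assumption that $\nabla_B$ be finitely generated (and, if one prefers to cite Lemma~\ref{lcharlo} or Proposition~\ref{propcharlo} instead of the self-contained computation, that $f$ be admissible) enters only through those cited statements.
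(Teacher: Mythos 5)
Your proof is correct, but it takes a genuinely different route from the paper's. The paper deduces the corollary from its quotient machinery: by Proposition \ref{charlo}, LO for $f$ passes to every $f_{[\theta ]}$; by Remark \ref{fthetafbeta} (which is where the hypotheses that $\nabla _B$ be finitely generated and $f$ be admissible enter), $f_{[\theta ]}=f_{Cg_B(f(\theta ))}$ for $\theta \in V_A({\rm Ker}(f))$; the covering hypothesis then says every $\beta \in {\rm Spec}(B)$ arises this way, so every $f_{\beta }$ with $\beta \in {\rm Spec}(B)$ fulfills LO, and Proposition \ref{fbeta}, (\ref{fbeta1})$\Leftrightarrow $(\ref{fbeta7}), converts this into GU for $f$. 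You instead verify GU straight from Definition \ref{3.1}: your rigidity observation $f^*(Cg_B(f(\phi )))=\phi $ (the easy direction of Lemma \ref{lcharlo}, provable from LO alone) combined with the covering hypothesis pins down $\phi _1=Cg_B(f(\phi ))$ and the LO--witness $\psi _1$ as $Cg_B(f(\psi ))$, after which $\phi _1\subseteq \psi _1$ follows from monotonicity of direct images and of $Cg_B(\cdot )$. What your approach buys is that it is self--contained and, as you correctly note, uses neither the finite generation of $\nabla _B$ nor admissibility of $f$, so it establishes a marginally stronger statement; what the paper's approach buys is that the corollary falls out of characterizations (Propositions \ref{fbeta}, \ref{charlo}, Remark \ref{fthetafbeta}) already in place, with no new computation. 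Your treatment of part (\ref{logu2}) as the injective special case coincides with the paper's.
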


\begin{proof} (\ref{logu1}) By Proposition \ref{charlo}, $f_{[\theta ]}$ fulfills LO for all $\theta \in {\rm Con}(A)$, thus for all $\theta \in {\rm Spec}(A)$, hence for all $\theta \in V_A({\rm Ker}(f))$. By Remark \ref{fthetafbeta}, this means that, for all $\theta \in V_A({\rm Ker}(f))$, $f_{Cg_B(f(\theta ))}$ fulfills LO, hence, for all $\beta \in {\rm Spec}(B)$, $f_{\beta }$ fulfills LO, therefore $f$ fulfills GU by Proposition \ref{fbeta}.

\noindent (\ref{logu2}) By (\ref{logu1}) and the fact that, if $f$ is injective, then ${\rm Ker}(f)=\Delta _A$, so $V_A({\rm Ker}(f))={\rm Spec}(A)$.\end{proof}

\section{Admissibility, Going Up and Lying Over in Different Kinds of Congruence--modular Equational Classes}
\label{varieties}

In this section, we point out certain kinds of congruence--modular equational classes in which all morphisms are admissible, and others in which all admissible morphisms fulfill GU and LO. From these results we obtain some classes in which all morphisms are admissible and fulfill GU and LO. Throughout this section, $A,B$ shall be members of ${\cal C}$ and $f:A\rightarrow B$ shall be a morphism in ${\cal C}$.

\begin{lemma} If ${\cal C}$ is congruence--modular and $\theta \in {\rm Con}(A)$, then the following are equivalent:\begin{enumerate}
\item\label{1.5(1)} $\theta \in {\rm Spec}(A)$;
\item\label{1.5(2)} for all $a,b,c,d\in A$, $[Cg_A(a,b),Cg_A(c,d)]_A\subseteq \theta $ implies $Cg_A(a,b)\subseteq \theta $ or $Cg_A(c,d)\subseteq \theta $.\end{enumerate}\label{1.5}\end{lemma}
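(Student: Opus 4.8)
The plan is to establish the two implications separately; the forward direction is essentially a triviality, and the reverse direction is the one that carries the content, though it too is short.

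For \ref{1.5(1)}$\Rightarrow$\ref{1.5(2)}: if $\theta \in {\rm Spec}(A)$, then in particular $\theta$ is a proper congruence of $A$, and the implication asserted in \ref{1.5(2)} is just the instance of the defining property of a prime congruence (Definition \ref{1.4}) obtained by specializing $\alpha := Cg_A(a,b)$ and $\beta := Cg_A(c,d)$.

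For \ref{1.5(2)}$\Rightarrow$\ref{1.5(1)}: I would argue by contraposition, within the class of proper congruences. Suppose $\theta$ is a proper congruence of $A$ that is \emph{not} prime; then there are $\alpha,\beta \in {\rm Con}(A)$ with $[\alpha,\beta]_A \subseteq \theta$ but $\alpha \nsubseteq \theta$ and $\beta \nsubseteq \theta$. Pick $(a,b) \in \alpha \setminus \theta$ and $(c,d) \in \beta \setminus \theta$. Since $Cg_A(a,b) \subseteq \alpha$ and $Cg_A(c,d) \subseteq \beta$, the fact that the commutator is increasing in both arguments (Proposition \ref{1.3}) gives $[Cg_A(a,b),Cg_A(c,d)]_A \subseteq [\alpha,\beta]_A \subseteq \theta$. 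On the other hand $(a,b) \in Cg_A(a,b)$ and $(a,b) \notin \theta$, so $Cg_A(a,b) \nsubseteq \theta$, and likewise $Cg_A(c,d) \nsubseteq \theta$; hence \ref{1.5(2)} fails for the quadruple $a,b,c,d$. This establishes the contrapositive.

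The only point worth flagging is the implicit quantifier over proper congruences: condition \ref{1.5(2)} by itself holds trivially when $\theta = \nabla_A$, since every principal congruence is contained in $\nabla_A$, so the equivalence is to be read for proper $\theta$ — which is exactly what \ref{1.5(1)} already builds in via Definition \ref{1.4}. Beyond that there is no real obstacle; the single idea needed is to replace the arbitrary congruences $\alpha,\beta$ witnessing non-primeness by the principal subcongruences generated by individual pairs lying outside $\theta$, and then to invoke monotonicity of $[\cdot,\cdot]_A$.
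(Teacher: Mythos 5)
Your proof is correct and follows essentially the same route as the paper's: the forward direction is the definition specialized to principal congruences, and the reverse direction replaces the witnesses $\alpha,\beta$ of non--primeness by $Cg_A(a,b)$ and $Cg_A(c,d)$ for pairs outside $\theta$ and invokes monotonicity of the commutator (the paper phrases it as a reductio rather than a contrapositive, which is immaterial). Your remark about the implicit restriction to proper $\theta$ is apt — the paper's proof also tacitly assumes properness — but it does not change the substance of the argument.
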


\begin{proof} (\ref{1.5(1)})$\Rightarrow $(\ref{1.5(2)}): Trivial.

\noindent (\ref{1.5(2)})$\Rightarrow $(\ref{1.5(1)}): Assume by absurdum that (\ref{1.5(2)}) is satisfied, but there exist $\alpha ,\beta \in {\rm Con}(A)$ that $[\alpha ,\beta ]\subseteq \theta $, $\alpha \nsubseteq \theta $ and $\beta \nsubseteq \theta $. Then there exist $(a,b)\in \alpha $ and $(c,d)\in \beta $ with $(a,b)\notin \theta $ and $(c,d)\notin \theta $, so that $Cg_A(a,b)\subseteq \alpha $, $Cg_A(c,d)\subseteq \beta $, $Cg_A(a,b)\nsubseteq \theta $ and $Cg_A(c,d)\nsubseteq \theta $, which imply $[Cg_A(a,b),Cg_A(c,d)]_A\subseteq [\alpha ,\beta ]_A\subseteq \theta $ by Proposition \ref{1.3}, and $[Cg_A(a,b),Cg_A(c,d)]_A\nsubseteq \theta $ by the hypothesis of this implication; we have a contradiction. So $\theta \in {\rm Spec}(A)$.\end{proof}

Let $I$ be a non--empty set and, for each $i\in I$, let $p_i$ and $q_i$ be terms of arity $4$ from ${\bf L}_{\tau }$. Following \cite[Section 2]{aglbak}, we call $\{(p_i,q_i)\ |\ i\in I\}$ a {\em system of congruence intersection terms without parameters} for ${\cal C}$ iff, for any member $M$ of ${\cal C}$ and all $a,b,c,d\in M$, $\displaystyle Cg_M(a,b)\cap Cg_M(c,d)=\bigvee _{i\in I}Cg_M(p_i^M(a,b,c,d),q_i^M(a,b,c,d))$.

\begin{theorem}{\rm \cite[Theorem 2.4]{aglbak}} If ${\cal C}$ has a system of congruence intersection terms without parameters, then ${\cal C}$ is congruence--distributive.\label{syscondistrib}\end{theorem}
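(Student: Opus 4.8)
The plan is to reduce everything to producing J\'onsson terms for ${\cal C}$ and then to invoke J\'onsson's theorem \cite{bj}. As a warm-up I would extract two identities from the hypothesis: specializing $Cg_M(a,b)\cap Cg_M(c,d)=\bigvee _{i\in I}Cg_M(p_i(a,b,c,d),q_i(a,b,c,d))$ at $c=d$ gives $\bigvee _iCg_M(p_i(a,b,c,c),q_i(a,b,c,c))=Cg_M(a,b)\cap \Delta _M=\Delta _M$, so each summand is $\Delta _M$ and hence ${\cal C}$ satisfies $p_i(x,y,z,z)\approx q_i(x,y,z,z)$; likewise, taking $a=b$, ${\cal C}$ satisfies $p_i(x,x,z,w)\approx q_i(x,x,z,w)$. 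I would also record the two features of the hypothesis that make the argument possible: in every member of ${\cal C}$ the intersection of two principal congruences is a \emph{join of principal congruences}, and the generating pairs of those principal congruences are produced by \emph{fixed terms without parameters}, so that their formation commutes with homomorphisms --- in particular it behaves well under endomorphisms of free algebras.

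For the reduction, recall that a lattice is distributive once $\alpha \cap (\beta \vee \gamma )\subseteq (\alpha \cap \beta )\vee (\alpha \cap \gamma )$ holds for all its elements, and that $(a,b)\in \alpha $ forces $Cg_M(a,b)\subseteq \alpha $; combining these, it suffices to prove, for every $M\in {\cal C}$, all $s,t\in M$ and all $\beta ,\gamma \in {\rm Con}(M)$: if $(s,t)\in \beta \vee \gamma $, then $(s,t)\in (Cg_M(s,t)\cap \beta )\vee (Cg_M(s,t)\cap \gamma )$. Moreover, for the purpose of obtaining J\'onsson terms it is enough to establish this in the single instance $M=F_{{\cal C}}(x,y,z)$ (the free algebra on $x,y,z$), $s=x$, $t=z$, $\beta =Cg_F(x,y)$, $\gamma =Cg_F(y,z)$, where $(x,z)\in \beta \vee \gamma $ holds through the chain $x\mathrel{\beta }y\mathrel{\gamma }z$. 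Here the congruence intersection terms already supply the two families of ``link'' pairs $(p_i(x,z,x,y),q_i(x,z,x,y))\in Cg_F(x,z)\cap Cg_F(x,y)$ and $(p_i(x,z,y,z),q_i(x,z,y,z))\in Cg_F(x,z)\cap Cg_F(y,z)$, together with $(x,z)\in Cg_F(x,z)=\bigvee _iCg_F(p_i(x,z,x,z),q_i(x,z,x,z))$; so the whole matter comes down to showing that each ``endpoint'' pair $(p_i(x,z,x,z),q_i(x,z,x,z))$ is joined by a chain running through the link pairs and the equalities coming from the two identities above. Once this is done, a witnessing chain $x=w_0,\dots ,w_m=z$ has $w_k=d_k(x,y,z)$ for ternary terms $d_k$, and applying the endomorphisms of $F_{{\cal C}}(x,y,z)$ that collapse $Cg(x,z)$, $Cg(x,y)$, $Cg(y,z)$ respectively yields $d_0\approx x$, $d_m\approx z$, $d_k(x,y,x)\approx x$, $d_k(x,x,z)\approx d_{k+1}(x,x,z)$ for even $k$, and $d_k(x,z,z)\approx d_{k+1}(x,z,z)$ for odd $k$ (inserting a dummy term should the parity need adjusting); these are J\'onsson terms, so ${\cal C}$ is congruence distributive by \cite{bj}.

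The step I expect to be the real obstacle is precisely that last reduction: connecting the endpoint pair $(p_i(x,z,x,z),q_i(x,z,x,z))$ to the link pairs inside $F_{{\cal C}}(x,y,z)$. The difficulty is structural, not computational. The hypothesis only controls intersections of \emph{principal} congruences, whereas the relevant join $Cg(x,y)\vee Cg(y,z)$ is not principal; and the naive manoeuvres --- substituting chain links into one argument slot of a $p_i$, or ``walking'' a slot along the path $x,y,z$ --- produce pairs lying in $Cg(x,y)$ or in $Cg(y,z)$ but not necessarily in $Cg(x,z)$, hence falling outside the intersections $Cg(x,z)\cap Cg(x,y)$ and $Cg(x,z)\cap Cg(y,z)$ through which the chain must pass. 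Getting past this is exactly where the hypothesis that the terms carry no parameters has to enter in an essential way, and it is the technical heart of \cite[Theorem~2.4]{aglbak}; a complete write-up would either carry out that argument or, as here, cite it.
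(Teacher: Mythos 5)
The paper offers no proof of this statement---it is imported verbatim from \cite[Theorem 2.4]{aglbak}---so the only question is whether your argument stands on its own, and it does not: you have correctly reduced the theorem to a single claim and then declined to prove that claim. Everything up to that point is sound and standard. The specialization identities $p_i(x,y,z,z)\approx q_i(x,y,z,z)$ and $p_i(x,x,z,w)\approx q_i(x,x,z,w)$ do follow (a join of congruences equal to $\Delta _M$ forces each joinand to equal $\Delta _M$); the reduction of distributivity of ${\rm Con}(M)$ to the single inclusion for principal $\alpha $ is correct; and the further reduction to showing $(x,z)\in (Cg_F(x,z)\cap Cg_F(x,y))\vee (Cg_F(x,z)\cap Cg_F(y,z))$ in $F=F_{{\cal C}}(x,y,z)$, followed by the extraction of J\'onsson terms from a witnessing chain and the appeal to \cite{bj}, is exactly the classical route to congruence distributivity.

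The problem is that this last membership \emph{is} the J\'onsson-term criterion; it carries essentially the full strength of the theorem. Your hypothesis hands you generating pairs for $Cg_F(x,z)\cap Cg_F(x,y)$, for $Cg_F(x,z)\cap Cg_F(y,z)$, and for $Cg_F(x,z)$ itself, but---as you yourself observe---nothing in your set-up connects the last family to the first two: the intersection terms only control meets of two \emph{principal} congruences, whereas the congruence you must intersect $Cg_F(x,z)$ with, namely $Cg_F(x,y)\vee Cg_F(y,z)$, is a join, and distributing a principal congruence over that join is precisely the statement being proved. Acknowledging the gap and proposing to cite \cite[Theorem 2.4]{aglbak} for the missing step makes the argument circular as a proof of that same theorem. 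So what you have is an honest and correctly organized reduction, not a proof; to count as one it would have to actually carry out the combinatorial argument linking the endpoint pairs to the link pairs---this is where the absence of parameters, i.e., the compatibility of the generating pairs with all homomorphisms and substitutions, must be exploited---and that argument is the entire content of the cited result.
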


\begin{proposition} If ${\cal C}$ has a system of congruence intersection terms without parameters, then any morphism in ${\cal C}$ is admissible.\label{syscontpar}\end{proposition}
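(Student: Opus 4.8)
The plan is to reduce admissibility of an arbitrary morphism $f\colon A\to B$ to the principal--congruence test for primeness in Lemma~\ref{1.5}, and then to transport the relevant principal congruences across $f$ by means of the system of congruence intersection terms. First I would record the ambient facts: by Theorem~\ref{syscondistrib} the class ${\cal C}$ is congruence--distributive, so by Theorem~\ref{distrib} the commutator equals the intersection of congruences in every member of ${\cal C}$; in particular, for $M\in {\cal C}$ a proper congruence $\psi $ is prime (Definition~\ref{1.4}) precisely when $\alpha \cap \beta \subseteq \psi $ forces $\alpha \subseteq \psi $ or $\beta \subseteq \psi $. Now fix $f\colon A\to B$ and $\psi \in {\rm Spec}(B)$; since $f^*(\psi )\in {\rm Con}(A)$, Lemma~\ref{1.5} tells us that to prove $f^*(\psi )\in {\rm Spec}(A)$ it suffices to check that, for all $a,b,c,d\in A$, $[Cg_A(a,b),Cg_A(c,d)]_A=Cg_A(a,b)\cap Cg_A(c,d)\subseteq f^*(\psi )$ implies $Cg_A(a,b)\subseteq f^*(\psi )$ or $Cg_A(c,d)\subseteq f^*(\psi )$.

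The heart of the argument is an equivalence that pushes this inclusion from ${\rm Con}(A)$ to ${\rm Con}(B)$. Let $\{(p_i,q_i)\mid i\in I\}$ be a system of congruence intersection terms without parameters for ${\cal C}$, fix $a,b,c,d\in A$, and put $u_i=p_i^A(a,b,c,d)$ and $v_i=q_i^A(a,b,c,d)$; since $f$ is a morphism, $f(u_i)=p_i^B(f(a),f(b),f(c),f(d))$ and $f(v_i)=q_i^B(f(a),f(b),f(c),f(d))$ for each $i\in I$. Using that a join of congruences lies below a given congruence iff every joinand does, that $Cg_M(x,y)\subseteq \theta $ iff $(x,y)\in \theta $, and the definition of $f^*$, I would then chain:
\begin{align*}
Cg_A(a,b)\cap Cg_A(c,d)\subseteq f^*(\psi ) & \Leftrightarrow (\forall \, i\in I)\ (u_i,v_i)\in f^*(\psi )\\
& \Leftrightarrow (\forall \, i\in I)\ (f(u_i),f(v_i))\in \psi \\
& \Leftrightarrow Cg_B(f(a),f(b))\cap Cg_B(f(c),f(d))\subseteq \psi ,
\end{align*}
where the first equivalence is the defining property of the system applied in $A$, the middle one is the identities $f(u_i)=p_i^B(f(a),f(b),f(c),f(d))$, $f(v_i)=q_i^B(f(a),f(b),f(c),f(d))$ combined with the definition of $f^*$, and the last is the defining property of the system applied in $B$.

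To conclude, suppose $Cg_A(a,b)\cap Cg_A(c,d)\subseteq f^*(\psi )$. By the displayed equivalences, $Cg_B(f(a),f(b))\cap Cg_B(f(c),f(d))\subseteq \psi $, and since $\psi \in {\rm Spec}(B)$ and the commutator in $B$ is the intersection, this yields $Cg_B(f(a),f(b))\subseteq \psi $ or $Cg_B(f(c),f(d))\subseteq \psi $; that is, $(f(a),f(b))\in \psi $ or $(f(c),f(d))\in \psi $, i.e.\ $(a,b)\in f^*(\psi )$ or $(c,d)\in f^*(\psi )$, i.e.\ $Cg_A(a,b)\subseteq f^*(\psi )$ or $Cg_A(c,d)\subseteq f^*(\psi )$. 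By Lemma~\ref{1.5} we get $f^*(\psi )\in {\rm Spec}(A)$, hence $f$ is admissible. I do not expect a genuine obstacle here: the only points to watch are the bookkeeping in the equivalence chain --- keeping straight at which tuple the terms $p_i,q_i$ are evaluated in $A$ versus in $B$, and invoking the compatibility of $f$ with derived operations at exactly the right step --- together with the fact that $f^*(\psi )$ is a \emph{proper} congruence, which is subsumed by the statement of Lemma~\ref{1.5} being quoted.
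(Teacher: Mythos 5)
Your proposal is correct and follows essentially the same route as the paper's proof: reduce primeness of $f^*(\psi )$ to the principal--congruence criterion of Lemma \ref{1.5}, use Theorems \ref{syscondistrib} and \ref{distrib} to replace the commutator by intersection, and transport $Cg_A(a,b)\cap Cg_A(c,d)\subseteq f^*(\psi )$ to $Cg_B(f(a),f(b))\cap Cg_B(f(c),f(d))\subseteq \psi $ via the terms $p_i,q_i$ and the compatibility of $f$ with derived operations. The only cosmetic difference is that you package the transport as a two-way equivalence where the paper only chains the forward implications, and both treatments lean on Lemma \ref{1.5} in the same (implicitly properness-assuming) way.
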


\begin{proof} Let $\{(p_i,q_i)\ |\ i\in I\}$ be a system of congruence intersection terms without parameters for ${\cal C}$. Let $\psi \in {\rm Spec}(B)$ and $a,b,c,d\in A$ such that $[Cg_A(a,b),Cg_A(c,d)]_A\subseteq f^*(\psi )$. By Theorems \ref{syscondistrib} and \ref{distrib}, this means that $Cg_A(a,b)\cap Cg_A(c,d)\subseteq f^*(\psi )$, thus $\displaystyle \bigvee _{i\in I}Cg_A(p_i^A(a,b,c,d),q_i^A(a,b,c,d))\subseteq f^*(\psi )$, so that, for all $i\in I$, $Cg_A(p_i^A(a,b,c,d),q_i^A(a,b,c,d))\subseteq f^*(\psi )$, that is, for all $i\in I$, $(p_i^A(a,b,c,d),q_i^A(a,b,c,d))\in f^*(\psi )$, thus, for all $i\in I$, $(p_i^B(f(a),f(b),f(c),f(d)),q_i^B(f(a),f(b),f(c),f(d))))=(f(p_i^A(a,b,c,d)),f(q_i^A(a,b,c,d)))\in \psi $, hence, for all $i\in I$, $Cg_B(p_i^B(f(a),f(b),f(c),f(d)),q_i^B(f(a),f(b),f(c),f(d))))=(f(p_i^A(a,b,c,d)),f(q_i^A(a,b,c,d)))\subseteq \psi $, hence $\displaystyle \bigvee _{i\in I}Cg_B(p_i^B(f(a),f(b),f(c),f(d)),q_i^B(f(a),f(b),f(c),f(d))))=(f(p_i^A(a,b,c,d)),f(q_i^A(a,b,c,d)))\subseteq \psi $, therefore $[Cg_B(f(a),f(b)),Cg_B(f(c),f(d))]_B=Cg_B(f(a),f(b))\cap Cg_B(f(c),f(d))\subseteq \psi $, hence $Cg_B(f(a),f(b))\subseteq \psi $ or $Cg_B(f(c),f(d))\subseteq \psi $, that is $(f(a),f(b))\in \psi $ or $(f(c),f(d))\in \psi $, so $(a,b)\in f^*(\psi )$ or $(c,d)\in f^*(\psi )$, thus $Cg_A(a,b)\subseteq f^*(\psi )$ or $Cg_A(c,d)\subseteq f^*(\psi )$. By Lemma \ref{1.5}, it follows that $f^*(\psi )\in {\rm Spec}(A)$, thus $f$ is admissible.\end{proof}

We recall that the compact elements of the lattice ${\rm Con}(A)$ are exactly the finitely generated congruences of $A$. We shall denote by ${\rm Con}_w(A)$ the set of the finitely generated congruences of $A$. Clearly, $({\rm Con}_w(A),\vee ,\Delta _A)$ is a lower bounded join--semilattice.

We say that ${\cal C}$ has {\em compact intersection property} (abbreviated {\em CIP}) iff, for any algebra $M$ from ${\cal C}$, the intersection of every two compact congruences of $M$ is a compact congruence of $M$. We say that ${\cal C}$ has {\em principal intersection property} (abbreviated {\em PIP}) iff, for any algebra $M$ from ${\cal C}$, the intersection of every two principal congruences of $M$ is a principal congruence of $M$.

\begin{remark} If ${\cal C}$ is congruence--distributive and has the PIP, then ${\cal C}$ has the CIP, because, if $A$ is congruence--distributive, then, for any $n,k\in \N ^*$ and any $a_1,\ldots ,a_n,b_1,\ldots ,b_n,c_1,\ldots ,c_k,d_1,\ldots ,d_k\in A$, $Cg_A(\{(a_1,b_1),\ldots ,$\linebreak $\displaystyle (a_n,b_n)\}\cap Cg_A(\{(c_1,d_1),\ldots ,(c_k,d_k)\})=(\bigvee _{i=1}^nCg_A(a_i,b_i))\cap (\bigvee _{j=1}^kCg_A(c_j,d_j))=\bigvee _{i=1}^n\bigvee _{j=1}^k(Cg_A(a_i,b_i))\cap Cg_A(c_j,d_j))$. See also \cite[p. 109]{aglbak}.\label{pipcip}\end{remark}

\begin{proposition}{\rm \cite{aglbak}} Any congruence--distributive equational class with CIP has a system of congruence intersection terms without parameters.\label{syscip}\end{proposition}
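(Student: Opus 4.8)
The plan is to read the terms off from the free ${\cal C}$--algebra on four generators and then transport the resulting identity to all members of ${\cal C}$. Put $F=F_{{\cal C}}(x,y,z,w)$, the free algebra on four free generators. By hypothesis ${\cal C}$ is congruence--distributive and has the CIP, so $\alpha :=Cg_F(x,y)\cap Cg_F(z,w)$ is a compact congruence of $F$ and therefore $\alpha =\bigvee _{j=1}^nCg_F(u_j,v_j)$ for finitely many pairs $(u_j,v_j)\in F^2$; since $F$ is free, $u_j=p_j^F(x,y,z,w)$ and $v_j=q_j^F(x,y,z,w)$ for suitable terms $p_j,q_j$ of arity $4$. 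I claim that $\{(p_j,q_j)\ |\ 1\le j\le n\}$ is a system of congruence intersection terms without parameters. One inclusion of the defining identity is immediate and uses neither hypothesis: given a member $M$ of ${\cal C}$ and $a,b,c,d\in M$, let $e\colon F\to M$ be the morphism with $e(x)=a$, $e(y)=b$, $e(z)=c$ and $e(w)=d$; then $e^*(Cg_M(a,b))$ is a congruence of $F$ containing $(x,y)$, hence containing $Cg_F(x,y)$, hence containing every $(u_j,v_j)$, so $(p_j^M(a,b,c,d),q_j^M(a,b,c,d))=(e(u_j),e(v_j))\in Cg_M(a,b)$, and symmetrically this pair lies in $Cg_M(c,d)$; joining over $j$ gives $\bigvee _{j=1}^nCg_M(p_j^M(a,b,c,d),q_j^M(a,b,c,d))\subseteq Cg_M(a,b)\cap Cg_M(c,d)$.

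The content is in the reverse inclusion, and the crucial intermediate step is to prove the same identity when the ambient algebra is itself free: namely, that for every free algebra $G=F_{{\cal C}}(Z)$ with four distinguished free generators $z_1,z_2,z_3,z_4\in Z$ one has
\[Cg_G(z_1,z_2)\cap Cg_G(z_3,z_4)=\bigvee _{j=1}^nCg_G(p_j^G(z_1,z_2,z_3,z_4),q_j^G(z_1,z_2,z_3,z_4)).\]
Granting this free--algebra identity, the general case follows quickly: for $M$ in ${\cal C}$ and $a,b,c,d\in M$ choose a surjective morphism $h\colon G\to M$ from a free algebra $G$ with $h(z_1)=a$, $h(z_2)=b$, $h(z_3)=c$ and $h(z_4)=d$. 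Since $h$ is surjective, $\gamma \mapsto \gamma /{\rm Ker}(h)$ is a bounded--lattice isomorphism of $[{\rm Ker}(h))$ onto ${\rm Con}(M)$ which sends $Cg_G(z_1,z_2)\vee {\rm Ker}(h)$ to $Cg_M(a,b)$ (by Lemma \ref{lo1} together with the fact that images of congruences under surjections are congruences), and similarly for the other pairs; joining both sides of the free--algebra identity with ${\rm Ker}(h)$, using distributivity of ${\rm Con}(G)$ to rewrite $(Cg_G(z_1,z_2)\cap Cg_G(z_3,z_4))\vee {\rm Ker}(h)$ as $(Cg_G(z_1,z_2)\vee {\rm Ker}(h))\cap (Cg_G(z_3,z_4)\vee {\rm Ker}(h))$, and carrying the equation through the isomorphism yields exactly $Cg_M(a,b)\cap Cg_M(c,d)=\bigvee _{j=1}^nCg_M(p_j^M(a,b,c,d),q_j^M(a,b,c,d))$.

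To prove the free--algebra identity, first observe that both of its sides are directed unions of the corresponding congruences computed inside the finitely generated free subalgebras $F_{{\cal C}}(S)$ with $\{z_1,z_2,z_3,z_4\}\subseteq S\subseteq Z$ finite, because principal congruences and finite joins of principal congruences are finitary; hence it suffices to treat $G=F_{{\cal C}}(z_1,z_2,z_3,z_4,w_1,\dots ,w_k)$, which I do by induction on $k$. For $k=0$ the identity is precisely the defining property of $\alpha $ and of the $(u_j,v_j)$. For the inductive step, let $r\colon G^{\prime }=F_{{\cal C}}(z_1,\dots ,z_4,w_1,\dots ,w_{k+1})\to G$ be the retraction collapsing $w_{k+1}$ onto $z_1$: it is surjective, ${\rm Ker}(r)=Cg_{G^{\prime }}(w_{k+1},z_1)$, and it fixes $z_1,\dots ,z_4$ and hence the evaluations of the terms $p_j,q_j$ at them. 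Running the argument of the previous paragraph with $r$ in place of $h$ and the induction hypothesis in place of the free--algebra identity in $G$ gives $(Cg_{G^{\prime }}(z_1,z_2)\cap Cg_{G^{\prime }}(z_3,z_4))\vee {\rm Ker}(r)=\Gamma ^{\prime }\vee {\rm Ker}(r)$ with $\Gamma ^{\prime }:=\bigvee _{j=1}^nCg_{G^{\prime }}(p_j^{G^{\prime }}(z_1,z_2,z_3,z_4),q_j^{G^{\prime }}(z_1,z_2,z_3,z_4))$; since the easy inclusion, applied in $G^{\prime }$, gives $\Gamma ^{\prime }\subseteq Cg_{G^{\prime }}(z_1,z_2)\cap Cg_{G^{\prime }}(z_3,z_4)$, distributivity of ${\rm Con}(G^{\prime })$ reduces the remaining claim to the single inclusion
\[Cg_{G^{\prime }}(z_1,z_2)\cap Cg_{G^{\prime }}(z_3,z_4)\cap Cg_{G^{\prime }}(w_{k+1},z_1)\subseteq \Gamma ^{\prime },\]
whose left--hand side is a compact congruence of $G^{\prime }$ by the CIP.

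This last inclusion --- equivalently, the assertion that the ``slack'' picked up by joining with ${\rm Ker}(r)$ is already absorbed into $\Gamma ^{\prime }$ --- is the step I expect to be the main obstacle; it is settled by one more appeal to the already established cases of the free--algebra identity after a relabelling of free generators, and compactness of the left--hand side (hence the CIP) enters there in an essential way. The conceptual reason one cannot bypass all of this by simply working inside the subalgebra $\langle a,b,c,d\rangle $ of $M$ is that for a subalgebra $K\le M$ one only has $Cg_K(a,b)\subseteq Cg_M(a,b)\cap K^2$, possibly strictly; passing to a surjective image of a free algebra is exactly what makes principal congruences behave predictably, and congruence--distributivity is what lets the intersection be pushed through the joins with kernels.
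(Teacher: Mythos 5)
This proposition is not proved in the paper at all --- it is quoted from \cite{aglbak} --- so there is no internal proof to compare yours against; what matters is whether your argument stands on its own. Most of your scaffolding is sound: extracting the terms $p_j,q_j$ from the compact congruence $Cg_F(x,y)\cap Cg_F(z,w)$ of the four--generated free algebra, the easy inclusion, the transfer of the free--algebra identity to arbitrary quotients via Lemma \ref{lo1} and congruence--distributivity, the reduction to finitely generated free algebras, and the reduction of the inductive step (via modularity and the easy inclusion) to the single inclusion $Cg_{G^{\prime }}(z_1,z_2)\cap Cg_{G^{\prime }}(z_3,z_4)\cap Cg_{G^{\prime }}(w_{k+1},z_1)\subseteq \Gamma ^{\prime }$ are all correct.

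But that last inclusion carries the entire content of the theorem at this point, and you do not prove it. The assertion that it is ``settled by one more appeal to the already established cases of the free--algebra identity after a relabelling of free generators'' fails for two concrete reasons. First, the pairs $(z_1,z_2)$, $(z_3,z_4)$, $(w_{k+1},z_1)$ are not four pairwise distinct free generators two at a time: an intersection such as $Cg_{G^{\prime }}(z_1,z_2)\cap Cg_{G^{\prime }}(w_{k+1},z_1)$ involves only three distinct generators, so it is not an instance of your identity --- evaluating the terms $p_j,q_j$ at a non--injective assignment of the free generators is precisely the general evaluation problem the theorem is about, and invoking the identity there is circular. Second, the one sub--intersection that does involve four distinct generators, $Cg_{G^{\prime }}(z_3,z_4)\cap Cg_{G^{\prime }}(w_{k+1},z_1)$, lives in $G^{\prime }$ itself, the free algebra on $k+5$ generators; by relabelling it is equivalent to the very case the current induction step is trying to establish, so it is not an ``already established case.'' And even granting it, you would obtain a join of principal congruences generated by terms evaluated at $z_3,z_4,w_{k+1},z_1$, with no argument for why intersecting further with $Cg_{G^{\prime }}(z_1,z_2)$ lands inside $\Gamma ^{\prime }$, whose generators are specific terms in $z_1,\ldots ,z_4$ alone. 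The induction therefore does not close; the step you yourself flag as ``the main obstacle'' is a genuine gap, and it needs a different idea (this is where the actual argument of \cite{aglbak} does its work), not a relabelling.
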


\begin{corollary} Any congruence--distributive equational class with PIP has a system of congruence intersection terms without parameters.\label{syspip}\end{corollary}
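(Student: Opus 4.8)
The plan is to chain together the two results that immediately precede the statement, since the corollary is a direct consequence of them. Specifically, I would argue as follows: suppose ${\cal C}$ is a congruence--distributive equational class with the PIP. By Remark \ref{pipcip}, any congruence--distributive equational class with the PIP also has the CIP, so ${\cal C}$ has the CIP. Then Proposition \ref{syscip} applies directly: any congruence--distributive equational class with the CIP has a system of congruence intersection terms without parameters. Hence ${\cal C}$ has such a system.

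The only thing worth double-checking is that the hypotheses line up exactly: Remark \ref{pipcip} takes ``congruence--distributive'' plus ``PIP'' and yields ``CIP'', while Proposition \ref{syscip} takes ``congruence--distributive equational class'' plus ``CIP'' and yields the desired system of terms. Since congruence--distributivity is retained throughout, the composition is seamless and no further work is needed.

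There is essentially no obstacle here; the corollary is purely a bookkeeping combination of Remark \ref{pipcip} and Proposition \ref{syscip}, both of which are available from the excerpt. A one--sentence proof suffices, citing those two results in that order.

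\begin{proof} Let ${\cal C}$ be a congruence--distributive equational class with the PIP. By Remark \ref{pipcip}, ${\cal C}$ has the CIP. Hence, by Proposition \ref{syscip}, ${\cal C}$ has a system of congruence intersection terms without parameters.\end{proof}
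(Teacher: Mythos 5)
Your proof is correct and matches the paper's intended argument exactly: the corollary is stated immediately after Remark \ref{pipcip} and Proposition \ref{syscip} precisely so that it follows by chaining them, as you do. Nothing further is needed.
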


\begin{corollary}\begin{enumerate}
\item\label{cippip1} If ${\cal C}$ is congruence--distributive and has the CIP, then every morphism in ${\cal C}$ is admissible. 
\item\label{cippip2} If ${\cal C}$ is congruence--distributive and has the PIP, then every morphism in ${\cal C}$ is admissible.\end{enumerate}\label{cippip}\end{corollary}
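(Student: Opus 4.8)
The plan is to read this off directly from the three results immediately preceding it, with essentially no new work. The key chain is: Proposition \ref{syscip} (congruence--distributive $+$ CIP $\Rightarrow$ a system of congruence intersection terms without parameters), Corollary \ref{syspip} (congruence--distributive $+$ PIP $\Rightarrow$ the same), and Proposition \ref{syscontpar} (a system of congruence intersection terms without parameters $\Rightarrow$ every morphism admissible). So the proof is just a composition of implications.

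For part (\ref{cippip1}), I would argue: since ${\cal C}$ is congruence--distributive and has the CIP, Proposition \ref{syscip} gives a system of congruence intersection terms without parameters for ${\cal C}$; then Proposition \ref{syscontpar} yields that every morphism in ${\cal C}$ is admissible. For part (\ref{cippip2}), I would give two equivalent routes and pick whichever reads most cleanly: either invoke Corollary \ref{syspip} in place of Proposition \ref{syscip} and conclude again by Proposition \ref{syscontpar}, or simply observe that by Remark \ref{pipcip} a congruence--distributive equational class with the PIP also has the CIP, and hence part (\ref{cippip2}) is a special case of part (\ref{cippip1}). I expect to use the first (more symmetric) phrasing, mentioning the Remark \ref{pipcip} reduction only as an aside.

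There is no real obstacle here; the statement is a bookkeeping corollary, and the only thing to be careful about is citing exactly the right earlier items (Propositions \ref{syscip}, \ref{syscontpar}, Corollary \ref{syspip}, Remark \ref{pipcip}) rather than re-deriving anything. Concretely, the proof I would write is: ``(\ref{cippip1}) By Propositions \ref{syscip} and \ref{syscontpar}. (\ref{cippip2}) By Corollary \ref{syspip} and Proposition \ref{syscontpar}; alternatively, by Remark \ref{pipcip} and part (\ref{cippip1}).''
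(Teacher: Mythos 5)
Your proposal is correct and matches the paper's intent exactly: the paper states this corollary without proof, as an immediate consequence of Propositions \ref{syscip} and \ref{syscontpar} (and Corollary \ref{syspip} for the PIP case), which is precisely the chain of citations you give.
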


Following \cite[Chapter $4$]{bj} and \cite[Chapter IV, Section $9$]{bur}, we call ${\cal C}$ a {\em discriminator variety} iff there exists a ternary term $t$ from ${\bf L}_{\tau }$ such that, for every subdirectly irreducible algebra $M$ in ${\cal C}$ and all $a,b,c\in M$: $t^M(a,b,c)=\begin{cases}a, & a\neq b,\\ c, & a=b.\end{cases}$

\begin{lemma}\begin{itemize}
\item {\rm \cite[Theorem 9.4, p. 166]{bur}} If ${\cal C}$ is a discriminator variety, then ${\cal C}$ is congruence--distributive, and there exists a term $s$ of arity $4$ in ${\bf L}_{\tau }$ such that, for any member $M$ of ${\cal C}$ and any $a,b,c,d\in M$, $Cg_M(a,b)\cap Cg_M(c,d)=Cg_M(s^M(a,b,c,d),c)$.
\item {\rm \cite[Corollary 2.7]{aglbak}} If ${\cal C}$ is congruence--distributive, then: ${\cal C}$ has the PIP iff there exist terms ${\cal P}$ and $q$ of arity $4$ from ${\bf L}_{\tau }$ such that, for any algebra $M$ from ${\cal C}$ and any $a,b,c,d\in M$, $Cg_M(a,b)\cap Cg_M(c,d)=Cg_M(p^M(a,b,c,d),q^M(a,b,c,d))$.\end{itemize}\label{termpip}\end{lemma}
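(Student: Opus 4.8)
Both items are quoted from the literature --- the first from Burris and Sankappanavar, the second from \cite{aglbak} --- so the simplest plan is just to cite them; nonetheless, here is how I would argue each directly.

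For the first item, I would start from the ternary discriminator term $t$ and verify, on the subdirectly irreducible members of ${\cal C}$, the three identities $t(x,x,z)=z$, $t(x,z,z)=x$ and $t(x,y,x)=x$. Since these are identities, they propagate to all of ${\cal C}$, so $t$ is a Pixley term and ${\cal C}$ is arithmetical, in particular congruence--distributive. Next I would show that every subdirectly irreducible $M\in{\cal C}$ is simple: if $c\neq d$ in $M$, applying the unary polynomial $z\mapsto t^M(c,z,x)$ to the pair $(c,d)\in Cg_M(c,d)$ yields $(x,c)\in Cg_M(c,d)$ for every $x$, forcing $Cg_M(c,d)=\nabla _M$; hence on a subdirectly irreducible algebra $Cg_M(c,d)$ equals $\Delta _M$ when $c=d$ and $\nabla _M$ when $c\neq d$. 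I would then take the $4$--ary term $s(x,y,u,v)=t(t(x,y,u),t(x,y,v),v)$, which on subdirectly irreducibles returns $u$ when $x=y$ and $v$ when $x\neq y$. Writing an arbitrary $M\in{\cal C}$ as a subdirect product of subdirectly irreducibles $M_i$ and passing to components, in each $M_i$ the congruences $Cg_{M_i}(a_i,b_i)$, $Cg_{M_i}(c_i,d_i)$ and $Cg_{M_i}(s^{M_i}(a_i,b_i,c_i,d_i),c_i)$ are $\Delta$ or $\nabla$ exactly so that $Cg_{M_i}(a_i,b_i)\cap Cg_{M_i}(c_i,d_i)=Cg_{M_i}(s^{M_i}(a_i,b_i,c_i,d_i),c_i)$, which gives the desired identity in $M$. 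The step that needs care --- and the main obstacle here --- is the passage to components, i.e.\ that for a principal congruence the embedding $M\hookrightarrow \prod _iM_i$ loses no information; this is where the structure theory of discriminator varieties, under which principal congruences behave as in the simple subdirect factors, must be invoked.

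For the second item, the implication ``existence of $(p,q)$ $\Rightarrow$ PIP'' is immediate, since then $Cg_M(a,b)\cap Cg_M(c,d)=Cg_M(p^M(a,b,c,d),q^M(a,b,c,d))$ is principal for every $M$ and all $a,b,c,d\in M$. For the converse, assume ${\cal C}$ is congruence--distributive with the PIP, and work in the free algebra $F=F_{\cal C}(x_0,x_1,x_2,x_3)$: by the PIP there are $p,q\in F$ with $Cg_F(x_0,x_1)\cap Cg_F(x_2,x_3)=Cg_F(p,q)$, and $4$--ary terms representing $p$ and $q$ are the candidates. Given $M\in{\cal C}$ and $a,b,c,d\in M$, let $h:F\rightarrow M$ send $x_0,x_1,x_2,x_3$ to $a,b,c,d$, so that $h(p)=p^M(a,b,c,d)$ and $h(q)=q^M(a,b,c,d)$. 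One inclusion is easy: $h^*(Cg_M(a,b))$ is a congruence of $F$ containing $(x_0,x_1)$ (as $h(x_0)=a$, $h(x_1)=b$), hence containing $Cg_F(x_0,x_1)$, in particular $(p,q)$; thus $(h(p),h(q))\in Cg_M(a,b)$, and symmetrically $(h(p),h(q))\in Cg_M(c,d)$, so $Cg_M(p^M(a,b,c,d),q^M(a,b,c,d))\subseteq Cg_M(a,b)\cap Cg_M(c,d)$. The reverse inclusion is the substantive part: I would reduce it to finitely generated members of ${\cal C}$, using that membership in a principal congruence is witnessed by a finite Mal'cev chain, and then transfer the identity from a suitable finitely generated free algebra by means of the distributivity of its congruence lattice; equivalently, by Remark~\ref{pipcip} the PIP yields the CIP, Proposition~\ref{syscip} then yields a system of congruence intersection terms, and one checks that under the PIP this system collapses to the single pair $(p,q)$. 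This reverse/transfer step is the main obstacle, and it is precisely the content of \cite{aglbak}.
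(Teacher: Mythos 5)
The paper supplies no proof here --- both bullets are quoted results, attributed to \cite{bur} and \cite{aglbak} respectively --- so your primary plan of simply citing the sources is exactly what the paper does. Your supplementary sketches are sound in outline (the Pixley-term identities, the simplicity of subdirectly irreducibles via the polynomial $z\mapsto t^M(c,z,x)$, the switching term $s(x,y,u,v)=t(t(x,y,u),t(x,y,v),v)$, and the free-algebra reduction for the PIP characterization are all correct), and you have accurately isolated the two steps that carry the real content of the cited theorems: the principal-congruence formula needed to transfer $Cg_M(a,b)\cap Cg_M(c,d)=Cg_M(s^M(a,b,c,d),c)$ from the simple subdirect factors to an arbitrary member, and the reverse inclusion in the term characterization of the PIP.
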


Following \cite{aglbak}, we call ${\cal C}$ a {\em filtral variety} iff, for any up--directed set $(I,\leq )$ and any family $(M_i)_{i\in I}$ of subdirectly irreducible algebras from ${\cal C}$, if $S$ is a subdirect product of the family $(M_i)_{i\in I}$, then every congruence of $S$ is of the form $\{((a_i)_{i\in I},(b_i)_{i\in I})\in S\ |\ \{j\in I\ |\ a_j=b_j\}\subseteq F\}$ for some filter $F$ of $(I,\leq )$.

We recall that a {\em (commutative) residuated lattice} is an algebra $(R,\vee ,\wedge ,\odot ,\rightarrow ,0,1)$ of type $(2,2,2,2,0,0)$, in which $(R,\vee ,\wedge ,0,1)$ is a bounded lattice, $(R,\odot ,0,1)$ is a commutative monoid, and each $a,b,c\in R$ fulfill the {\em law of residuation}: $a\leq b\rightarrow c$ iff $a\odot b\leq c$, where $\leq $ is the partial order of the underlying lattice of $R$. For the results on residuated lattices that we use in what follows, we refer the reader to \cite{gal}, \cite{jits}, \cite{kow}. Residuated lattices form a semi--degenerate congruence--distributive equational class, which includes BL--algebras and MV--algebras.

Throughout the rest of this section, $R$ shall be a residuated lattice. For all $a,b\in R$ and all $n\in \N $, we denote by $a\leftrightarrow b=(a\rightarrow b)\wedge (b\rightarrow a)$, $a^0=1$ and $a^{n+1}=a^n\odot a$. We shall denote by ${\rm Filt}(R)$ the set of the {\em filters} of $R$, that is the non--empty subsets of $R$ which are closed with respect to $\odot $ and to upper bounds. Then $({\rm Filt}(R),\vee ,\cap ,\{1\},R)$ is a complete distributive lattice, with $\vee $ defined as in the case of bounded lattices. The map $F\mapsto \sim _F=\{(a,b)\in R^2\ |\ a\leftrightarrow b\in F\}$ is a bounded lattice isomorphism from ${\rm Filt}(R)$ to ${\rm Con}(R)$. For any $a\in R$, we shall denote by $[a)$ the principal filter of $R$ generated by $a$: $[a)=\{x\in R\ |\ (\exists \, n\in \N )\, (a^n\leq x)\}$. For all $a,b\in R$: $[a)\cap [b)=[a\vee b)$, $\sim _{[a)}=Cg_A(a,1)$ and $Cg_A(a,b)=Cg_A(a\leftrightarrow b,1)$, hence, for all $a,b,x,y\in R$, $Cg_A(a,b)\cap Cg_A(x,y)=Cg_A(a\leftrightarrow b,1)\cap Cg_A(x\leftrightarrow y,1)=\sim _{[a\leftrightarrow b)}\cap \sim _{[x\leftrightarrow y)}=\sim _{[a\leftrightarrow b)\cap [x\leftrightarrow y)}=\sim _{[(a\leftrightarrow b)\vee (x\leftrightarrow y))}=Cg_A((a\leftrightarrow b)\vee (x\leftrightarrow y),1)$.
 
\begin{example}\begin{itemize}
\item By the above, the class of residuated lattices is congruence--distributive and has the PIP.
\item The class of bounded distributive lattices is congruence--distributive and, by \cite{aglbak}, it has the PIP.
\item By \cite[Example 2.11]{aglbak}, any filtral variety has the CIP.
\item By Lemma \ref{termpip}, any discriminator variety is congruence--distributive and has the PIP.\end{itemize}\label{pip}\end{example}

Following \cite[p. 382]{bj}, we say that ${\cal C}$ has {\em equationally definable principal congruences} (abbreviated, {\em EDPC}) iff, for any algebra $M$ from ${\cal C}$, there exist an $n\in \N ^*$ and terms $p_1,q_1,\ldots ,p_n,q_n$ of arity $4$ from ${\bf L}_{\tau }$ such that, for all $a,b\in M$, $Cg_M(a,b)=\{(c,d)\in M^2\ |\ (\forall \, i\in \overline{1,n})\, (p_i(a,b,c,d)=q_i(a,b,c,d))\}$.

\begin{theorem}{\rm \cite{blkpgz}} If ${\cal C}$ has EDPC, then ${\cal C}$ is is congruence--distributive.\label{edpcdistrib}\end{theorem}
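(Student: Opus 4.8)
This statement is quoted from \cite{blkpgz}, so the plan is to reproduce the classical argument, which runs through the Mal'cev condition of J\'onsson for congruence--distributivity, that is, the existence of ternary J\'onsson terms. The first step is to replace the algebra--by--algebra formulation of EDPC used above by a single \emph{uniform} quaternary system that works throughout ${\cal C}$: this is standard (it is part of the equivalences established in \cite{blkpgz}) and follows by applying the defining condition in a suitable finitely generated free algebra of ${\cal C}$ and invoking substitution invariance. Thus one fixes an $n\in\N ^*$ and quaternary terms $p_1,q_1,\ldots ,p_n,q_n$ such that, for every member $M$ of ${\cal C}$ and all $a,b,c,d\in M$, $(c,d)\in Cg_M(a,b)$ if and only if $p_i^M(a,b,c,d)=q_i^M(a,b,c,d)$ for every $i\in\overline{1,n}$.

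Next one reads off, by evaluating this biconditional in free algebras of ${\cal C}$, the identities forced on the $p_i,q_i$: reflexivity of $Cg_M(a,b)$ yields $p_i(x,y,z,z)\approx q_i(x,y,z,z)$, and $(x,y)\in Cg_M(x,y)$ yields $p_i(x,y,x,y)\approx q_i(x,y,x,y)$, while finer substitutions deliver the further identities needed below. One then has to assemble out of the $p_i,q_i$ a finite sequence of ternary terms $d_0(x,y,z),\ldots ,d_m(x,y,z)$ with $d_0\approx x$, $d_m\approx z$, $d_j(x,y,x)\approx x$ for all $j$, and the alternating identities $d_j(x,x,z)\approx d_{j+1}(x,x,z)$ for even $j$ and $d_j(x,z,z)\approx d_{j+1}(x,z,z)$ for odd $j$. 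The mechanism that makes this possible is that EDPC turns the ``chain of generator--related elements'' description of a join of principal congruences into a \emph{bounded} and \emph{term--definable} one: membership of a pair in $Cg_M(x,y)$ is witnessed by the finitely many equations $p_i=q_i$, so one can interpolate between $x$ and $z$ through the values of the $p_i,q_i$ in such a way that each individual step, once two of the three variables $x,y,z$ are identified, lies inside the appropriate principal congruence. With such terms in hand, J\'onsson's theorem gives that every algebra in ${\cal C}$ has a distributive congruence lattice, that is, ${\cal C}$ is congruence--distributive.

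The main obstacle is precisely the construction and verification of the J\'onsson terms: converting the raw quaternary terms $p_i,q_i$ into ternary terms satisfying the prescribed alternating identities is a delicate, purely syntactic manipulation of terms, and that is where the content of the theorem resides; the passage to a uniform EDPC system and the final appeal to J\'onsson's theorem are routine by comparison. One may also avoid J\'onsson terms and argue directly that ${\rm Con}(M)$ is distributive for each fixed $M\in{\cal C}$, using the EDPC terms of $M$ to transport a compact congruence contained in $\alpha\cap(\beta\vee\gamma)$ into $(\alpha\cap\beta)\vee(\alpha\cap\gamma)$; the difficulty of controlling the chain that computes $\beta\vee\gamma$ is the same.
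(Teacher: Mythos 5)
The paper does not prove this statement: it is quoted verbatim from \cite{blkpgz}, so there is no internal proof to compare yours against. Judged on its own terms, your reconstruction has a genuine gap exactly where you yourself locate ``the content of the theorem'': you never construct the J\'onsson terms. The assertion that one can ``interpolate between $x$ and $z$ through the values of the $p_i,q_i$'' so that each step lands in the appropriate principal congruence is not an argument. EDPC describes membership in a \emph{single} principal congruence $Cg_M(a,b)$, whereas the J\'onsson identities govern the interaction of the three congruences $Cg(x,y)$, $Cg(y,z)$, $Cg(x,z)$, and you offer no mechanism for passing from the quaternary witnesses $p_i,q_i$ to ternary terms $d_j$ satisfying $d_j(x,y,x)\approx x$ together with the alternating identities. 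The two identities you do extract ($p_i(x,y,z,z)\approx q_i(x,y,z,z)$ and $p_i(x,y,x,y)\approx q_i(x,y,x,y)$) are correct but nowhere near sufficient, and the classical proofs do not proceed by such a direct syntactic conversion.

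The standard route --- essentially the one behind \cite{blkpgz}, and available almost for free from material already present in this paper --- avoids Mal'cev conditions entirely. By Proposition \ref{charedpc} (K\"ohler--Pigozzi), EDPC is equivalent to the join--semilattice ${\rm Con}_w(M)$ of compact congruences being dually Brouwerian for every member $M$ of ${\cal C}$. A dually Brouwerian join--semilattice is distributive: if $a\leq b\vee c$, then $c^{\prime }=a\dot{-}b$ satisfies $c^{\prime }\leq c$, $c^{\prime }\leq a$ and $a\leq b\vee c^{\prime }$, whence $b^{\prime }=a\dot{-}c^{\prime }$ satisfies $b^{\prime }\leq b$, $b^{\prime }\leq a$ and $a=b^{\prime }\vee c^{\prime }$. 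Since ${\rm Con}(M)$ is an algebraic lattice whose compact elements are exactly the finitely generated congruences, it is isomorphic to the ideal lattice of ${\rm Con}_w(M)$, and the ideal lattice of a distributive join--semilattice is a distributive lattice; hence ${\rm Con}(M)$ is distributive for every $M$, which is the assertion. Your closing alternative (``argue directly that ${\rm Con}(M)$ is distributive\dots using the EDPC terms to transport a compact congruence'') is pointing at exactly this argument, but again only gestures at it: the residuation $\dot{-}$ on compact congruences is precisely the tool that ``controls the chain'' computing $\beta \vee \gamma $, and without introducing it the difficulty you name remains unresolved.
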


\begin{example} Here are some examples of varieties with EDPC, from \cite{blkpgz}, \cite[Theorem 2.8]{bj} and \cite{lpt}:\begin{itemize}
\item distributive lattices, residuated lattices;
\item discriminator varieties, which include: Boolean algebras, $n$--valued Post algebras, $n$--valued \L ukasiewicz algebras, $n$--valued MV--algebras, relation algebras, monadic algebras, $n$--dimensional cylindric algebras, ${\rm G\ddot{o}del}$ residuated lattices;
\item dual discriminator varieties;
\item filtral varieties;
\item implication algebras, de Morgan algebras, Hilbert algebras, Brouwerian semilattices, Heyting algebras, modal algebras.\end{itemize}\label{edpc}\end{example}

Let $(L,\vee ,0)$ be a lower bounded join--semilattice. $L$ is said to be {\em dually Browerian} iff it has a binary derivative operation $\dot{-}$ such that, for all $x\in L$, $a\dot{-}b\leq x$ iff $a\leq b\vee x$.

\begin{proposition}{\rm \cite{kp}} ${\cal C}$ has EDPC iff, for any algebra $M$ from ${\cal C}$, ${\rm Con}_w(M)$ is dually Browerian. In this case, if $M$ is a member of ${\cal C}$ and $n\in \N ^*$ and $p_1,q_1,\ldots ,p_n,q_n$ are the terms of arity $4$ which define the principal congruences of $M$ as above, then, for any $a,b,c,d\in M$, $\displaystyle Cg_M(c,d)\dot{-}Cg_M(a,b)=\bigvee _{i=1}^nCg_M(p_i(a,b,c,d),q_i(a,b,c,d))$.\label{charedpc}\end{proposition}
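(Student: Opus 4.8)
The plan is to establish the two implications of the equivalence separately, the forward one ($\Rightarrow$) yielding at the same time the displayed formula for $\dot{-}$, and the backward one ($\Leftarrow$) recovering the defining terms from the free algebra on four generators. For the forward direction, assume ${\cal C}$ has EDPC; by Theorem \ref{edpcdistrib} it is congruence--distributive, so each ${\rm Con}_w(M)$ is a distributive lower bounded join--semilattice, and (by a standard compactness argument) we may fix a single system of four--ary terms $p_1,q_1,\ldots ,p_n,q_n$ defining principal congruences throughout ${\cal C}$. Fix $M$ and, for $a,b,c,d\in M$, set $\gamma (a,b,c,d)=\bigvee _{i=1}^nCg_M(p_i^M(a,b,c,d),q_i^M(a,b,c,d))\in {\rm Con}_w(M)$. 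The heart of this direction is the claim that, for every $\theta \in {\rm Con}(M)$,
\[Cg_M(c,d)\subseteq Cg_M(a,b)\vee \theta \iff \gamma (a,b,c,d)\subseteq \theta .\]

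To prove this, pass to $M/\theta $ via $p_{\theta }$: since $p_{\theta }$ is a homomorphism, $\gamma (a,b,c,d)\subseteq \theta $ means exactly $p_i^{M/\theta }(\overline{a},\overline{b},\overline{c},\overline{d})=q_i^{M/\theta }(\overline{a},\overline{b},\overline{c},\overline{d})$ for all $i$, which by EDPC in $M/\theta $ is equivalent to $(\overline{c},\overline{d})\in Cg_{M/\theta }(\overline{a},\overline{b})$; by Lemma \ref{lo1} one has $Cg_{M/\theta }(\overline{a},\overline{b})=p_{\theta }(Cg_M(a,b)\vee \theta )=(Cg_M(a,b)\vee \theta )/\theta $, and by the congruence correspondence above $\theta $ this last membership is equivalent to $(c,d)\in Cg_M(a,b)\vee \theta $, i.e.\ to $Cg_M(c,d)\subseteq Cg_M(a,b)\vee \theta $. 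The claim shows $\gamma (a,b,c,d)=\min \{\theta \in {\rm Con}(M)\mid Cg_M(c,d)\subseteq Cg_M(a,b)\vee \theta \}$, which depends only on $Cg_M(a,b)$ and $Cg_M(c,d)$; we define $Cg_M(c,d)\dot{-}Cg_M(a,b):=\gamma (a,b,c,d)$ on principal congruences and extend $\dot{-}$ to arbitrary compact congruences by writing each as a finite join of principal ones and iterating --- at each step the universal property $\alpha \dot{-}\beta \subseteq x\iff \alpha \subseteq \beta \vee x$, hence the intrinsic description as a least congruence, is preserved, which also gives well--definedness. Thus ${\rm Con}_w(M)$ is dually Brouwerian, and the displayed formula holds by the very definition of $\dot{-}$.

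For the backward direction, assume every ${\rm Con}_w(M)$ is dually Brouwerian. Let $F:=F_{\cal C}(y_1,y_2,y_3,y_4)$ be free on four generators and put $\mu :=Cg_F(y_3,y_4)\dot{-}Cg_F(y_1,y_2)$; being compact, $\mu =\bigvee _{i=1}^nCg_F(u_i,v_i)$, and writing $u_i=p_i^F(y_1,y_2,y_3,y_4)$, $v_i=q_i^F(y_1,y_2,y_3,y_4)$ gives four--ary terms $p_i,q_i$. Given $M$ and $a,b,c,d\in M$, let $g:F\rightarrow M$ be the homomorphism with $g(y_1)=a,\ldots ,g(y_4)=d$. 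Pushing everything through $g$ and using the universal property of $\dot{-}$ in ${\rm Con}_w(F)$, one gets a chain of equivalences: ``$p_i^M(a,b,c,d)=q_i^M(a,b,c,d)$ for all $i$'' $\iff$ $\mu \subseteq {\rm Ker}(g)$ $\iff$ $Cg_F(y_3,y_4)\subseteq Cg_F(y_1,y_2)\vee {\rm Ker}(g)$ $\iff$ $(y_3,y_4)\in Cg_F(y_1,y_2)\vee {\rm Ker}(g)$, and by Lemma \ref{lo1} together with the congruence correspondence above ${\rm Ker}(g)$ (applied to the factorisation $F\twoheadrightarrow F/{\rm Ker}(g)\cong g(F)$), this last condition is equivalent to $(c,d)\in Cg_{g(F)}(a,b)$, where $g(F)=\langle a,b,c,d\rangle _M$. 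Hence the $p_i,q_i$ define the principal congruences of $M$ precisely when $Cg_M(a,b)\cap g(F)^2=Cg_{g(F)}(a,b)$; since this is the four--generated instance of the ``$Cg_M(a,b)\cap N^2=Cg_N(a,b)$'' criterion and every subalgebra inclusion reduces to that instance, what remains is exactly that ${\cal C}$ has the congruence extension property.

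The main obstacle, and the step that requires genuine work, is therefore to deduce the CEP from the hypothesis that every ${\rm Con}_w(M)$ is dually Brouwerian: the naive attempts --- transporting $\dot{-}$ along a subalgebra inclusion $N\leq M$, or along a retraction $F_{\cal C}(M)\rightarrow F_{\cal C}(N)$ --- break down because congruences are not preserved by homomorphic endomorphisms, so neither kernels nor the operation $\dot{-}$ behave functorially in the needed way. I would carry this out following \cite{kp}, exploiting the structure theory of congruence--distributive varieties; note that the hypothesis already forces congruence--distributivity, since ${\rm Con}(M)$ is the ideal lattice of the algebraic semilattice ${\rm Con}_w(M)$ and the ideal lattice of a dually Brouwerian join--semilattice is distributive. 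Once the CEP is in hand, the backward direction is complete by the displayed chain of equivalences, and the two directions together give the equivalence of the proposition together with the explicit formula for $\dot{-}$.
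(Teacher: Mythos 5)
The paper does not prove this proposition: it is quoted verbatim from \cite{kp} (K\"ohler--Pigozzi) with no argument supplied, so there is no in-paper proof to compare against and your attempt must stand on its own. Your forward direction does stand: the claim that $\gamma(a,b,c,d)\subseteq\theta$ iff $Cg_M(c,d)\subseteq Cg_M(a,b)\vee\theta$, proved by passing to $M/\theta$ via Lemma \ref{lo1} and the correspondence theorem, correctly exhibits $\gamma(a,b,c,d)$ as the least such $\theta$, and the extension of $\dot{-}$ from principal to compact congruences via the identities $(\alpha_1\vee\alpha_2)\dot{-}\beta=(\alpha_1\dot{-}\beta)\vee(\alpha_2\dot{-}\beta)$ and $\alpha\dot{-}(\beta_1\vee\beta_2)=(\alpha\dot{-}\beta_1)\dot{-}\beta_2$ is sound. (One caveat: your argument applies the defining terms of $M$ inside $M/\theta$, so it needs the terms to be uniform across the variety; the paper's per-algebra phrasing of EDPC does not literally grant this, and your ``standard compactness argument'' for uniformization is itself nontrivial --- it is cleanest to simply read EDPC with variety-wide terms, as in \cite{kp}.)

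The backward direction, however, contains a genuine gap that you yourself flag. Your free-algebra construction of $\mu=Cg_F(y_3,y_4)\dot{-}Cg_F(y_1,y_2)$ and the ensuing chain of equivalences are correct as far as they go, but they only show that the extracted terms define the relation $(c,d)\in Cg_{\langle a,b,c,d\rangle_M}(a,b)$, not $(c,d)\in Cg_M(a,b)$; closing that gap requires the principal-congruence extension property for four-generated subalgebras, and you explicitly defer its derivation from the dually Brouwerian hypothesis to \cite{kp}. That deferred step is not a routine verification --- it is the substantive content of the hard direction of the K\"ohler--Pigozzi theorem (indeed, the naive attempts you list really do fail, because neither kernels nor $\dot{-}$ transport along non-surjective homomorphisms). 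As written, therefore, the proposal proves ``EDPC $\Rightarrow$ dually Brouwerian'' together with the displayed formula, but only reduces the converse to an unproven extension property; it is not a complete proof of the equivalence.
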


\begin{proposition} If ${\cal C}$ is semi--degenerate and has EDPC, then every admissible morphism in ${\cal C}$ fulfills GU.\label{edpcgu}\end{proposition}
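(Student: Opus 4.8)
The plan is to reduce to an admissible canonical embedding and then produce the required prime congruence of $B$ by a maximality argument whose only substantial ingredient is the dually Brouwerian structure on compact congruences supplied by EDPC. First I would note that EDPC makes ${\cal C}$ congruence--distributive (Theorem \ref{edpcdistrib}), in particular congruence--modular, so that the commutator and the theory of prime congruences are available, and that semi--degeneracy gives that $\nabla_M$ is finitely generated for every member $M$ (Proposition \ref{2.6}). By Corollary \ref{sufembed}, (\ref{sufembed2}), it suffices to prove that every admissible canonical embedding fulfills GU, so I may assume that $A$ is a subalgebra of $B$ and that the canonical embedding $i:A\to B$ is admissible. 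I would fix a single system of $4$--ary terms $p_1,q_1,\dots,p_n,q_n$ witnessing EDPC uniformly over ${\cal C}$ (the existence of one system valid for the whole variety being the standard form of EDPC), so that Proposition \ref{charedpc} applies in every member $M$ of ${\cal C}$: in the dually Brouwerian join--semilattice ${\rm Con}_w(M)$ one has $Cg_M(c,d)\,\dot{-}\,Cg_M(a,b)=\bigvee_{k=1}^{n}Cg_M(p_k^M(a,b,c,d),q_k^M(a,b,c,d))$ for all $a,b,c,d\in M$.

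Next I would invoke the internal characterization in Proposition \ref{altacargu}: since $\nabla_B$ is finitely generated and $i$ is admissible, $i$ fulfills GU if and only if, for every $\phi\in{\rm Spec}(A)$, every maximal element $\psi$ of the set $\{\theta\in{\rm Con}(B)\setminus\{\nabla_B\}\ |\ \theta\cap(\nabla_A\setminus\phi)=\emptyset\}$ satisfies $\psi\cap A^2=\phi$. So I fix such a $\phi$ and such a $\psi$; by Lemma \ref{specmsist}, $\psi\in{\rm Spec}(B)$, and from $\psi\cap(\nabla_A\setminus\phi)=\emptyset$ one immediately obtains $\psi\cap A^2\subseteq\phi$. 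The substance of the proof is the reverse inclusion $\phi\subseteq\psi$.

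To establish $\phi\subseteq\psi$, I would argue by contradiction: suppose $(a,b)\in\phi$ but $(a,b)\notin\psi$. Then $\psi\subsetneq\psi\vee Cg_B(a,b)$, so maximality of $\psi$ forces $\psi\vee Cg_B(a,b)=\nabla_B$ or $(\psi\vee Cg_B(a,b))\cap(\nabla_A\setminus\phi)\neq\emptyset$; since $\phi$ is proper, $\nabla_A\setminus\phi\neq\emptyset$, and in the first case $\psi\vee Cg_B(a,b)=B^2$ already meets $\nabla_A\setminus\phi$, so in either case there is a pair $(u,v)\in A^2\setminus\phi$ with $Cg_B(u,v)\subseteq\psi\vee Cg_B(a,b)$. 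Since $Cg_B(u,v)$ is compact, the dual residuation in ${\rm Con}_w(B)$ (extended to the arbitrary second argument $\psi$ using compactness of $Cg_B(u,v)$) gives $Cg_B(u,v)\,\dot{-}\,Cg_B(a,b)\subseteq\psi$, that is, by Proposition \ref{charedpc}, $(p_k^B(a,b,u,v),q_k^B(a,b,u,v))\in\psi$ for all $k$. Because $a,b,u,v\in A$ and $A$ is a subalgebra of $B$, these pairs lie in $A^2$ and equal $(p_k^A(a,b,u,v),q_k^A(a,b,u,v))$, hence they lie in $\psi\cap A^2\subseteq\phi$. Applying Proposition \ref{charedpc} now in $A$, $Cg_A(u,v)\,\dot{-}\,Cg_A(a,b)=\bigvee_{k=1}^{n}Cg_A(p_k^A(a,b,u,v),q_k^A(a,b,u,v))\subseteq\phi$, whence $Cg_A(u,v)\subseteq Cg_A(a,b)\vee\phi=\phi$ (the last equality because $(a,b)\in\phi$). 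Thus $(u,v)\in\phi$, contradicting $(u,v)\in A^2\setminus\phi$. Therefore $\phi\subseteq\psi$, so $\psi\cap A^2=\phi$, and Proposition \ref{altacargu} yields that $i$ fulfills GU; by Corollary \ref{sufembed}, (\ref{sufembed2}), every admissible morphism in ${\cal C}$ fulfills GU.

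The hard part will be the bookkeeping around the EDPC term system rather than any deep idea: one must work with a single system of defining terms valid simultaneously in $A$ and in $B$, so that term operations transport along the inclusion and the two pseudo--difference computations match, and one must pass the dual--residuation law from the compact--congruence semilattice ${\rm Con}_w(B)$ to the genuinely non--compact congruence $\psi$. Once these points are handled, the remainder is the familiar m--system/Zorn maximality pattern already exploited in the paper (Lemmas \ref{1.7} and \ref{specmsist} and Proposition \ref{altacargu}).
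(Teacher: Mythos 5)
Your proposal is correct and follows essentially the same route as the paper: reduction to admissible canonical embeddings via Corollary \ref{sufembed}, the maximality criterion of Proposition \ref{altacargu}, and the dually Brouwerian structure on compact congruences from Proposition \ref{charedpc} to push the pseudo--difference from $B$ down into $A$. The only (harmless) variation is at the very end, where you conclude $(u,v)\in\phi$ directly from $Cg_A(u,v)\subseteq Cg_A(a,b)\vee\phi=\phi$ via dual residuation, while the paper passes to the quotient $A/\phi$ and reads off $(s/\phi,t/\phi)\in Cg_{A/\phi}(x/\phi,y/\phi)=\Delta_{A/\phi}$ from the EDPC equations; these are equivalent ways of finishing the same argument.
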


\begin{proof} By Theorem \ref{edpcdistrib}, ${\cal C}$ is congruence--distributive. By Corollary \ref{sufembed}, (\ref{sufembed2}), it is sufficient to prove that every admissible canonical embedding in ${\cal C}$ fulfills GU. Let $B$ be a member of ${\cal C}$, $A$ be a subalgebra of $B$ and $i:A\rightarrow B$ be the canonical embedding. Let $\phi \in {\rm Spec}(A)$ and $\psi $ be a maximal element of the set $\{\theta \in {\rm Con}(B)\setminus \{\nabla _B\}\ |\ \theta \cap (\nabla _A\setminus \phi )=\emptyset \}$. Then $\emptyset =\psi \cap (A^2\setminus \phi )=(\psi \cap A^2)\setminus (\psi \cap \phi )$, thus $\psi \cap A^2\subseteq \psi \cap \phi \subseteq \phi $. Now assume by absurdum that $\phi \nsubseteq \psi \cap A^2$, thus there exists $(x,y)\in \phi \setminus (\psi \cap A^2)$, so that $(x,y)\in \phi\subseteq A^2$ and $(x,y)\notin \psi \cap A^2$, hence $(x,y)\notin \psi $, thus $\psi \nsubseteq \psi \vee Cg_B(x,y)$, therefore $(\psi \vee Cg_B(x,y))\cap (A^2\setminus \phi )\neq \emptyset $ or $\psi \vee Cg_B(x,y)=\nabla _B$, by the maximality of $\psi $. Since $\phi \in {\rm Spec}(A)$, so $\phi \subsetneq \nabla _A$ and thus $\nabla _B\cap (A^2\setminus \phi )=A^2\setminus \phi \neq \emptyset $, it follows that $(\psi \vee Cg_B(x,y))\cap (A^2\setminus \phi )\neq \emptyset $. Let $(s,t)\in (\psi \vee Cg_B(x,y))\cap (A^2\setminus \phi )$, so that there exist an $n\in \N $ and $(a_1,b_1),\ldots ,(a_n,b_n)\in \psi $ such that $(s,t)\in Cg_B((a_1,b_1),\ldots ,(a_n,b_n))\vee Cg_B(x,y)$, hence $Cg_B(s,t)\subseteq Cg_B((a_1,b_1),\ldots ,(a_n,b_n))\vee Cg_B(x,y)$. Since $Cg_B(s,t),Cg_B((a_1,b_1),\ldots ,(a_n,b_n)),Cg_B(x,y)\in {\rm Con}_w(A)$ and, by Proposition \ref{charedpc}, ${\rm Con}_w(A)$ is a dually Browerian join--semilattice, it follows that $Cg_B(s,t)\dot{-}Cg_B(x,y)\subseteq Cg_B((a_1,b_1),\ldots ,(a_n,b_n))\subseteq \psi $, hence $\nabla _A\cap (Cg_B(s,t)\dot{-}Cg_B(x,y))\subseteq \nabla _A\cap \psi \subseteq \phi $. Let $n\in \N $ and $p_1,q_1,\ldots ,p_n,q_n$ be the terms in the equations which define the principal congruences of $A$, as written above. $x,y,s,t\in A$, thus, for every $i\in \overline{1,n}$, we may write $Cg_A(p_i(x,y,s,t),q_i(x,y,s,t))\subseteq Cg_B(p_i(x,y,s,t),q_i(x,y,s,t))$, so, by Proposition \ref{charedpc}, $\displaystyle Cg_A(s,t)\dot{-}Cg_A(x,y)=\bigvee _{i=1}^nCg_A(p_i(x,y,s,t),q_i(x,y,s,t))\subseteq \bigvee _{i=1}^nCg_B(p_i(x,y,s,t),q_i(x,y,s,t))=Cg_B(s,t)\dot{-}Cg_B(x,y)$, hence $Cg_A(s,t)\dot{-}Cg_A(x,y)\subseteq \nabla_A\cap (Cg_B(s,t)\dot{-}Cg_B(x,y))\subseteq \phi $, so $\displaystyle \bigvee _{i=1}^nCg_A(p_i(x,y,s,t),$\linebreak $q_i(x,y,s,t))\subseteq \phi $, thus, for all $i\in \overline{1,n}$, $Cg_A(p_i(x,y,s,t),q_i(x,y,s,t))\subseteq \phi $, so, for all $i\in \overline{1,n}$, $(p_i(x,y,s,t),$\linebreak $q_i(x,y,s,t))\in \phi $, hence, for all $i\in \overline{1,n}$, $p_i(x/\phi ,y/\phi ,s/\phi ,t/\phi )=p_i(x,y,s,t)/\phi =q_i(x,y,s,t)/\phi =q_i(x/\phi ,y/\phi ,$\linebreak $s/\phi ,t/\phi )$, which means that $(s/\phi ,t/\phi )\in Cg_{A/\phi }(x/\phi ,y/\phi )$. But $(x,y)\in \phi $, so that $x/\phi =y/\phi $, hence $Cg_{A/\phi }(x/\phi ,y/\phi )=Cg_{A/\phi }(x/\phi ,x/\phi )=\Delta _{A/\phi }$. Thus $(s/\phi ,t/\phi )\in \Delta _{A/\phi }$, that is $s/\phi =t/\phi $, so $(s,t)\in \phi $. Therefore $(s,t)\in \phi \cap (\Delta _A\setminus \phi )=\emptyset $; we have a contradiction. Hence $\phi \subseteq \psi \cap A^2$, therefore $\psi \cap A^2=\phi $. By Proposition \ref{altacargu}, it follows that $i$ fulfills GU, which concludes the proof.\end{proof}

\begin{corollary} If ${\cal C}$ is semi--degenerate and has EDPC, then every admissible morphism in ${\cal C}$ fulfills LO.\label{edpclo}\end{corollary}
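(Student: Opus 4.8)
The plan is to derive this as an immediate consequence of the companion statement Proposition \ref{edpcgu} together with the already-established fact that, in semi--degenerate equational classes, property GU entails property LO. So no new combinatorial work on congruences is needed; the entire burden has already been discharged in the proof of Proposition \ref{edpcgu}, and here we merely transport its conclusion across the implication GU $\Rightarrow$ LO.

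Concretely, I would argue as follows. Let $f:A\rightarrow B$ be an admissible morphism in ${\cal C}$. Since ${\cal C}$ is semi--degenerate and has EDPC, Proposition \ref{edpcgu} applies and gives that $f$ fulfills GU. Now invoke semi--degeneracy once more: by Proposition \ref{2.6}, every $\nabla_M$ with $M$ a member of ${\cal C}$ is finitely generated, in particular $\nabla_B$ is finitely generated. Hence Proposition \ref{gulo} applies to the admissible morphism $f$ and, since $f$ fulfills GU, it follows that $f$ fulfills LO. (Equivalently, one can quote Corollary \ref{classgulo} directly, which packages precisely this ``GU $\Rightarrow$ LO for semi--degenerate ${\cal C}$'' step.) As this holds for an arbitrary admissible morphism $f$ in ${\cal C}$, we conclude that every admissible morphism in ${\cal C}$ fulfills LO.

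The only ``hard part'' lies entirely upstream, inside Proposition \ref{edpcgu}: the delicate point there is showing that, for an admissible canonical embedding $i:A\rightarrow B$ and a prime $\phi\in{\rm Spec}(A)$, a $\cap(\nabla_A\setminus\phi)$-maximal proper congruence $\psi$ of $B$ satisfies $\psi\cap A^2=\phi$, which uses the dual Brouwerian structure of ${\rm Con}_w$ guaranteed by EDPC via Proposition \ref{charedpc}, plus the reduction to embeddings from Corollary \ref{sufembed} and the GU--characterization in Proposition \ref{altacargu}. Once Proposition \ref{edpcgu} is in hand, the present corollary involves no obstacle at all: it is a two-line deduction combining Propositions \ref{edpcgu}, \ref{2.6} and \ref{gulo} (equivalently, Proposition \ref{edpcgu} and Corollary \ref{classgulo}).

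Note, for context, that under these hypotheses ${\cal C}$ is automatically congruence--distributive by Theorem \ref{edpcdistrib}, so the commutator coincides with intersection throughout ${\cal C}$ by Theorem \ref{distrib}; this is not needed in the deduction itself but explains why the spectra behave well. I would therefore keep the write-up to the short chain of citations above rather than re-deriving anything.
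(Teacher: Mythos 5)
Your proposal is correct and matches the paper's own proof exactly: the paper also derives Corollary \ref{edpclo} by combining Proposition \ref{edpcgu} with Corollary \ref{classgulo} (itself just Propositions \ref{2.6} and \ref{gulo}). Nothing is missing.
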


\begin{proof} By Proposition \ref{edpcgu} and Corollary \ref{classgulo}.\end{proof}

\begin{corollary}\begin{itemize}
\item If ${\cal C}$ is semi--degenerate and has EDPC and CIP, then any morphism in ${\cal C}$ is admissible and fulfills GU and LO.
\item If ${\cal C}$ is semi--degenerate and has EDPC and PIP, then any morphism in ${\cal C}$ is admissible and fulfills GU and LO.\end{itemize}\label{pipcipedpc}\end{corollary}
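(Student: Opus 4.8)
The plan is to assemble this corollary entirely from results already established, since both bullets are immediate consequences of the preceding development. The key observation is that the hypothesis EDPC alone forces congruence--distributivity: by Theorem \ref{edpcdistrib}, if ${\cal C}$ has EDPC, then ${\cal C}$ is congruence--distributive. This single fact unlocks the CIP/PIP hypotheses, because the admissibility results of Section \ref{varieties} take congruence--distributivity as an ambient assumption.

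For the first bullet, I would argue as follows. Since ${\cal C}$ has EDPC, it is congruence--distributive by Theorem \ref{edpcdistrib}; combined with the CIP, Corollary \ref{cippip}, (\ref{cippip1}), yields that every morphism in ${\cal C}$ is admissible. Now fix an arbitrary morphism $f$ in ${\cal C}$; it is admissible by what was just said. Since ${\cal C}$ is semi--degenerate and has EDPC, Proposition \ref{edpcgu} gives that $f$ fulfills GU, and Corollary \ref{edpclo} gives that $f$ fulfills LO. Hence every morphism in ${\cal C}$ is admissible and fulfills both GU and LO. For the second bullet, the argument is identical except that one invokes Corollary \ref{cippip}, (\ref{cippip2}), in place of part (\ref{cippip1}): from EDPC we again obtain congruence--distributivity via Theorem \ref{edpcdistrib}, and congruence--distributivity together with the PIP makes every morphism admissible; the conclusions for GU and LO then follow exactly as before from Proposition \ref{edpcgu} and Corollary \ref{edpclo}. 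Alternatively, one could note that, by Remark \ref{pipcip}, a congruence--distributive variety with the PIP automatically has the CIP, so the second bullet reduces to the first.

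There is essentially no obstacle here: the statement is a pure synthesis of Theorem \ref{edpcdistrib}, Corollary \ref{cippip}, Proposition \ref{edpcgu} and Corollary \ref{edpclo}, and the only point that needs a moment's attention is checking that EDPC supplies the congruence--distributivity needed to apply Corollary \ref{cippip}. The mild subtlety is purely bookkeeping: ensuring that the variety--level hypotheses (semi--degeneracy, EDPC, CIP or PIP) are each used exactly where the cited results demand them, and that ``every morphism'' is obtained by first invoking admissibility and only then applying the GU and LO results, which are stated for admissible morphisms.
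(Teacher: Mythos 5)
Your proposal is correct and follows essentially the same route as the paper: the corollary is a pure synthesis of Theorem \ref{edpcdistrib} (EDPC implies congruence--distributivity), Corollary \ref{cippip} (admissibility from CIP/PIP), Proposition \ref{edpcgu} (GU) and Corollary \ref{edpclo} (LO). Your version is in fact spelled out more carefully than the paper's one--line citation, and the remark that PIP reduces to CIP via Remark \ref{pipcip} is a valid shortcut for the second bullet.
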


\begin{proof} By Proposition \ref{charedpc}, Theorem \ref{edpcdistrib} and Corollary \ref{classgulo}.\end{proof}

\begin{corollary}\begin{itemize}
\item Any morphism in the class of residuated lattices is admissible and fulfills GU and LO.
\item Any morphism in the class of bounded distributive lattices is admissible and fulfills GU and LO.
\item Any morphism in a semi--degenerate filtral variety is admissible and fulfills GU and LO.
\item Any morphism in a semi--degenerate discriminator variety is admissible and fulfills GU and LO.\end{itemize}\end{corollary}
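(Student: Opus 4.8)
The plan is to read all four assertions off Corollary \ref{pipcipedpc}, by checking in each case that the variety under consideration is semi--degenerate, congruence--distributive, and enjoys EDPC together with one of the two intersection properties CIP or PIP. All of the structural facts needed for this have essentially been recorded already, so the argument is purely a matter of assembling them.

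For the class of residuated lattices: it is semi--degenerate and congruence--distributive (recalled in Section \ref{varieties}), it has the PIP by the filter computation displayed just before Example \ref{pip}, and it has EDPC by Example \ref{edpc}; hence the second bullet of Corollary \ref{pipcipedpc} applies. For the class of bounded distributive lattices: any one--element subalgebra would have to contain the interpretations of the two nullary operations $0$ and $1$, forcing $0=1$ and hence triviality, so the class is semi--degenerate; it is congruence--distributive, has the PIP by Example \ref{pip} and EDPC by Example \ref{edpc}, so again the second bullet of Corollary \ref{pipcipedpc} applies. For a semi--degenerate filtral variety: filtral varieties are congruence--distributive with the CIP by Example \ref{pip} and have EDPC by Example \ref{edpc}, so together with the assumed semi--degeneracy the first bullet of Corollary \ref{pipcipedpc} gives the claim. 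For a semi--degenerate discriminator variety: such a variety is congruence--distributive with the PIP by Lemma \ref{termpip} (see also Example \ref{pip}) and has EDPC by Example \ref{edpc}, so with semi--degeneracy the second bullet of Corollary \ref{pipcipedpc} yields the conclusion.

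Since each of these reductions is immediate once the cited facts are in place, there is no real obstacle to overcome; the only points deserving a line of justification are the semi--degeneracy of the class of residuated lattices and of the class of bounded distributive lattices, the former being recalled in the preliminaries and the latter following at once, as noted above, from the presence of $0$ and $1$ as constants.
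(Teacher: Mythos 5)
Your proposal is correct and follows exactly the paper's route: the paper proves this corollary by citing Corollary \ref{pipcipedpc} together with Examples \ref{pip} and \ref{edpc}, which is precisely the assembly you carry out (with the added, correct, verification of semi--degeneracy in each case). No differences worth noting.
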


\begin{proof} By Corollary \ref{pipcipedpc} and Examples \ref{pip} and \ref{edpc}.\end{proof}

The previous corollary implies the results from \cite{bel} and \cite{rada} which say that any morphism of MV--algebras or BL--algebras is admissible and fulfills GU and LO.

\end{document}